\documentclass{amsart}

\usepackage{graphicx}
\usepackage[utf8]{inputenc}
\usepackage[authoryear,round]{natbib}

\input{commands.tex}

\pgfplotsset{compat=1.18}

\begin{document}

\title[Wasserstein Least Squares]{Wasserstein Least Squares: A Canonical Regression Method for
  Probability Distributions}

\author{Uriel Mart\'inez Le\'on}
\address{Courant Institute of Mathematics, Computing, and Data Science,
  New York University, New York, NY 10011, USA}
\email{uriel.leon@nyu.edu}

\author{Jonathan Niles-Weed}
\address{Courant Institute of Mathematics, Computing, and Data Science,
  New York University, New York, NY 10011, USA}
\email{jnw@cims.nyu.edu}

\subjclass[2020]{62J05, 49Q22, 60A10}
\keywords{Wasserstein distance, optimal transport, distributional regression, random coefficient model, Fr\'echet regression}

\begin{abstract}
We perform a mathematical and statistical analysis of the Wasserstein least squares problem, a regression method for vector-valued covariates and distribution-valued responses.
Our proposal contrasts with other distributional regression methods by having a direct interpretation in terms of random variables, as a nonparametric analogue of the classic random-effects model.
On the mathematical side, we use a strategy of \citet{lavenantLiftingFunctionalsDefined2023} to show that Wasserstein least squares is the \textit{canonical} extension of Euclidean least squares to the space of probability distributions from the perspective of convex analysis; this viewpoint gives rise to multimarginal and dual formulations of the Wasserstein least squares problem, extending a similar theory for Wasserstein barycenters.
We perform a statistical analysis of the Wasserstein least squares problem under the template deformation model, showing, surprisingly, that estimation is possible at the $n^{-1/2}$ rate.
As a special case, we obtain improved rates of estimation for Wasserstein barycenters, which are an exponential improvement over those established by~\cite{ahidar-coutrixConvergenceRatesEmpirical2020}.
Finally, we propose a heuristic particle method for Wasserstein least squares and use it to conduct a novel analysis of large-scale demographic data from the RAND Health and Retirement Study.
\end{abstract}

\maketitle

\section{Introduction}
Legendre's 1805 publication of the method of least squares is a turning point in the history of mathematical statistics.
In contrast to later justifications of the subject---particularly that of Gauss, which appeared in 1809---Legendre motivates his method not by a probabilistic model but by practical considerations.
(See \citealp{stigler}, for a historical review of the development and popularization of the least squares method.)
For Legendre, the usefulness of least squares was justified by several interrelated facts:
\begin{enumerate}
	\item The naturalness and interpretability of linear relationships between covariates and responses;
	\item The simplicity of the first-order optimality conditions for the minimization problem (the \textit{normal equations}); and
	\item The consistency of the least squares method with the method of averaging: in the case of repeated measurements of the same object, least-squares minimization reduces to computation of the arithmetic mean.
\end{enumerate}
Later, Gauss's probabilistic developments connected the least-squares objective to statistical estimation in the presence of random errors whose law bears his name.

The power and ubiquity of linear least squares has led to the growth of a large literature that seeks to extend its reach beyond the Euclidean setting in which it was initially proposed, for instance, by allowing the independent variables (\textit{covariates}) and dependent variables (\textit{responses}) to take values in infinite-dimensional Hilbert spaces, on manifolds, or in general metric spaces \citep{ramsay_functional_2005,hsing_theoretical_2015,PetersenMuller2019}.
From a statistical perspective, generalizing beyond the Euclidean setting requires specifying both a method (analogous to Legendre's least squares proposal) and a statistical model (analogous to Gauss's theory of random errors) under which the method returns sensible results.

The goal of this work is to propose and study a version of the least squares method, with vector-valued covariates but responses which take values in the space of probability distributions.
We are far from the first to propose such a method (see \cref{sec:related_work} for an in-depth summary), but we take a different approach to most existing statistical work on the problem.
Our philosophy can be summarized by saying that rather than view probability distributions as elements of an arbitrary metric space, we \textit{lift} standard linear least squares to the space of probability distributions by allowing our regression coefficients to be random variables.
(This is analogous to the construction of the Wasserstein space by lifting a given base metric to the space of random variables, \citealp{savareSimpleRelaxationApproach}).

Given covariates $\bx \in \RR^p$ and a response $y \in \RR$,
Legendre's proposal begins with the structural assumption that
\begin{equation}\label{eq:linear_1d}
	y \approx \bm{\beta}^\top \bx\,,
\end{equation}
for an unknown vector of parameters $\bm{\beta} \in \RR^p$.
More generally, for responses in $\RR^d$, \eqref{eq:linear_1d} takes the form
\begin{equation}\label{eq:linear_ansatz}
	\by \approx \bB^\top \bx
\end{equation}
for an unknown $\bB \in \RR^{p \times d}$.

In its simplest form, our proposal is based on the following analogous structural assumption: given covariates $\bx \in \RR^p$ and a \textit{distribution-valued response} $\nu \in \cP(\RR^d)$, we suppose
\begin{equation}\label{eq:measure_ansatz}
	\nu \approx \mathrm{Law}(\bB^\top \bx)\,,
\end{equation}
where $\bB$ is a random variable.\footnote{Our statistical analysis adopts the fixed design assumption under which $\bx$ is deterministic, but our results also give valid conditional guarantees under random design as well if we suppose that $\bB$ is independent of $\bx$.
	In this context,~\eqref{eq:measure_ansatz} should be interpreted as $\nu \approx \mathrm{Law}(\bB^\top \bx \mid \bx)$.}

From the perspective of applications,~\eqref{eq:measure_ansatz} is a natural model in a number of domains.
In econometrics, in the context of repeated cross-sectional data, an analyst observes covariate--response pairs for individuals stratified into groups (for example, survey waves, age bands, regions), but the individuals sampled vary across groups, so that within-individual variation cannot be tracked.
Following~\citet{Deaton1985}, the standard econometric approach is to aggregate respondents into cohorts and to fit a linear regression to the resulting cohort-level \emph{averages}, yielding the so-called pseudo-panel estimator (see, e.g., \citealp{Verbeek}).
In~\eqref{eq:measure_ansatz}, we enrich this approach by modeling the full conditional distribution $\nu$ rather than only its first moment.

In computational biology for single-cell data, \citet{Bunne} propose an optimal-transport method for examining the impact of treatment interventions and genetic perturbations on cell populations.
They model the baseline state of a population of cells by a probability measure $\mu \in \cP(\RR^d)$, and consider the perturbed states $\nu_1, \dots, \nu_n \in \cP(\RR^d)$ for a population of cells that obtains after applying $n$ different perturbations of the cells' environment (for example, introducing a drug or using CRISPR to perform a genetic ``knockout'').
\citet{Bunne} posit the existence of a parametrized family of optimal transport maps $\{T_c\}_{c \in C}$ indexed by ``contexts'' $C$ corresponding to the medical or genetic interventions in question, which satisfy $\nu_i = (T_{c_i})_\# \mu$.
Supposing that context $c_i$ is identified with a vector $\bx_i$ in $\RR^p$ and that $T_{c_i} = \sum_{j=1}^p x_{ij} T_j$ for a dictionary $T_1, \dots, T_p$ of potential transport maps, we obtain our model by setting $\mathrm{Law}(\bB) = (T_1, \dots, T_p)_\# \mu$.

Finally, in demography, the relation between observed cohort distributions of outcomes such as mortality, morbidity, or fertility, and the unobserved heterogeneity of the underlying population is a long-standing concern.
In their seminal paper, \citet{VaupelMantonStallard1979} introduced the \emph{frailty model}, in which each individual is endowed with an unobserved random scalar $z$ that multiplicatively scales their mortality hazard, so that the cohort-level survival distribution arises by mixing over the population-level distribution of $z$.
They argued that ignoring this heterogeneity systematically biases standard estimates of life expectancy and of rates of individual aging.
Writing $\bx \in \RR^p$ for cohort-level covariates (year of birth, sex, country, education, and the like) and $\bm{\beta}$ for a random vector of individual-level traits, the principle that cohort distributions reflect a population-level distribution over individual parameters takes its simplest linear form in model~\eqref{eq:measure_ansatz}, with $\nu = \mathrm{Law}(\bm{\beta}^\top \bx)$.

More generally, the relation~\eqref{eq:measure_ansatz} may be viewed as a random-effects model~\citep{Laird1982}.
Such models are typically written in the form
\begin{equation}\label{eq:random_effects}
	\by_{ij} = \bm{\beta}^\top \bm{w}_{ij} + \bm{b}_i^\top \bz_{ij} + \eps_{ij}\,,
\end{equation}
where $\bm{\beta}$ are fixed (deterministic) effects, $\bm{b}_i \sim \cN(0, \Sigma)$ are random effects, and $\eps_{ij} \sim \cN(0, \sigma^2)$ is independent noise.
We observe that~\eqref{eq:random_effects} is a particular case of~\eqref{eq:measure_ansatz}, when $\bB$ has a (possibly non-centered) Gaussian distribution and the approximation symbol in~\eqref{eq:measure_ansatz} reflects the presence of additive noise.

However, our model~\eqref{eq:measure_ansatz} goes beyond~\eqref{eq:random_effects} in several ways.
First, we make \textit{no} parametric assumptions on $\bB$.
Our model is therefore closer to the fully nonparametric random coefficient model whose study was initiated by~\citet{BeranHall1992} and developed in the multivariate setting by~\citet{Hoderlein2010}.
Second, our precise statistical setting (see \cref{subsec:intro_template}) adopts the ``template-deformation model'' \citep{Boissard2015Distribution}, in which the approximation symbol in~\eqref{eq:measure_ansatz} reflects possibly nonlinear distortions of $\bB^\top \bx$.
We recover classical additive noise as a very special case, but our statistical theory is more general.

Paralleling Legendre's least squares method for~\eqref{eq:linear_ansatz}, we propose to fit covariate--response data $(\bx_1, \nu_1), \dots, (\bx_n, \nu_n)$ under~\eqref{eq:measure_ansatz} by solving
\begin{equation}\label{eq:basic_wls}
		\min_{\bB} \sum_{i=1}^n W_2^2(\nu_i, \mathrm{Law}(\bB^\top \bx_i))
\end{equation}
Here, the minimization is taken over all $\RR^{p \times d}$-valued random variables $\bB$.
This procedure extends an approach studied by~\citet{KarimiRipaniGeorgiou2021} and~\citet{KarimiGeorgiou2021} for one-dimensional covariates.
Following~\citet{graph-structured-cost}, who studied algorithmic aspects of that approach, we call~\eqref{eq:basic_wls} \textit{Wasserstein least squares}.

From the perspective of practical usefulness, Wasserstein least squares captures the important benefits of its Euclidean analogue:
\begin{enumerate}
	\item Via~\eqref{eq:measure_ansatz}, the solution to~\eqref{eq:basic_wls} has a natural interpretation \textit{at the level of random variables}.
	This distinguishes our proposal from many other variants of distributional regression and is crucial to applications.
	\item Convex duality implies that the optimal solution to~\eqref{eq:basic_wls} satisfies a simple first-order optimality condition directly parallel to the normal equations of Euclidean least squares.
	As in the Euclidean case, these normal equations show that $\bB$ can be interpreted as a projection of the data onto the span of the design matrix.
	\item In the case of repeated measurements (i.e., when the covariates are duplicated), Wasserstein least squares reduces to the well-studied Wasserstein barycenter problem.
\end{enumerate}

Our first main contribution is to give a new mathematical argument that Wasserstein least squares is a principled formulation of regression for probability distributions.
While~\eqref{eq:basic_wls} may seem like little more than a formal analogue of the Euclidean least squares problem, we show that this is far from the case: we give an axiomatic justification for~\eqref{eq:basic_wls}, establishing it as the canonical \textit{lifting} of ordinary least squares to the space of probability measures, in the sense of~\cite{lavenantLiftingFunctionalsDefined2023}, by showing it to be the unique functional of the marginal measures $\nu_1, \dots, \nu_n$ satisfying certain desirable properties.
In addition to providing evidence that~\eqref{eq:basic_wls} is a fundamental question, this perspective allows us to develop a convex duality theory for~\eqref{eq:basic_wls}; this theory gives us both algorithmic and statistical insights into properties of optimizers of~\eqref{eq:basic_wls}.

To perform a statistical analysis of~\eqref{eq:basic_wls}, we formalize a model for~\eqref{eq:measure_ansatz} with two sources of noise.
First, to capture the assumption that $\nu$ only approximately matches the law of $\bB^\top \bx$, we adopt the template-deformation model proposed by~\cite{Boissard2015Distribution}.
The template-deformation model has emerged as a standard probabilistic framework for distribution-valued regression and the Wasserstein barycenter problem.
In this model, the measure corresponding to the law of $\bB^\top \bx_i$ is corrupted by the application of a random nonlinear warping of $\RR^d$; formally, we have $\nu_i = (T_i)_\# \mathrm{Law}(\bB^\top \bx_i)$ for an unobserved random cyclically monotone function $T_i$.
Second, to capture the assumption that $\nu$ is only partially observed, we assume that the statistician only has access to i.i.d.\ samples from $\nu$, and therefore that the responses $\nu_1, \dots, \nu_n$ in~\eqref{eq:basic_wls} are replaced with nonparametric estimators $\widehat \nu_1, \dots, \widehat \nu_n$ constructed on the basis of samples.

Our full statistical model incorporates both template-deformation noise and sampling noise.
We obtain finite-sample estimation guarantees for Wasserstein least squares under this model.
In fact, since the Wasserstein barycenter problem is a special case of our model, our bounds imply new rates of estimation for Wasserstein barycenters as well.
These results complement the rates obtained by~\citet{LeGouic2022FastCE}, which hold only under very restrictive conditions on the deformations, and provide an exponential improvement on the best available general results for the template deformation model~\citep{ahidar-coutrixConvergenceRatesEmpirical2020}.

We give extensive empirical evidence for the utility of our method.
We validate our approach on a number of simulated examples and conduct a new, in-depth analysis of a large public health dataset obtained from the RAND Health and Retirement Study~\citep{RANDHRS2022V1}.
Our method extracts insights from this study that other methods for distribution regression do not.

\subsection{Our results and approach}
\subsubsection{A canonical regression method for probability distributions}
Given covariate-response pairs $(\bx_1, \nu_1), \dots, (\bx_n, \nu_n)$, most other approaches to regression adopt the following roadmap: 1.\ Identify $\nu_1, \dots, \nu_n$ with elements of a metric space $\cM$. 2.\ Develop an analogue of the least-squares problem $\min_{\bB} \|\by_i - \bB^\top \bx_i\|^2$ by a) replacing the Euclidean distance with the metric on $\cM$ and b) replacing the linear function $\bx \mapsto \bB^\top \bx$ with a suitable generalization to $\cM$.
We review these approaches in more detail in \cref{sec:related_work}.

The statistical properties and implications of this model depend to a great extent on the design choices in the above description. (See \citealp{Song2026-wi,petersen_econometrics} for two recent review articles.)
Moreover, some approaches (such as those based on mean embeddings in reproducing kernel Hilbert spaces) require choosing additional tuning parameters.
While this flexibility can be appealing in some applications, it raises the question of whether there exists a regression method for this setting that does not involve any additional choices.

We take a different approach, inspired by~\cite{lavenantLiftingFunctionalsDefined2023}, which identifies a canonical method through an axiomatic argument.
The standard Euclidean least squares problem is based on evaluating the function
\[E(\by_1, \dots, \by_n) = \min_{\bB \in \RR^{p \times d}} \frac 1n \sum_{i=1}^n \|\by_i - \bB^\top \bx_i\|^2.\]
We propose to define a new functional $\cE(\nu_1, \dots, \nu_n)$ which is uniquely characterized as the \textit{convex extension}~\citep{Rockafellar1970,Hiriart-Urruty2011-dv} of $E$ to the space of probability measures.
The construction, also called the convex envelope or convex hull, is the analyst's standard tool for extending convex functions to larger spaces.
Though this definition is not explicit, we show that in fact  $\cE(\nu_1, \dots, \nu_n)$ is \textit{exactly} equal to~\eqref{eq:basic_wls}.
This provides a new, canonical construction of linear regression on Wasserstein space, and reveals Wasserstein least squares to be, from the viewpoint of convex analysis, the correct analogue of Euclidean least squares for distribution-valued responses.

Our axiomatic justification of Wasserstein least squares extends beyond the linear regression case and applies to general nonparametric regression problems involving distributions, see \cref{sec:lifting}.

\subsubsection{Multimarginal formulations and convex duality}
A benefit of the formulation of Wasserstein least squares that we have adopted is its close connection to convex analysis and the theory of optimal transportation.
We obtain several results generalizing the theory of Wasserstein barycenters developed by~\cite{aguehBarycentersWassersteinSpace2011}.
First, we show that~\eqref{eq:basic_wls} possesses an equivalent ``multimarginal'' formulation as an optimization problem over the space of couplings of the $n$ measures $\nu_1, \dots, \nu_n$.
This parallels the analogous multimarginal formulation of the Wasserstein barycenter problem, and also clarifies the interpretation of our procedure: as we show in \cref{prop:linear_explicit}, Wasserstein least squares finds a coupling between the response measures that maximizes the explained variance of the resulting linear model.

Like the optimal transportation problem itself, we show that the Wasserstein least squares problem also possesses a dual formulation (\cref{thm:basic_dual}).
This duality theory provides a criterion under which~\eqref{eq:basic_wls} has a unique solution and a natural first-order optimality condition for~\eqref{eq:basic_wls}: writing $\nabla \varphi_i$ for the Brenier map from $\mathrm{Law}(\bB^\top \bx_i)$ to $\nu_i$, \cref{theorem:W-normal-equations} shows that the optimal $\bB$ satisfies
\begin{equation}\label{eq:intro_normal_equations}
	\sum_{i=1}^n \bx_i (\nabla \varphi_i(\bB^\top \bx_i))^\top =  \sum_{i=1}^n \bx_i \bx_i^\top \bB \quad \text{a.s.}
\end{equation}
Note that $\nabla \varphi_i(\bB^\top \bx_i) \sim \nu_i$.
This equation is the exact Wasserstein analogue of the celebrated normal equations for linear regression:
\begin{equation*}
	\bX^\top \bY = \bX^\top \bX \bB\,.
\end{equation*}

As in the case of Euclidean least squares, the normal equations offer a powerful geometric interpretation of Wasserstein least squares: the optimal $\bB$ is the projection of the random variables $(\nabla \varphi_i(\bB^\top \bx_i)^\top)_{i=1}^n$ onto the column space of $\bX$.

\subsubsection{Algorithms}
\begin{sloppypar}
Like many optimization problems involving measures, the Wasserstein least squares problem is computationally challenging.
We propose a gradient flow algorithm whose stationary points are precisely solutions to~\eqref{eq:intro_normal_equations}.
This algorithm has a concrete interpretation via~\eqref{eq:intro_normal_equations}: at each step, it updates the law of the random variable $\bB$ to bring it closer to the projection of $(\nabla \varphi_i(\bB^\top \bx_i)^\top)_{i=1}^n$.
As with many optimization problems in Wasserstein space~\citep{ChewiNilesWeedRigollet2025,chewiGradientDescentAlgorithms2020c,altschulerAveragingBuresWassersteinManifold2021}, the objective function in~\eqref{eq:basic_wls} is not geodesically convex, and we cannot rule out the possibility of local minima; nevertheless, our approach is successful in practice.
We view obtaining rigorous convergence guarantees for our algorithm as an important open problem.
\end{sloppypar}

This algorithm is particularly tractable in two special cases: when each response $\nu_i$ is a Gaussian distribution (with arbitrary mean and covariance), or when each response is a univariate distribution.
In the former case, we show that the optimal $\bB$ may be chosen to be Gaussian as well.
That fact implies that the optimization problem may be formulated on the finite-dimensional Bures--Wasserstein manifold.
The benefit of this formulation is that the gradient updates may be computed in closed form.
In the latter case, when each $\nu_i$ is univariate, the special structure of $\cP(\RR)$ makes the Brenier maps in~\eqref{eq:intro_normal_equations} easy to compute.
As a result, the resulting gradient descent steps are easy to implement via particle methods, at a cost of $O(n M \log M)$ per iteration, where $M$ is the number of particles.

\subsubsection{Rates of estimation under the template deformation model}\label{subsec:intro_template}
Having developed the Wasserstein least squares approach, we analyze its performance under the template deformation model~\citep{Boissard2015Distribution,zemelFrechetMeansProcrustes2018a,panaretos_invitation_2020}.
We suppose that for $i = 1, \dots, n$ and $j = 1, \dots, m$, we observe samples
\begin{equation}\label{eq:intro_model}
	Y_{i, j} = \nabla \phi_i (\bB_{i, j}^\top \bx_i)\,,
\end{equation}
where $\bB_{i, j}$ are independent copies of an unobserved random variable $\bB$, and $\nabla \phi_i$ are independent copies of the gradient of an unobserved random convex function, satisfying $\E[\nabla \phi_i(\by)] = \by$ for all $\by \in \RR^d$.
When $\nabla \phi_i$ is the function $\by \mapsto \by + \eps_i$ for mean-zero $\eps_i$, \eqref{eq:intro_model} is a standard random-coefficient linear model~\cite{longford_random_1993}.
In general, however, \eqref{eq:intro_model} allows for noise in the form of nonlinear ``warpings'' of $\RR^d$.

Under the assumption that $\phi_i$ is almost surely smooth and strongly convex, we show that the Wasserstein least squares solution $\widehat \bB$  satisfies
\begin{equation}\label{eq:intro_error}
	\E \left[\frac 1n \sum_{i=1}^n W_2^2(\mathrm{Law}(\bB^\top \bx_i), \mathrm{Law}(\widehat{\bB}^\top \bx_i))\right] \lesssim n^{-1/2} + m^{-2/d}\,,
\end{equation}
for $d > 4$, where the implicit constant depends on $p$, $d$, and the smoothness and strong convexity bounds for the random deformations.
The error bound in~\eqref{eq:intro_error} reflects the two sources of noise in the model: the first term reflects the problem of estimating the law of $\bB$ in the presence of the noise arising from the template deformations $\nabla \phi$, and the second reflects the sampling noise inherent in~\eqref{eq:intro_model}.
When $m \to \infty$ (which corresponds to the case where $\nu_i = (\nabla \phi_i)_\# \mathrm{Law}(\bB^\top \bx_i)$ is fully observed), \eqref{eq:intro_error} shows that the error decays at the $n^{-1/2}$ rate.

As a special case, taking $p = 1$ and $\bx_1 = \dots = \bx_n$,~\eqref{eq:intro_error} yields new rates for the problem of estimating Wasserstein barycenters in the template deformation model.
Under our assumptions, the best general results for estimation of Wasserstein barycenters, due to~\citet{ahidar-coutrixConvergenceRatesEmpirical2020}, prove a rate of convergence at the rate $n^{-1/d}$, exhibiting a steep curse of dimensionality.
In the very special case that each $\phi_i$ is $\alpha$-strongly convex and $\beta$-smooth where $\beta - \alpha < 1$, \citet{LeGouic2022FastCE} prove the remarkable fact that the error decreases at the rate $n^{-1}$, with no dependence on the dimension.
However, their bounds become vacuous when $\beta - \alpha \geq 1$.
Our bound is an exponential improvement over the general state of the art; though we fail to recover the sharp bound of~\citet{LeGouic2022FastCE} when $\beta - \alpha <1$, our $n^{-1/2}$ rate applies to strongly convex and smooth deformations for any $\alpha, \beta  \in (0, \infty)$.

\subsubsection{Empirical results}
We apply this framework to Body Mass Index data from the RAND Health and Retirement Study~\citep{RANDHRS2022V1}, a nationally representative longitudinal survey of approximately $45{,}000$ U.S.\ adults observed across 16 biennial waves (1992--2022)~\citep{gutin2018bmi,life-course,Ward2019Projected}.
Treating each demographic cell (birth cohort $\times$ survey wave $\times$ gender) as a distributional observation yields $n = 164$ BMI distributions, which we model with the quadratic design
$\bx_i = (1, \widetilde a_i, \widetilde a_i^2, \widetilde c_i, \widetilde g_i)^\top$
encoding normalized age, birth cohort, and gender.
The template deformation noise model~\eqref{eq:model} makes no distributional assumption on the individual-level heterogeneity beyond the regularity conditions C1--C3, so the analysis is free of the Gaussian random-effect assumption that underpins classical mixed-effects analyses of these data~\citep{Laird1982,life-course}.

We estimate $Q^\star$ using Algorithm~\ref{alg:wls-gd-particle} with $M = 20{,}000$ particles.
The estimator returns the full joint distribution over the coefficient vector
\[\bbeta = (\beta_0, \beta_{\rm age}, \beta_{{\rm age}^2}, \beta_{\rm cohort}, \beta_{\rm gender})^\top,\]
recovering the coupling structure of the random effects without parametric assumptions.
This joint distribution enables a form of conditional inference that is unavailable in prior-distributional regression methods.
Since each particle $\bbeta_m \sim \widehat{Q}$ represents a plausible individual trajectory under the fitted model, conditioning on an observed BMI value amounts to asking which trajectories in the estimated population are consistent with that observation; the retained subset then predicts the future distribution of BMI for that subpopulation.
This conditional forecast is meaningful precisely because $\widehat{Q}$ captures the true population heterogeneity, a claim that we assess directly by comparing predicted trajectory bands against the actual observed HRS cohort paths in \cref{sec:experiments}.
We compare our approach to Global Fréchet regression~\citep{PetersenMuller2019} and show that Wasserstein least squares substantially outperforms it in coverage under sequential conditioning, and is the only method capable of representing all clinically relevant outcome scenarios; see \cref{sec:experiments} for the full analysis.

We further validate Wasserstein least squares on two synthetic settings (\cref{appendix:synthetic}) in which the Wasserstein least squares model class strictly contains the Fréchet family as a special case, demonstrating that the empirical advantage is a structural consequence of the broader scope of our model.

\subsection{Related work}\label{sec:related_work}
There is a significant literature on designing regression methods for distribution-valued data.
One initial challenge is a lack of linear structure on the space of probability measures, which necessitates either mapping distributions into a linear space (as is typical in functional data analysis, \citealp{hsing_theoretical_2015}), or simply treating them as ``random objects'' in a metric space~\citep{muller_peter_2016}.
There are challenges with either approach.
In the former, transformation-based, approach, the mapping is rarely invertible, which makes interpreting the output of estimation procedures in the linear space challenging; in the latter, metric space, approach, the loss of linear structure can cause difficulties for computation and theoretical analysis.

As \citet{petersen_econometrics} make clear, regression methods under either approach may be unified using the \textit{global Fr\'echet regression} framework of \citet{PetersenMuller2019}: given a metric $d$ defined between probability measures and data $(\bx_1, \nu_1), \dots, (\bx_n, \nu_n)$, the predicted output for a new covariate $\bx$ is given by
\begin{equation}\label{eq:frechet_regression_intro}
	\argmin_{\nu} \frac 1n \sum_{i=1}^n w_i(\bx) d^2(\nu_i, \nu)\,,
\end{equation}
where $w_i$ are (possibly negative) weights designed so that, if $\nu_i$ and $\nu$ are replaced by vectors and $d$ is the Euclidean metric, the solution to~\eqref{eq:frechet_regression_intro} coincides with the solution to standard Euclidean least squares.
Different choices of the metric $d$ give rise to different generalizations of this approach.

When $d$ is the Wasserstein metric on the space of univariate measures, we obtain the Wasserstein regression method studied by \citet{PetersenMuller2019} and \citet{petersen_wasserstein-f-tests_2021}.
These works propose a statistical model under which they can study estimation and testing problems on the Wasserstein space; the focus on one dimensional measures plays an essential role in~\citet{petersen_wasserstein-f-tests_2021} in particular, since on $\RR$ the Wasserstein metric agrees with an $L^2$ metric on quantile functions.

\begin{sloppypar}
When the covariates are also distributions, one obtains a distribution-to-distribution regression problem.
Such problems have been studied in detail by \citet{Panaretos_regression} and \citet{Chen03042023}.
Once again, a key challenge lies in defining a suitable class of maps between covariates and responses.
\citet{Chen03042023} do so by employing the linear structure of the tangent space to a measure in the Wasserstein geometry, whereas \citet{Panaretos_regression} study a ``shape constraint'' in the form of a monotonicity assumption.
A detailed comparison between these two models is provided in Section 3.3 of \cite{Panaretos_regression}.
\end{sloppypar}

What these methods have in common is their focus on \textit{probability measures} as the key objects of interest, either as covariates or responses.
As explained above, our approach is philosophically different: the primary interpretation of our methods is in terms of \textit{random variables}.
One benefit of our perspective is that it leverages a unique feature of the Wasserstein distance as opposed to other metrics, namely, that couplings between random variables play a central role.
As we show in \cref{sec:experiments}, it is possible to interpret the approach of \citet{PetersenMuller2019} in terms of random variables in special cases, but their model corresponds to a very rigid constraint on these variables' joint distribution.
For example, in the univariate Gaussian case with a single covariate, global Fr\'echet regression corresponds to the model $Q_x=\mathcal{N}(\mu_0+\beta x,\,(\sigma_0+\gamma x)^2)$ \citep[§6.2]{PetersenMuller2019}, whereas our model corresponds to $Q_x=\mathcal{N}(\mu_0+\beta x,\,\sigma_0^2+\gamma^2 x^2+2\rho x)$ for any $|\rho|\leq\sigma_0\gamma$, which is strictly more flexible.
This additional freedom has important benefits in applications.

An important precedent for our work is the proposals of \citet{KarimiRipaniGeorgiou2021} and \cite{KarimiGeorgiou2021}.
Their focus was on \textquote{utilizing general curves in a Euclidean setting and lifting them to corresponding measure-valued curves in Wasserstein space}; accordingly, they viewed their method as a way to generalize the ``straight-line geometry'' of simple linear regression.
When $p =1$ (i.e., when there is only a single covariate), our model reduces to theirs.
Their theoretical results include deriving a multimarginal formulation (akin to our \cref{prop:linear_explicit}) and matrix formulation in the Gaussian case (akin to our \cref{lemma:marginal_cov_gaussian}).
Our full axiomatic justification, duality theory, and statistical analysis are new.

Our model also extends statistical work on the Wasserstein barycenter problem under the template deformation model~\citep{ahidar-coutrixConvergenceRatesEmpirical2020,LeGouic2022FastCE,zemelFrechetMeansProcrustes2018a}.
Outside of special cases (for example, one-dimensional measures), the best general rates are due to \citet{ahidar-coutrixConvergenceRatesEmpirical2020}, who show that, under our \cref{cond:alphabeta}, barycenters on $\cP_2(\RR^d)$ can be estimated at the rate $n^{-1/d'}$ for any $d' > d$.
A striking improvement of this result is due to \cite{LeGouic2022FastCE}: under \cref{cond:alphabeta} with $\beta - \alpha < 1$, barycenters can be estimated at the parametric rate $1/n$, with no dependence on the ambient dimension.
Our results are in some sense intermediate between these regimes: we show that estimation at the rate $\sqrt{d/n}$ is possible for any positive $\alpha$ and $\beta$.
While we fail to recover the sharp rate obtained by \citet{LeGouic2022FastCE} when $\beta$ and $\alpha$ are very close, our result applies in far greater generality and is an exponential improvement over the general bounds of \citet{ahidar-coutrixConvergenceRatesEmpirical2020}.

\section{Wasserstein least squares as a lifting of Euclidean least squares}\label{sec:lifting}

\subsection{Lifting via convex extension}
The goal of this section is to describe the canonical lifting procedure that allows us to extend the least squares problem to probability measures, following a strategy developed by~\cite{lavenantLiftingFunctionalsDefined2023}.
As described in the introduction, we first view the method of least squares as a purely deterministic optimization problem, an analogue of which we will develop for the Wasserstein space.
We then describe and analyze a statistical model under which our lifted least squares objective serves as a natural estimation procedure.

Consider a generic nonparametric regression problem with fixed nonzero covariates $\{\bx_i\}_{i=1}^n \subseteq \RR^p$.
Given a candidate class $\cF$ of regression functions from $\RR^p$ to $\RR^d$ and responses $\by_1, \dots, \by_n$, the method of least squares amounts to evaluating  $\EEuc:(\RR^d)^{n}\to \RR$ given by
\begin{equation}\label{eq:NPR}
    \EEuc(\by_1, \dots, \by_n)=\min_{f\in\mathcal{F}}\frac{1}{n}\sum_{i=1}^n \|\by_i-f(\bx_i)\|^2_2\,.
\end{equation}
Throughout, we will adopt one of two main assumptions on $\cF$.
\begin{assume}\label{assume:compact}
	$\cF$ is a non-empty, convex, compact subset of $C(\RR^p; \RR^d)$.
\end{assume}
\begin{assume}\label{assume:linear}
	$\cF$ is the set of all linear functions from $\RR^p$ to $\RR^d$.
\end{assume}
Under either \cref{assume:compact} or \cref{assume:linear}, minimizers for the least squares problem exist in $\cF$, which justifies the use of $\min$ in~\eqref{eq:NPR}.
We call $\EEuc$ the \textit{Euclidean least squares functional}, to indicate that it is defined on the Euclidean space $(\RR^d)^{\otimes n}$.

Our goal is to define a new Wasserstein least squares functional $\EWas: \cP_2(\RR^d)^{n} \to \RR$, which is an analogue of~\eqref{eq:NPR} defined on the Wasserstein space.
Inspired by the properties of $\EEuc$, we shall require that $\EWas$ satisfy two requirements:
\begin{enumerate}
		\renewcommand{\theenumi}{R\arabic{enumi}}
	\item $\EWas(\delta_{\by_1}, \dots, \delta_{\by_n}) = \EEuc(\by_1, \dots, \by_n)$.
	That is, in the special case where each of the measures $\nu_i$ is a Dirac at a single point $\by_i$, the lifted functional should reduce to its Euclidean counterpart~\eqref{eq:NPR}.
		\item $\EWas(\nu_1, \dots, \nu_n)$ is a convex, lower-semicontinuous function of $(\nu_1, \dots, \nu_n)$.\footnote{We stress that this convexity is with respect to the \textit{linear} structure of the space $\cP(\RR^d)$ (equipped with the topology of narrow convergence), that is, given $\nu_1, \dots, \nu_n, \nu_1', \dots, \nu_n' \in \cP_2(\RR^d)$, we require $$\EWas(\lambda \nu_1 + (1-\lambda) \nu_1', \dots, \lambda \nu_n  + (1-\lambda) \nu_n') \leq \lambda \EWas(\nu_1, \dots, \nu_n) + (1-\lambda)\EWas (\nu_1', \dots, \nu_n') \quad \forall \lambda \in [0,1]\,.$$
		We discuss geodesic convexity (with respect to the Wasserstein geometry) in \cref{rem:non-convex-G}, below.}
\end{enumerate}

These two requirements are natural in light of our statistical goals.
The first ensures that $\EWas$ genuinely captures the features of the original Euclidean least squares functional: any proposed lifting that failed to satisfy this requirement would not be a plausible method of extending $\EEuc$ to the Wasserstein space.
The second, convexity and continuity, is a fundamental property of $\EEuc$, and is clearly desirable for mathematical analysis of the regression problem.
As we shall see, this requirement also plays a crucial role in the development of convex duality for the Wasserstein least squares functional, see \cref{sec:duality}.

These two requirements alone are not sufficient to uniquely specify $\EWas$, nor to make it useful; for example, on a compact set, the definition $\EWas(\nu_1, \dots, \nu_n) = \EEuc\left(\int x \dd \nu_1, \dots, \int x \dd \nu_n\right)$ which simply performs Euclidean least squares on the \textit{means} of the measures satisfies R1 and R2, but it throws away too much information, since it fails to distinguish between different measures with the same centering.
This example shows that
$\EWas$ can fail to be meaningful if it does not usefully discriminate between different inputs.
Motivated by this observation, we define $\EWas$ to be ``as discriminating as possible'' by setting
\begin{equation}\label{eq:ewas_def}
	\EWas(\nu_1, \dots, \nu_n) = \sup_{\Phi: \text{$\Phi$ satisfies R1 \& R2}} \Phi(\nu_1, \dots, \nu_n)\,.
\end{equation}
Equivalently, $\EWas$ is the \textit{convex extension} or \textit{convex hull} of the Euclidean functional $\EEuc$, which is the canonical way of constructing convex functions taking prescribed values at certain points (\citealp[IV.2.5]{Hiriart-Urruty2011-dv}, see also \citealp{BinBunNil25}).
Crucially, since both R1 and R2 are preserved under pointwise suprema, $\EWas$ defined by~\eqref{eq:ewas_def} itself satisfies our main requirements.
Though this is good evidence that $\EWas$ is a natural object, unfortunately,
the definition in~\eqref{eq:ewas_def} is not explicit.
Our main theorem in this section gives two equivalent characterizations for $\EWas$ in terms of optimal transport problems, which provides further justification for it as our primary object of study.

To state these, we need two pieces of notation. First, given $\nu_1, \dots, \nu_n \in \cP_2(\RR^d)$, write $\Pi(\nu_1, \dots, \nu_n) \in \cP_2((\RR^d)^n)$ for the set of multi-marginal couplings: joint distributions whose $d$-dimensional marginals agree with $\nu_1, \dots, \nu_n$, respectively.
Second, given a Borel probability measure $Q$ on $\cF$ and $\bx \in \RR^p$, denote by $Q_{\bx}$ the probability measure on $\RR^d$ obtained as the pushforward of $Q$ by the map $f \mapsto f(\bx)$.\footnote{We equip $\cF$ with the compact-open topology, for which the pointwise evaluation maps are measurable.}

The following result shows that~\eqref{eq:ewas_def} can be equivalently expressed either as a multi-marginal optimal transport problem over $\Pi(\nu_1, \dots, \nu_n)$, or as an optimization problem over probability measures on $\cF$.
\begin{theorem}\label{thm:lifted_main}
	Adopt either \cref{assume:compact} or \cref{assume:linear}.
	Let $\EWas$ be the convex extension of $\EEuc$ defined via~\eqref{eq:ewas_def}, and let $\nu_1, \dots, \nu_n \in \cP_2(\RR^d)$.
	Then
	\begin{align}
		\EWas(\nu_1, \dots, \nu_n) & = \min_{P \in \Pi(\nu_1, \dots, \nu_n)} \int \EEuc(\by_1, \dots, \by_n) \dd P(\by_1, \dots, \by_n)\label{eq:mult_lift} \\
		& = \min_{Q \in \cP(\cF)} \frac 1n \sum_{i=1}^n W_2^2(\nu_i, Q_{\bx_i}) \label{eq:marg_lift}\,.
	\end{align}
\end{theorem}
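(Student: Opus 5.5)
Write $M(\nu_1,\dots,\nu_n)$ for the multimarginal value in~\eqref{eq:mult_lift} and $R(\nu_1,\dots,\nu_n)$ for the value in~\eqref{eq:marg_lift}. The proof has three parts. First we show $M = R$. Then we show that $M$ satisfies R1 and R2, which gives $M \le \EWas$. Finally we show $\Phi \le M$ for every admissible $\Phi$, which gives $\EWas \le M$. We also check that both minima are attained.

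\emph{Step 1: $M=R$.} For $R \ge M$: given $Q\in\cP(\cF)$, let $f\sim Q$ and take optimal couplings between $f(\bx_i)$ and $\nu_i$. Glue these couplings along $f$ (gluing lemma) to get a random tuple $(f,\by_1,\dots,\by_n)$ with $\by_i\sim\nu_i$. Then
\[
\frac1n\sum_i W_2^2(\nu_i,Q_{\bx_i}) = \E\,\frac1n\sum_i\|\by_i-f(\bx_i)\|^2 \ge \E\,\EEuc(\by_1,\dots,\by_n) \ge M.
\]
For $M \ge R$: take any $P\in\Pi(\nu_1,\dots,\nu_n)$ and a measurable selection $\by\mapsto f^*(\by)$ of minimizers in~\eqref{eq:NPR}. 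Under \cref{assume:linear} the minimizer set is an affine space and the minimum-norm minimizer is continuous in $\by$. Under \cref{assume:compact} we use a measurable selection theorem for the compact-valued, upper hemicontinuous argmin correspondence. Set $Q=(f^*)_\#P$. Since $(\by_i,f^*(\by)(\bx_i))$ is a coupling of $\nu_i$ and $Q_{\bx_i}$, we get $R\le \int\EEuc\,\dd P$.

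\emph{Step 2: attainment and R1, R2.} For attainment, $\Pi(\nu_1,\dots,\nu_n)$ is narrowly compact. $\EEuc$ is continuous and nonnegative, and it is bounded by $\frac1n\sum_i\|\by_i-f_0(\bx_i)\|^2$ for a fixed $f_0$. That bound is uniformly integrable over $\Pi$, so the integral functional is continuous on $\Pi$ and the minimum is attained. R1 holds because $\Pi(\delta_{\by_1},\dots,\delta_{\by_n})=\{\delta_{(\by_1,\dots,\by_n)}\}$. Convexity of $M$: if $P\in\Pi(\nu)$ and $P'\in\Pi(\nu')$, then $\lambda P+(1-\lambda)P'$ couples the mixtures and the objective is linear in $P$. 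Lower semicontinuity: take $\nu^k\to\nu$ narrowly and optimal $P^k$. The marginals are tight, so the $P^k$ are tight; extract a limit point $P\in\Pi(\nu)$. Since $\EEuc\ge0$ is continuous, $\int\EEuc\,\dd P\le\liminf_k\int\EEuc\,\dd P^k$.

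\emph{Step 3: $\Phi\le M$.} This is the main obstacle. Let $\Phi$ satisfy R1 and R2, and let $P$ be optimal. If $P=\sum_k w_k\delta_{\by^k}$ is finitely supported, then $\nu_i=\sum_k w_k\delta_{\by^k_i}$ is a convex combination of the Dirac tuples $(\delta_{\by^k_1},\dots,\delta_{\by^k_n})$, taken jointly in $k$. Convexity (Jensen for finite sums) and R1 give
\[
\Phi(\nu)\le\sum_k w_k\,\EEuc(\by^k)=\int\EEuc\,\dd P.
\]
For general $P$, approximate it by finitely supported $P^k\to P$ in $W_2$, e.g.\ by quantization on a grid. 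The marginals of $P^k$ then converge narrowly to the $\nu_i$. Lower semicontinuity of $\Phi$ and continuity of $P\mapsto\int\EEuc\,\dd P$ under $W_2$ convergence (quadratic growth of $\EEuc$) give
\[
\Phi(\nu)\le\liminf_k\Phi(\nu^k)\le\lim_k\int\EEuc\,\dd P^k=M(\nu).
\]
The delicate points are the domain issue and the continuity claim. The issue is that $\Phi$ is only defined on $\cP_2$ with narrow lower semicontinuity, so we need the approximating marginals to lie in $\cP_2$, which quantization ensures. Continuity of the integral along the $P^k$ uses the quadratic bound from Step 2 together with uniform integrability.
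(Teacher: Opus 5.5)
Your proposal is correct, and its skeleton matches the paper's. You show that the multimarginal value is itself admissible, so it is at most $\EWas$. You show that it dominates every admissible $\Phi$. You then pass to \eqref{eq:marg_lift} by a measurable selection of minimizers and the gluing lemma. Your Step 1 is essentially the paper's second half: the minimum-norm solution in the linear case, a measurable selection in the compact case, and gluing along $f$.

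The difference lies in the first equality. There the paper proves nothing by hand but cites \citet{lavenantLiftingFunctionalsDefined2023}:
\begin{itemize}
\item Proposition 3.13 for attainment;
\item Lemma 3.11 and Proposition 3.13 for R1--R2;
\item Theorem 3.9 for the fact that the multimarginal functional is the largest convex lower-semicontinuous function dominated by $E$ on Dirac tuples (and $+\infty$ elsewhere).
\end{itemize}
Your Steps 2--3 replace this black box with elementary arguments. You use narrow compactness of $\Pi$ for attainment, mixing of couplings for convexity, and tightness for lower semicontinuity. For maximality, you observe that a finitely supported coupling exhibits $(\nu_1,\dots,\nu_n)$ as a convex combination of Dirac tuples, then invoke $W_2$-density of such couplings and lower semicontinuity of $\Phi$.

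Your route gives a self-contained proof that makes the convex-hull structure transparent. It also shows, as a small bonus, that $\EWas$ would not change if R2 were required only along $W_2$-convergent sequences, since Step 3 uses nothing more.

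The paper's citation buys economy. It also supplies the dual representation \eqref{eq:hugo_dual}, which the paper takes from the same Theorem 3.9 and reuses in the proof of Theorem \ref{thm:basic_dual}; on your route that duality would have to be derived separately.

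One bookkeeping point: your Step 2 only covers attainment in \eqref{eq:mult_lift}. Attainment in \eqref{eq:marg_lift} follows by applying your Step 1 construction $Q=(f^*)_\#P$ to an optimal $P$.
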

Part of the content of \cref{thm:lifted_main} is that the minima in both~\eqref{eq:mult_lift} and~\eqref{eq:marg_lift} are attained.
In the linear case (Assumption~\ref{assume:linear}), $\cP(\cF)$ is exactly the set of laws of $\RR^{p\times d}$-valued random variables, recovering the basic Wasserstein least squares problem from the introduction.

The expression given by~\eqref{eq:marg_lift} is the Wasserstein least squares function promised in the introduction.
Remarkably, despite the abstractness of the definition~\eqref{eq:ewas_def}, the equivalent version in~\eqref{eq:marg_lift} is a direct Wasserstein space analogue of $\EEuc$, with the $\ell^2$ distance replaced by the $W_2$ distance and the optimization set $\cF$ replaced by the set $\cP(\cF)$ of probability measures over $\cF$.
The axiomatic justification of $\EWas$ via~\eqref{eq:ewas_def} and its more interpretable expressions given by \cref{thm:lifted_main} together make the case that~\eqref{eq:marg_lift} is a canonical formulation of least squares regression on the Wasserstein space.

Note that, unlike the Euclidean least squares problem, the solution to~\eqref{eq:marg_lift} is not unique in general, since the objective function involves only the marginal distributions $Q_{\bx_1}, \dots, Q_{\bx_n}$.
However, in \cref{prop:unique} we will give a natural condition under which these marginal distributions are uniquely identified.

\subsection{Convex duality}\label{sec:duality}
In this section, we establish that the functional $\EWas$, which is given by a convex minimization problem, also possesses a dual formulation as a maximization problem.
This formulation is crucial to the development of the algorithms we present in \cref{sec:alg} as well as our statistical results in \cref{sec:estimation}.

Our proof is based on an extension of the strategy pioneered by \cite{aguehBarycentersWassersteinSpace2011} for showing a similar dual formulation for the Wasserstein barycenter problem.

Given $\psi: \cF \to \RR$ and $i \in [n]$, we denote by $S_i \psi: \RR^d \to \RR$ the function
\begin{equation}\label{eq:s_def}
	S_i \psi(\by) = \inf_{f \in \cF} \|\by - f(\bx_i)\|^2 - \psi(f)\,.
\end{equation}
\begin{theorem}\label{thm:basic_dual}
	Adopt either \cref{assume:compact} or \cref{assume:linear}.
	If $\nu_1, \dots, \nu_n \in \cP_2(\RR^d)$, then
	\begin{equation}\label{eq:dual}
		\EWas(\nu_1, \dots, \nu_n) = \sup \left\{\frac 1n \sum_{i=1}^n \int S_i \psi_i \dd \nu_i : \psi_1, \dots, \psi_n \in C(\cF), \sum_{i=1}^n \psi_i = 0 \right\}\,.
	\end{equation}
\end{theorem}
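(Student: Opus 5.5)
The plan is to prove weak duality directly and then obtain strong duality via the multimarginal formulation \eqref{eq:mult_lift} of \cref{thm:lifted_main}, together with Kantorovich duality for multimarginal transport.

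\emph{Weak duality.} Fix $\psi_1,\dots,\psi_n\in C(\cF)$ with $\sum_i\psi_i=0$, and let $Q$ be optimal in \eqref{eq:marg_lift}. For each $i$, let $\gamma_i$ be an optimal coupling of $\nu_i$ and $Q_{\bx_i}$. Glue these couplings along $Q$: this gives a law of $(f,\by_1,\dots,\by_n)$ with $f\sim Q$ and $(f(\bx_i),\by_i)\sim\gamma_i$. By the definition \eqref{eq:s_def}, $S_i\psi_i(\by_i)\le \|\by_i-f(\bx_i)\|^2-\psi_i(f)$ pointwise. Integrating and summing over $i$, the $\psi_i$ terms cancel, and we get $\frac1n\sum_i\int S_i\psi_i\,\dd\nu_i\le \EWas$. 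Some care with integrability is needed. Under \cref{assume:compact}, $\psi_i$ is bounded and $S_i\psi_i$ has quadratic growth, so this is fine. Under \cref{assume:linear}, $S_i\psi_i$ may equal $-\infty$, and then the inequality holds trivially.

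\emph{Strong duality.} Set $c(\by_1,\dots,\by_n)=\EEuc(\by_1,\dots,\by_n)=\min_{f\in\cF}\frac1n\sum_i\|\by_i-f(\bx_i)\|^2$. This cost is continuous, nonnegative, and bounded by $\frac Cn\sum_i(1+\|\by_i\|^2)$. Multimarginal Kantorovich duality (valid for lsc costs bounded below) then gives
\[
\min_{P\in\Pi}\int c\,\dd P=\sup\Bigl\{\tfrac1n\sum_i\int u_i\,\dd\nu_i:\ \tfrac1n\sum_i u_i(\by_i)\le c(\by)\Bigr\}.
\]
The key step is to show that any admissible $(u_i)$ can be improved to the form $u_i=S_i\psi_i$ with $\sum\psi_i=0$. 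Expanding the constraint, it says that for every $f$,
\[
\sum_i\bigl(u_i(\by_i)-\|\by_i-f(\bx_i)\|^2\bigr)\le 0\quad\text{for all }\by.
\]
Define $h_i(f)=\sup_{\by}\bigl(u_i(\by)-\|\by-f(\bx_i)\|^2\bigr)$; this is a $c$-transform. Then $\sum_i h_i(f)\le 0$. Set $\psi_i=h_i-\frac1n\sum_j h_j$, so that $\sum_i\psi_i=0$ and $\psi_i\ge h_i$. Then
\[
S_i\psi_i(\by)\ge \inf_f\bigl(\|\by-f(\bx_i)\|^2-h_i(f)\bigr)\ge u_i(\by),
\]
which is the desired improvement.

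\emph{Main obstacle.} The difficulty is regularity: we need $\psi_i\in C(\cF)$, real-valued. Under \cref{assume:compact}, first reduce to $u_i$ bounded and continuous with compact support, which is allowed in Kantorovich duality. Then $h_i$ is finite, since $u_i$ is bounded above. It is also continuous on $\cF$: it is a supremum over a compact set of $\by$, since only bounded $\by$ matter when $u_i$ is bounded and $\cF(\bx_i)$ is compact. Under \cref{assume:linear}, $\cF\cong\RR^{p\times d}$ is not compact, and $h_i$ may grow quadratically in $f$. Continuity still holds, since $h_i$ is convex-type: it is a sup of functions quadratic in $f$ and finite everywhere whenever $u_i$ is bounded above. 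Here $C(\cF)$ should be read as continuous but possibly unbounded functions. If that fails, I would approximate $\cF$ by the compact balls $\{\|B\|\le R\}$, apply the compact case, and pass $R\to\infty$ using the attainment in \cref{thm:lifted_main} and tightness of minimizers, since $\nu_i\in\cP_2$ and $\bx_i\ne0$. Finally, \eqref{eq:mult_lift} identifies the primal value with $\EWas$, which concludes the proof.
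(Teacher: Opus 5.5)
Your overall strategy matches the paper's. You get weak duality from the pointwise bound $\frac1n\sum_i S_i\psi_i(\by_i)\le \EEuc(\by)$, and strong duality from a Kantorovich-type dual for the cost $\EEuc$ over $\Pi(\nu_1,\dots,\nu_n)$; the paper takes that dual, with $C_b$ potentials, from \citet[Theorem 3.9]{lavenantLiftingFunctionalsDefined2023}. Admissible potentials are then turned into admissible $\psi_i$ via a $c$-transform.

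\textbf{The gap.} The conversion step, which you rightly call the key one, has its signs reversed and is false as written.
\begin{itemize}
\item Because $S_i\psi(\by)=\inf_f\|\by-f(\bx_i)\|^2-\psi(f)$ is \emph{decreasing} in $\psi$, your bound $\psi_i\ge h_i$ gives $S_i\psi_i(\by)\le\inf_f\bigl(\|\by-f(\bx_i)\|^2-h_i(f)\bigr)$, not $\ge$.
\item The second inequality in your chain also fails. From $h_i(f)\ge u_i(\by)-\|\by-f(\bx_i)\|^2$ you get $\|\by-f(\bx_i)\|^2+h_i(f)\ge u_i(\by)$, with a plus sign in front of $h_i$.
\item A concrete failure: under \cref{assume:linear}, take $n=2$, $u_1\equiv 1$, $u_2\equiv -1$, which is admissible since $\EEuc\ge 0$. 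Then $h_i=u_i$ and $\sum_j h_j=0$, so your $\psi_1\equiv 1$. Since $\bx_1\neq 0$, this gives $S_1\psi_1\equiv -1<u_1$.
\end{itemize}

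\textbf{The fix.} Flip the sign and set $\psi_i:=-h_i+\frac1n\sum_j h_j$. Equivalently, redistribute the slack of the $c$-transforms $g_i:=-h_i$, which satisfy $\sum_i g_i\ge 0$ by the constraint. Then $\sum_i\psi_i=0$ and $\psi_i\le -h_i$, so
\[
S_i\psi_i(\by)\ \ge\ \inf_{f\in\cF}\bigl(\|\by-f(\bx_i)\|^2+h_i(f)\bigr)\ \ge\ u_i(\by).
\]
With this correction your argument is essentially the paper's. The only difference is cosmetic: the paper uses the $c$-transforms $\psi_i(f)=\inf_{\by}\|\by-f(\bx_i)\|^2-\phi_i(\by)$ only for $i<n$. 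It puts all the slack into $\psi_n:=-\sum_{i<n}\psi_i$ and checks $S_n\psi_n\ge\phi_n$ directly from the constraint, where you distribute the slack symmetrically.

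\textbf{Regularity.} Your regularity discussion is more involved than it needs to be. Since the dual can be restricted to $u_i\in C_b$, you have $\inf u_i\le h_i\le \sup u_i$. Moreover $h_i$ is continuous under either assumption: it is locally Lipschitz in $f(\bx_i)$, because the supremum may be restricted to $\|\by-f(\bx_i)\|^2\le 2\sup|u_i|$. So no quadratic growth arises under \cref{assume:linear}. Neither the compact-support reduction nor the approximation of $\cF$ by balls $\{\|\bB\|\le R\}$ is needed.
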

Note that for any $\psi \in C(\cF)$, the function $S_i \psi$ satisfies $S_i \psi(\by) \leq C(1+ \|\by\|^2)$ for some $C > 0$, and since $\nu_i \in \cP_2(\RR^d)$, this ensures that the integral appearing in~\eqref{eq:dual} is well defined in $[-\infty, \infty)$.

    \subsection{Wasserstein linear least squares}\label{section:linear case}
    In this section, we work under \cref{assume:linear},  where the set $\mathcal{F}$ consists of \textit{linear} functions from $\RR^{p}$ to $\RR^d$, in which case the Wasserstein least squares problem furnishes a canonical linear regression methodology for the Wasserstein space.

    We identify the set $\cF$ with the set of real $p \times d$ matrices, with $\bB \in \RR^{p \times d}$ giving rise to the linear function $\bx \mapsto \bB^\top \bx$.

    For convenience, we restate \cref{thm:lifted_main} explicitly for the linear case.
    \begin{theorem}\label{thm:main_linear}
    	The convex extension of the Euclidean least squares functional
    	\[\EEuc(\by_1, \dots, \by_n) = \min_{\bB \in \RR^{p \times d}} \frac 1n \sum_{i=1}^n \|\by_i - \bB^\top \bx_i\|^2\]
    	is given by
    	\begin{align}
    		\EWas(\nu_1, \dots, \nu_n) & =  \min_{P\in \Pi(\nu_1,...,\nu_n) }  \int_{(\mathbb{R}^{d})^n} \min_{\bB\in\RR^{p\times d}}\frac{1}{n}\sum_{i=1}^n \|\by_i-\bB^\top \bx_i\|^2 \dd P(\by_1, \dots, \by_n) \label{eq:lin_mult}\\
    		& = \min_{Q \in \cP(\RR^{p \times d})} \frac 1n \sum_{i=1}^n \min_{\substack{\bY_i \sim \nu_i \\ \bB \sim Q}} \E \|\bY_i - \bB^\top \bx_i\|^2 \label{eq:lin_wls}\,.
    	\end{align}
    \end{theorem}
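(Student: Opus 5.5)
This is a specialization of \cref{thm:lifted_main} to \cref{assume:linear}, so the plan is to invoke that theorem and then check that the two expressions it produces coincide with \eqref{eq:lin_mult} and \eqref{eq:lin_wls}.

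\emph{Multimarginal form.} Under \cref{assume:linear}, $\cF$ is identified with $\RR^{p\times d}$ through $\bB \mapsto (\bx \mapsto \bB^\top\bx)$. With this identification $\EEuc(\by_1,\dots,\by_n)$ is literally the inner minimum over $\bB$ in \eqref{eq:lin_mult}. Substituting into \eqref{eq:mult_lift} gives \eqref{eq:lin_mult} immediately, including the attainment of the outer minimum.

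\emph{Measure form.} Here I will rewrite \eqref{eq:marg_lift}. The identification sends $\cP(\cF)$ bijectively to $\cP(\RR^{p\times d})$. The compact-open topology on linear maps coincides with the Euclidean topology on matrices, so the Borel structures agree. For $Q \in \cP(\RR^{p\times d})$, the measure $Q_{\bx_i}$ is the law of $\bB^\top\bx_i$ with $\bB\sim Q$. The remaining step is the identity
\[
W_2^2(\nu_i, Q_{\bx_i}) = \min_{\bY_i\sim\nu_i,\ \bB\sim Q} \E\|\bY_i - \bB^\top\bx_i\|^2 .
\]
For ``$\le$'': any coupling of $(\bY_i,\bB)$ induces a coupling of $\nu_i$ and $Q_{\bx_i}$ with the same cost.

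For ``$\ge$'', take an optimal coupling $\gamma$ of $(\nu_i, Q_{\bx_i})$. Disintegrate $Q$ along the measurable map $\bB\mapsto\bB^\top\bx_i$ to get kernels $Q(\cdot\mid \bz)$, and glue them to $\gamma$ via the gluing lemma. Draw $(\bY_i,\bZ)\sim\gamma$ and then $\bB\sim Q(\cdot\mid\bZ)$. This gives $\bB\sim Q$, $\bB^\top\bx_i=\bZ$ almost surely, and cost exactly $W_2^2$.

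The same argument shows that the inner minimum is attained. Attainment also follows from tightness and lower semicontinuity, since $\nu_i\in\cP_2$ and we only need to consider $Q$ with finite cost. The outer minimum is attained by \cref{thm:lifted_main}. If the minimizer there could have infinite second moment, note that finite cost forces $Q_{\bx_i}\in\cP_2$, and that is all the identity requires.

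The only step with any content is the gluing argument. It is routine, so there is no real obstacle. The one point to watch is that the inner expression is well defined as a minimum and not merely an infimum, which is exactly what the gluing construction provides.
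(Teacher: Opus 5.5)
Your proposal is correct and follows the paper's route: the paper gives no separate proof and presents this theorem as the specialization of \cref{thm:lifted_main} to \cref{assume:linear}. Your disintegration/gluing argument for $W_2^2(\nu_i,Q_{\bx_i})=\min_{\bY_i\sim\nu_i,\,\bB\sim Q}\E\|\bY_i-\bB^\top\bx_i\|^2$ is exactly the identity $\Pi(\nu_i,Q_{\bx_i})=(\Id,\delta_{\bx_i})_\#\Pi(\nu_i,Q)$ that the paper invokes in the final step of that theorem's proof.
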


    Since the inner minimization in~\eqref{eq:lin_mult} is the standard linear regression problem, it can be solved explicitly.
    Collecting the covariates into the design matrix $\bX \in \RR^{n \times p}$ and writing $\bY = (\by_1 | \dots | \by_n)^\top \in \RR^{n \times d}$, we recall that the minimal norm solution for Euclidean least squares is
     \begin{equation}\label{eq:LR-normal_equations}
        \bB^*=(\bX^\top \bX)^+ \bX^\top \bY \in\argmin_{\bB\in\RR^{p\times d}}\frac{1}{n}\sum_{i=1}^n \|\by_i- \bB^\top \bx_i\|^2.
    \end{equation}
    When $(\by_1, \dots, \by_n) \sim P$, this induces a corresponding distribution for $\bY$ and thereby for $\bB^*$ via~\eqref{eq:LR-normal_equations}.
    We therefore have the following alternative expression for~\eqref{eq:lin_mult}.
    \begin{proposition}\label{prop:linear_explicit}
    	In the same setting as \cref{thm:main_linear},
    	\begin{align}
    			\EWas(\nu_1, \dots, \nu_n) & = \min_{P\in \Pi(\nu_1,...,\nu_n) }  \int_{(\mathbb{R}^{d})^n} \frac 1n \sum_{i=1}^n \|\by_i-(\bB^*)^\top \bx_i\|^2 \dd P(\by_1, \dots, \by_n) \\
    			& = \min_{P\in \Pi(\nu_1,...,\nu_n) }  \int_{(\mathbb{R}^{d})^n} \frac 1n \|\mathbf{Y}\|_F^2 - \frac 1n \|((\bX^\top \bX)^+)^{1/2}\bX^\top \bY\|^2_F \dd P
    	\end{align}
    	In particular, since $\int\|\mathbf{Y}\|_F^2 \dd P$ depends only on the marginals $\nu_1, \dots, \nu_n$, solutions to~\eqref{eq:lin_mult} agree with those of
    	\begin{equation}\label{eq:max-linear-mot}
    		       \max_{P\in \Pi(\nu_1,...,\nu_n) }  \int_{(\mathbb{R}^{d})^n}  \|((\bX^\top \bX)^+)^{1/2}\bX^\top \bY\|^2_F\, \dd P\,.
    	\end{equation}
    \end{proposition}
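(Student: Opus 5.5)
The plan is to start from the multimarginal formula \eqref{eq:lin_mult} of \cref{thm:main_linear} and evaluate the inner minimum over $\bB$ pointwise in $\by_1,\dots,\by_n$.

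\emph{First equality.} For a fixed realization $\bY$, the minimal-norm matrix $\bB^*=(\bX^\top\bX)^+\bX^\top\bY$ from \eqref{eq:LR-normal_equations} attains the minimum. Hence the integrand in \eqref{eq:lin_mult} equals $\frac1n\sum_i\|\by_i-(\bB^*)^\top\bx_i\|^2$ identically in $\bY$. This gives the first equality directly, with the same minimizing $P$. For this to be rigorous, the integrand must be a Borel function of $\bY$; this holds because $\bB^*$ is linear in $\bY$.

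\emph{Second equality.} I will rewrite the residual in matrix form. Note that $\sum_i\|\by_i-\bB^\top\bx_i\|^2=\|\bY-\bX\bB\|_F^2$. Let $H=\bX(\bX^\top\bX)^+\bX^\top$, so that $\bX\bB^*=H\bY$. Then $H$ is the orthogonal projection onto the column space of $\bX$: it is symmetric, and $H^2=H$ by the identity $(\bX^\top\bX)^+\bX^\top\bX(\bX^\top\bX)^+=(\bX^\top\bX)^+$. Pythagoras gives
\[
\|\bY-H\bY\|_F^2=\|\bY\|_F^2-\|H\bY\|_F^2 .
\]
Moreover,
\[
\|H\bY\|_F^2=\operatorname{tr}(\bY^\top H\bY)=\operatorname{tr}\bigl(\bY^\top\bX(\bX^\top\bX)^+\bX^\top\bY\bigr)=\|((\bX^\top\bX)^+)^{1/2}\bX^\top\bY\|_F^2 .
\]
The last step uses that $(\bX^\top\bX)^+$ is positive semidefinite, so its square root exists and is symmetric. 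Dividing by $n$ yields the second displayed integrand.

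\emph{Final claim.} Integrating against $P\in\Pi(\nu_1,\dots,\nu_n)$, we get
\[
\int\|\bY\|_F^2\,\dd P=\sum_i\int\|\by\|^2\,\dd\nu_i,
\]
which is finite since each $\nu_i\in\cP_2$ and does not depend on $P$. The second term is also integrable: it is bounded by $C\|\bY\|_F^2$, so no $\infty-\infty$ issue arises. Minimizing the difference over $P$ is therefore the same as maximizing $\int\|((\bX^\top\bX)^+)^{1/2}\bX^\top\bY\|_F^2\,\dd P$. Consequently the sets of optimizers coincide, and the maximum in \eqref{eq:max-linear-mot} is attained because the minimum in \eqref{eq:lin_mult} is.

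The only mildly delicate step is the pseudoinverse algebra showing that $H$ is an orthogonal projection. Everything else is bookkeeping, so I expect no real obstacle.
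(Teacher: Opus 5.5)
Your proof is correct and follows the paper's approach: the paper justifies this proposition only by substituting the explicit minimal-norm least squares solution $\bB^*=(\bX^\top\bX)^+\bX^\top\bY$ into the inner minimization of \eqref{eq:lin_mult}. Your proposal does the same, and additionally supplies the details the paper leaves implicit: the projection identity $\|\bY-H\bY\|_F^2=\|\bY\|_F^2-\|H\bY\|_F^2$, and the integrability argument that turns the minimization into the maximization \eqref{eq:max-linear-mot}.
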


We stress again that, in~\eqref{eq:max-linear-mot}, the coupling $P$ determines the distribution of $\bY$.
Recall that in Euclidean linear least squares, the quantity $ \|((\bX^\top \bX)^+)^{1/2}\bX^\top \bY\|^2_F$ measures the \emph{explained variance}, the variance in the responses attributable to the covariates.
This expression gives a natural geometric interpretation to the Wasserstein linear least squares problem: it seeks a coupling between $\nu_1, \dots, \nu_n$ that maximizes the predictive power of the linear model as measured by explained variance.

	We can also exploit the linear structure to obtain stronger duality results under \cref{assume:linear}.
	\begin{theorem}\label{thm:lin_dual}
		Adopt \cref{assume:linear}.
		If $\nu_1, \dots, \nu_n \in \cP_2(\RR^d)$, then the supremum in \eqref{eq:dual} is attained.
	\end{theorem}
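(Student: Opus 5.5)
The plan is to follow the attainment argument of \cite{aguehBarycentersWassersteinSpace2011} for the barycenter dual, using the linear structure of $\cF\cong\RR^{p\times d}$ to trade the potentials $\psi_i\in C(\cF)$ for convex functions on $\RR^d$.

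\textbf{Step 1: rewrite the dual over convex functions.} For $\bB\in\RR^{p\times d}$,
\[
S_i\psi(\by)=\|\by\|^2-\sup_{\bB}\bigl\{2\langle \by,\bB^\top\bx_i\rangle-\|\bB^\top\bx_i\|^2+\psi(\bB)\bigr\}=:\|\by\|^2-u_i(\by),
\]
so $u_i$ is a convex, lower-semicontinuous function, and $\int S_i\psi_i\dd\nu_i=\int\|\by\|^2\dd\nu_i-\int u_i\dd\nu_i$. We may improve any admissible family by a double-transform (``$c$-concavification'') step. First replace $\psi_i$ by
\[
\tilde\psi_i(\bB)=\inf_{\by}\bigl\{\|\by-\bB^\top\bx_i\|^2-S_i\psi_i(\by)\bigr\}\ \ge\ \psi_i .
\]
This gives $S_i\tilde\psi_i=S_i\psi_i$ and $\tilde\psi_i(\bB)=\|\bB^\top\bx_i\|^2-u_i^*(2\bB^\top\bx_i)$. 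The constraint $\sum_i\psi_i=0$ is thereby relaxed to $\sum_i\tilde\psi_i\ge 0$. Any surplus can be absorbed into $\psi_1$ without decreasing $S_1\psi_1$; more precisely, the supremum in \eqref{eq:dual} over $\sum_i\psi_i\ge 0$ equals the one over $\sum_i\psi_i=0$, since decreasing $\psi_1$ only increases $S_1\psi_1$. The dual thus becomes
\[
\sup\Bigl\{\frac1n\sum_i\int(\|\by\|^2-u_i)\dd\nu_i \;:\; u_i \text{ convex l.s.c.},\ \sum_i u_i^*(2\bB^\top\bx_i)\le \|\bX\bB\|_F^2\ \ \forall\bB\Bigr\},
\]
a finite-dimensional analogue of the Agueh--Carlier constraint $\sum_i u_i^*\le\|\cdot\|^2$. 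By \cref{thm:basic_dual} its value is $\EWas(\nu_1,\dots,\nu_n)<\infty$.

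\textbf{Step 2: compactness of a maximizing sequence.} Take a maximizing sequence $(u_i^k)$. Since adding constants $c_i$ with $\sum_i c_i=0$ preserves both the value and the constraint, normalize so that $\int u_i^k\dd\nu_i$ are bounded. This is possible because the objective is bounded above by $\EWas$, and each $\int u_i^k\dd\nu_i$ is bounded below using the constraint together with $u_i^k\ge \langle\cdot,\bz\rangle-u_i^{k*}(\bz)$.

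Convexity plus bounded $\nu_i$-integrals give uniform local bounds on $u_i^k$ on the interior of the convex hull of $\mathrm{supp}\,\nu_i$, hence local Lipschitz bounds there. The conjugates $u_i^{k*}\circ(2\,\cdot^\top\bx_i)$ are bounded above by the constraint and are convex. Extracting diagonal subsequences, we obtain limits $u_i$ converging locally uniformly on the interior of $\mathrm{conv}\,\mathrm{supp}\,\nu_i$, set equal to the lower-semicontinuous envelope elsewhere. The constraint passes to the limit by lower semicontinuity of conjugation under epi-convergence. The objective passes to the limit by Fatou's lemma, using the lower bound $u_i^k(\by)\ge -C(1+\|\by\|)$ uniformly in $k$, together with $\nu_i\in\cP_2$; this yields $\limsup_k \int(\|\by\|^2-u_i^k)\dd\nu_i\le\int(\|\by\|^2-u_i)\dd\nu_i$.

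\textbf{Step 3: return to $C(\cF)$ — the main obstacle.} Define $\psi_i(\bB)=\|\bB^\top\bx_i\|^2-u_i^*(2\bB^\top\bx_i)$, and shift $\psi_1$ so that $\sum_i\psi_i=0$. The difficulty is showing each $\psi_i$ is a finite continuous function on all of $\RR^{p\times d}$. This requires $u_i^*$ to be finite everywhere, which fails if $u_i$ grows only linearly, and the supports of the $\nu_i$ may be degenerate, so $u_i$ is undetermined off them. I would handle this in two ways.
\begin{itemize}
\item Replace $u_i$ by $\max(u_i,\,\|\cdot\|^2-C)$, or more canonically apply one more double $c$-transform. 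This changes nothing $\nu_i$-a.e. in the objective (after checking that the value does not drop) and forces $u_i^*(\bz)\le \|\bz\|^2/4+C$, hence finiteness.
\item Use that a finite convex function is continuous, so finiteness of $u_i^*$ gives $\psi_i\in C(\cF)$.
\end{itemize}
Showing that the modification keeps the constraint $\sum_i\psi_i\ge0$ is the delicate point. I would first try to prove that a maximizer can be chosen $c$-concave in the sense that $\psi_i=\tilde\psi_i$, and that $c$-concave potentials of this form are automatically quadratically bounded above and below, and hence finite.
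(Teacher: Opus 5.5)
\textbf{There is a genuine gap in Step~3, which you acknowledge yourself.} The proposal never produces $\psi_1,\dots,\psi_n\in C(\cF)$ with $\sum_i\psi_i=0$ that attain the supremum, and that is the entire content of the theorem. The remedies you suggest are not carried out. Replacing $u_i$ by $\max(u_i,\|\cdot\|^2-C)$ raises $u_i$ wherever $u_i<\|\cdot\|^2-C$. When $\mathrm{supp}\,\nu_i$ is unbounded this can happen on a set of positive $\nu_i$-measure, so nothing shows the value is preserved.

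\textbf{The missing observation is that feasibility itself rules out the obstruction you describe.}
\begin{itemize}
\item Your limits $u_j$ are proper, being finite on the relative interior of $\mathrm{conv}\,\mathrm{supp}\,\nu_j$. Hence $u_j^*(\bz)\ge\langle\bz,\by_0\rangle-u_j(\by_0)>-\infty$ for any such $\by_0$.
\item Evaluate the limiting constraint $\sum_i u_i^*(2\bB^\top\bx_i)\le\|\bX\bB\|_F^2$ at $\bB=\bx_i\bz^\top/(2\|\bx_i\|^2)$, using $\bx_i\neq0$. This gives $u_i^*(\bz)\le C(1+\|\bz\|^2)<\infty$ for every $\bz\in\RR^d$. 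A linearly growing $u_i$ is simply infeasible.
\item A finite convex function is continuous, so $\psi_i(\bB):=\|\bB^\top\bx_i\|^2-u_i^*(2\bB^\top\bx_i)$ lies in $C(\cF)$, and $\sum_i\psi_i\ge0$.
\item Since $\bB\mapsto2\bB^\top\bx_i$ is onto $\RR^d$, one gets $S_i\psi_i=\|\cdot\|^2-u_i^{**}=\|\cdot\|^2-u_i$.
\item Replacing $\psi_1$ by $-\sum_{i\ge2}\psi_i$ can only increase $S_1\psi_1$, so this family is an admissible maximizer.
\end{itemize}
No modification of the $u_i$ is needed.

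\textbf{Two smaller points in Step~2.} First, when $\nu_i$ has degenerate support (an atom, or mass on a hyperplane), the interior of $\mathrm{conv}\,\mathrm{supp}\,\nu_i$ is empty, so you must work on its relative interior. Second, Fatou needs $\liminf_k u_i^k\ge u_i$ also on the relative boundary, which $\nu_i$ may charge. This holds by convexity along segments issuing from relative-interior points, but it has to be stated.

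\textbf{How the paper avoids all of this.} It extracts limits on the $\cF$ side rather than on $\RR^d$.
\begin{itemize}
\item It keeps $\psi_n=-\sum_{i<n}\psi_i$ and takes $\psi_i$ of the form \eqref{eq:psi-def} for $i<n$, so that $\bB\mapsto\|\bB^\top\bx_i\|^2-\psi_i(\bB)$ is convex.
\item It normalizes $\psi_i(\mathbf{0})=0$, which gives $S_i\psi_i^m\le\|\cdot\|^2$.
\item It derives the uniform two-sided bound $|\psi_i^m(\bB)|\le C(1+\|\bB\|_F^2)$. This is essentially the estimate you already have when you note that the conjugates are bounded above by the constraint.
\item Convexity of $\|\bB^\top\bx_i\|^2-\psi_i^m$ then gives local equicontinuity. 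Arzel\`a--Ascoli yields a limit that automatically lies in $C(\cF)$ with $\sum_i\psi_i=0$.
\item The inequality $\limsup_m S_i\psi_i^m\le S_i\psi_i$ comes for free because $S_i$ is an infimum, and Fatou concludes.
\end{itemize}
In your notation, this amounts to extracting limits of the finite convex functions $u_i^{k*}$ on all of $\RR^d$, rather than of the $u_i^k$ on the supports. That removes both the support-geometry issues and the return step.
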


	Examining the optimality conditions of~\eqref{eq:dual}, we obtain a characterization of optimal primal and dual solutions for Wasserstein least squares.
	\begin{theorem}\label{thm:opt_duals}
		Let $\nu_1, \dots, \nu_n \in \cP_2(\RR^d)$, and let $Q$ be an optimal solution to~\eqref{eq:lin_wls}.
		Then there exist convex functions $\varphi_1, \dots, \varphi_n$ such that:
		\begin{enumerate}
			\item For each $i \in [n]$, the pair $(\varphi_i, \varphi_i^*)$ are optimal Brenier potentials for $(Q_{\bx_i}, \nu_i)$; i.e., $\varphi_i$ satisfies
			\begin{equation}
				W_2^2(\nu_i, Q_{\bx_i}) = \int (\|\cdot\|^2 - 2 \varphi_i^*) \dd \nu_i + \int (\|\cdot\|^2 - 2 \varphi_i) \dd Q_{\bx_i}
			\end{equation}
			\item The functions satisfy
			\begin{equation}\label{eq:dual_ineq}
				\frac 1n \sum_{i=1}^n \varphi_i(\bB^\top \bx_i) \leq \frac 1n \sum_{i=1}^n \frac{\|\bB^\top \bx_i\|^2}{2}\,,
			\end{equation}
			with equality $Q$-a.s.
		\end{enumerate}
		Conversely, if $Q$ is such that there exist $\varphi_1, \dots, \varphi_n$ satisfying the above conditions, then $Q$ is an optimal solution to~\eqref{eq:marg_lift}.
	\end{theorem}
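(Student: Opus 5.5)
The plan is to combine the attained dual of \cref{thm:lin_dual} with weak duality, and then convert dual potentials into Brenier potentials. Under \cref{assume:linear} we have $\cF \cong \RR^{p\times d}$ and
\[
S_i\psi(\by) = \inf_{\bB} \|\by - \bB^\top\bx_i\|^2 - \psi(\bB).
\]
Take optimal dual functions $\psi_1,\dots,\psi_n$ with $\sum_i\psi_i=0$ from \cref{thm:lin_dual}. Expanding the square, $S_i\psi_i(\by) = \|\by\|^2 - 2\sup_{\bB}\{\langle \by, \bB^\top \bx_i\rangle - \tfrac12(\|\bB^\top\bx_i\|^2 + \psi_i(\bB))\}$.

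The candidate potentials are defined by first setting $g_i(\bB) = \tfrac12(\|\bB^\top\bx_i\|^2+\psi_i(\bB))$. Next let
\[
\varphi_i(\bz) = \inf\{ g_i(\bB) : \bB^\top\bx_i = \bz\},
\]
so that $\varphi_i^*(\by) = \sup_{\bB}\{\langle\by,\bB^\top\bx_i\rangle - g_i(\bB)\}$ and $S_i\psi_i = \|\cdot\|^2 - 2\varphi_i^*$. Finally replace $\varphi_i$ by its biconjugate $\varphi_i^{**}$. This is convex, has the same conjugate, and lies below $\varphi_i$. By construction, for every $\bB$,
\[
\sum_i \varphi_i^{**}(\bB^\top\bx_i)\le \sum_i g_i(\bB) = \tfrac12\sum_i\|\bB^\top\bx_i\|^2,
\]
using $\sum_i\psi_i=0$. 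This is inequality \eqref{eq:dual_ineq}.

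Next, run the chain of inequalities. For each $i$, the Kantorovich duality inequality for the pair $(\varphi_i^{**},\varphi_i^*)$, which satisfies $\varphi^{**}(\bz)+\varphi^*(\by)\ge\langle\by,\bz\rangle$, gives
\[
W_2^2(\nu_i,Q_{\bx_i}) \ge \int(\|\cdot\|^2-2\varphi_i^*)\,d\nu_i + \int(\|\cdot\|^2-2\varphi_i^{**})\,dQ_{\bx_i}.
\]
Averaging over $i$ and inserting \eqref{eq:dual_ineq} integrated against $Q$ yields
\[
\tfrac1n\sum_i W_2^2(\nu_i, Q_{\bx_i}) \ge \tfrac1n\sum_i\int S_i\psi_i\,d\nu_i + \tfrac1n\int\Big(\sum_i\|\bB^\top\bx_i\|^2 - 2\varphi_i^{**}(\bB^\top\bx_i)\Big)dQ \ge \tfrac1n\sum_i\int S_i\psi_i\,d\nu_i.
\]
When $Q$ is primal optimal, the left-hand side equals $\EWas$ by \cref{thm:main_linear}, and the right-hand side equals $\EWas$ by \cref{thm:basic_dual} and attainment. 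Hence every inequality is an equality. The first gives item (1), provided all integrals are finite. The second gives equality in \eqref{eq:dual_ineq} $Q$-a.s., because the integrand is nonnegative.

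For the converse, suppose $\varphi_i$ satisfy (1) and (2). Set $\psi_i(\bB) = 2\varphi_i(\bB^\top\bx_i) - \|\bB^\top\bx_i\|^2 + c(\bB)$, where
\[
c(\bB) = \tfrac1n\sum_j\big(\|\bB^\top\bx_j\|^2 - 2\varphi_j(\bB^\top\bx_j)\big)\ge 0
\]
is chosen so that $\sum_i\psi_i=0$. Then $S_i\psi_i \ge \|\cdot\|^2 - 2\varphi_i^* - \sup c$; more carefully, $c\ge0$ gives $S_i\psi_i\ge \|\cdot\|^2-2\varphi_i^*$ only after accounting for the sign. Because of this, I would instead directly compare with an arbitrary competitor $Q'$. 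Applying weak Kantorovich duality to $Q'_{\bx_i}$ with the same $\varphi_i$ and using (2) pointwise gives
\[
\tfrac1n\sum_i W_2^2(\nu_i,Q'_{\bx_i}) \ge \tfrac1n\sum_i\int(\|\cdot\|^2-2\varphi_i^*)\,d\nu_i.
\]
Under (1) and the a.s. equality, this lower bound equals the objective value at $Q$. So $Q$ is optimal, and this direction needs no dual regularity.

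The main obstacle is technical. The potentials $\psi_i$ from \cref{thm:lin_dual} are only continuous and may be unbounded, so $\varphi_i$ could take the value $-\infty$ or fail to be integrable, and the biconjugation step must keep integrability against $Q_{\bx_i}$ and $\nu_i$. I would handle this by checking that finiteness of $\EWas$ forces $\int\varphi_i^*\,d\nu_i>-\infty$. Combined with $\varphi_i^{**}(\bz)\ge\langle\by,\bz\rangle-\varphi_i^*(\by)$ and second moments, this gives integrable lower bounds. Degenerate $\bx_i$ cannot occur, since the covariates are assumed nonzero.
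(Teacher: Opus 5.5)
Your forward direction is the paper's argument. After correcting one sign, your $\varphi_i^{**}$ is exactly the paper's $\varphi_i=\bigl(\tfrac12\|\cdot\|^2-\tfrac12 S_i\psi_i\bigr)^*$ from \eqref{eq:phi_def}. Your bound $\varphi_i^{**}(\bB^\top\bx_i)\le g_i(\bB)$ is the paper's inequality $\tfrac12\psi_i(\bB)\le\tfrac12\|\bB^\top\bx_i\|^2-\varphi_i(\bB^\top\bx_i)$, and your chain is the paper's chain read in the opposite direction.

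The sign: expanding the square gives
\[
S_i\psi_i(\by)=\|\by\|^2-2\sup_{\bB}\Bigl\{\langle\by,\bB^\top\bx_i\rangle-\tfrac12\bigl(\|\bB^\top\bx_i\|^2-\psi_i(\bB)\bigr)\Bigr\},
\]
so you need $g_i(\bB)=\tfrac12(\|\bB^\top\bx_i\|^2-\psi_i(\bB))$. With $+\psi_i$ as you wrote it, $\|\cdot\|^2-2\varphi_i^*$ equals $S_i(-\psi_i)$ rather than $S_i\psi_i$. Now $-\psi$ is feasible but in general not dual optimal, so your chain would not close. The slip is invisible in \eqref{eq:dual_ineq} because $\sum_i g_i=\tfrac12\sum_i\|\bB^\top\bx_i\|^2$ holds with either sign.

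The same slip derailed your first attempt at the converse. With the correct convention, $\psi_i(\bB)=\|\bB^\top\bx_i\|^2-2\varphi_i(\bB^\top\bx_i)-c(\bB)$ sums to zero. Because $c\ge 0$, it also satisfies $S_i\psi_i\ge\|\cdot\|^2-2\varphi_i^*$.

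In your integrability remark, the bound that matters is $\int\varphi_i^*\,d\nu_i<+\infty$, equivalently $\int S_i\psi_i\,d\nu_i>-\infty$. This holds because the dual value is finite and each term is at most $\int\|\cdot\|^2\,d\nu_i-\psi_i(0)$. The lower bound $\varphi_i^*\ge\tfrac12\psi_i(0)$ is automatic.

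Your converse (the direct argument you settled on) is correct and takes a more direct route than the paper's. The paper builds a feasible dual from the $\varphi_i$: it sets $\psi_i(\bB)=\|\bB^\top\bx_i\|^2-2\varphi_i(\bB^\top\bx_i)$ for $i<n$ and $\psi_n=-\sum_{i<n}\psi_i$. It then checks $S_i\psi_i\ge\|\cdot\|^2-2\varphi_i^*$ and concludes by weak duality for \eqref{eq:dual}. You instead compare against an arbitrary competitor $Q'$. You use only the Fenchel--Young (weak Kantorovich) inequality marginal by marginal, together with the pointwise inequality \eqref{eq:dual_ineq} integrated against $Q'$.

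Your version is self-contained and places no regularity demands on the $\varphi_i$. The paper's version needs $\psi_i\in C(\cF)$ to be admissible in \eqref{eq:dual}, so it implicitly uses that the $\varphi_i$ are finite and hence continuous. In return, the paper's version also certifies that the induced $(\psi_1,\dots,\psi_n)$ attains the supremum in \eqref{eq:dual}, which yours does not.
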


	In light of \cref{thm:opt_duals}, we call any collection $(\varphi_1, \dots, \varphi_n)$ satisfying conditions 1 and 2 above \textit{optimal dual solutions} to the Wasserstein least squares problem.

	\Cref{thm:opt_duals} also gives a criterion under which the marginal distributions of an optimal solution are uniquely identified.
	\begin{proposition}\label{prop:unique}
		Suppose that $\nu_1, \dots, \nu_n$ are absolutely continuous with respect to the Lebesgue measure.
		If $Q$ and $Q'$ are any two optimal solutions to~\eqref{eq:marg_lift}, then $Q_{\bx_i} = Q'_{\bx_i}$ for each $i \in [n]$.
	\end{proposition}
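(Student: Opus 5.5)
The plan is to use the fact that the optimality conditions in \cref{thm:opt_duals} involve a single family of dual potentials that serves every optimal $Q$ at once. By \cref{thm:lin_dual}, the dual problem~\eqref{eq:dual} has a maximizer $(\psi_1,\dots,\psi_n)$. I would first check, from the proof of \cref{thm:opt_duals}, that the potentials $\varphi_i$ it produces are built from this dual maximizer alone; they should be essentially the conjugates $\varphi_i = \tfrac12(\|\cdot\|^2 - S_i\psi_i)^{*}$ up to the standard rewriting. The key point is that they do not depend on which primal optimizer $Q$ is used. Concretely, I would show that any dual maximizer together with any primal optimizer $Q$ satisfies conditions 1 and 2 of \cref{thm:opt_duals}. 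This follows from zero duality gap: substituting $Q$ into the chain of inequalities
\[\tfrac1n\textstyle\sum_i \int S_i\psi_i\,\dd\nu_i \le \tfrac1n\sum_i W_2^2(\nu_i,Q_{\bx_i})\]
forces equality in each step. So a single tuple $(\varphi_1,\dots,\varphi_n)$ is optimal simultaneously for $Q$ and for $Q'$.

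Next I would fix $i$ and use condition 1 for both $Q$ and $Q'$. The pair $(\varphi_i,\varphi_i^*)$ is an optimal Brenier pair for $(Q_{\bx_i},\nu_i)$ and also for $(Q'_{\bx_i},\nu_i)$. Since $\nu_i$ is absolutely continuous, Brenier's theorem applies in the reverse direction. The optimal plan from $\nu_i$ to $Q_{\bx_i}$ is then unique and is induced by $\nabla\varphi_i^*$. The reason is that any optimal plan $\gamma$ is supported on the subdifferential of $\varphi_i^*$, and $\varphi_i^*$ is differentiable $\nu_i$-a.e. because convex functions are differentiable Lebesgue-a.e. on the interior of their domain. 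Hence $Q_{\bx_i} = (\nabla\varphi_i^*)_\#\nu_i$, and by the same argument $Q'_{\bx_i} = (\nabla\varphi_i^*)_\#\nu_i$, so the two agree.

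The main obstacle is the first step: ensuring that the potentials in \cref{thm:opt_duals} really come from a $Q$-independent dual optimizer. If the proof of that theorem constructs the $\varphi_i$ in a $Q$-dependent way, I would redo the complementary slackness argument directly. I would take the dual maximizer from \cref{thm:lin_dual}, define $\varphi_i$ from $S_i\psi_i$ by conjugation, and verify condition 1 for an arbitrary primal optimizer. That verification uses $\sum_i\psi_i = 0$ and the pointwise inequality $\|\by - \bB^\top\bx_i\|^2 \ge S_i\psi_i(\by) + \psi_i(\bB)$, integrated against an optimal coupling of $Q$ and each $\nu_i$. Equality of the integrals forces equality of the integrands $\gamma$-a.e., which is exactly the Brenier optimality of $(\varphi_i,\varphi_i^*)$. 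A minor technical point is that $\varphi_i^*$ must be finite on a set of full $\nu_i$-measure with nonempty interior. This follows from integrability against $\nu_i$, combined with convexity and absolute continuity, since the boundary of a convex set has Lebesgue measure zero.
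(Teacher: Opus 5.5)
Your proposal is correct and is essentially the paper's proof: take a dual maximizer $(\psi_1,\dots,\psi_n)$ from \cref{thm:lin_dual}, form the potentials $\varphi_i$ via \eqref{eq:phi_def}, use the argument of \cref{thm:opt_duals} to see that $(\varphi_i,\varphi_i^*)$ is an optimal Brenier pair for both $(Q_{\bx_i},\nu_i)$ and $(Q'_{\bx_i},\nu_i)$, and conclude from absolute continuity of $\nu_i$ that both marginals equal $(\nabla\varphi_i^*)_\#\nu_i$. Your concern about $Q$-dependence resolves as you expected, since \eqref{eq:phi_def} depends only on $\psi_i$.
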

	\begin{proof}
		Let $(\psi_1, \dots, \psi_n)$ solve~\eqref{eq:dual}, and let $\varphi_1, \dots, \varphi_n$ be constructed by~\eqref{eq:phi_def}.
		Then \cref{thm:opt_duals} shows that $(\varphi_i, \varphi_i^*)$ are optimal Brenier potentials for both $(Q_{\bx_i}, \nu_i)$ and $(Q'_{\bx_i}, \nu_i)$ for each $i = 1, \dots, n$.
		In particular, since $\nu_i$ is absolutely continuous, this fact implies that both $Q_{\bx_i}$ and $Q'_{\bx_i}$ are equal to $(\nabla \varphi_i^*)_\# \nu_i$, hence they agree.
	\end{proof}

	Finally, \cref{thm:opt_duals} gives a characterization of optimality exactly equivalent to the normal equations in linear regression.
	\begin{theorem}\label{theorem:W-normal-equations}
		Let $\nu_1, \dots, \nu_n \in \cP_2(\RR^d)$ and let $Q$ be an optimal solution to~\eqref{eq:lin_wls}.
		Then there exist convex functions $\varphi_1, \dots, \varphi_n$ such that $(\varphi_i, \varphi_i^*)$ are optimal Brenier potentials for $(Q_{\bx_i}, \nu_i)$ for each $i \in [n]$, and
		\begin{equation}\label{eq:W-normal-equations}
			 \frac{1}{n}\sum^n_{i=1}\bx_i(\nabla\varphi_i(\bB^\top\bx_i))^\top=\Sigma_{XX}\bB \quad\forall \, \bB \in \operatorname{supp}(Q)\,.
		\end{equation}
		In particular, $\varphi_i$ is differentiable on $\operatorname{supp}(Q_{\bx_i})$.
	\end{theorem}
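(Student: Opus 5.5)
We derive the normal equations from the first-order optimality of the inequality~\eqref{eq:dual_ineq} in \cref{thm:opt_duals}. We use the functions $\varphi_1, \dots, \varphi_n$ provided by that theorem. Each $\varphi_i$ is convex, and $(\varphi_i,\varphi_i^*)$ are optimal Brenier potentials for $(Q_{\bx_i},\nu_i)$. The function
\[
G(\bB) = \frac 1n \sum_{i=1}^n \frac{\|\bB^\top \bx_i\|^2}{2} - \frac 1n \sum_{i=1}^n \varphi_i(\bB^\top \bx_i)
\]
is nonnegative on $\RR^{p\times d}$ and vanishes $Q$-a.s. We first upgrade ``$Q$-a.s.'' to ``on all of $\operatorname{supp}(Q)$''. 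Since each $\varphi_i$ is convex, we may replace it by its lower semicontinuous regularization without changing conditions 1 and 2; this is harmless because it only changes $\varphi_i$ on the boundary of its domain, a $Q_{\bx_i}$-null set after the usual argument. Then $G$ is upper semicontinuous on the set where the $\varphi_i$ are finite, so $\{G \ge 0\}\cap\{G\le 0\}$ is closed there. Since $\{G=0\}$ has full $Q$-measure, it contains $\operatorname{supp}(Q)$. To make this clean I would instead argue via continuity: the finiteness set of each $\varphi_i$ contains $\operatorname{supp}(Q_{\bx_i})$ up to its boundary, and convex functions are continuous in the interior of their domain. The boundary case is handled by the differentiability argument below, which shows the relevant points are interior.

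The key step is the following observation. At a point $\bB_0$ with $G(\bB_0)=0$, the convex function $\Phi(\bB) := \frac1n\sum_i \varphi_i(\bB^\top\bx_i)$ is touched from above by the smooth convex quadratic $q(\bB) := \frac1n\sum_i \frac12\|\bB^\top \bx_i\|^2$: we have $\Phi\le q$ everywhere, with equality at $\bB_0$. A convex function touched from above by a differentiable function at a point is differentiable there, with the same gradient. To see this, take any subgradient $g\in\partial\Phi(\bB_0)$; then
\[
q(\bB_0)+\langle g, \bB-\bB_0\rangle \le \Phi(\bB)\le q(\bB).
\]
This forces $g=\nabla q(\bB_0)=\Sigma_{XX}\bB_0$, where $\Sigma_{XX}=\frac1n\sum_i\bx_i\bx_i^\top$. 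The same sandwich also shows that $\Phi$ is finite near $\bB_0$, so $\bB_0$ is interior to $\operatorname{dom}\Phi$.

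It remains to pass from differentiability of $\Phi$ to differentiability of each $\varphi_i$ at $\bB_0^\top\bx_i$; I expect this to be the main obstacle. The difficulty is that a sum can be differentiable while its summands are not. I plan to use the separable structure. For each $i$, fix $\by\in\RR^d$ and perturb along $\bB_0 + t\,\bx_i\by^\top/\|\bx_i\|^2$. This moves $\bB^\top\bx_i$ by $t\by$, but it also moves the other arguments $\bB^\top\bx_j$ by $t\by\langle\bx_i,\bx_j\rangle/\|\bx_i\|^2$, so it does not isolate $\varphi_i$. I therefore argue with subdifferentials directly, using $\partial\Phi(\bB_0)\supseteq\frac1n\sum_i \bx_i\,(\partial\varphi_i(\bB_0^\top\bx_i))^\top$. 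This inclusion holds whenever each $\partial\varphi_i(\bB_0^\top\bx_i)$ is nonempty, which is true because the touching quadratic makes each $\varphi_i$ finite in a neighborhood of $\bB_0^\top \bx_i$; this neighborhood claim needs care, since the other $\varphi_j$ are bounded below by affine functions. Because $\partial\Phi(\bB_0)$ is the singleton $\{\Sigma_{XX}\bB_0\}$, for every choice of $g_i\in\partial\varphi_i(\bB_0^\top\bx_i)$ we get $\frac1n\sum_i\bx_i g_i^\top=\Sigma_{XX}\bB_0$. Varying a single $g_i$ while fixing the others shows that $\bx_i(g_i-g_i')^\top=0$. Since $\bx_i\neq 0$ (the covariates are nonzero by standing assumption), this gives $g_i=g_i'$, so $\partial\varphi_i(\bB_0^\top\bx_i)$ is a singleton. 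Hence $\varphi_i$ is differentiable at $\bB_0^\top\bx_i$, and substituting $g_i=\nabla\varphi_i(\bB_0^\top\bx_i)$ yields~\eqref{eq:W-normal-equations}. Finally, since $\operatorname{supp}(Q_{\bx_i})$ is the closure of $\{\bB^\top\bx_i:\bB\in\operatorname{supp}Q\}$, this gives differentiability on the image of $\operatorname{supp}(Q)$. Extending to the closure would again use the touching argument at limit points, where $G=0$ continues to hold by lower semicontinuity of $\varphi_i$ and $G\ge0$.
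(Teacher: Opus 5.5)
Your core argument is the paper's. At a point $\bB_0$ where $\Phi(\bB)=\frac1n\sum_i\varphi_i(\bB^\top\bx_i)$ touches $q(\bB)=\frac1n\sum_i\frac12\|\bB^\top\bx_i\|^2$ from above, every subgradient of $\Phi$ equals $\Sigma_{XX}\bB_0$. Since $\frac1n\sum_i\bx_i g_i^\top\in\partial\Phi(\bB_0)$ for any $g_i\in\partial\varphi_i(\bB_0^\top\bx_i)$, nonemptiness together with $\bx_i\neq 0$ forces each $\partial\varphi_i(\bB_0^\top\bx_i)$ to be a singleton. Using only this trivial inclusion, instead of the exact subdifferential chain rule the paper cites, is fine.

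The step that fails as written is the preliminary upgrade from $G=0$ $Q$-a.s.\ to $G=0$ on all of $\operatorname{supp}(Q)$. Passing to the lower semicontinuous regularization makes $\Phi$ lower semicontinuous, hence $G=q-\Phi$ \emph{upper} semicontinuous. For an upper semicontinuous $G\ge 0$, the set $\{G\le 0\}=\{G=0\}$ need not be closed, because $G$ may jump up at a limit point. What you need is lower semicontinuity of $G$, i.e.\ upper semicontinuity of $\Phi$. Your last sentence has the same reversal: lower semicontinuity of the $\varphi_i$ only yields $G(\bB_0)\ge\limsup_k G(\bB_k)=0$, which you already knew. Deferring boundary points to ``the differentiability argument below'' is circular, since that argument takes $G(\bB_0)=0$ as input. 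Also, the claim that the boundary of $\operatorname{dom}\varphi_i$ is $Q_{\bx_i}$-null would need absolute continuity, which is not assumed.

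The missing observation is that your sandwich is global, not local. Condition~2 of \cref{thm:opt_duals} gives $\Phi\le q<\infty$ on all of $\RR^{p\times d}$. As you note, each $\varphi_j$ has an affine minorant (for instance because $\varphi_j^*$ is finite $\nu_j$-a.e.\ by condition~1). For any $\bz\in\RR^d$, take $\bB=\bx_i\bz^\top/\|\bx_i\|^2$, so that $\bB^\top\bx_i=\bz$; then $\varphi_i(\bz)\le n\,q(\bB)-\sum_{j\neq i}\varphi_j(\bB^\top\bx_j)<\infty$. So every $\varphi_i$ is a finite convex function on $\RR^d$, hence continuous. Then $G$ is continuous, and $\{G=0\}$ is a closed set of full $Q$-measure, so it contains $\operatorname{supp}(Q)$. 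This is exactly what the paper's one-line remark, that both sides of~\eqref{eq:dual_ineq} are continuous, relies on. The same fact makes every $\partial\varphi_i(\bz)$ nonempty, so neither the regularization nor your ``needs care'' neighbourhood claim is needed.

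Finally, your proposed extension to the closure of $\{\bB^\top\bx_i:\bB\in\operatorname{supp}(Q)\}$ does not work as stated. When $\operatorname{supp}(Q)$ is unbounded, a limit point need not be of the form $\bB_0^\top\bx_i$ with $\bB_0\in\operatorname{supp}(Q)$, so there is no touching point at which to argue. The paper's proof likewise only gives differentiability on this image, which coincides with $\operatorname{supp}(Q_{\bx_i})$ when $\operatorname{supp}(Q)$ is compact. In any case the image has full $Q_{\bx_i}$-measure, which is all that is used afterwards.
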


	Since $\varphi_i$ is differentiable on $\operatorname{supp}(Q_{\bx_i})$ and $\varphi_i$ is an optimal Brenier potential, if we write $\nabla\varphi_i(\bB^\top\bx_i)=\bY_i$, then $\bY_i \sim \nu_i$ when $\bB \sim Q$.
	Writing $\nabla\varphi_i(\bB^\top\bx_i)=\bY_i$ and defining $\Sigma_{YX}$ and $\Sigma_{XX}$ as above, we observe that $\bY_i \sim \nu_i$ and that~\eqref{eq:W-normal-equations} is equivalent to
	\begin{equation}\label{eq:new-normal}
		 \Sigma_{YX}=\Sigma_{XX}\bB \quad \text{a.s.}
	\end{equation}
	 However, in contrast to the Euclidean case,~\eqref{eq:new-normal} alone does not specify the law of $\bB$, since the law of $\Sigma_{XY}$ depends on $P$, the coupling that solves~\eqref{eq:lin_mult}.

	The proof of \cref{theorem:W-normal-equations} reveals that the gradient of the Wasserstein least squares objective vanishes precisely when the normal equations hold.
	The following lemma (proved in \cref{appendix:FOG}) makes this explicit and is the key ingredient for the algorithms of \cref{sec:alg}.

	\begin{lemma}\label{lemma:W-gradient-of-G}
    \begin{sloppypar}Let $Q$ be absolutely continuous.
		The $W_2$-gradient of $G(Q)=\frac{1}{n}\sum_{i=1}^n W_2^2(Q_{\bx_i},\nu_i)$ with respect to $Q$ is the map $\WGrad_Q G(Q)\in L^2(\RR^{p\times d},\RR^{p\times d};Q)$ with assignment rule\end{sloppypar}
		\begin{equation}\label{eq:W-gradient-of-G}
			\bB\mapsto -\frac{1}{n}\sum_{i=1}^n\bigl(\bx_i(\nabla\varphi_i(\bB^\top\bx_i))^\top -\bx_i\bx_i^\top\bB\bigr)
		\end{equation}
		for every $\bB\in\supp(Q)$, where $\varphi_i$ are the optimal Brenier potentials for $(Q_{\bx_i},\nu_i)$.
        In particular, $\WGrad_Q G(Q)=0$ $Q$-a.s.\ if and only if the normal equations~\eqref{eq:W-normal-equations} hold.
	\end{lemma}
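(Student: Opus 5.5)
We compute the first variation of $G$ along a perturbation of $Q$ by a map and identify the Riesz representer in $L^2(Q)$. Fix a bounded Lipschitz vector field $V:\RR^{p\times d}\to\RR^{p\times d}$ and set $Q_t = (\mathrm{id}+tV)_\# Q$. Since evaluation at $\bx_i$ is linear, $(Q_t)_{\bx_i} = (\mathrm{id} + tV_i)_\# Q_{\bx_i}$ in a pushforward sense, where $V_i(\bB)=V(\bB)^\top\bx_i$. Concretely, if $\bB\sim Q$, then $(\bB+tV(\bB))^\top \bx_i \sim (Q_t)_{\bx_i}$. The standard first variation of $\mu\mapsto \tfrac12 W_2^2(\mu,\nu)$ (e.g.\ Ambrosio--Gigli--Savar\'e, Thm.~10.2.2, or directly from Kantorovich duality) gives
\[
\frac{d}{dt}\Big|_{t=0} W_2^2\bigl((Q_t)_{\bx_i},\nu_i\bigr) = 2\int \bigl\langle \by - \nabla\varphi_i(\by),\, w\bigr\rangle \, d\pi_i ,
\]
where $\nabla\varphi_i$ is the Brenier map from $Q_{\bx_i}$ to $\nu_i$ and the velocity $w$ is evaluated along the plan. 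Lifting to $\RR^{p\times d}$, the $i$th term becomes $2\E[\langle \bB^\top\bx_i - \nabla\varphi_i(\bB^\top\bx_i),\, V(\bB)^\top\bx_i\rangle]$.

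The key step is to justify using the Brenier map at $Q_{\bx_i}$. Since $Q$ is absolutely continuous, $Q_{\bx_i}$ is the pushforward of an a.c.\ measure by the surjective linear map $\bB\mapsto\bB^\top\bx_i$ (surjective since $\bx_i\neq0$). By coarea/Fubini it is therefore absolutely continuous on $\RR^d$. Brenier's theorem then yields a unique optimal map $\nabla\varphi_i$, defined $Q_{\bx_i}$-a.e. The potentials from \cref{thm:opt_duals} need not be invoked, because the lemma is stated at arbitrary a.c.\ $Q$. For the derivative itself, the upper bound comes from using the coupling $(\bB+tV,\ \nabla\varphi_i(\bB^\top\bx_i))$, which gives a one-sided inequality with exactly this derivative. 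The matching lower bound uses Kantorovich potentials $(\varphi_i,\varphi_i^*)$ and the fact that $\int(\|\cdot\|^2-2\varphi_i)\,d\mu+\int(\|\cdot\|^2-2\varphi_i^*)\,d\nu_i\le W_2^2(\mu,\nu_i)$, differentiated at $t=0$ using a.e.\ differentiability of the convex function $\varphi_i$ together with dominated convergence. This requires $V$ bounded and the quadratic growth of $\varphi_i$. I expect this step to be the main technical obstacle; I would cite the general superdifferential result for $W_2^2$ rather than reprove it.

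Rewriting the inner product gives $\langle \bB^\top\bx_i - \nabla\varphi_i,\, V^\top\bx_i\rangle = \langle \bx_i(\bB^\top\bx_i - \nabla\varphi_i)^\top,\, V\rangle_F$. Summing over $i$ and dividing by $n$ yields
\[
\frac{d}{dt}\Big|_{t=0} G(Q_t)=\E_Q\big\langle \Xi(\bB),V(\bB)\big\rangle_F ,
\]
where $\Xi$ is the map in \eqref{eq:W-gradient-of-G}, up to the normalization factor $2$ that the paper's convention absorbs. By the triangle inequality and the second moments of the $\nu_i$, $\Xi$ lies in $L^2(Q)$, and its values lie in the tangent space: since $\Xi(\bB)$ is a finite sum of matrices $\bx_i u_i(\bB)^\top$, it is a gradient-type field of a pointwise linear structure. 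Alternatively, one can simply note that the Wasserstein gradient is defined as the $L^2(Q)$ element representing the directional derivative, and density of such $V$ in $L^2(Q)$ gives uniqueness. Finally, $\Xi=0$ $Q$-a.s.\ is literally $\frac1n\sum_i \bx_i\nabla\varphi_i(\bB^\top\bx_i)^\top=\Sigma_{XX}\bB$ $Q$-a.e. This is \eqref{eq:W-normal-equations}, holding on $\supp(Q)$ after choosing continuous representatives, which proves the ``in particular''.
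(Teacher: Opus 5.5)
Your proof is correct in substance. It uses the same two ingredients as the paper: Kantorovich duality for the derivative of $W_2^2(\cdot,\nu_i)$, and linearity of $\bB\mapsto\bB^\top\bx_i$. The route, however, is different and more careful.

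The paper argues formally in Otto calculus. An envelope-theorem step gives the first variation $\delta G(Q)=\frac1n\sum_i\bigl(\tfrac12\|\cdot^\top\bx_i\|^2-\varphi_i(\cdot^\top\bx_i)\bigr)$, pulled back from $Q_{\bx_i}$ to $Q$ through $L_X$. The gradient is then its Euclidean gradient in $\bB$. The paper also assumes the $\nu_i$ are absolutely continuous.

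You instead differentiate along $(\mathrm{id}+tV)_\#Q$ and note that this perturbs $Q_{\bx_i}$ along a plan rather than a map. You then sandwich the derivative between the Brenier-coupling majorant and a potential minorant, and identify the $L^2(Q)$ representer. This buys three things:
\begin{itemize}
\item an actual justification of the envelope step;
\item removal of the hypothesis on the $\nu_i$, since only $Q_{\bx_i}$ absolutely continuous is needed, which you correctly derive from $\bx_i\neq0$;
\item the correct factor $2$: for $G$ as defined the representer is $2\Xi$, so \eqref{eq:W-gradient-of-G} is the gradient of $G/2$, consistent with the factor kept in \cref{lemma:distance_smoothness}.
\end{itemize}

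What the paper's route gives for free is the explicit potential. Using $\nabla_{\bB}[\varphi_i(\bB^\top\bx_i)]=\bx_i\nabla\varphi_i(\bB^\top\bx_i)^\top$, one has $\Xi=\nabla_{\bB}\Psi$ with $\Psi(\bB)=\tfrac12\langle\bB,\Sigma_{XX}\bB\rangle_F-\tfrac1n\sum_i\varphi_i(\bB^\top\bx_i)$, which is exactly the paper's $\delta G(Q)$. Use this instead of your remark about a ``gradient-type field of a pointwise linear structure'', which is not an argument. Since $\Psi$ is a difference of convex functions with gradients in $L^2(Q)$, this identity places $\Xi$ in the tangent space at the absolutely continuous $Q$.

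Two smaller corrections.

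\emph{The minorant is not self-contained in general.} It requires $\varphi_i$ to be finite and integrable along $(\bB+tV(\bB))^\top\bx_i$. This holds when $\operatorname{supp}\nu_i$ is bounded, since $\varphi_i$ can then be chosen globally Lipschitz. It fails in general. Take $p=d=1$, $\bx_i=1$, $Q=\mathrm{Unif}[0,1]$, and $\operatorname{supp}\nu_i$ unbounded above. Every Brenier potential is then $+\infty$ on $(1,\infty)$, so for $V\equiv1$ the minorant equals $-\infty$ for every $t>0$. For general $\nu_i\in\cP_2(\RR^d)$ the citation is therefore necessary, not optional. The right one is the first-variation formula for $W_2^2$ along transport plans \citep{ambrosioGradientFlowsMetric2008}. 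There the one-sided derivatives are extremal values over optimal plans, so they agree with your formula because the optimal plan from the absolutely continuous $Q_{\bx_i}$ is unique.

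\emph{The closing upgrade should be dropped.} The step to every $\bB\in\operatorname{supp}(Q)$ ``after choosing continuous representatives'' is unjustified and unnecessary. $\nabla\varphi_i$ is only determined $Q_{\bx_i}$-a.e., and an optimal potential need not be differentiable at every support point. For example, take $n=p=d=1$ and $Q=\nu_1=\mathrm{Unif}([0,1]\cup[2,3])$. The convex $\varphi_1$ with $\varphi_1'(x)=x$ off $(1,2)$ and $\varphi_1'=2$ on $(1,2)$ is optimal, and $\Xi=0$ $Q$-a.e., yet $\varphi_1$ is not differentiable at $1$. Since the gradient lives in $L^2(Q)$, read the ``in particular'' $Q$-a.e. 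The everywhere-on-the-support statement of \cref{theorem:W-normal-equations} comes from the touching inequality \eqref{eq:dual_ineq}, which is available only at an optimizer.
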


	 \begin{remark}\label{remark:regression_barycneter}
	 	In the case where all the covariates are equal, \eqref{eq:lin_wls} is a barycenter problem in Wasserstein space \citep{aguehBarycentersWassersteinSpace2011}.
	 	In this case, \eqref{eq:lin_mult} is the well known multimarginal formulation of the barycenter problem, and \cref{thm:lin_dual,thm:opt_duals} recover the duality results for barycenters obtained by Agueh and Carlier~\cite[]{aguehBarycentersWassersteinSpace2011}.
	 	Our statistical results for Wasserstein least squares regression will supply new convergence results for barycenters as corollaries, see \cref{section:barycenter}.
	 \end{remark}

\section{Gradient descent algorithms}\label{sec:alg}
A canonical approach for designing algorithms to solve optimization problems in Wasserstein space is to discretize the gradient flow of the associated objective function along Wasserstein geodesics; see Chapter~6 of \cite{ChewiNilesWeedRigollet2025} for a sampler of applications.
We apply this strategy to the Wasserstein least squares objective introduced in \cref{section:linear case}; the full first-order geometry of $G$ is developed in \cref{appendix:FOG}.
The two algorithms derived below are the computational workhorses of the experiments in \cref{sec:experiments}.

Recall that the $W_2$-gradient flow of $G$, the $\cP_{2,{\text{a.c}}}(\RR^{p\times d})$-variable objective function of~\eqref{eq:lin_wls}, with starting measure $Q$ is the curve of measures $({Q}^t)_{t\geq0}$ that solves the system
\begin{equation}\label{eq:W-Flow-linear}
    \begin{cases}
        \dot{Q}^t= - \WGrad_{{Q}^t}G(Q^t) \\
        Q^0=Q,
    \end{cases}
\end{equation}

The Wasserstein gradient descent scheme for $G$ with starting measure $Q\in\cP_2(\mathbb{R}^{p\times d})$, the corresponding discretization of \eqref{eq:W-Flow-linear}, is
\begin{equation}\label{eq:gradient_descent_G}
\begin{cases}
    Q^{k+1}:=\exp_{Q^k}(-\tau\WGrad_{Q^k}G(Q^k)) \quad \forall k>0 \text{ and } k\in \mathbb{N},  \\
    Q^0:= Q.
\end{cases}
\end{equation}
where $\exp_{Q^k}:T_{Q^k}\cP_2(\RR^{p\times d})\to\cP_2(\RR^{p\times d})$ is the exponential map with assignment rule
\begin{equation}
    \exp_{Q^k}(v):=(\Id+v)_{\sharp}Q^k \quad \forall v\in T_{Q^k}\cP(\RR^{p\times d}).
\end{equation}
\Cref{lemma:W-gradient-of-G} gives
\begin{equation}\label{eq:W-Flow-linear'}
    \WGrad_{\dot{Q}^t}G(Q^t)=-\frac{1}{n}\sum^n_{i=1}(\bx_i(\nabla\varphi^t_i(\bB^\top\bx_i))^\top -\bx_i\bx_i^\top \bB ),
\end{equation}
which means that the main difficulty of solving \eqref{eq:lin_wls} via a gradient descent scheme boils down to having access to $\{\nabla\varphi^t_i\}_{i=1}^n$, the collection of Brenier maps from $Q^t_{\bx_i}$ to $\nu_i$ at each $t\geq0$.

We study two cases where the optimal transport between $Q_{\bx_i}$ and $\nu_i$ is explicit: the Gaussian (\cref{alg:bw_gd}) and the 1-d responses (\cref{alg:wls-gd-particle}) cases.

\begin{remark}[Non-geodesic convexity of $G$]\label{rem:non-convex-G}
The non-geodesic convexity of $G$\footnote{A function $f$ on a metric space $(X,d)$ is \emph{geodesically convex} if $f(\gamma(t))\le(1-t)f(\gamma(0))+tf(\gamma(1))$ for every constant-speed geodesic $\gamma:[0,1]\to X$ and $t\in[0,1]$  \cite[see][Section~5.2]{ChewiNilesWeedRigollet2025}.} is the central difficulty for gradient descent: without convexity, standard descent arguments do not guarantee convergence to a global minimum.

Indeed, Wasserstein barycenters are a special case of Wasserstein least squares ---obtained by taking a constant design $\bx_i\equiv c$, so that every response is fitted against the same marginal $Q_c$ (see \cref{section:barycenter})---and \citet[Section~B.2]{chewiGradientDescentAlgorithms2020c} construct explicit counterexamples showing that the Wasserstein barycenter functional fails to be geodesically convex.

\begin{sloppypar}
This non-convexity is precisely the motivation for \citet{chewiGradientDescentAlgorithms2020c} and \citet{altschulerAveragingBuresWassersteinManifold2021} to replace convexity by a Polyak--\L{}ojasiewicz condition~\citep{karimi2016linear} in order to recover convergence rates for barycenter gradient descent. We leave a parallel analysis for the full Wasserstein least squares functional to future work.
\end{sloppypar}

At a deeper level, the non-convexity stems from the fact that $(\cP_2(\RR^{p\times d}),W_2)$ is a CBB(0) space---a metric space of non-negative Alexandrov curvature \citep{ambrosioGradientFlowsMetric2008,sturm2006geometry1,alexander2023alexandrovgeometryfoundations}---in which squared-distance functions are geodesically concave, so each summand of $G$ is generally geodesically concave rather than convex.
\end{remark}

\subsection{Bures-Wasserstein gradient descent for the linear case}\label{sec:BW_GD}
In \cref{appendix:geometry_of_gaussians}, we show that when each of the responses is Gaussian, then the solution to the Wasserstein least squares problem can be taken to be Gaussian as well. The reduction rests on parametrizing Gaussian measures $Q\in\cP_2(\RR^{p\times d})$ by their vectorized mean $m_Q\in\RR^{dp}$ and covariance $\Sigma_Q\in\mathbf{S}^{dp}_{++}$, where the $\vect(\cdot)$ operator identifies coefficient matrices $\bB^\top$ with vectors in $\RR^{dp}$. In these coordinates, \cref{appendix:geometry_of_gaussians} derives the closed-form marginal covariances $\Sigma_{Q_{\bx_i}}$, Brenier maps between $Q_{\bx_i}$ and $\nu_i$, and the Bures-Wasserstein gradient of $G$ (all collected in \cref{lemma:marginal_cov_gaussian}), along with first-order optimality conditions for the Gaussian problem.
Following the view adopted by \cite{altschulerAveragingBuresWassersteinManifold2021} and \cite{10.5555/3600270.3601319}, we may therefore exploit this fact to study the Gaussian Wasserstein least squares problem as an optimization over the Bures-Wasserstein manifold.

With these tools in hand, we specialize the Wasserstein gradient flow \eqref{eq:W-Flow-linear} to Gaussian responses $\nu_i\in\cP_2(\RR^d)$ with mean zero and covariance $\Sigma_{\nu_i}$. The flow takes the form
\begin{equation}\label{eq:W-flow-gaussian-mean-zero}
    \begin{cases}
        \dot{Q^t}=\frac{1}{n}\sum^n_{i=1}\bx_i\bx_i^\top(\cdot)(\Sigma_{Q^t_{i}}\#\Sigma_{\nu_i}- I_d)\\
        Q^0= \mathcal{N}_{dp}(0,\Sigma_{Q^0}).
    \end{cases}
\end{equation}
We show in \cref{appendix:geometry_of_gaussians} that both \eqref{eq:W-flow-gaussian-mean-zero} and its discrete counterpart \eqref{eq:gradient_descent_G} propagate within the class of Gaussian measures, and admit explicit expressions in terms of the covariance $(\Sigma_{Q^t})_{t\geq0}$ alone.

\begin{proposition}\label{prop:full_gaussian_gf}
The following two properties hold for the Wasserstein least squares functional with mean-zero Gaussian data:
\begin{enumerate}
    \item The Bures-Wasserstein  gradient flow that solves \eqref{eq:W-flow-gaussian-mean-zero} is the curve of Gaussian measures $\left(Q^t=\mathcal{N}_{dp}(0,\Sigma_{Q^t})\right)_{t\geq0}$, where $(\Sigma_t)_{t\geq 0}$ is a solution of the Lyapunov equation
    \begin{equation}
        \dot{\Sigma}_{Q^t}=M_t\Sigma_{Q^t}+\Sigma_{Q^t}M_t
\end{equation}
and $M_t:=(\frac{1}{n}\sum^n_{i=1}\bx_i\bx_i^\top \otimes(\Sigma_{Q^t_{\bx_i}}\#\Sigma_{\nu_i}- I_d))$.
    \item A sequence $(Q^k)_{k\in\mathbb{N}}$ produced by \eqref{eq:gradient_descent_G} at each iteration, has covariances $(\Sigma_{Q^k})_{k\in\mathbb{N}}$ given by each iteration of Algorithm \ref{alg:bw_gd}.
\end{enumerate}

\end{proposition}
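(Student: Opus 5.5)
The plan is to show that, along Gaussian measures, the Wasserstein gradient of $G$ is a \emph{linear} vector field on $\RR^{p\times d}$. Both the flow~\eqref{eq:W-flow-gaussian-mean-zero} and the scheme~\eqref{eq:gradient_descent_G} would then reduce to finite-dimensional recursions on covariance matrices.

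\textbf{Step 1: the gradient is linear.} Let $Q=\mathcal N_{dp}(0,\Sigma_Q)$ with $\Sigma_Q\succ0$. Each marginal $Q_{\bx_i}$ is the law of $\bB^\top\bx_i=(\bx_i^\top\otimes I_d)\vect(\bB^\top)$. It is therefore a centered Gaussian with covariance
\[
\Sigma_{Q_{\bx_i}}=(\bx_i^\top\otimes I_d)\,\Sigma_Q\,(\bx_i\otimes I_d),
\]
which is positive definite because $\bx_i\neq0$ and $\Sigma_Q\succ 0$. The Brenier map between centered nondegenerate Gaussians is the symmetric linear map $T_i=\Sigma_{Q_{\bx_i}}\#\Sigma_{\nu_i}$; these facts are collected in \cref{lemma:marginal_cov_gaussian}. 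Substituting $\nabla\varphi_i(\by)=T_i\by$ into \cref{lemma:W-gradient-of-G} gives
\[
-\WGrad_Q G(Q)(\bB)=\frac1n\sum_{i=1}^n \bx_i\bx_i^\top\bB\,(T_i-I_d).
\]
Transposing and vectorizing with the identity $\vect(AXB)=(B^\top\otimes A)\vect X$ and the symmetry of $T_i$, this becomes $\vect(\bB^\top)\mapsto M\,\vect(\bB^\top)$, where $M=\frac1n\sum_i \bx_i\bx_i^\top\otimes(T_i-I_d)$. The matrix $M$ is symmetric, and the ordering of the Kronecker factors has to be checked against the convention for $\vect$.

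\textbf{Step 2: continuous time.} First I solve the matrix ODE $\dot\Sigma_t=M_t\Sigma_t+\Sigma_tM_t$, in which $M_t$ is computed from $\Sigma_t$ as in Step~1.
\begin{itemize}
\item The right-hand side is locally Lipschitz on the open cone of positive definite matrices, since $\Sigma\mapsto\Sigma_{Q_{\bx_i}}$ is linear and $(A,B)\mapsto A\#B$ is smooth on positive definite pairs. Cauchy--Lipschitz therefore gives a local solution.
\item Positive definiteness is preserved. Let $R_t$ solve the linear ODE $\dot R_t=M_tR_t$ with $R_0=I$. Then $\Sigma_t=R_t\Sigma_0R_t^\top$ and $R_t$ is invertible.
\end{itemize}
Next I set $Q^t=\mathcal N(0,\Sigma_t)=(R_t)_\#Q^0$. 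Then $Q^t$ is the image of $Q^0$ under the flow of the linear, time-dependent velocity field $v_t(z)=M_tz$. Hence $(Q^t)$ solves the continuity equation $\partial_tQ^t+\nabla\cdot(Q^tv_t)=0$. By Step~1, $v_t=-\WGrad_{Q^t}G(Q^t)$, so $(Q^t)$ solves~\eqref{eq:W-flow-gaussian-mean-zero}.

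To claim that this is \emph{the} flow, I would appeal to uniqueness: the Gaussian curve is a solution whose velocity field is Lipschitz in space, and any solution starting at the same point must coincide with it. If that uniqueness is not available in the generality of \cref{appendix:FOG}, I would state part~(1) as an existence statement.

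\textbf{Step 3: discrete time.} If $Q^k$ is Gaussian, then the exponential-map step is
\[
Q^{k+1}=\bigl(\Id-\tau\WGrad_{Q^k}G(Q^k)\bigr)_\#Q^k=(I+\tau M_k)_\#Q^k .
\]
This is again centered Gaussian, with covariance $\Sigma_{k+1}=(I+\tau M_k)\Sigma_k(I+\tau M_k)$; here I use the symmetry of $M_k$. Induction on $k$ then identifies the sequence $(\Sigma_{Q^k})$ with the iterates of Algorithm~\ref{alg:bw_gd}.

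\textbf{Main obstacle.} I expect the delicate point to be nondegeneracy in the discrete scheme. The continuous flow stays positive definite by the argument in Step~2. In the discrete scheme, however, $I+\tau M_k$ may be singular, and then $Q^{k+1}$ is degenerate and the Brenier maps in Step~1 are not defined. The first thing I would try is to note that $M_k\succeq -I$: each $T_i\succeq0$, so $\bx_i\bx_i^\top\otimes(T_i-I_d)\succeq-\bx_i\bx_i^\top\otimes I_d$, giving $M_k\succeq-\lambda_{\max}(\Sigma_{XX})\,I$. Consequently a step size $\tau<1/\lambda_{\max}(\Sigma_{XX})$ keeps $I+\tau M_k$ invertible, so each iterate stays nondegenerate.
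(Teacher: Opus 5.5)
Your proposal is correct and uses the same ingredients as the paper: linearity of $\WGrad G$ at centered nondegenerate Gaussians, the identity $\vect(AXB)=(B^\top\otimes A)\vect(X)$ giving $M=\frac1n\sum_i\bx_i\bx_i^\top\otimes(T_i-I_d)$, and the push-forward of a Gaussian by a linear map. Your Step~3 is exactly the paper's Part~2; the paper phrases it through $M_k=-\E_Q[\nabla^2_{\bB}\,\delta G(Q)]$, which is the same linear map.

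The real difference is the direction of Part~1. The paper assumes $Q^t$ is Gaussian, writes the particle ODE $\dot\bB_t=v_t(\bB_t)$, and differentiates $\E[\vect(\bB_t^\top)\vect(\bB_t^\top)^\top]$. This shows that any Gaussian solution has covariance solving the autonomous, locally Lipschitz Lyapunov equation. You solve that equation first and realize $Q^t=(R_t)_\#Q^0$, which shows that such a solution exists and stays nondegenerate; the paper never checks this. The statement needs both directions.

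The paper's direction is also the right fix for your uniqueness step. Lipschitz continuity of \emph{your} velocity field does not give uniqueness: a competing solution $\widetilde Q^t$ is transported by its own field $-\WGrad G(\widetilde Q^t)$, the equation is nonlinear, and $G$ is not geodesically convex. What is actually available is uniqueness among Gaussian curves, which the paper's computation supplies.

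Two smaller points:
\begin{enumerate}
\item The statement is for all $t\geq0$, so local Cauchy--Lipschitz is not enough and you need a priori bounds. For example, the decrease of $G$ bounds $\Sigma_t$ from above when $\bX$ has full column rank. Liouville's formula $\det R_t=\exp\bigl(\int_0^t\operatorname{tr}M_s\,ds\bigr)\geq e^{-d\operatorname{tr}(\Sigma_{XX})t}$ then keeps $\Sigma_t$ away from singular matrices on bounded intervals.
\item Your step-size condition $\tau<1/\lambda_{\max}(\Sigma_{XX})$ is a worthwhile addition that the paper omits. For large $\tau$ the matrix $I+\tau M_k$ can be singular, and the iteration then leaves the regime where the Gaussian Brenier maps and Lemma~\ref{lemma:W-gradient-of-G} apply.
\end{enumerate}
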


\begin{algorithm}[htbp]
\caption{Bures--Wasserstein Gradient Descent}
\label{alg:bw_gd}
\begin{algorithmic}[1]
\renewcommand{\algorithmicrequire}{\textbf{Input:}}
\renewcommand{\algorithmicensure}{\textbf{Output:}}

\Require Step size $\tau > 0$; design points $\{\bx_i\}_{i=1}^n$; initial covariance $\Sigma_{Q^0}$; target covariances $\{\Sigma_{\nu_i}\}_{i=1}^n$.
\Ensure Final covariance matrix $\Sigma_{Q^K}$.

\State \textbf{Initialize:} $\Sigma_{Q^0}$

\For{$k = 0, 1, \ldots, K-1$}
    \State $\displaystyle M_k \leftarrow \tau \left( \sum_{i=1}^n \frac{1}{n} \bx_i \bx_i^\top \otimes (\Sigma_{Q^k,i} \# \Sigma_{\nu_i} - I_d) \right) + I_{dp}$
    \State $\Sigma_{Q^{k+1}} \leftarrow M_k \Sigma_{Q^k} M_k$
\EndFor

\State \Return $\Sigma_{Q^K}$
\end{algorithmic}
\end{algorithm}

\subsection{Wasserstein least squares when $d=1$}
When $d=1$ we can find a solution to the Wasserstein least squares problem via a gradient descent (GD) algorithm since at each step we have a concrete expression for the Brenier maps involved.
Specifically,
\begin{equation}
    \nabla\varphi_{Q_{\bx_i}\to\nu_i}:=f_{i}^{-1}\circ g_{\bx_i}
\end{equation}
with $f_i$ and $g_{\bx_i}$ being respectively the cumulative distribution functions of $\nu_i$ and $Q_{\bx_i}$.

We begin the operationalization of the gradient descent scheme by looking at the particle interpretation of \eqref{eq:W-Flow-linear}, meaning, any $\bbeta\sim Q^0\in \cP_2(\RR^p)$ induces a solution $(\bbeta^t)_{t\geq 0}$ of the following ODE
\begin{equation}\label{eq:particle-W-Flow}
\begin{cases}
    \dot\bbeta^t&=\frac{1}{n}\sum^n_{i=1}(f_{i}^{-1}\circ g^t_{\bx_i}(\bx_i^\top \bbeta^t) -\bx_i^\top \bbeta^t )\bx_i\\
    \bbeta^0&=\bbeta
 \end{cases}
\end{equation}
where $\bbeta_t\sim Q_t$ for every $t\geq0$.
This is a McKean--Vlasov ODE (which depends on the distribution of the solution via the cumulative distribution function $g^t_{\bx_i}$).

To implement \eqref{eq:particle-W-Flow}, we employ the corresponding mean-field interacting particle system.
Let $\{\bbeta_j^0\}_{j=1}^M$ be a collection of $M$ i.i.d. random variables coming from $Q^0$. Set $\widehat{Q}^0:=\sum_{l=1}^M\delta_{\bbeta^0_l}$ and $\widehat g^0_{\bx_i}$ to be the cdf of $(\cdot^\top\bx_i)_{\sharp}\widehat{Q}^0$. Then we define $(\widehat{Q}^t)_{t\geq0}$,
the particle gradient flow estimate of $(Q^t)_{t\geq 0}$, to be the curve of empirical measures formed with the solutions $\{(\bbeta^t_j)_{t\geq0}\}_{j=1}^M$ of the  $M$-particle system
\begin{equation}\label{eq:particle-gf}
\begin{cases}
    \dot\bbeta^t_j&=\frac{1}{n}\sum^n_{i=1}(f_{i}^{-1}\circ \widehat g^t_{\bx_i}(\bx_i^\top \bbeta^t_j) -\bx_i^\top \bbeta^t_j )\bx_i,\\
    \widehat Q^t&=\frac{1}{M}\sum_{l=1}^M\delta_{\bbeta^t_l}, \\
    \bbeta^0_j&=\bbeta_j,
 \end{cases}
\end{equation}
where $\widehat{g}^t_{\bx_i}$ is the cdf of $\widehat{Q}^t_{\bx_i}$.

\begin{algorithm}[H]
\caption{Particle gradient descent for Wasserstein least squares ($d=1$)}
\label{alg:wls-gd-particle}
\begin{algorithmic}[1]
\renewcommand{\algorithmicrequire}{\textbf{Input:}}
\renewcommand{\algorithmicensure}{\textbf{Output:}}

\Require Design matrix $\mathbf{X} = (\mathbf{x}_1, \ldots, \mathbf{x}_n)^\top \in \mathbb{R}^{n \times p}$, target distributions $\{\nu_i\}_{i=1}^n$ with CDFs $\{f_i\}_{i=1}^n$
\Require Number of particles $M$, step size $\tau > 0$, number of iterations $T$
\Ensure Particles $\{\boldsymbol{\beta}_j\}^M_{j=1} \subset \mathbb{R}^p$ representing the approximate minimizer $Q^T$

\State \textbf{Initialize:} $\{\boldsymbol{\beta}_j^0\}_{j=1}^M \subset \mathbb{R}^p$ \Comment{Particle representation of $Q^0$}

\For{$k = 0, 1, \ldots, T-1$}
    \For{$i = 1, \ldots, n$}
        \State $\bz_{i,j}^k \leftarrow \mathbf{x}_i^\top \boldsymbol{\beta}_j^k$ for $j = 1, \ldots, M$ \Comment{Pushforward samples}
        \State $\widehat g^k_{\mathbf{x}_i} \leftarrow \text{empirical CDF of } \{\bz_{i,j}^k\}_{j=1}^M$
        \State $\nabla\varphi_{Q^k_{\mathbf{x}_i} \to \nu_i} \leftarrow f_i^{-1} \circ \widehat g^k_{\mathbf{x}_i}$ \Comment{Brenier map}
        \State $T_{i,j}^k \leftarrow \nabla\varphi_{Q^k_{\mathbf{x}_i} \to \nu_i}(\bz_{i,j}^k)$ for $j = 1, \ldots, M$ \Comment{Transported samples}
    \EndFor
    \For{$j = 1, \ldots, M$}
        \State $\boldsymbol{\beta}_j^{k+1} \leftarrow \boldsymbol{\beta}_j^k + \frac{\tau}{n} \sum_{i=1}^n (T_{i,j}^k - \bz_{i,j}^k) \mathbf{x}_i$ \Comment{Gradient step}
    \EndFor
\EndFor

\State \Return $\{\boldsymbol{\beta}_j^T\}_{j=1}^M$
\end{algorithmic}
\end{algorithm}

\begin{remark}[Convergence]
Neither Algorithm~\ref{alg:bw_gd} nor Algorithm~\ref{alg:wls-gd-particle} comes with a formal convergence guarantee at this stage.
Nevertheless, \cref{theorem:smoothness,prop:convergence_rate} in \cref{appendix:FOG} establish that $G$ is $\eta$-smooth along Wasserstein geodesics and that the iterates~\eqref{eq:gradient_descent_G} reach an $\varepsilon$-critical point in $O(\varepsilon^{-2})$ steps.
Though we are not able to rule out the possibility of spurious stationary points, our empirical experiments (see \cref{sec:experiments}) suggest that these algorithms work well in practice.
\end{remark}

\begin{remark}[Polynomial time algorithms in fixed dimension]
	When $\nu_1, \dots, \nu_n$ are finitely supported on at most $m$ atoms, the derivation of the multimarginal formulation in the proof of \cref{thm:lifted_main} shows that the optimal $Q$ is also finitely supported, on at most $m^n$ atoms, and can be found by linear programming; of course, this algorithm runs in polynomial time only if $n = O(1)$.
	However, in fixed dimension (i.e., if both $p$ and $d$ are constant), it is possible to exploit the geometric structure of the Wasserstein least squares problem to obtain algorithms with $\mathrm{poly}(n, m)$ running time (with exponent depending on $p$ and $d$) by using the techniques of~\cite{altschuler2021wasserstein_jmlr}.
	Though the resulting algorithm runs in polynomial time, it is far less practical than the gradient flow algorithms we employ.
\end{remark}

\section{Estimation via Wasserstein least squares}\label{sec:estimation}

In this section, we propose a statistical model for the Wasserstein linear regression problem.
As we shall see, the lifted objective defined in \Cref{sec:lifting} will furnish a natural estimator for this setting.

We retain the same notational convention as above: given a probability measure $Q \in \cP(\RR^{p \times d})$ and $\bx \in \RR^p$, we write $Q_{\bx} \in \cP(\RR^d)$ for the push-forward of $Q$ under the map $\bB \mapsto \bB^\top \bx$.

Let $\{(\bx_i, \nu_i)\}_{i = 1}^n$ be a collection of $n$ covariate/response pairs in $\RR^p \times \cP_2(\RR^{d})$.
We consider the following \textit{template deformation model}~\citep{Boissard2015Distribution,zemelFrechetMeansProcrustes2018a,LeGouic2022FastCE,Panaretos_regression} for Wasserstein linear regression:
\begin{equation}\label{eq:model}\tag{WLR}
	\nu_i = (\nabla \phi_{i})_\# Q^\star_{\bx_i} \quad \quad i = 1, \dots, n\,,
\end{equation}
where $Q^\star \in \cP(\RR^{p \times d})$ is a fixed, unknown distribution, and where $\{\phi_i\}_{i=1}^n$ are i.i.d.\ random functions\footnote{We view $\phi_i$ as a random element of the space of $C^2$ convex functions, equipped with the $C^2_{\mathrm{loc}}$ topology, so all the objects discussed below are measurable.} on $\RR^d$ satisfying the following conditions:
\begin{enumerate}
	\renewcommand{\theenumi}{C\arabic{enumi}}
	\item The functions $\phi_i$ are almost surely convex, twice differentiable, and satisfy $\phi_i(0) = 0$ and $\E[\|\nabla \phi_i(0)\|^2]\leq d\sigma^2$. \label[condition]{cond:convex}
		\item For each $\by \in \RR^d$, the function $\phi_i$ satisfies $\E[\nabla \phi_i(\by)] = \by$. \label[condition]{cond:identity}
	\item There exist positive constants $\alpha, \beta$ such that $\alpha \mathrm{I} \preceq \nabla^2 \phi_i \preceq \beta \mathrm{I}$ almost surely. \label[condition]{cond:alphabeta}
\end{enumerate}
The random maps $\nabla \phi_i$ represent the source of noise in the model.
Indeed, under our conditions, the random maps $\nabla \phi_i$ may be viewed as \textit{mean-zero perturbations in the Wasserstein space}: \cref{cond:convex} implies that $\nabla \phi_i$ is an \textit{optimal} transport map, and
\cref{cond:identity} says that $\E[\nabla \phi_i]$ is the identity function on $\RR^d$.
The responses $\nu_i$ in \eqref{eq:model} are therefore equal to $Q^\star_{\bx_i}$ up to corruptions which, on average, leave the measure unchanged.
More rigorously, this interpretation can be justified by the fact that the above conditions guarantee that $Q^\star_{\bx_i}$ is the Fréchet mean of the random measure $(\nabla \phi_{i})_\# Q^\star_{\bx_i}$:
\begin{equation}\label{eq:perturbed_is_barycenter}
	Q^\star_{\bx_i} = \argmin_{P \in \cP(\RR^d)} \E[W_2^2((\nabla \phi_{i})_\# Q^\star_{\bx_i}, P)]\,.
\end{equation}
We refer to the works cited above for a discussion of the statistical interpretation of~\eqref{eq:perturbed_is_barycenter} as an average on the Wasserstein space.

At the level of random variables, \eqref{eq:model} can be written as
\begin{equation}\label{eq:model'}\tag{WLR${}^\prime$}
	\nu_i = \law(\bY_i \mid \phi_i)\,, \quad \quad \text{where } \bY_i  = \nabla \phi_i(( \bB^\star)^\top \bx_i)\,.
\end{equation}
In this formulation, $\bB^\star \sim Q^\star$ is a random matrix independent of $\phi_i$.
Written in this way, we see that~\eqref{eq:model} models the situation in which the response is the \textit{law} of a linear projection of a random variable $\bB^\star$, corrupted by (possibly nonlinear) noise.
In particular, taking $\phi_i(\by) = \|\by\|^2/2 + \langle \by, \eps_i \rangle$ for i.i.d.\ mean-zero random vectors $\eps_i$, we see that~\eqref{eq:model} includes the random-effects model~\eqref{eq:random_effects}.

Since it is impractical to assume that the statistician can observe a probability distribution $\nu_i$ directly, we complete our model by assuming that the statistician has access to \textit{estimators} $\widehat \nu_i$ of $\nu_i$.
In the simplest case, we assume that these estimators correspond to simply observing $m$ i.i.d.\ samples from $\nu_i$:
\begin{equation}\label{eq:nu_hat_empirical}
	\widehat \nu_i = \sum_{j=1}^m \delta_{Y_{i, j}}\,, \quad \quad  Y_{i,j} \sim \nu_i \text{ i.i.d.}
\end{equation}
We discuss alternative options, where $\widehat \nu_i$ is potentially a nonparametric estimator of $\nu_i$, in Remark~\ref{remark:other_estimators}, below.

The observed data is the $n$ pairs $\{(\bx_i, \widehat \nu_i)\}_{i=1}^n$.
Under this model, we will study the performance of the Wasserstein least squares estimator, as defined in \Cref{sec:lifting}:
\begin{equation}\label{eq:empirical_estimate}
    \widehat Q \in \argmin_{Q \in \cP_2(\RR^{p \times d})} \frac 1n \sum_{i=1}^n W_2^2(\widehat \nu_i, Q_{\bx_i})\,.
\end{equation}
Note that this estimator is similar to the definition of an empirical Wasserstein barycenter~\citep{LeGouic2022FastCE}.
This connection is not superficial, and we shall see in \cref{section:barycenter} that our techniques give new convergence bounds in this setting as well.

In order to assess the performance of $\widehat{Q}$ as an estimator of $Q^\star$, we will evaluate its \textit{in-sample error}~\cite[Chapter 7]{hastie2009elements}:
\begin{equation}\label{eq:W-in-sample-error}
    \E[\sum^n_{i=1} \frac{1}{n}W^2_2(\widehat{Q}_{\bx_i},Q^\star_{\bx_i})]\,.
\end{equation}
We will also assume a standard incoherence condition on the fixed design~\citep{Hoaglin-1978,candes-2009}:
\begin{equation}\label{eq:incoherence}
	\bx_i^\top(\bX^\top\bX)^+\bx_i \leq \mu \frac p n \quad \forall i \in [n]
\end{equation}
for some $\mu \in \RR$.
Our main statistical theorem is:

\begin{theorem}\label{thm:stat_main}
  Assume \Cref{cond:convex,cond:identity,cond:alphabeta}, and that the
  covariates satisfy the incoherence condition~\eqref{eq:incoherence}.
  Suppose further that the latent responses are uniformly bounded:
  \begin{equation}\label{eq:latent_bounded}
  	\|\bB^\top \bx_i\| \leq M \quad \forall i \in [n], \text{ $Q^\star$-almost surely.}
  \end{equation}
  and that $\widehat\nu_i$ is an empirical measure
  from $m$ i.i.d.\ samples from $\nu_i$.
  Let $R = M\max(1, \beta\sqrt{\mu p})$.
  Then the in-sample error of the
  estimator~\eqref{eq:empirical_estimate} satisfies
  \begin{equation}
    \E\left[\frac{1}{n}\sum_{i=1}^n W_2^2(\widehat Q_{\bx_i},
    Q^\star_{\bx_i})\right] \lesssim R^2 \frac \beta \alpha \sqrt{\frac{pd}{n}}
    + \frac{R^2}{\alpha}\, r_m + \sigma^2\frac{pd}{n}\,,
  \end{equation}
  where
  \begin{equation}
  	r_m := \begin{cases}
  		m^{-2/d} & \text{if $d > 4$} \\
  		m^{-1/2} \log m & \text{if $d = 4$} \\
  		m^{-1/2} & \text{if $d < 4$.}
  	\end{cases}
  \end{equation}
\end{theorem}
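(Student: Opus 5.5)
The plan is to split the error into a template-deformation part and a sampling part. Write $G(Q)=\frac1n\sum_i W_2^2(\nu_i,Q_{\bx_i})$ and $\widehat G(Q)=\frac1n\sum_i W_2^2(\widehat\nu_i,Q_{\bx_i})$. For the sampling part, the triangle inequality and the standard empirical rates $\E W_2^2(\widehat\nu_i,\nu_i)\lesssim R^2 r_m$ (the $\nu_i$ are supported in a ball of radius $\lesssim \beta M+\|\nabla\phi_i(0)\|$ by \cref{cond:alphabeta}) show that $\widehat Q$ is an approximate minimizer of $G$, up to an additive $O(R^2 r_m)$ after the usual expansion of $(a+b)^2$. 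This error is later divided by $\alpha$ through the curvature argument below, which accounts for the term $R^2 r_m/\alpha$.

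The core is a quadratic-growth inequality for the population objective around $Q^\star$, proved via the duality of \cref{thm:opt_duals} rather than geodesic convexity. The natural dual certificate comes from the model itself: $\nabla\phi_i$ is the Brenier map from $Q^\star_{\bx_i}$ to $\nu_i$, so take $\varphi_i=\phi_i$. The normal equations \eqref{eq:W-normal-equations} would require $\frac1n\sum_i \bx_i(\nabla\phi_i(\bB^\top\bx_i)-\bB^\top\bx_i)^\top=0$. This fails exactly, but by \cref{cond:identity} the left side is a mean-zero average of $n$ independent terms. So I would correct the certificate by a linear function: set $\varphi_i(\by)=\phi_i(\by)-\langle \by, \bm{\ell}_i\rangle$, or equivalently perturb the dual $\psi_i$ by $\langle \bB, \bx_i\bm{\ell}_i^\top\rangle$ with the $\bm{\ell}_i$ chosen by projecting the residual onto the column space of $\bX$ using $(\bX^\top\bX)^+$.

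With this in hand, for any $Q$ I would show
\[
G(Q)-\frac1n\sum_i\Big(\int(\|\cdot\|^2-2\phi_i^*)\,d\nu_i+\int(\|\cdot\|^2-2\phi_i)\,dQ_{\bx_i}\Big)\ \geq\ \frac{\alpha}{\beta}\cdot\frac1n\sum_i W_2^2(Q_{\bx_i},Q^\star_{\bx_i}).
\]
The idea is that $\|\cdot\|^2/2-\phi_i^*$-type gaps are strongly convex with respect to the transport cost when $\phi_i$ is $\alpha$-convex and $\beta$-smooth; this is the standard stability bound for semi-dual OT potentials, giving $W_2^2(\nu,\rho)-\text{(dual value)}\ge \frac{1}{\beta}\,\|T-\nabla\phi_i\|^2$, and $\alpha$-strong convexity then converts this to $W_2^2$ between preimages. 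Combining this with optimality of $\widehat Q$ yields $\frac{\alpha}{\beta}\cdot\text{error}\le$ (linear fluctuation term evaluated at $\widehat Q$) plus (sampling error).

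The fluctuation term has the form $\sup_Q \E_{\bB\sim Q}\langle \bB,\frac1n\sum_i \bx_i\bm{\xi}_i(\bB^\top\bx_i)^\top\rangle$, where $\bm{\xi}_i=\nabla\phi_i-\mathrm{Id}$ is mean zero. This is the step I expect to be the main obstacle: we need a supremum over $\bB$ in a bounded set, not a fixed $\bB$, and the noise depends on $\bB$ nonlinearly. I would first try to bound it by projecting onto the column space using the incoherence condition \eqref{eq:incoherence}, reducing to $\E\|(\bX^\top\bX)^{+1/2}\sum_i\bx_i\bm{\xi}_i^\top\|$. This gives $O(R^2\beta\sqrt{pd/n})$ via a second-moment computation in which $\beta$-Lipschitzness of $\bm{\xi}_i$ on the ball of radius $M$ and independence across $i$ supply the $\sqrt{pd/n}$, while $\|\bm{\xi}_i(0)\|$ produces the $\sigma^2 pd/n$ term. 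If the nonlinearity in $\bB$ prevents this, the fallback is a chaining or discretization argument over $\{\|\bB^\top\bx_i\|\le M\}$, exploiting the Lipschitz bound. Dividing by $\alpha/\beta$ then assembles the three terms in the statement.
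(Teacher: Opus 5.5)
Your architecture matches the paper's: a Fenchel--Young gap with the uncorrected $\phi_i$ as certificate, a mean-zero fluctuation term handled by chaining, and a separate sampling term. But the step that makes the fluctuation term controllable is missing. Applied at $\widehat Q$, your displayed inequality leaves $\int Z_{\bX\bB}\,\mathrm{d}(Q^\star-\widehat Q)(\bB)$ with $Z_{\bY}=\frac1n\sum_i\bigl(\|\by_i\|^2-2\phi_i(\by_i)\bigr)$. Since $\widehat Q$ depends on the $\phi_i$, you must bound this by a supremum over a \emph{deterministic} set containing $\operatorname{supp}(\widehat Q)$. You propose $\{\|\bB^\top\bx_i\|\le M\}$, but only $Q^\star$ is known to live there.

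This is exactly where incoherence and the factor $\sqrt{\mu p}$ in $R$ come from. By the multimarginal formulation, any minimizer satisfies $\bX\bB=\bm H\bY$ a.s., where $\bm H=\bX(\bX^\top\bX)^+\bX^\top$ and $\bY$ follows some coupling of $\widehat\nu_1,\dots,\widehat\nu_n$. Using $H_{ii}\le\mu p/n$ and $\|\bY\|_F\le\sqrt n\,\beta M$, this gives $\|\bB^\top\bx_i\|\le\sqrt{H_{ii}}\,\|\bY\|_F\le\beta M\sqrt{\mu p}$. Your own use of incoherence, reducing to $\E\|((\bX^\top\bX)^+)^{1/2}\sum_i\bx_i\bm\xi_i^\top\|$, only makes sense when $\bm\xi_i$ is constant.

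That support bound, and the bounded-increment step in the chaining, both need $\operatorname{supp}(\widehat\nu_i)\subset B_{\beta M}$. This fails when the shifts $\bm v_i=\nabla\phi_i(0)$ are merely square integrable. The paper therefore removes them exactly: $W_2^2(\mu,\nu)$ equals the squared distance between the centered measures plus $\|m(\mu)-m(\nu)\|^2$. So the estimator splits into a centered problem, identical with or without the shifts, plus OLS on the means, which yields $\sigma^2pd/n$ via $\E\|\bm H\bm V\|_F^2\le d\sigma^2p$. Keeping $\bm v_i$ inside the fluctuation term would instead give a term of order $R\sigma\sqrt{pd/n}/\alpha$, which is linear in $\sigma$ and not dominated by the stated bound.

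Second, your sampling step loses a square root. Expanding $(a+b)^2$ leaves the cross term $2W_2(\widehat\nu_i,Q_{\bx_i})\,W_2(\widehat\nu_i,\nu_i)$. Here $W_2(\widehat\nu_i,Q_{\bx_i})$ is of the order of the deformation noise, not small. So you get only $R\,\E W_2(\widehat\nu_i,\nu_i)\lesssim R^2\sqrt{r_m}$, e.g.\ $m^{-1/d}$ instead of $m^{-2/d}$ for $d>4$. Reaching $R^2r_m$ requires the semi-dual argument the paper uses. Uniformly in $\mu\in\cP(B_R)$, $|W_2^2(\mu,\nu_i)-W_2^2(\mu,\widehat\nu_i)|$ is at most $|\int\|\by\|^2\,\mathrm{d}(\nu_i-\widehat\nu_i)|$ plus the supremum of $|\int f\,\mathrm{d}(\nu_i-\widehat\nu_i)|$ over convex $R$-Lipschitz $f$ on $B_R$. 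This empirical process has expected supremum $\lesssim R^2r_m$.

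The curvature constant is also $\alpha$, not $\alpha/\beta$. It suffices to integrate $\phi_i(\by)+\phi_i^*(\bz)-\langle\bz,\by\rangle\ge\frac\alpha2\|\by-\nabla\phi_i^*(\bz)\|^2$ against an optimal coupling of $Q_{\bx_i}$ and $\nu_i$, because $(\by,\nabla\phi_i^*(\bz))$ then couples $Q_{\bx_i}$ with $Q^\star_{\bx_i}$. Your detour through $\beta$-smoothness actually yields only $\alpha^2/\beta$. Even $\alpha/\beta$ would put an extra factor $\beta$ on both of the first two terms of the bound. Finally, the linear correction $\phi_i-\langle\cdot,\bm\ell_i\rangle$ is unnecessary, and a constant $\bm\ell_i$ cannot cancel a $\bB$-dependent residual anyway.
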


\begin{remark}
	It is easy to see that the rate of estimation guaranteed by~\cref{thm:stat_main} is minimax optimal up to a logarithmic factor in its dependence on $m$.
	Indeed,
	the rate $r_m$ is optimal for the problem of estimating a compactly supported probability measure on $\RR^d$ in Wasserstein distance from $m$ i.i.d.\ samples, up to the $\log m$ factor when $d = 4$~\citep{Singh-Poczos}, and our model includes that one as a special case.
	The term $\sigma^2 \tfrac{pd}{n}$ is the familiar minimax rate for linear regression (see, e.g., \cite{rigollet_high_dimensional_2023}), which is also a special case of our model.
	This term is therefore also impossible to improve.
	However, we do not know whether the $n^{-1/2}$ dependence in the first term is optimal.
\end{remark}

\begin{remark}\label{remark:other_estimators}
	In many applications, the error in \cref{thm:stat_main} is dominated by the slow $r_m$ term, which arises due to the slow convergence of $\widehat \nu_i$ to $\nu_i$ in Wasserstein distance, which are an unavoidable feature of empirical estimators for Wasserstein distances in general~\citep[see, e.g.,][]{FournierGuillin2015,WeedBach2019,Manole-NW-smooth}.
	This rate can be ameliorated by imposing smoothness assumptions on $\nu_i$ and replacing the empirical estimator~\eqref{eq:nu_hat_empirical} by a suitable nonparametric estimator.

	For example, \cite{Niles-Weed2022-qm} and \cite{Divol2021} show that if $\nu_i$ is a probability measure on $[0,1]^d$ or the $d$-dimensional flat torus with $s$-smooth density for any $d \geq 3$, then a suitable wavelet or kernel density estimator $\widehat \nu_i$ satisfies
	\begin{equation}
		\E W_2(\widehat \nu_i, \nu_i) \lesssim m^{-(s+1)/(2s + d)}\,.
	\end{equation}
	In this setting, for any compact set $\Omega$, we obtain by the triangle inequality
	\begin{align*}
		\E \sup_{\mu \in \cP(\Omega)} |W_2^2(\mu, \widehat \nu_i) - W_2^2(\mu, \nu_i)| & \leq \E \sup_{\mu \in \cP(\Omega)} (W_2(\mu, \widehat \nu_i) + W_2(\mu, \nu_i)) W_2(\widehat \nu_i, \nu_i) \\
		& \lesssim m^{-(s+1)/(2s + d)}\,,
	\end{align*}
	and so the term $r_m$ in \cref{thm:stat_main} may be replaced by $m^{-(s+1)/(2s + d)}$.
	This is an improvement for any $d \geq 5$ and $s \geq \frac{2d -1}{d -4}$.

\end{remark}

\subsection{Application to Wasserstein barycenters}\label{section:barycenter}
The results of the previous section can be reinterpreted to get a new bound for the estimation of Wasserstein barycenters in  $\cP_2(\mathbb{R}^d)$ under the template deformation model.
In what follows, set $S:=\cP_2(\RR^d)$, $\mathfrak{Q}$ a probability measure in $\cP_2(S)$, and
\begin{equation}\label{eq:w^2_barycenter}
    \bb^\star\in\argmin_{\bb\in S }\E_{\nu\sim\mathfrak{Q}}[W^2_2(\bb,\nu)].
\end{equation}
Recall that  in a \textit{template deformation model for the barycenter} $\bb^\star$  one observes independent copies of measures $\nu$ on the support of $\mathfrak{Q}$ and assumes they are a push-forward measure of $\bb^\star$ through a noisy random distortion $T\in \cA:= L^2(\mathbb{R}^d,\RR^d; \bb^\star)$ \citep{LeGouic2022FastCE,zemelFrechetMeansProcrustes2018a,Boissard2015Distribution}.
In other words,
\begin{equation}\label{eq:tdfm}
    \nu=T_{\sharp}\bb^\star \text{,}\quad T\sim P\in\cP_2(\cA)
\end{equation}
where $P\in \cP_2(\cA)$.
We shall note that equations \eqref{eq:w^2_barycenter} and \eqref{eq:tdfm} coincide with equations \eqref{eq:perturbed_is_barycenter} and \eqref{eq:model} respectively when $P$ is a probability measure supported on the functions satisfying \cref{cond:alphabeta,cond:convex,cond:identity} and if we set $p=1$, $\bx_i=1$ for every $\nu_i\in\supp(\mathfrak{Q})$.
Then we can understand the problem of estimating $W_2$-barycenters through the lens of Theorem \ref{thm:stat_main}.
\begin{corollary}\label{coro:bary_rate}
   Let $\mathfrak{Q}\in\cP_2(S)$ have a barycenter $\bb^\star$ with $\operatorname{supp}(\bb^\star) \subset B_M$.
   Assume we observe $n$ measures $\nu_i$ which come from the template deformation model \eqref{eq:tdfm} where each $T=\nabla\phi_i$ satisfies \cref{cond:alphabeta,cond:convex,cond:identity}. Set
   \[\widehat{\bb}:=\argmin_{\bb} \frac{1}{n}\sum_{i=1}^n W^2_2(\bb,\nu_i).\]
   Then
\begin{equation}\label{eq:bary-rate}
    \E\left[ W_2^2(\widehat{\bb}, \bb^\star)\right] \lesssim  M^2 \frac{\beta^3}{\alpha}\sqrt{\frac{d }{n}} + \frac{\sigma^2 d}{n}.
\end{equation}
\end{corollary}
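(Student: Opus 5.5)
We plan to obtain the corollary as a direct specialization of \cref{thm:stat_main}, taking $p=1$ and $\bx_i = 1$ for all $i\in[n]$. With this design, $\cF$ is the set of constant linear maps $\RR\to\RR^d$, so a measure $Q \in \cP(\RR^{1\times d})$ is identified with a measure on $\RR^d$, and $Q_{\bx_i} = Q$ for every $i$. The template deformation model \eqref{eq:tdfm}, with $T = \nabla\phi_i$ satisfying \cref{cond:convex,cond:identity,cond:alphabeta}, then coincides with \eqref{eq:model} with $Q^\star = \bb^\star$.

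The first check is that $\bb^\star$ is indeed the barycenter of $\mathfrak{Q}$. This follows from \eqref{eq:perturbed_is_barycenter}, which is a consequence of \cref{cond:identity,cond:convex}. Next, the estimator \eqref{eq:empirical_estimate} reduces exactly to $\widehat\bb = \argmin_{\bb}\frac1n\sum_i W_2^2(\bb,\nu_i)$. Here the measures $\nu_i$ are observed exactly rather than through empirical samples. We handle this by formally letting $m\to\infty$, i.e.\ taking $\widehat\nu_i=\nu_i$. In the proof of \cref{thm:stat_main} the $r_m$ term arises only from bounding $\E\sup_\mu|W_2^2(\mu,\widehat\nu_i)-W_2^2(\mu,\nu_i)|$, which vanishes when $\widehat\nu_i=\nu_i$; this is the same substitution used in \cref{remark:other_estimators}. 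The in-sample error becomes $\frac1n\sum_i W_2^2(\widehat\bb,\bb^\star) = W_2^2(\widehat\bb,\bb^\star)$.

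Next we verify the hypotheses. For incoherence, $\bX = \mathbf{1}\in\RR^{n\times 1}$, so $\bx_i^\top(\bX^\top\bX)^+\bx_i = 1/n$ and \eqref{eq:incoherence} holds with $\mu=1$, $p=1$. For boundedness, $\operatorname{supp}(\bb^\star)\subset B_M$ gives \eqref{eq:latent_bounded}. Hence $R = M\max(1,\beta)$, and \cref{thm:stat_main} yields
\[
\E W_2^2(\widehat\bb,\bb^\star)\lesssim M^2\max(1,\beta)^2\frac{\beta}{\alpha}\sqrt{\frac dn} + \frac{\sigma^2 d}{n}.
\]

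The remaining step is to replace $\max(1,\beta)^2$ by $\beta^2$. We expect this to be the only delicate point. \cref{cond:identity} forces $\E[\nabla^2\phi_i]=\mathrm I$, so \cref{cond:alphabeta} implies $\alpha\le 1\le\beta$, and hence $\max(1,\beta)=\beta$. This gives the factor $M^2\beta^3/\alpha$ and completes the bound \eqref{eq:bary-rate}. If the $m\to\infty$ passage is not transparent from the statement of \cref{thm:stat_main}, we would instead rerun its proof verbatim with the sampling error term set to zero.
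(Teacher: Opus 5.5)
Your proposal is correct and follows the same route the paper indicates: specialize \cref{thm:stat_main} to $p=1$, $\bx_i=1$, so that $\mu=1$ and $R=M\max(1,\beta)=M\beta$ (using $\alpha\le 1\le\beta$), and take the responses to be observed exactly so that the $r_m$ term vanishes. The paper only says the corollary follows ``through the lens of'' the theorem; your explicit checks of the incoherence constant, the $m=\infty$ substitution in the proof, and $\max(1,\beta)=\beta$ are exactly the details it leaves to the reader.
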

\begin{remark}[Subtleties of the template deformation model, and a comparison to the lower bound of~\citet{hundrieser2024lower}]
	In our model, we require that $\varphi$ satisfy \Cref{cond:identity}, which reads
	\begin{equation}\label{eq:everywhere}
		\E[\nabla \varphi(\by)] = \by \quad \text{ for all $\by \in \RR^d$.}
	\end{equation}
	This agrees with the definition proposed by~\cite{Boissard2015Distribution} and~\cite{zemelFrechetMeansProcrustes2018a}.
	However, it is stronger than the assumption made by~\citet{LeGouic2022FastCE}, who require only that
	\begin{equation}\label{eq:almost_everywhere}
		\E[\nabla \varphi(\by)] = \by \quad\text{ for  $\bb^\star$-almost every $\by \in \RR^d$.}
	\end{equation}
	In the language of \citet{zemelFrechetMeansProcrustes2018a}, this condition is equivalent to requiring that $\bb^\star$ is a \textit{Karcher mean}.
	The arguments of \citet[Theorem 2]{zemelFrechetMeansProcrustes2018a} show that, under \Cref{cond:alphabeta}, these two assumptions agree if $\bb^\star$ is absolutely continuous, with density positive on a convex subset of $\RR^d$.

	However, a construction of \citet{hundrieser2024lower} shows that there is a significant difference between~\eqref{eq:everywhere} and~\eqref{eq:almost_everywhere} in general.
	Section 3.5 of that work constructs an example showing that it is possible to construct $\varphi$ satisfying \Cref{cond:alphabeta} with $\beta/\alpha = O(1)$ and~\eqref{eq:almost_everywhere}---but not~\eqref{eq:everywhere}---for which \textit{no} estimate like~\eqref{eq:bary-rate} can hold.
	In their example, the barycenter is supported on two Diracs.
\end{remark}
This rate should be understood in comparison with the rate derived in Corollary 4.4 of \cite{LeGouic2022FastCE}. Our rate is neither a generalization nor a special case of theirs. The rate in \cite{LeGouic2022FastCE} holds for measures supported in Hilbert spaces, while ours only works for measures supported in finite-dimensional vector spaces. Even then, when both rates hold, theirs converges considerably faster in terms of sample size.
Perhaps more importantly, in terms of \cite{zemelFrechetMeansProcrustes2018a}, the results in \cite{LeGouic2022FastCE} are stronger since they only require $\bb^\star$ to be a \textit{Karcher mean}. Meanwhile, our result is valid only when $\bb^\star$ is a Fréchet mean.
However, they require that \cref{cond:alphabeta} hold with $\beta - \alpha <1$, which is significantly more stringent than our assumption.
For the case of general $\alpha, \beta > 0$ covered by \cref{coro:bary_rate}, the best known rates, due to \citet{ahidar-coutrixConvergenceRatesEmpirical2020}, are exponentially slower.
\section{Experiments}\label{sec:experiments}

We begin with a large-scale real-data application (\cref{appendix:bmi} contains full pre-processing details and supplementary figures), then turn to two synthetic settings that demonstrate the structural virtues of the Wasserstein least squares framework; complete experimental specifications are in \cref{appendix:synthetic}.

\subsection{Modeling BMI with Wasserstein least squares}
\begin{sloppypar}
We examine data coming from the RAND Health and Retirement Study (HRS) Longitudinal File 2022~\citep{RANDHRS2022V1}, a nationally representative longitudinal survey comprising $16$ biennial waves (1992--2022) with approximately $45{,}000$ U.S.\ respondents aged 50 and older. The HRS is a longitudinal panel: in each wave of the study, individuals are asked about personal, demographic, and health information, yielding a data footprint which can be used to ask questions related to aging both at an \textit{individual} and \textit{population} level.
\end{sloppypar}

One such question is obesity, commonly measured through body mass index (BMI), a widely used epidemiological metric with established associations to metabolic disease, longevity, and healthcare burden~\citep{gutin2018bmi}.
In this experiment we aim to characterize how a person's BMI changes across decades by using distributional data. Capturing this calls for an approach that is at once a distributional regression and a model of individual random variation, in the spirit of the linear mixed effects framework~\citep{Laird1982}. Based on \eqref{eq:model} and \eqref{eq:model'}, Wasserstein least squares is both.

Prior work has fitted linear mixed effects models in which age, birth cohort, and gender are covariates and individual heterogeneity is encoded as a random effect on the regression coefficients~\citep{life-course}. We adopt the same model structure but fit entirely from distributional data.
That is, we do \emph{not} use longitudinal per-individual data when computing our estimator. This per-individual data collected as part of the survey can then be used to validate our findings. For the purposes of our analysis, each observation is therefore the BMI distribution of a demographic \emph{cell}---a group of respondents sharing the same birth cohort ($5$-year bins: 1930--34 through 1955--59), survey wave, and gender---yielding $n = 164$ cells after quality filtering. Details can be found in Appendix \ref{appendix:bmi}.

Under the template deformation model~\eqref{eq:model} paradigm, this translates into describing the cell distributions $\nu_i$ through the quadratic design vectors
\begin{equation}\label{eq:bmi_covariate}
\bx_i = \bigl(1, \widetilde{a}_i, \widetilde{a}_i^2, \widetilde{c}_i, \widetilde{g}_i\bigr)^\top,
\end{equation}
where $\widetilde{a}_i$, $\widetilde{c}_i$, and $\widetilde{g}_i \in \{-1,+1\}$ are normalized age, birth cohort, and gender ($+1$ = female), so that
\begin{equation}\label{eq:bmi_wlr}
\nu_i = \mathrm{Law}\bigl(\nabla\phi_i\bigl((\bB^\star)^\top \bx_i\bigr)\bigr), \qquad \bB^\star \sim Q^\star \in \cP_2(\RR^p).
\end{equation}
The quadratic age term captures the empirical rise in BMI until approximately age 70, followed by a plateau or decline~\citep{life-course}. The symmetric gender coding ensures $\beta_0$ represents the predicted distribution at the mean age and cohort, averaged across genders.

We represent $\widehat{Q}$, the Wassertein least squares estimator for this data, as a cloud of $M = 20{,}000$ particles fit via Algorithm~\ref{alg:wls-gd-particle}, and compare against Global Fréchet regression~\citep{PetersenMuller2019} on the same cells. The central inferential goal is the probability of crossing BMI $= 30$~\citep{cdc_bmi_2024} and how that probability updates as successive measurements accumulate, a question we assess directly against the individual HRS trajectories.

\subsubsection*{Results and analysis}
Both methods achieve similar in-sample fit: Wasserstein $R^2 \approx 0.90$ for the quadratic Wasserstein least squares and Fréchet models alike, and leave-one-out cross-validation errors differing by a mean $W_2$ gap of just $0.025$ BMI units.
This is expected since the two methods produce identical marginal predictions at the observed cells in expectation. Their architectures diverge in how they represent the \emph{joint} distribution of coefficients (\Cref{fig:corner_main}) and, consequently, in their capacity for conditional inference (\Cref{fig:double_cond_main}).

Both models also agree at the level of mean effects on the primary demographic trends: a baseline BMI of approximately $28.5$ kg/m$^2$; a positive cohort effect ($\widehat\beta_{\rm cohort} \approx 0.88$, consistent with the global rise of obesity \citep{WHO2000Obesity}); and a concave age trajectory ($\widehat\beta_{{\rm age}^2} \approx -0.37$), rising to a peak in early retirement before declining.
These headline figures are consistent across the two methods precisely because they reflect the dominant variation in the data and are well-captured by any reasonable regression model.
The differences emerge when we look beyond these mean effects.

The Wasserstein least squares estimator returns a full joint distribution $\widehat{Q}$ over the coefficient vector $$\bbeta = (\beta_0, \beta_{\rm age}, \beta_{{\rm age}^2}, \beta_{\rm cohort}, \beta_{\rm gender})^\top.$$
\Cref{fig:corner_main} (left) shows its corner plot: diagonal panels display the marginal distribution of each coefficient, and off-diagonal panels reveal genuine correlations among demographic effects.
On the other hand, as we describe in \cref{appendix:bmi}, Fr\'echet regression also implicitly yields a joint distribution over the coefficient vector: the predicted distribution corresponding to a covariate vector $\bx$ has quantile function $p \mapsto \bx^\top \widehat \beta(p)$ for $p \in (0, 1)$, where $\widehat \beta(p)$ is a least-squares estimate constructed from the quantile functions of the responses.
At the level of random variables, this also gives rise to a joint distribution over the coefficient vector, namely $\mathrm{Law}(\widehat \beta_0(U), \widehat \beta_{\rm age}(U), \widehat \beta_{{\rm age}^2}(U), \widehat \beta_{\rm cohort}(U), \widehat \beta_{\rm gender}(U))$, for $U \sim \mathrm{Unif}([0,1])$.
\cref{fig:corner_main} (right) shows the corresponding corner plot.

\begin{figure}[h]
    \centering
    \begin{subfigure}[t]{0.49\textwidth}
        \centering
        \includegraphics[width=\textwidth]{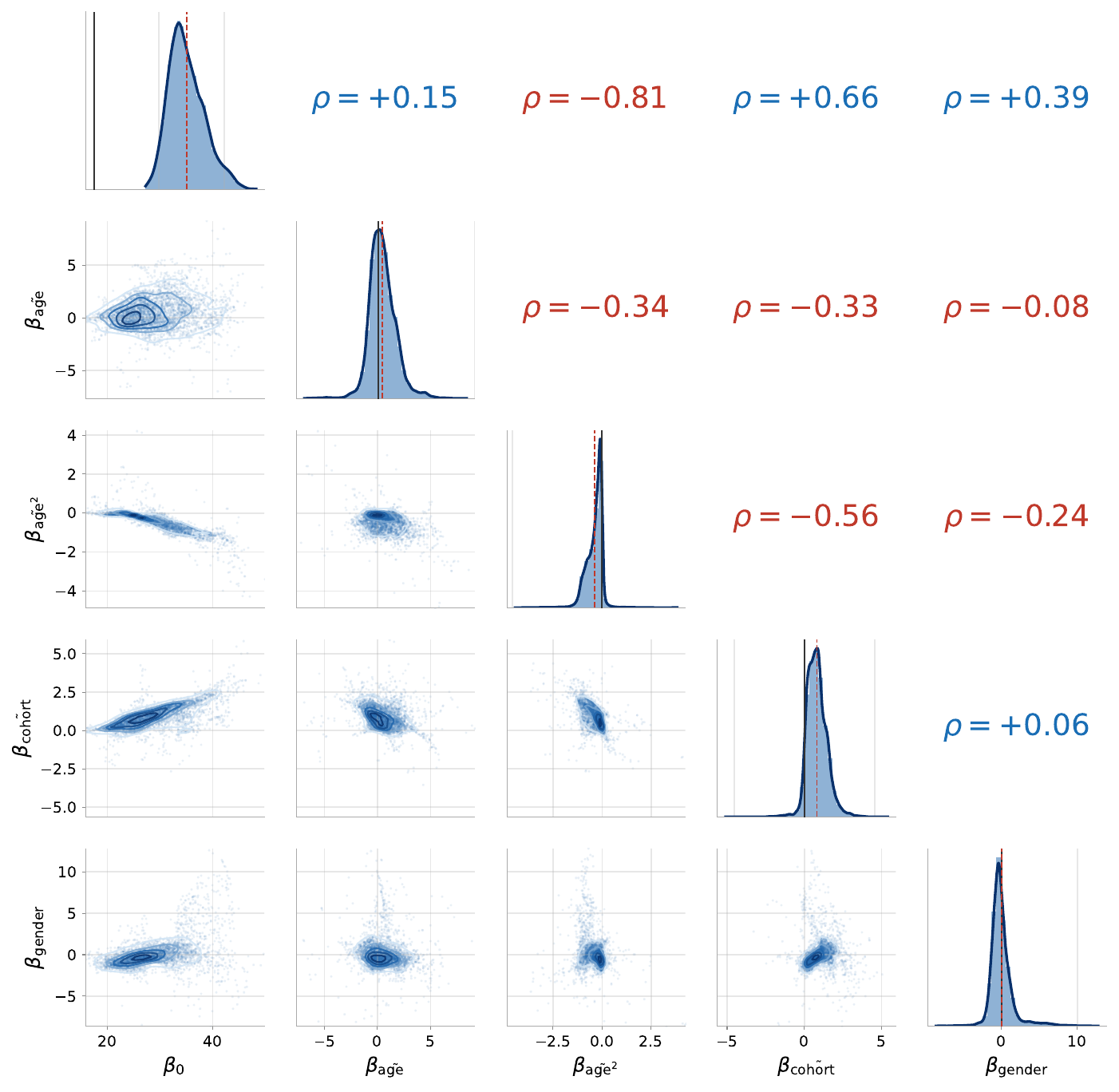}
        \caption{Wasserstein least squares: genuine joint distribution $\widehat{Q}$.}
        \label{fig:corner_main_wls}
    \end{subfigure}
    \hfill
    \begin{subfigure}[t]{0.49\textwidth}
        \centering
        \includegraphics[width=\textwidth]{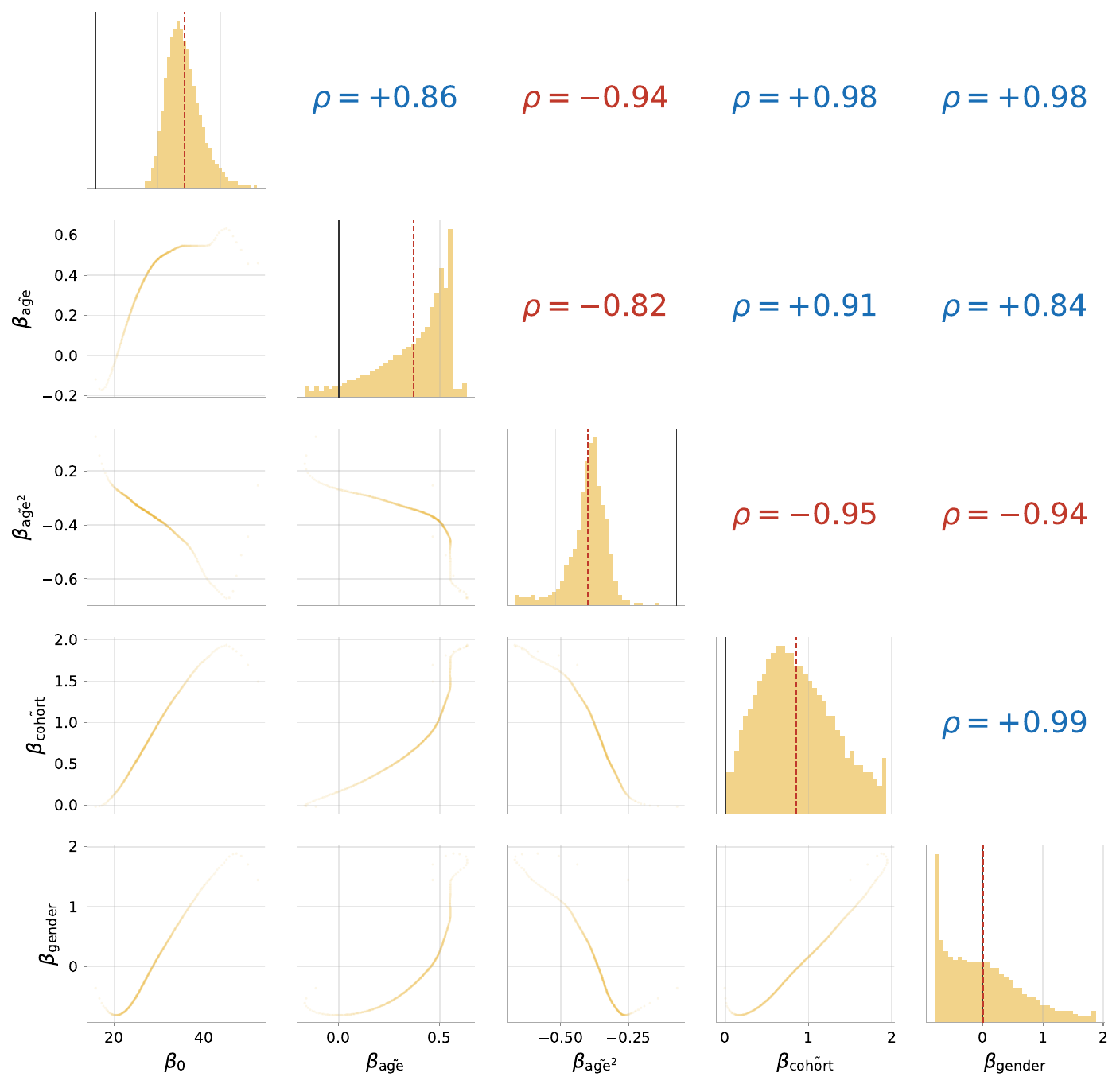}
        \caption{Fréchet: one-dimensional deterministic curve.}
        \label{fig:corner_main_frechet}
    \end{subfigure}
    \caption{Corner plots of estimated coefficient distributions on HRS BMI data (quadratic model, $n = 164$ cells).
    Diagonal: marginal distributions; lower triangle: pairwise scatter plots; upper triangle: Pearson correlations.
    \textbf{Wasserstein least squares} (left) recovers a genuine multivariate distribution with $p = 5$ independent degrees of freedom.
    \textbf{Fréchet} (right) collapses to a curve in $\RR^5$ parametrized by the single scalar $\pp$, forcing all pairs into near-perfect correlation and suppressing all subpopulation heterogeneity.}
    \label{fig:corner_main}
\end{figure}

Several features stand out in the Wasserstein least squares analysis.
Approximately $10\%$ of the particle mass satisfies $\beta_{{\rm age}^2} > 0$, identifying a subpopulation whose BMI trajectory is convex --- accelerating rather than plateauing with age.
The raw HRS trajectories confirm this: roughly $10\%$ of individuals in the 1940--44 female cohort exhibit a genuine BMI minimum followed by a late-life upturn (\cref{appendix:bmi}).
In the estimator obtained from Fréchet regression, $\widehat\beta_{{\rm age}^2}(p)$ is negative for all $p$, implying that \emph{zero} individuals have convex trajectories.
Similarly, the gender coefficient in Wasserstein least squares carries a standard deviation of $1.69$ --- more than twice the Fréchet estimate of $0.64$ --- reflecting that the male--female BMI gap varies substantially across subpopulations, a feature that independent quantile mapping compresses away.

\begin{sloppypar}
More broadly, since the joint distribution obtained in Fréchet regression is parametrized by this single scalar, the coefficients are perfectly correlated and the support of the joint distribution is a one-dimensional curve in $\RR^p$.
Concretely, knowing the intercept $\beta_0$ from the Fréchet fit determines
every other coefficient; the conditional variance of any slope given $\beta_0$ is essentially zero.
This is a structural consequence of pointwise quantile regression since Fréchet regression has exactly one degree of freedom in its coefficient distribution.
Wasserstein least squares, by contrast, retains all $p = 5$ degrees of freedom, enabling the model to capture localized demographic heterogeneity that is invisible to Fréchet regression.
\end{sloppypar}
\begin{figure}[h]
    \centering
    \includegraphics[width=.9\textwidth]{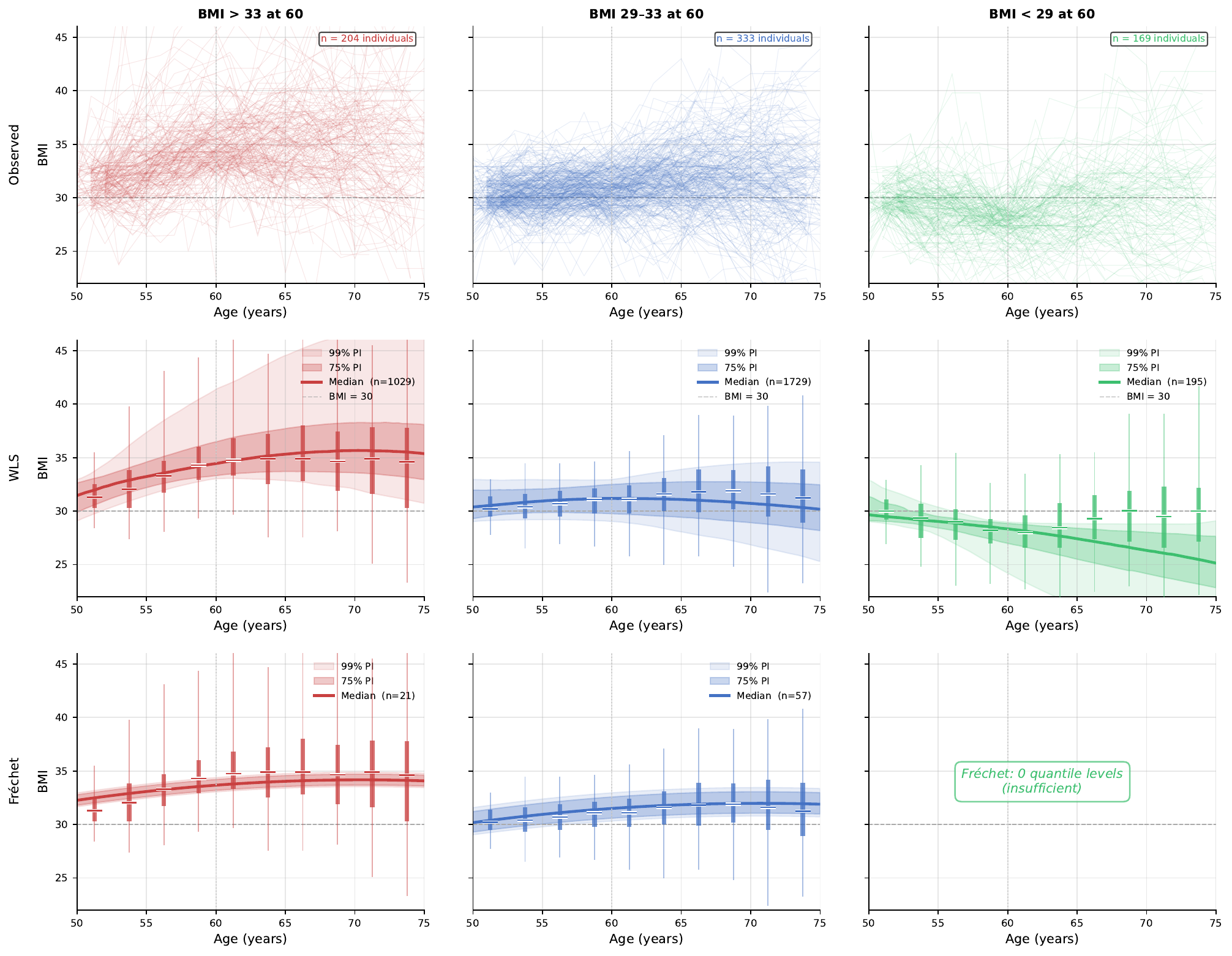}
    \caption{\textbf{Sequential conditioning (BMI $\in [29,33]$ at age~50; three age-60 scenarios), cohorts 1935--39 and 1940--44, both sexes.}
    Columns: worsening (BMI $> 33$, red), stable (BMI $29$--$33$, blue), improved (BMI $< 29$, green).
    Rows: observed trajectories (top), Wasserstein least squares (middle), Fréchet (bottom).
    Shaded bands: 75\% and 99\% prediction intervals; box plots: matched empirical distribution.
    Wasserstein least squares differentiates all three scenarios with well-separated, meaningful prediction bands.
    Fréchet produces results only for worsening and stable;
    the improved scenario is structurally impossible under the fitted model.}
    \label{fig:double_cond_main}
\end{figure}

The consequences of this structural difference become most visible when attempting to make conditional predictions.
For Wasserstein least squares, conditioning on observed BMI $\approx 31$ at age 50 is simple particle selection: retain
\begin{equation*}
    \cM_{50} = \bigl\{m : |\bx(50)^\top\bbeta_m - 31| \leq 2\bigr\}
\end{equation*}
and propagate those particles forward with age.\footnote{
  Here $m$ indexes the $M = 20{,}000$ particles $\{\bbeta_m\}$, each a draw from the fitted
  distribution $\widehat{Q}$ over the coefficient vector
  $\bbeta$.
  The covariate vector $\bx(a)$ encodes age $a$ together with the cohort and gender of the cell
  of interest (see~\eqref{eq:bmi_covariate}), so $\bx(50)^\top\bbeta_m$ is the BMI predicted by
  particle $m$ at age~50.
  The set $\cM_{50}$ therefore collects all particles whose age-50 prediction falls within
  $\pm 2$ BMI units of the target value of~31; the $\leq 2$ tolerance window is chosen to
  retain a sufficient number of particles for reliable downstream inference.}
Even at this single-conditioning stage, the Wasserstein least squares intervals are wide ($\approx 8$--$12$ BMI units at the 80\% level), correctly reflecting the genuine biological uncertainty: individuals at the same current BMI may follow vastly different long-term trajectories.
By contrast, the perfect correlations implicit in the Fréchet regression estimator yield
near-degenerate intervals.
Empirically, Wasserstein least squares achieves $87.7\%$ coverage at the nominal $99\%$ prediction interval; Fréchet achieves only $45.6\%$ (\cref{appendix:bmi}).

Introducing a second observation at age 60 further filters the particle set, separating three outcome groups: continued weight gain ($\text{BMI}_{60} > 33$), stable obesity ($\text{BMI}_{60} \in [29,33]$), and improvement ($\text{BMI}_{60} < 29$).
\Cref{fig:double_cond_main} shows the result.
Wasserstein least squares differentiates all three scenarios: the prediction bands separate clearly, widen appropriately as the retained sample shrinks ($n = 1{,}029$, $1{,}729$, and $195$ particles respectively), and the improved trajectory descends below the obesity threshold with a median that tracks the empirical data.

Fréchet regression again produces near degenerate intervals; in the third scenario, it produces nothing, because every quantile-level trajectory consistent with $\text{BMI}_{50} \approx 31$ also predicts $\text{BMI}_{60} \geq 29$.
A non-negligible fraction of HRS respondents lowers their BMI between ages 50 and 60, yet the Fréchet model cannot represent this outcome at all.
This failure is a direct consequence of the one-dimensional structure of the Fréchet coefficient distribution.
Wasserstein least squares, by retaining the full joint distribution of $Q^\star$, remains capable of expressing the full heterogeneity of the data.

\FloatBarrier
\subsection{Synthetic experiments}

We validate Wasserstein least squares on two controlled settings that illustrate the model's capacity to recover linear distributional structure from noisy observations.
In both settings, each observed distribution $\nu_i = (\nabla\phi_i)_\# Q^\star_{\bx_i}$ arises from the template deformation model~\eqref{eq:model}: a random convex map $\nabla\phi_i$ satisfying C1--C3 pushes the true template $Q^\star_{\bx_i}$ forward, adding noise whose form is unknown to the estimator.
Figure~\ref{fig:transport_main} illustrates this process for five noise families; the deviation $T(y)-y$ (bottom row) confirms that each map stays close to the identity on average, satisfying condition C2.

\begin{figure}[ht]
    \centering
    \includegraphics[width=\textwidth]{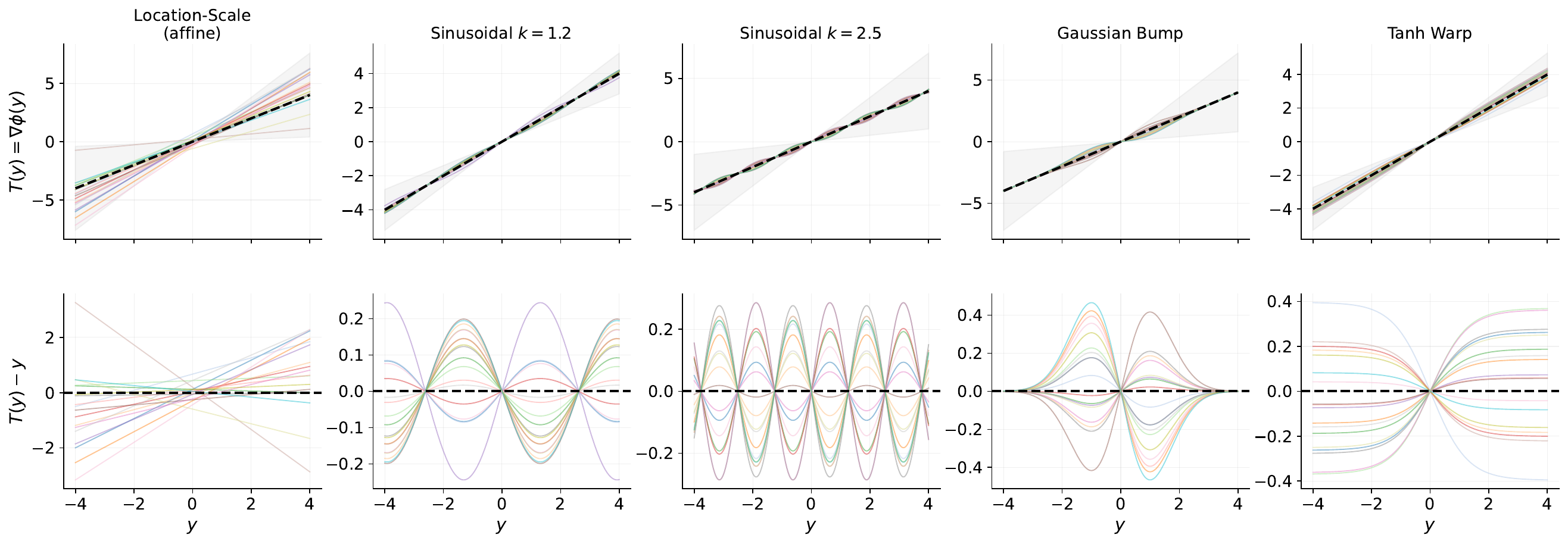}
    \caption{Random transport maps $\nabla\phi_i$ from five noise families (coloured) applied to the template $Q^\star_{\bx}$ (black dashed).
    Top row: the maps $T(y)$; shaded band shows the C3 curvature bounds.
    Bottom row: deviation $T(y)-y$ from the identity, illustrating C2 ($\mathbb{E}[T(y)]=y$).}
    \label{fig:transport_main}
\end{figure}

The two templates are: \textit{(i) Univariate} ($d=1$, $n=50$), with $Q^\star_{\bx}=\mathcal{N}(t,\,1+t^2)$ and covariate $\bx=(1,t)^\top$, $t\sim\mathcal{U}[-2,2]$; the variance is U-shaped, growing quadratically in $|t|$.
\textit{(ii) Bivariate Gaussian} ($d=2$, $n=50$), with $Q^\star_{\bx}=\mathcal{N}(\mu(t),\Sigma(t))$ where $\Sigma(t)=A+t(B+B^\top)+t^2C$ traces a quadratic arc on the cone of symmetric positive definite matrices.
Full specifications and numerical summaries are in \cref{appendix:synthetic}.
\begin{figure}[ht]
\includegraphics[width=.9\textwidth]{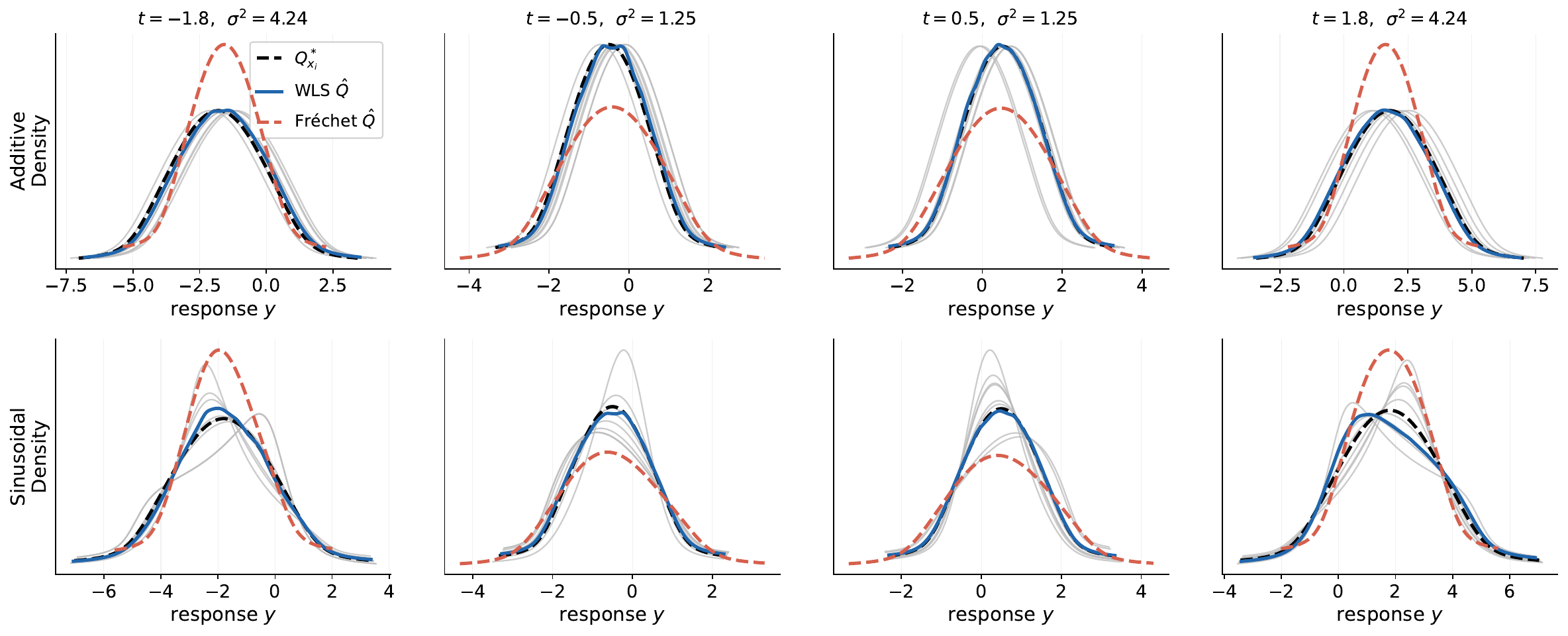}
    \caption{Univariate: estimated densities at $t\in\{-1.8,-0.5,0.5,1.8\}$. True template $Q^\star_{\bx_i}$ (black dashed), noisy observations $\nu_i$ (grey), Wasserstein least squares (blue), Fréchet (red dashed). Wasserstein least squares recovers the true template across the covariate range under additive noise. The U-shaped variance (univariate), Fréchet regression underestimates spread at the extremes.}
    \label{fig:fitting_main}
\end{figure}

Both templates share a feature that illuminates the modeling scope of Wasserstein least squares.
In the univariate Gaussian case, Fréchet regression~\citep[§6.2]{PetersenMuller2019} corresponds to the model $Q_x\sim\mathcal{N}(\mu_0+\beta x,\,(\sigma_0+\gamma x)^2)$.
The Wasserstein least squares model corresponds to $Q_x\sim\mathcal{N}(\mu_0+\beta x,\,\sigma_0^2+\gamma^2 x^2+2\rho x)$ for any $|\rho|\leq\sigma_0\gamma$, which is strictly more flexible: completing the square gives $\sigma_0^2+\gamma^2 x^2+2\rho x = (a+\gamma x)^2+b$ where $a=\rho/\gamma$ and $b=\sigma_0^2-\rho^2/\gamma^2\geq 0$, yielding one extra degree of freedom over Fréchet.

At the level of random variables, this extra freedom is $\mathrm{Cov}(B_0,B_1)=\rho$: the Fréchet parametrization is precisely the boundary case $\rho=\sigma_0\gamma$, which forces $B_0$ and $B_1$ to be perfectly dependent ($\mathrm{Corr}(B_0,B_1)=1$).
Our template $\sigma^2(t)=1+t^2$ corresponds to $\rho=0$, independent random coefficients, which is natural in many applications but cannot be expressed in the Fréchet family.

In the bivariate case, the same principle operates at the level of covariance matrices: Fréchet predictions are confined to the convex hull of the training responses and cannot reach the quadratic arc traced by $\Sigma(t)$.
We compare against two Fréchet baselines: \emph{Fréchet-GD}, which applies the iterative descent algorithm of \citet{bures-frechet}, and \emph{Fréchet-OLS}, a closed-form ordinary least squares fit on the covariance entries (details in \cref{appendix:synthetic}).

Figure~\ref{fig:fitting_main} shows the fitted distributions at four covariate values; Wasserstein least squares tracks the true template across the full range of $t$.
Figure~\ref{fig:necessity_main} focuses on the bivariate covariance trajectory: the true path bends, and Wasserstein least squares follows it faithfully in log-Euclidean coordinates.

\begin{figure}[h]
\includegraphics[width=\textwidth]{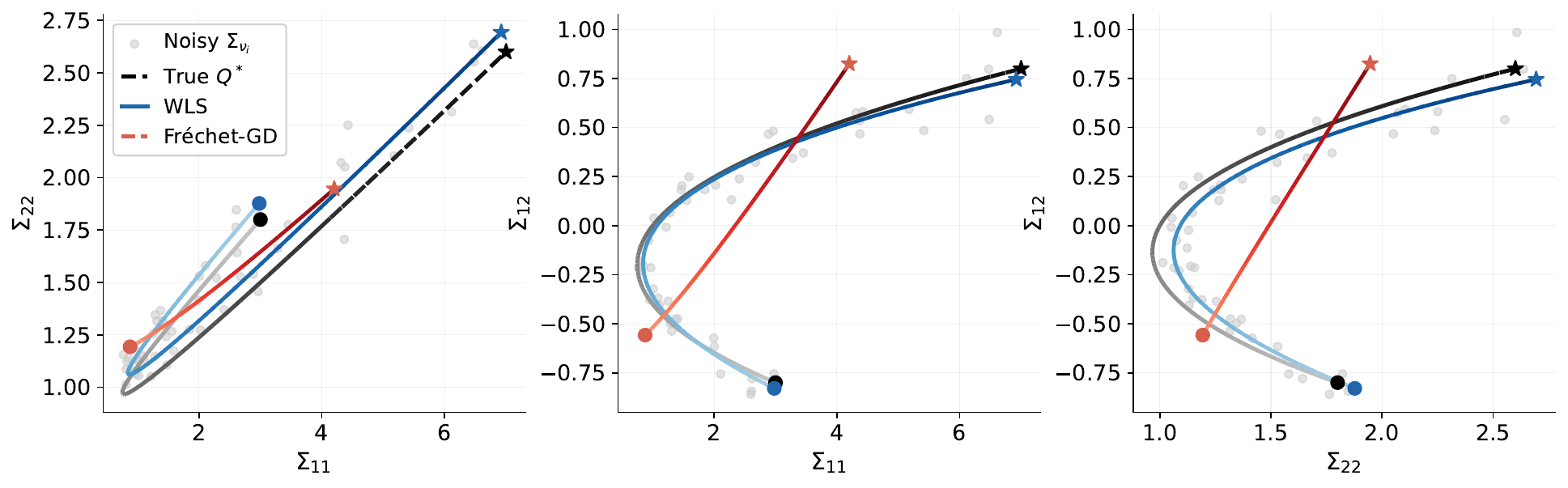}
        \caption{Covariance trajectory $t\mapsto\Sigma(t)$ projected onto three coordinate pairs of the SPD cone. Wasserstein least squares (blue) traces the curved true path (black); Fréchet-GD (red) follows a straight line.Bivariate Gaussian experiment: Wasserstein least squares recovers the quadratic covariance trajectory $\Sigma(t)=A+t(B+B^\top)+t^2C$. The log-Euclidean view (right) makes the curvature explicit; Fréchet regression's linear weighting prevents it from reaching beyond the convex hull of the training responses.}
        \label{fig:log_euclidean_main}
    \label{fig:necessity_main}
\end{figure}

\FloatBarrier

\section*{Acknowledgments}
The authors would like to thank Aram-Alexandre Pooladian, Jacob Shkrob, and Shay Sadovsky for helpful discussions. The second author was supported in part by NSF grant DMS-2339829.

\bibliographystyle{plainnat}
\bibliography{biblio}
\appendix
\section{Experiment: retirement data (full analysis)}\label{appendix:bmi}

Body Mass Index (BMI) is a widely used, simple, and cost-effective metric for population health, offering insights into metabolic disease associated with obesity, longevity, and a variety of healthcare burdens \citep{gutin2018bmi}.
To demonstrate the practical and theoretical advantages of the Wasserstein least squares framework in the analysis of distributional responses, we examine the BMI data from the RAND Health and Retirement Study (HRS) Longitudinal File 2022 (Version 1) \cite{RANDHRS2022V1}. We compare our approach to Global Fréchet regression \cite{PetersenMuller2019} and illustrate how our framework translates standard linear modeling methods—including coefficient interpretation and analysis of variance—directly into the space of probability distributions.

\subsubsection*{Modelling and pre-processing}
The HRS is a nationally representative longitudinal survey of U.S. citizens aged 50 and older and their spouses, sponsored by the National Institute on Aging and conducted biennially by the University of Michigan since 1992. The study comprises $16$ biennial waves spanning $1992$ to $2022$, with approximately $45{,}000$ unique respondents. For each respondent and wave, the database provides self-reported height and weight from which BMI is computed as weight (kg) divided by height squared (m$^2$) \cite{cdc_bmi_2024}.

 We adopt a \textquote{macroscopic} distributional approach where each observation is the BMI distribution of a specific demographic \textquote{cell}. A cell is defined as the set of participants in the study who share three demographic variables: birth cohort, survey wave, and gender. Respondents are assigned to six $5$-year birth cohorts based on birth year: $1930$--$1934$, $1935$--$1939$, $1940$--$1944$, $1945$--$1949$, $1950$--$1954$, and $1955$--$1959$.

Individual BMI observations are filtered to the range $[15, 55]$ kg/m$^2$ to exclude implausible values. Cells with fewer than $200$ valid observations are excluded to ensure reliable density estimation. For each retained cell $(c, w, g)$ corresponding to cohort $c$, wave $w$, and gender $g$, we compute a representative age as $a_{c,w} = y_w - (c + 2.5)$, where $y_w$ denotes the calendar year of wave $w$ and $c + 2.5$ is the cohort midpoint. The final sample comprises $n=164$ cells with ages ranging from approximately $35$ to $90$ years.

From the cell-level BMI observations, we construct continuous density estimates as follows. For each cell $i$, the raw BMI values $\{b_{i,1}, \ldots, b_{i,n_i}\}$ are smoothed using kernel density estimation with a Gaussian kernel and bandwidth selected via Scott's rule, yielding continuous density estimates $\nu_i$ evaluated on a grid of $200$ equally-spaced points spanning $[15, 55]$ kg/m$^2$. Each density is normalized so that it integrates to 1.

To capture the structural evolution of these BMI profiles, we model the cell-level distributions $\nu_i$ using a Wasserstein linear regression framework \eqref{eq:model'}, meaning
\begin{equation*}
    \nu_i = \text{law}(\mathbf{Y}_i)\,, \quad \quad \text{where } \mathbf{Y}_i  = \nabla \phi_i(( \mathbf{B}^\star)^\top \mathbf{x}_i)\, \quad \text{ and}\quad \bB^\star\sim Q^\star\in\cP_2({\RR^p}).
\end{equation*}
This formulation mimics the dynamics of a classical linear mixed-effects model when interpreted at the particle level. For any individual particle comprising the distribution of a demographic cell, the linear predictor $( \mathbf{B}^\star)^\top \mathbf{x}_i$ acts as a shared, group-specific random effect. We interpret the convex gradient $\nabla \phi_i$ as a functional error term capturing variation unexplained by the covariates.

Based on the insights of previous population-level studies of BMI \cite{life-course}, we consider two specifications for the design matrix $\bX$. Let $\bar{a}$ and $s_a$ denote the sample mean and standard deviation of cell ages, and let $\bar{c}$ and $s_c$ denote the analogous quantities for cohort midpoints. Define the normalized covariates
\begin{align*}
    \widetilde{a}_i &= \frac{a_i - \bar{a}}{s_a}, \qquad
    \widetilde{c}_i = \frac{c_i - \bar{c}}{s_c}, \qquad
    \widetilde{g}_i = \begin{cases} -1 & \text{if male} \\ +1 & \text{if female}. \end{cases}
\end{align*}

The \textbf{linear model} uses a design matrix $\bX \in \RR^{n \times 4}$ with rows
\begin{equation}
    \bx_i = (1, \widetilde{a}_i, \widetilde{c}_i, \widetilde{g}_i)^\top
\end{equation}
corresponding to an intercept, normalized age, normalized cohort, and gender indicator.

The \textbf{quadratic model} uses a design matrix $\bX \in \RR^{n \times 5}$ with rows
\begin{equation}
    \bx_i = (1, \widetilde{a}_i, \widetilde{a}_i^2, \widetilde{c}_i, \widetilde{g}_i)^\top
\end{equation}
incorporating a quadratic age term to capture the empirical observation that BMI tends to increase with age until approximately $70$ years, then plateaus or declines.

We normalized the continuous covariates to ensure the coefficients are on comparable scales and to facilitate interpretation. The intercept $\beta_0$ represents the predicted BMI distribution at the mean age and cohort, averaged across genders. The symmetric gender coding centers this covariate at zero, so that $\beta_{\text{gender}}$ captures the half-difference between female and male BMI distributions.

\subsubsection*{Model fitting and selection metrics}

To analyze the relationship between demographic covariates and BMI profiles, we fit two types of vector-covariate, distribution-response regression models to the data: Wasserstein least squares and Global Fréchet regression.

For the Wasserstein least squares approach, we evaluate several model specifications on the $n$ demographic cells, utilizing their covariate vectors $\{\bx_i\}_{i=1}^n$ and target BMI distributions $\{\nu_i\}_{i=1}^n$. These specifications vary primarily in two ways: we include an additional quadratic age covariate ($\text{age}^2$) in most models to capture nonlinear aging dynamics, and we scale the number of particles representing the fit to either $M = 2{,}000$ or $M = 20{,}000$, depending on the distributional resolution required for the analysis.

We compute the fit of the least squares estimator $\widehat{Q}$ using the procedure outlined in Algorithm \ref{alg:wls-gd-particle}. The estimator is represented as an empirical measure of $M$ particles $\{\boldsymbol{\beta}_j\}_{j=1}^M \subset \RR^p$, which we update via gradient descent with a learning rate $\tau_k = 0.1/(1 + 0.001k)$, a momentum coefficient $\rho = 0.9$, and a mini-batch size $B = 5$ over $T = 2{,}000$ iterations. The resulting particle cloud provides a nonparametric estimate of the distribution over the coefficient vectors $\boldsymbol{\beta} = (\beta_0, \beta_{\text{age}}, \beta_{\text{age}^2}, \beta_{\text{cohort}}, \beta_{\text{gender}})^\top$.

From this, we extract summary statistics including marginal distributions, pairwise correlations, and the covariance matrix
$$\Sigma_{\boldsymbol{\beta}}=\frac{1}{M-1}\sum_{j=1}^M(\boldsymbol{\beta}_j-\bar{\boldsymbol{\beta}})(\boldsymbol{\beta}_j-\bar{\boldsymbol{\beta}})^{\top},$$where $\bar{\boldsymbol{\beta}}=\frac{1}{M}\sum_{i=1}^M\boldsymbol{\beta}_i$.

As a benchmark for Wasserstein least squares, we fit the Global Fréchet regression model \citep{PetersenMuller2019} to the same distributional data. Let $S_{\nu_i}: (0,1) \to \RR$ denote the quantile function of the observed BMI distribution $\nu_i$ for cell $i$.
We evaluate these quantile functions on a uniform grid of $L = 500$ probability levels $\{p_\ell\}_{\ell=1}^L$ spanning $[0.001, 0.999]$. For each quantile level $p \in (0,1)$, we first compute the unconstrained least squares estimate:
\begin{equation}\label{eq:coefficients_frechet}
    \widetilde{\beta}(p) = (\beta_0(p),\dots, \beta_\text{gender}(p))^\top=(\bX^\top \bX)^{-1} \bX^\top S(p),
\end{equation}
where $S(p) = (S_{\nu_1}(p), \ldots, S_{\nu_n}(p))^\top \in \RR^n$ is the vector
of observed quantiles at level $\pp$, and $\bX \in \RR^{n \times 5}$ is the design
matrix with rows $\bx_i^\top$.

The unconstrained predicted quantile function for a covariate vector $\bx$ is
$\widetilde{S}_{\bx}(\pp) = \bx^\top \widetilde{\beta}(\pp)$. However, this prediction may
not be monotonically increasing in $\pp$, violating the definition of a quantile
function. Following the computational procedure in Section~6.1 of
\cite{PetersenMuller2019}, we project onto the space of valid quantile functions
via isotonic regression:
\begin{equation}
    \widehat{S}_{\bx} = \argmin_{q: q(\pp_1) \leq \cdots \leq q(\pp_L)}
    \sum_{\ell=1}^L \left( q(\pp_\ell) - \widetilde{S}_{\bx}(\pp_\ell) \right)^2.
\end{equation}
This projection is computed efficiently using the Pool-Adjacent-Violators Algorithm (PAVA) \citep{barlow1972statistical}\footnote{An important subtlety arises regarding monotonicity in Fréchet regression.
The unconstrained coefficients $\widetilde{\beta}(\pp)$ from \eqref{eq:coefficients_frechet} define a curve $\pp \mapsto \widetilde{\beta}(\pp)$ in coefficient space $\mathbb{R}^p$.
When visualizing the joint distribution of coefficient pairs $(\widetilde{\beta}_k(\pp), \widetilde{\beta}_j(\pp))$ as $\pp$ varies and where $j,k\in\{0,\text{age},\text{age}^2, \text{cohort},\text{gender}\}$, one might observe non-injective curves as in Figure \ref{fig:overall_comparison}.
This non-injectivity might suggest that the predicted quantile functions $\widetilde{S}_{\mathbf{x}}(\pp) = \mathbf{x}^\top \widetilde{\beta}(\pp)$ could violate monotonicity for certain covariate vectors $\mathbf{x}$.
In our BMI application, we verified empirically that $\widetilde{S}_{\mathbf{x}}(\pp)$ is monotonic for all observed covariate vectors $\mathbf{x}_i$, rendering the PAVA projection step unnecessary in practice. Nevertheless, for general applications where monotonicity violations may occur, the isotonic regression step described in \cite{PetersenMuller2019} remains essential.}.

To evaluate the goodness-of-fit and generalizability of our models, we conduct a comparative performance analysis using the Wasserstein coefficient of determination $R^2$, introduced in section 6.4 of \cite{PetersenMuller2019}, and leave-one-out cross-validation (LOO-CV).

As summarized in Table \ref{tab:r_squared}, the inclusion of the quadratic age term ($\widetilde{a}^2$) improves model fit for both methodologies.

\begin{table}[t]
    \centering
    \begin{tabular}{lc}
        \toprule
        \textbf{Model} & \textbf{$R^2$} \\
        \midrule
        Wasserstein least squares (Linear) & 0.7254  \\
        Wasserstein least squares (Quadratic) &\textbf{ 0.8958} \\
        Fréchet (Linear) & 0.7536  \\
        Fréchet (Quadratic) & 0.8932  \\
        \bottomrule
    \end{tabular}
    \caption{Wasserstein $R^2$ for distributional regression models on BMI data ($n=164$ cells).
    Computed as $R^2 = 1 - \sum_i W_2^2(\nu_i, \widehat{Q}_{\mathbf{x}_i}) / \sum_i W_2^2(\nu_i, \bar{\nu})$
    following \cite{PetersenMuller2019}, where $\bar{\nu}$ is the Wasserstein barycenter.
    Linear models use covariates $\mathbf{x} = (1, \widetilde{a}, \widetilde{c}, \widetilde{g})^\top$;
    quadratic models add $\widetilde{a}^2$.
    Both quadratic models explain ${\approx}89\%$ of distributional variance;
    Wasserstein least squares ($0.8958$) and Fréchet ($0.8932$) are indistinguishable at this aggregate level.}
    \label{tab:r_squared}
\end{table}

Furthermore, the $R^2$ metrics reveal that at the macroscopic level of explained variability, neither Wasserstein least squares nor Global Fréchet regression strictly dominates.

To assess out-of-sample predictive performance, we perform LOO-CV across the filtered sample of $n = 164$ demographic cells.
Given the computationally intensive nature of LOO-CV, we fitted the Wasserstein least squares model for this specific evaluation using an Adam optimizer with gradient clipping and early stopping ($M = 2000$ particles, patience $= 100$). Table \ref{tab:loo_cv} reports the mean and standard deviation of the $W_2$ distances. Fréchet regression exhibits slightly lower LOO error than Wasserstein least squares, suggesting a marginally higher out-of-sample stability.

\begin{table}[t]
    \centering
    \begin{tabular}{lc}
        \toprule
        \textbf{Model} & \textbf{LOO $W_2$} \\
        \midrule
        Fréchet (Quadratic) & 0.3813 $\pm$ 0.1618 \\
        Wasserstein least squares (Quadratic) & 0.4060 $\pm$ 0.1874 \\
        \bottomrule
    \end{tabular}
    \caption{Leave-one-out cross-validation results for distributional regression on BMI data
    ($n = 164$ cells; errors reported as mean $\pm$ std of $W_2$ distance).
    Fréchet achieves a marginally lower mean error ($0.381$ vs.\ $0.406$), a gap of $0.025$ $W_2$ units
    on average; the two error distributions overlap substantially (\cref{fig:loo_comparison}).}
    \label{tab:loo_cv}
\end{table}

However, a paired cell-by-cell comparison of the LOO errors provides a more nuanced picture (Figure \ref{fig:loo_comparison}). Histograms of the LOO $W_2$ errors show highly overlapping distributions, with the median error for Fréchet at $0.345$ and Wasserstein least squares at $0.370$, closely trailing. When comparing performance on identical cells, Fréchet achieves a lower LOO error in $61\%$ ($100/164$) of the cells. Ultimately, the mean difference in LOO error between the two approaches is a mere $0.025$ BMI units.
\begin{figure}[t]
  \centering
  \begin{subfigure}[t]{0.54\textwidth}
    \centering
    \includegraphics[width=\textwidth]{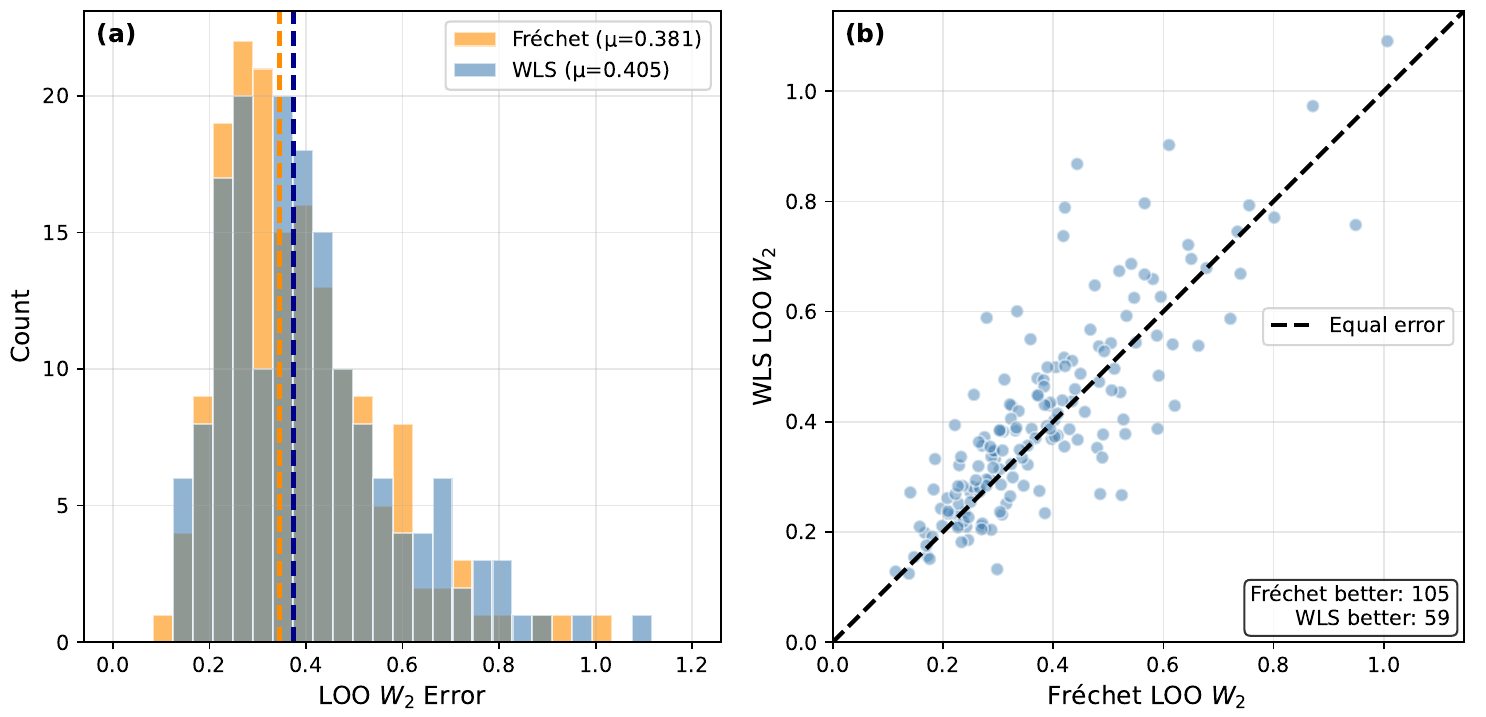}
    \caption{LOO-CV comparison. (a)~Histograms of LOO $W_2$ errors; dashed lines indicate medians. (b)~Paired cell-by-cell LOO errors; points above the diagonal favour Fréchet.}
    \label{fig:loo_comparison}
  \end{subfigure}
  \hfill
  \begin{subfigure}[t]{0.43\textwidth}
    \centering
    \includegraphics[width=\textwidth]{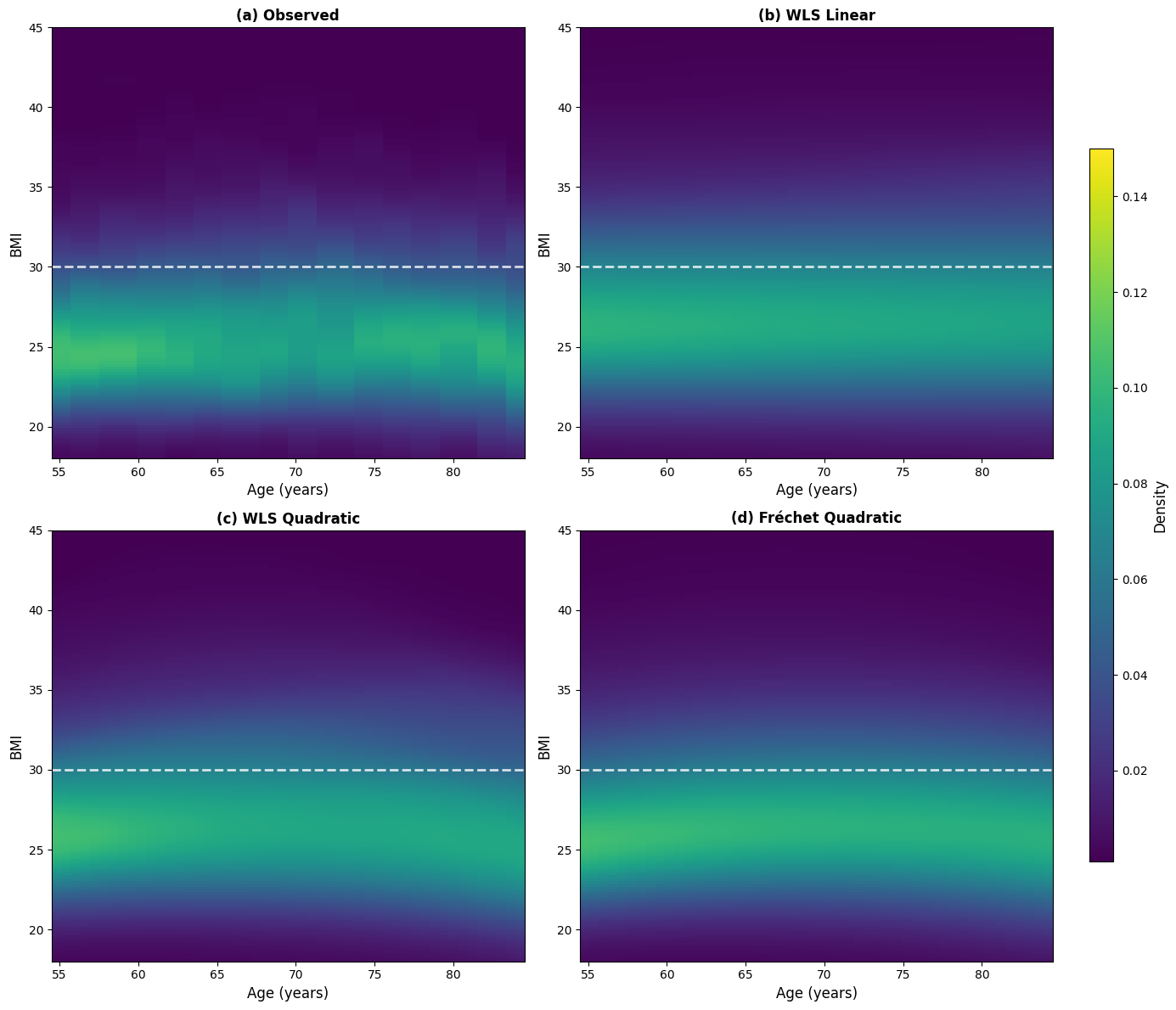}
    \caption{Density heatmaps of BMI distributions as a function of age. Male subjects, cohort 1935--1939. Darker colour $=$ higher probability mass.}
    \label{fig:density_heatmaps}
  \end{subfigure}
  \caption{Predictive parity between Wasserstein least squares and Fréchet regression on the BMI data.
  \emph{Left}: LOO $W_2$ error histograms (top) and paired cell-level comparison (bottom); the median difference is $0.025$ BMI units and $61\%$ of cells favour Fréchet, confirming that neither method dominates on raw out-of-sample fit.
  \emph{Right}: Fitted density heatmaps for male subjects, cohort 1935--1939; both models produce virtually identical distributions across the age range.
  The two methods are interchangeable for interpolation; structural differences emerge only when conditioning on individual-level observations (see \cref{fig:double_cond_main,fig:corner_main}).}
\end{figure}

Taken together, the quantitative $R^2$ and LOO-CV metrics, along with qualitative visual assessments, tell a consistent empirical story: when interpolating within the observed covariate space, Wasserstein least squares and Global Fréchet regression exhibit equivalent performance. This interpolation parity is evident in \cref{fig:density_heatmaps}, which shows the density heatmaps of the fitted BMI distributions $\nu_i$ for a representative demographic slice (males in the 1935–1939 birth cohort). Here, the distributions output by Wasserstein least squares and Fréchet regression are virtually indistinguishable. Both frameworks capture the nonlinear age dynamics present in the observed data.  Since neither model significantly outperforms the other in raw interpolating power, the choice between them hinges on their structural properties.

\subsection*{Coefficient measures, covariances, and joint probability distributions of the coefficients for the fitted models}

While both models interpolate effectively, their mathematical architectures differ in how they handle population heterogeneity. Wasserstein least squares extends the logic of linear mixed-effects modeling into a nonparametric setting by estimating a joint probability measure, $\widehat{Q}$, over the multidimensional coefficient space $\RR^p$. In contrast, Global Fréchet regression constructs its distributional response by independently regressing each quantile level $\pp$.

\begin{table}[t]
\centering
\begin{tabular}{lrrrrr}
\toprule
& \multicolumn{5}{c}{\textbf{Wasserstein least squares} ($M = 20{,}000$ particles)} \\
\cmidrule(lr){2-6}
Coefficient & Mean & SD & 2.5\% & 97.5\% & $\widehat P(\beta_j > 0)$ \\
\midrule
$\beta_0$                   & 28.479 & 5.468 & 19.513 & 41.057 & 100.0\% \\
$\beta_{{\rm age}}$       &  0.451 & 1.412 & $-1.909$ &  3.663 &  60.7\% \\
$\beta_{{\rm age}^2}$     & $-0.393$ & 0.490 & $-1.347$ &  0.143 &   9.9\% \\
$\beta_{{\rm cohort}}$    &  0.877 & 0.718 & $-0.225$ &  2.414 &  94.0\% \\
$\beta_{\rm gender}$        &  0.020 & 1.693 & $-2.084$ &  5.057 &  38.6\% \\
\midrule
& \multicolumn{5}{c}{\textbf{Fr\'echet} ($K = 500$ quantile levels)} \\
\cmidrule(lr){2-6}
Coefficient & Mean & SD & 2.5\% & 97.5\% & $\widehat P(\beta_j > 0)$ \\
\midrule
$\beta_0$                   & 28.483 & 5.699 & 19.458 & 41.930 & 100.0\% \\
$\beta_{{\rm age}}$       &  0.372 & 0.176 & $-0.072$ &  0.554 &  95.2\% \\
$\beta_{{\rm age}^2}$     & $-0.368$ & 0.082 & $-0.602$ & $-0.250$ &   0.0\% \\
$\beta_{{\rm cohort}}$    &  0.854 & 0.452 &  0.110 &  1.814 &  99.6\% \\
$\beta_{\rm gender}$        &  0.014 & 0.636 & $-0.789$ &  1.505 &  45.2\% \\
\bottomrule
\end{tabular}
\caption{
  Coefficient summaries for the Wasserstein least squares and Fr\'echet regression fits on HRS BMI data.
  \textbf{Wasserstein least squares}: each row summarises the marginal distribution of the $j$-th coordinate
  of $\widehat{Q}$ across $M = 20{,}000$ particles $(\beta_m)_{m=1}^M$ computed via Algorithm \ref{alg:wls-gd-particle}; Mean, SD, and prediction interval are empirical
  particle statistics.
  $\widehat P(\beta_j > 0)$ is the fraction of particles with positive $j$-th coordinate.
  \textbf{Fr\'echet}: each row summarises the $j$-th coordinate of the OLS coefficient
  vector $\widehat\beta(\pp)$ across $K = 500$ quantile levels $\pp \in (0,1)$;
  Mean, SD, and interval are taken over quantile levels.
  Covariates are normalized.
}
\label{tab:bmi_coef}
\end{table}

Table \ref{tab:bmi_coef} and Figure \ref{fig:marginals_comparison}  illustrate the practical consequences of this divergence. At a macro level, both methods agree on the baseline BMI ($\beta_0 \approx 28.5$) and identify the same primary population trends: later birth cohorts exhibit higher BMI ($\beta_{{\text{cohort}}} > 0$), and aging generally follows a negative quadratic trajectory ($\beta_{{\text{age}}^2} < 0$).

\begin{figure}[t]
    \centering
    \begin{subfigure}[t]{0.48\textwidth}
        \centering
        \includegraphics[width=\textwidth]{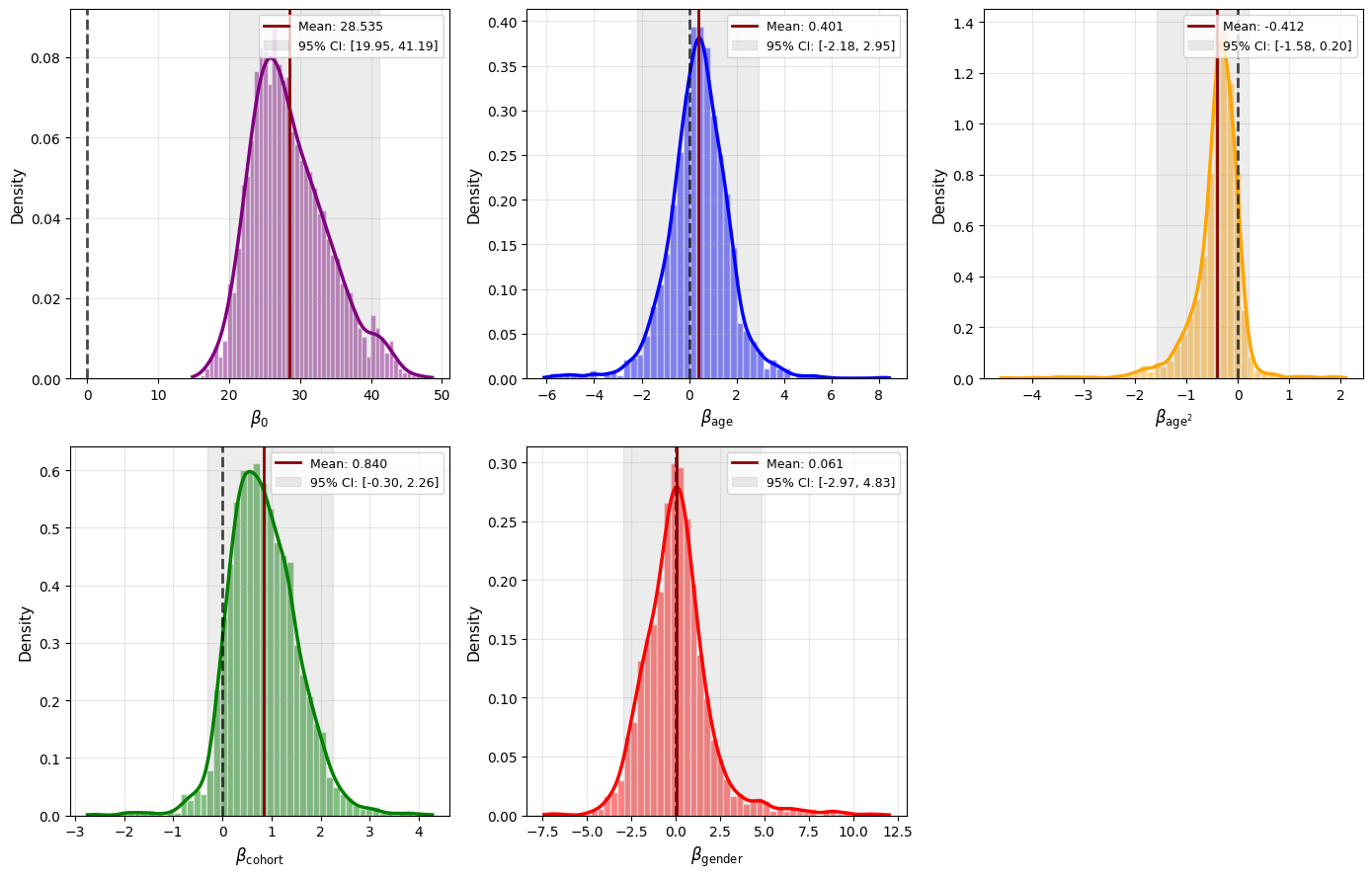}
        \caption{Wasserstein least squares}
        \label{fig:wls_marginals}
    \end{subfigure}
    \hfill
    \begin{subfigure}[t]{0.48\textwidth}
        \centering
        \includegraphics[width=\textwidth]{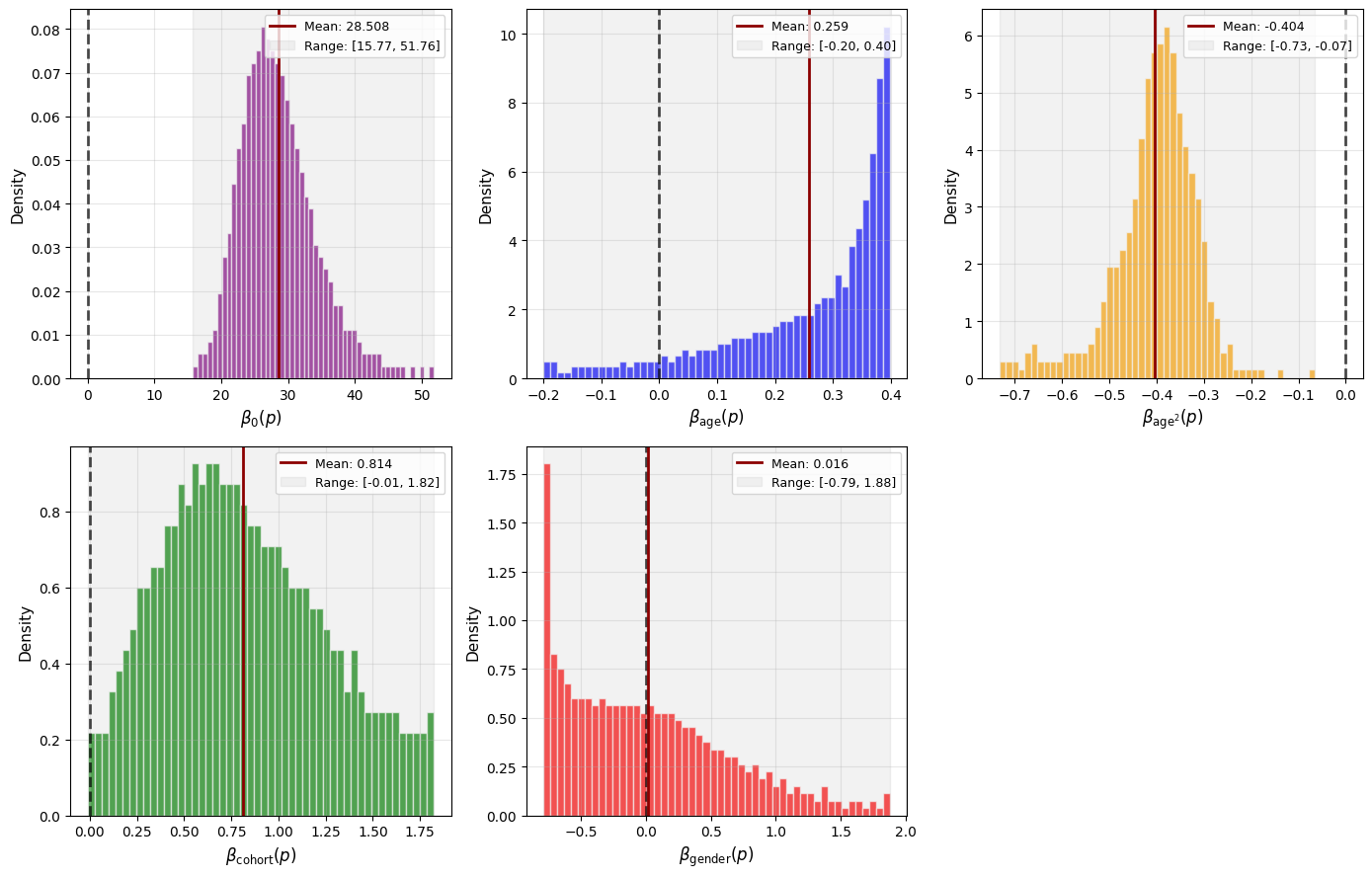}
        \caption{Fréchet regression}
        \label{fig:frechet_marginals}
    \end{subfigure}
    \caption{Comparison of coefficient distributions from Wasserstein least squares and Fréchet regression.
    \textbf{(a)} Marginal distributions of the random coefficient vector $\boldsymbol{\beta} \sim \widehat{Q}$
    estimated via Wasserstein least squares. Each panel shows the empirical distribution
    of one coefficient component from the $M = 2000$ particle representation of $\widehat{Q}$.
    Histogram with kernel density estimate overlay; vertical black dashed line at zero;
    The red vertical line indicates the mean; the gray shaded region shows the range from the $2.5$-th to the $97.5$-th quantile on each distribution.
    \textbf{(b)} Marginal distributions of coefficient functions $\beta_k(\pp)$ from Fréchet regression as in equation \eqref{eq:coefficients_frechet}.
    Unlike Wasserstein least squares, Fréchet yields pointwise estimates $\beta_k(\pp)$ for each quantile level
    $\pp \in [0.001, 0.999]$. }
    \label{fig:marginals_comparison}
\end{figure}

However, the marginal distributions reveal that Wasserstein least squares permits a broader range of structural variation. For example, Fréchet regression yields a quadratic age effect that is strictly negative across all quantiles ($P(\beta_{{\text{age}}^2} > 0) = 0$). Wasserstein least squares, conversely, indicates that while the average effect is concave, approximately $10\%$ of the mass allows for near-zero or slightly positive curvature. Likewise, Wasserstein least squares yields a substantially wider spread for the linear age and gender coefficients (e.g., a gender standard deviation of $1.69$ versus $0.64$).

Structurally, this flexibility enables Wasserstein least squares to model scenarios in which different segments of the BMI distribution experience demographic gradients in different ways, a localized heterogeneity that independent quantile mapping tends to constrain.

The structural difference between the two frameworks becomes most pronounced while examining the joint distribution of the regression coefficients (Figure \ref{fig:overall_comparison}). Since Global Fréchet regression constructs its estimates by independently evaluating each probability level $\pp \in (0,1)$, the resulting coefficient functions $\beta_k(\pp)$ are strictly parameterized by this single scalar. This induces near-perfect correlations across all coefficient pairs. Wasserstein least squares, by contrast, estimates a full multidimensional measure $\widehat{Q}$ that captures the covariance structure among the demographic variables.

\begin{figure}[t]
    \centering
    \begin{subfigure}[t]{0.48\textwidth}
        \centering
        \includegraphics[width=\textwidth]{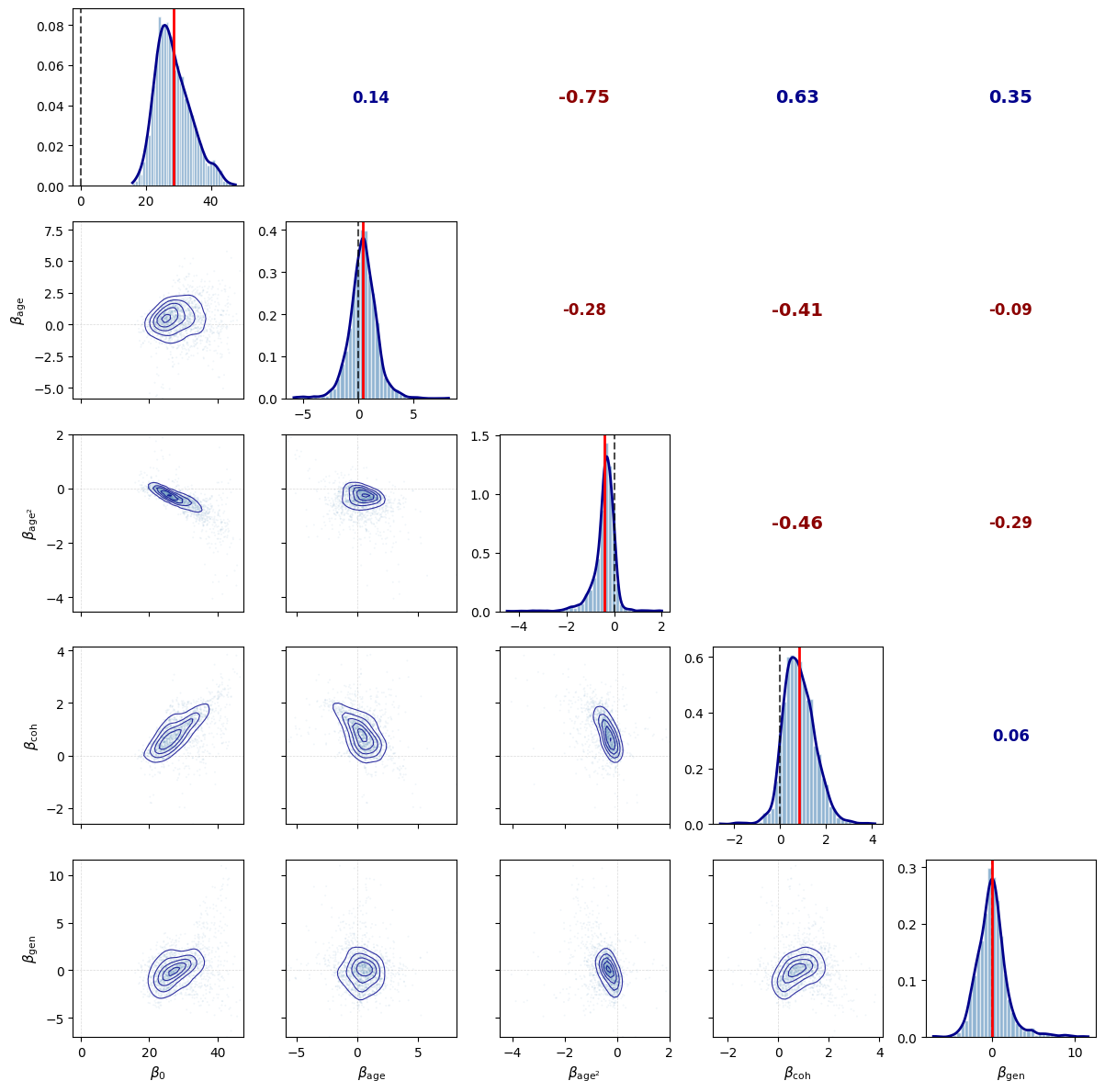}
        \label{fig:corner_plot}
    \end{subfigure}
    \hfill
    \begin{subfigure}[t]{0.48\textwidth}
        \centering
        \includegraphics[width=\textwidth]{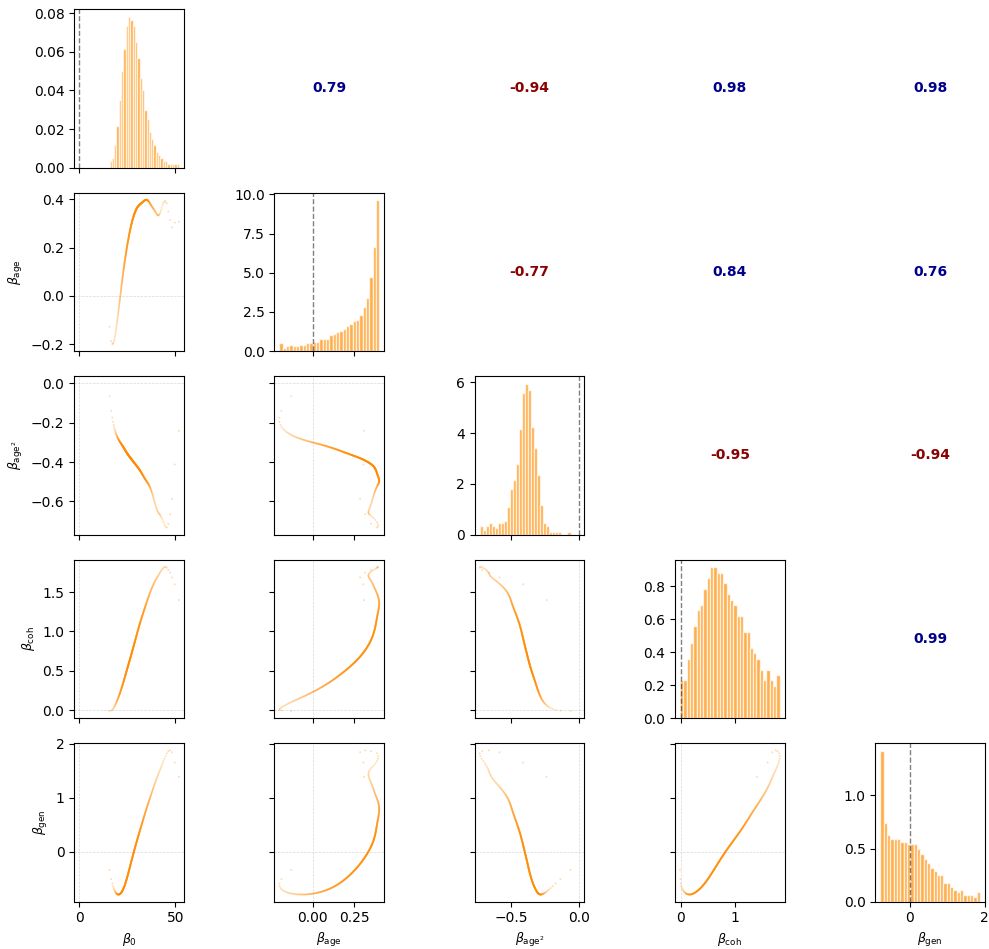}
        \label{fig:corner_comparison}
    \end{subfigure}
    \caption{Comparison of joint distribution structures between Wasserstein least squares and Fréchet regression. \textbf{To the left}: Corner plot showing the joint distribution structure of $\boldsymbol{\beta} \sim \widehat{Q}$.
        Diagonal panels: marginal distributions (histogram with KDE) for each coefficient;
        black vertical line at zero, red vertical line at the mean.
        Lower triangle: bivariate scatter plots showing pairwise joint distributions
        with contour lines from kernel density estimation.
        Upper triangle: Pearson correlation coefficients $\rho_{ij}$ between coefficient pairs;
        Blue text indicates positive correlation, red indicates negative. \textbf{To the right}: corner plots showing the joint distribution structure of the regression
        coefficients $\boldsymbol{\beta} = (\beta_0, \beta_{\mathrm{age}}, \beta_{\mathrm{age}^2},
        \beta_{\mathrm{cohort}}, \beta_{\mathrm{gender}})^\top$ for Fréchet
        regression. Diagonal panels show marginal distributions; lower triangles
        show pairwise scatter plots; upper triangles show Pearson correlations.}
    \label{fig:overall_comparison}
\end{figure}

This architectural difference has severe implications for conditional inference, exposing a critical limitation of the pointwise Fréchet approach, as demonstrated by our variance partition analysis (Figure \ref{fig:variance_partition}).

In a classical multivariate setting, we can compute the conditional standard deviation of the demographic effects given a specific baseline BMI ($\beta_0$) using the Schur complement:$$\Sigma_{\mathrm{rest}|\beta_0} = \Sigma_{22} - \Sigma_{21}\Sigma_{11}^{-1}\Sigma_{12}.$$
For Wasserstein least squares, this conditional variance remains substantial. It reflects the biological reality that subpopulations sharing the same baseline BMI profile can still exhibit highly varied physiological responses to aging and generational shifts.
For Fréchet regression, the conditional standard deviation collapses to approximately zero. Because its coefficients are perfectly correlated by the quantile index $\pp$, knowing the intercept deterministically dictates the exact slopes for age, cohort, and gender. This artificial rigidity structurally prevents the Fréchet model from capturing independent, localized demographic heterogeneity.

\begin{figure}[t]
    \centering
    \includegraphics[width=.6\textwidth]{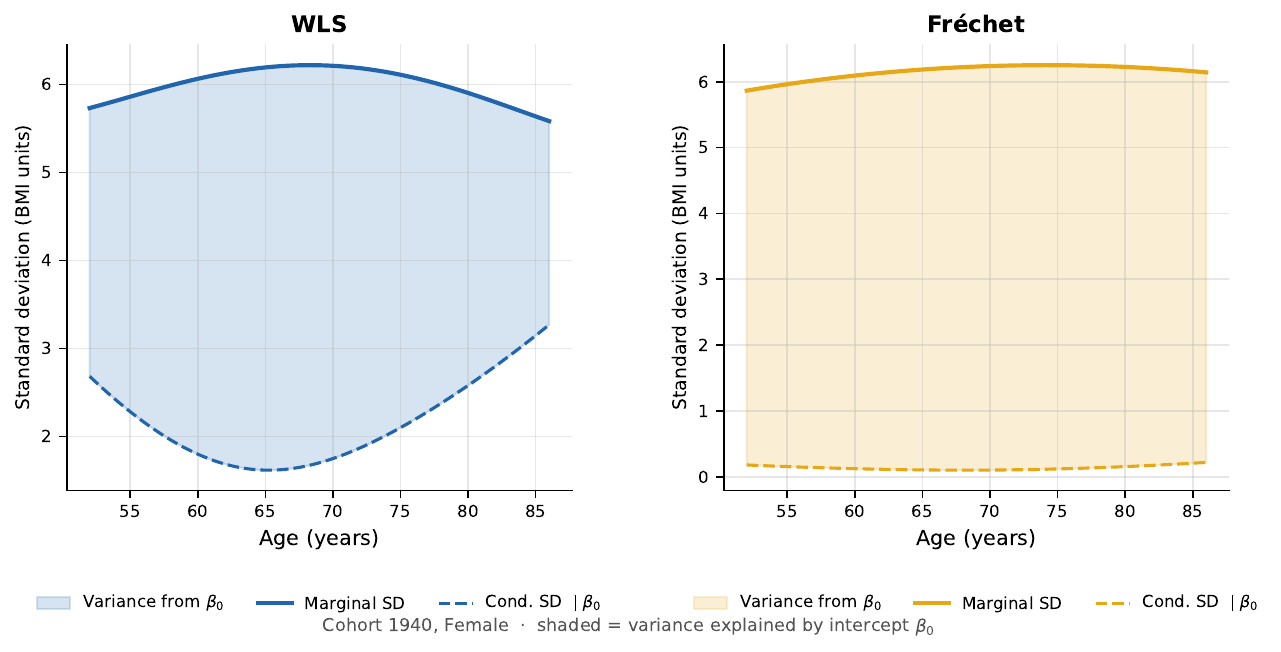}
    \caption{Marginal versus conditional standard deviation (cohort 1940, female). The shaded region represents the variance explained by knowing $\beta_0$.}
    \label{fig:variance_partition}
\end{figure}

Because Wasserstein least squares preserves the full joint distribution of the coefficients, it can explore rare population dynamics. Fréchet regression’s quantile-based construction forces certain structural assumptions—such as a strictly positive cohort effect—across the entire distribution. In contrast, interrogating the Wasserstein least squares particle cloud reveals distinct minority trajectories.

For instance, while the vast majority of the modeled population exhibits a late-life plateau or decline in BMI, approximately $10\%$ of the Wasserstein least squares particles yield a positive quadratic age coefficient ($\beta_{\mathrm{age}^2} > 0$). Within the model framework, this corresponds to a subpopulation experiencing accelerating BMI growth in later years (Figure \ref{fig:extreme_age2}).

To quantify how prevalent this convex pattern is in the data, we fit an individual quadratic regression $\mathrm{BMI}_i(t) = a_i + b_i t + c_i t^2$ to each trajectory in the 1940--44 female cohort and classify it as \emph{convex} if $c_i > 0$, the fitted minimum (at $t^* = -b_i/2c_i$) occurs strictly within the observed age window, and the predicted BMI at the last observation exceeds that minimum by at least 3.5 units. Under this definition, approximately 10.0\% of individuals (197 out of 1,964 with at least four observations) exhibit a genuine upturn; the remaining 90.0\% are concave or monotone. Figure~\ref{fig:convex_vs_concave} visualizes the two groups: the majority (blue) follows the familiar plateau-and-decline pattern, while the minority (orange) shows a clear BMI minimum in mid-follow-up, followed by a rise at older ages.

\begin{figure}[t]
    \centering
    \includegraphics[width=.72\textwidth]{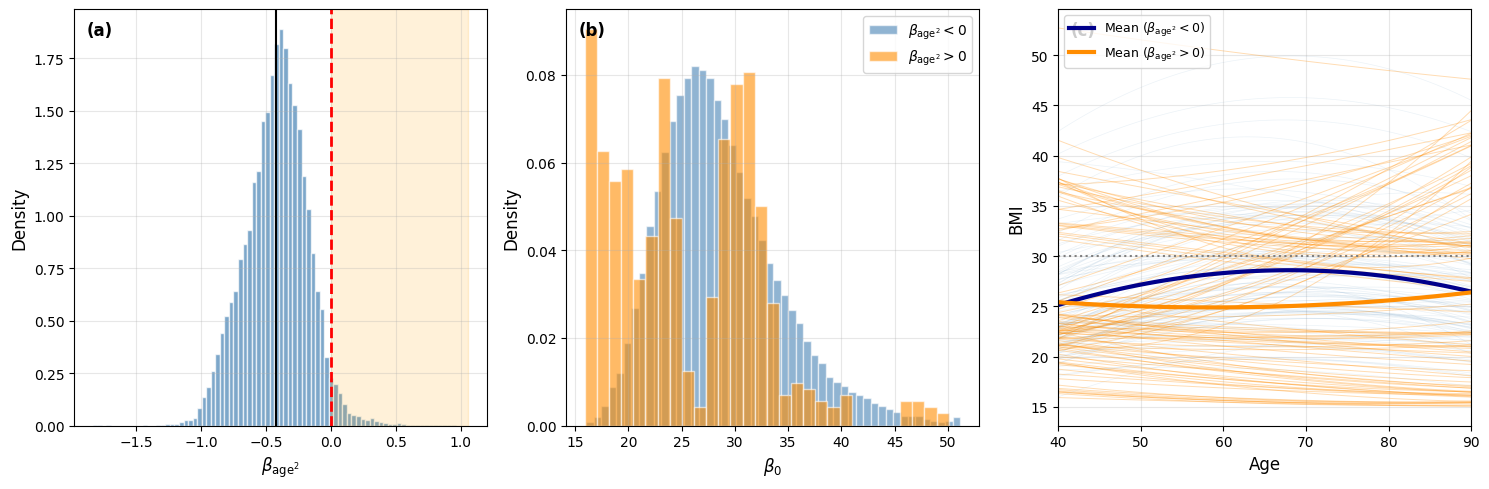}
    \caption{Analysis of particles with $\beta_{\mathrm{age}^2} > 0$ (accelerating BMI growth).
    Using $M = 20{,}000$ particles, approximately $10\%$ exhibit positive
    quadratic age coefficients, representing individuals whose BMI continues to accelerate
    with age rather than plateau or decline.
    (a)~Marginal distribution of $\beta_{\mathrm{age}^2}$; vertical dashed line indicates zero,
    solid line shows the mean. Shaded region highlights particles with $\beta_{\mathrm{age}^2} > 0$.
    (b)~Conditional distribution of $\beta_0$ (intercept) given the sign of $\beta_{\mathrm{age}^2}$.
    (c)~BMI trajectories for cohort 1940, female. Blue curves show 100 sampled trajectories
    with $\beta_{\mathrm{age}^2} < 0$ (typical); orange curves show trajectories with
    $\beta_{\mathrm{age}^2} > 0$ (accelerating). Thick lines indicate conditional means.
    Horizontal line marks the obesity threshold ($\mathrm{BMI} = 30$).}
    \label{fig:extreme_age2}
\end{figure}

Similarly, the model identifies a reverse cohort effect for roughly $10\%$ of the particles ($\beta_{\mathrm{cohort}} < 0$), suggesting a subset of individuals for whom more recent birth cohorts actually exhibit lower BMI scores (Figure \ref{fig:extreme_cohort}).

\begin{figure}[t]
    \centering
    \includegraphics[width=.65\textwidth]{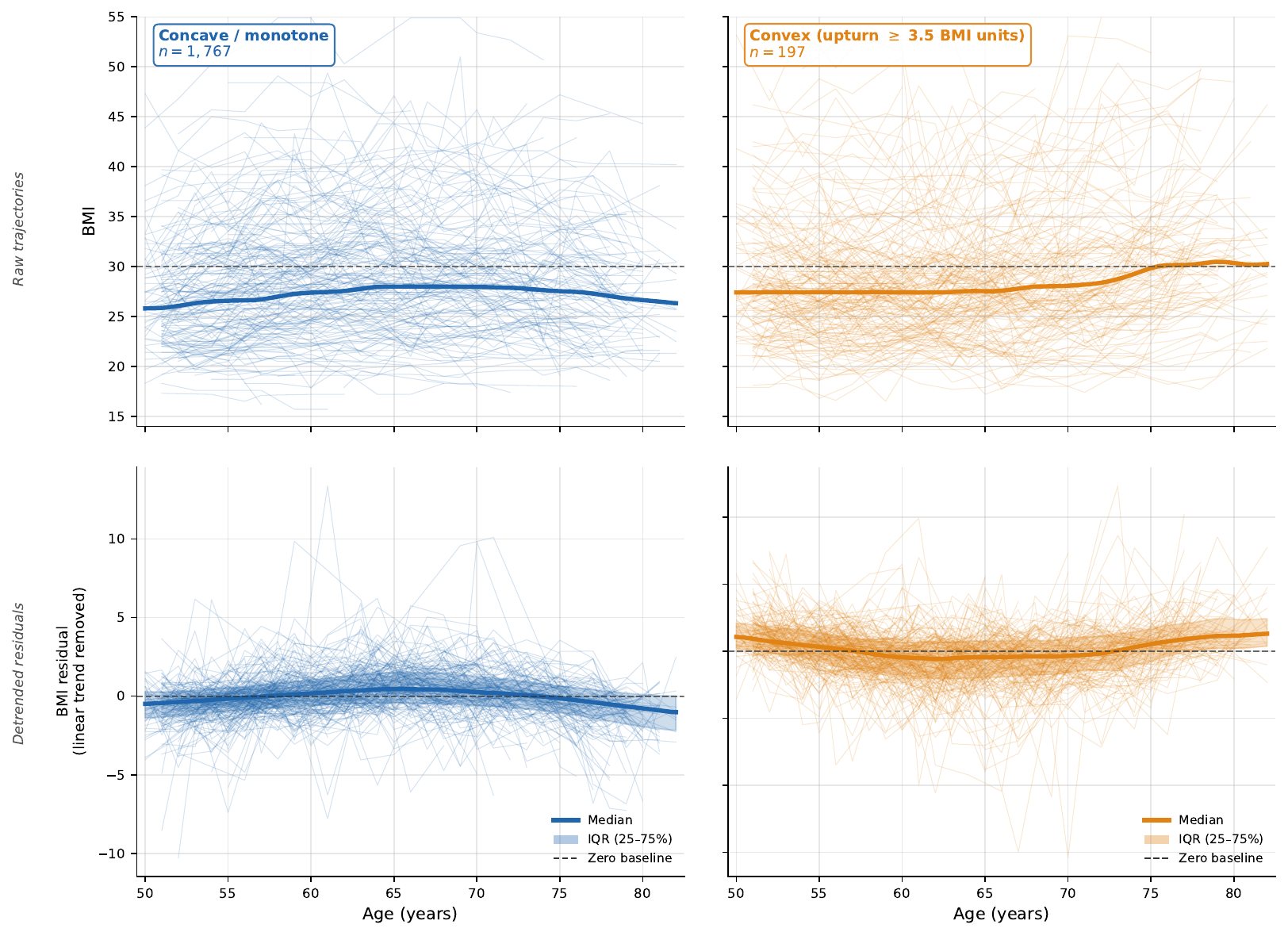}
    \caption{\textbf{Concave/monotone vs.\ convex BMI trajectories, cohort 1940--44, female.}
    \emph{Top row:} individual observed BMI trajectories (thin lines) with group median (thick line);
    dashed line at BMI~$= 30$.
    \emph{Bottom row:} detrended residuals after removing each individual's personal linear trend, with smoothed median and interquartile range (IQR) band (25th--75th percentile); dashed line at zero (no curvature).
    \emph{Left (blue):} the 90\% majority whose trajectories are concave or monotone.
    \emph{Right (orange):} the 10\% minority classified as convex ($n = 197$ out of 1,964 with $\geq 4$ observations) --- BMI reaches a minimum in mid-follow-up and rises again at older ages (upturn $\geq 3.5$ BMI units above the fitted vertex). The  residuals confirm that the upward curvature in the convex group is a systematic feature, not an artifact of baseline level differences.}
    \label{fig:convex_vs_concave}
\end{figure}

Crucially, these minority coefficients do not appear as uniformly distributed noise within the Wasserstein least squares framework.  When mapped onto the joint distribution of the intercept and linear age effect $(\beta_0, \beta_{\mathrm{age}})$, the particles exhibiting these extreme $\beta_{\mathrm{age}^2}$ and $\beta_{\mathrm{cohort}}$ values cluster in specific regions (Figure \ref{fig:extreme_joint}). This localized clustering points toward a complex demographic profile. For instance, the cluster of particles with $\beta_{\mathrm{age}^2} > 0$ formed around $(30, -1)$ suggests that a subgroup of retirees with a high starting BMI exhibits a convex weight trajectory, meaning these individuals experience accelerating weight gain toward the later years of their lives.

\begin{figure}[t]
    \centering
    \includegraphics[width=.72\textwidth]{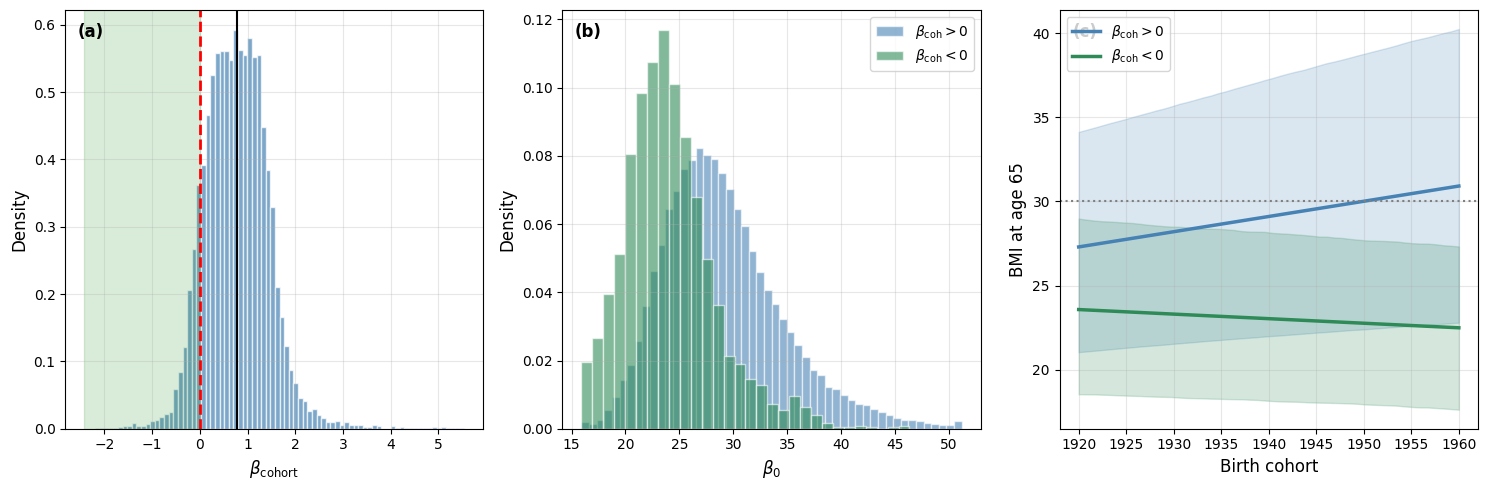}
    \caption{Analysis of particles with $\beta_{\mathrm{cohort}} < 0$ (reverse cohort effect).
    Using $M = 20{,}000$ particles, approximately $6\%$ exhibit negative
    cohort coefficients, representing individuals for whom later birth cohorts have lower
    BMI---opposite of the typical obesity epidemic trend.
    (a)~Marginal distribution of $\beta_{\mathrm{cohort}}$; vertical dashed line indicates zero.
    Shaded region highlights particles with $\beta_{\mathrm{cohort}} < 0$.
    (b)~Conditional distribution of $\beta_0$ (intercept) given the sign of $\beta_{\mathrm{cohort}}$.
    (c)~Mean predicted BMI at age 65 as a function of birth cohort for female respondents,
    stratified by the sign of $\beta_{\mathrm{cohort}}$. Shaded bands show 80\% prediction intervals.
    For typical particles ($\beta_{\mathrm{cohort}} > 0$), later cohorts have higher BMI;
    for the minority with $\beta_{\mathrm{cohort}} < 0$, the trend reverses.
    Horizontal line marks the obesity threshold.}
    \label{fig:extreme_cohort}
\end{figure}

While establishing these minority trajectories as distinct biological phenomena requires subsequent empirical verification, identifying them illustrates a practical utility of the Wasserstein least squares framework. By extending the machinery of classical linear regression to the Wasserstein space, Wasserstein least squares provides an interpretability tool alongside its predictive capabilities. It allows researchers to observe localized structural heterogeneity directly from the model outputs, offering a principled way to generate hypotheses from complex distributional data.

\begin{figure}[t]
    \centering
    \includegraphics[width=.65\textwidth]{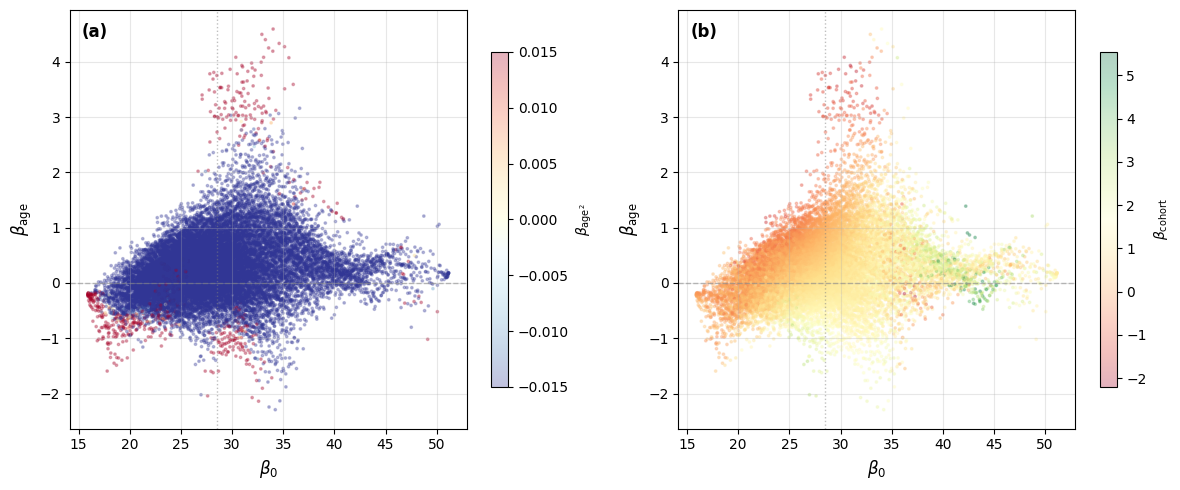}
    \caption{Joint distribution of coefficients colored by extreme values ($M = 20{,}000$ particles).
    (a)~Scatter plot of $(\beta_0, \beta_{\mathrm{age}})$ colored by $\beta_{\mathrm{age}^2}$.
    Particles with $\beta_{\mathrm{age}^2} > 0$ (red/orange) tend to cluster in specific regions
    of the $(\beta_0, \beta_{\mathrm{age}})$ space, revealing the correlation structure
    among coefficients.
    (b)~Same scatter plot colored by $\beta_{\mathrm{cohort}}$. Particles with
    $\beta_{\mathrm{cohort}} < 0$ (red) show distinct patterns in the joint distribution.}
    \label{fig:extreme_joint}
\end{figure}

\subsection*{Conditional trajectory and obesity persistence}

The obesity epidemic represents one of the most pressing public health crises of our time \citep{Ward2019Projected}. From an epidemiological perspective, modeling longitudinal BMI trajectories is critical, particularly regarding the persistence of obesity, which the Centers for Disease Control and Prevention (CDC) defines as BMI $\geq$ 30 \citep{cdc_bmi_2024}. Population-level interventions rely on understanding not just average weight gain, but the genuine probability that individuals at the threshold will either cross into severe obesity or return to healthier baseline levels over time. It is in this predictive, conditional context that the structural advantages of Wasserstein least squares over Fréchet regression become apparent.

To demonstrate this clinical utility, we compute conditional BMI trajectories and obesity probabilities for both models. For Wasserstein least squares, consider the coefficient distribution $\widehat{Q}$ represented via $M = 20{,}000$ particles
$\{\boldsymbol{\beta}^{(m)}\}_{m=1}^{M}$. To compute
trajectories conditional on $\text{BMI} \approx 31$ at age 50, we identify the subset
of particles satisfying the conditioning criterion
\begin{equation}
\mathcal{M}_{50} = \bigl\{ m : \bigl| \mathbf{x}(50)^\top \boldsymbol{\beta}^{(m)} - 31 \bigr| \leq 2 \bigr\},
\end{equation}
where $\mathbf{x}(a) = (1, \widetilde{a}, \widetilde{a}^2, \widetilde{c}, \widetilde{g})^\top$ denotes
the covariate vector at age $a$ with normalized components $\widetilde{a} = (a - \bar{a})/s_a$,
$\widetilde{c} = (c - \bar{c})/s_c$, and $\widetilde{g} \in \{-1, +1\}$ for gender. The conditional
mean trajectory at age $a$ is then
\begin{equation}
\widehat{\mu}(a) = \frac{1}{|\mathcal{M}_{50}|} \sum_{m \in \mathcal{M}_{50}} \mathbf{x}(a)^\top \boldsymbol{\beta}^{(m)},
\end{equation}
and the $C\%$ prediction interval is given by the $(100-C)/2$th and $( 100+C)/2$th percentiles of the
distribution $\{ \mathbf{x}(a)^\top \boldsymbol{\beta}^{(m)} \}_{m \in \mathcal{M}_{50}}$.
The probability of obesity at age $a$, conditional on BMI at age 50, is computed as
the proportion of conditional particles exceeding the threshold:
\begin{equation}
\mathrm{P}(\mathrm{BMI} \geq 30 \mid a, \mathrm{BMI}_{50} \approx 31)
= \frac{\bigl| \bigl\{ m \in \mathcal{M}_{50} : \mathbf{x}(a)^\top \boldsymbol{\beta}^{(m)} \geq 30 \bigr\} \bigr|}{|\mathcal{M}_{50}|}.
\end{equation}

For Fr\'echet regression, the coefficient vectors $\{\boldsymbol{\beta}(\pp)\}_{\pp \in \mathcal{P}}$
are indexed by quantile level $\pp \in (0,1)$, discretized over a fine grid $\mathcal{P}$.
Conditioning proceeds analogously by identifying quantile levels that produce the
target BMI at age 50
\begin{equation}
\mathcal{P}_{50} = \bigl\{ \pp \in \mathcal{P} : \bigl| \mathbf{x}(50)^\top \boldsymbol{\beta}(\pp) - 31 \bigr| \leq 2 \bigr\}.
\end{equation}
Mean trajectories and prediction intervals are computed over this subset, and the
conditional obesity probability takes the form
\begin{equation}
\mathrm{P}(\mathrm{BMI} \geq 30 \mid a, \mathrm{BMI}_{50} \approx 31)
= \frac{\bigl| \bigl\{ \pp \in \mathcal{P}_{50} : \mathbf{x}(a)^\top \boldsymbol{\beta}(\pp) \geq 30 \bigr\} \bigr|}{|\mathcal{P}_{50}|}.
\end{equation}

To incorporate a second observation, we further restrict to particles satisfying
conditions at both ages. For instance, to condition on $\text{BMI}_{50} \approx 31$
and $\text{BMI}_{60} > 31$, we define
\begin{equation}
\mathcal{M}_{50,60} = \bigl\{ m : \bigl| \mathbf{x}(50)^\top \boldsymbol{\beta}^{(m)} - 31 \bigr| \leq 2
\;\;\text{and}\;\; \mathbf{x}(60)^\top \boldsymbol{\beta}^{(m)} > 31 \bigr\}.
\end{equation}
Trajectories and prediction intervals are computed over $\mathcal{M}_{50,60}$ using
the same formulas as above. The conditional obesity probability with sequential
conditioning becomes
\begin{equation}
\mathrm{P}(\mathrm{BMI} \geq 30 \mid a, \mathrm{BMI}_{50}, \mathrm{BMI}_{60} \in \mathcal{C})
= \frac{\bigl| \bigl\{ m \in \mathcal{M}_{50,60} : \mathbf{x}(a)^\top \boldsymbol{\beta}^{(m)} \geq 30 \bigr\} \bigr|}{|\mathcal{M}_{50,60}|},
\end{equation}
where $\mathcal{C}$ denotes the conditioning set for BMI at age 60. In our analysis,
we consider $\mathcal{C} = (33, \infty)$ for continued weight gain, $\mathcal{C} = (31, \infty)$
for sustained obesity, and $\mathcal{C} = (-\infty, 30)$ for return below the obesity threshold.

The practical consequences of these different conditioning architectures can be understood in two complementary ways: the forecasted BMI trajectories (Figure \ref{fig:obesity_persistence}) and the corresponding age-dependent probability of obesity (Figure \ref{fig:obesity_probability_tetraptych}).

When conditioning on a single observation at the threshold (BMI = 31 at age 50), Wasserstein least squares yields wide 80\% prediction intervals spanning roughly 8 to 12 BMI units. This is not a lack of precision but a reflection of genuine biological uncertainty; as observed, the empirical distribution of the trajectories fans out in a manner similar to the prediction interval.

Consequently, the conditional probability of obesity (Panel 1 of Figure \ref{fig:obesity_probability_tetraptych})  produces smooth, clinically meaningful curves. An individual with a BMI of 31 at age 50 faces a steady decline in obesity risk, reaching roughly 50\% by age 90, indicating that individuals with the same current BMI may follow vastly different long-term trajectories.

In contrast, conditioning the Fréchet model on the same single observation exhibits a highly constrained behavior. Because the target BMI selects only a narrow cluster of quantile levels (roughly 39 to 48 out of 500), and each level defines a deterministic trajectory, the resulting prediction intervals are exceptionally narrow. This forces the conditional obesity probabilities (Panel 2 of Figure \ref{fig:obesity_probability_tetraptych}) into near-binary states with sharp transition zones. The apparent predictive precision in this case is largely a structural consequence of pointwise quantile regression, which inherently limits the representation of individual-level variance within the forecasted paths.

This structural divergence is further highlighted when sequential conditioning is introduced to update longitudinal predictions. For Wasserstein least squares, incorporating a second measurement at age 60 refines the forecast. As seen in the right panel of Figure \ref{fig:obesity_persistence}, stratifying the age 60 observations into three outcomes (continued weight gain, stable obesity, or returning below the threshold) differentiates future trajectories and narrows the prediction intervals.

The probability curves adjust accordingly (Panel 3 of Figure \ref{fig:obesity_probability_tetraptych}): a second measurement below the obesity threshold shifts the predicted outcome, dropping the long-run obesity probability toward zero. This indicates that Wasserstein least squares accommodates sequential updating, where the initial predictive variance is reduced as new observations are incorporated.

By contrast, applying this sequential conditioning illustrates a structural constraint within the Fréchet framework. By age 60, only 7 to 38 quantile levels satisfy the criteria for the stable or weight-gain scenarios. Notably, for the subset representing a return below the obesity threshold ($\text{BMI} < 30$ at age 60), zero quantile levels survive. Because every Fréchet trajectory that predicts $\text{BMI} \approx 31$ at age 50 in this specific fit also predicts $\text{BMI} \geq 30$ at age 60, this particular sequence of observations cannot be represented under the fitted model.

\begin{figure}[t]
    \centering
    \includegraphics[width=\textwidth]{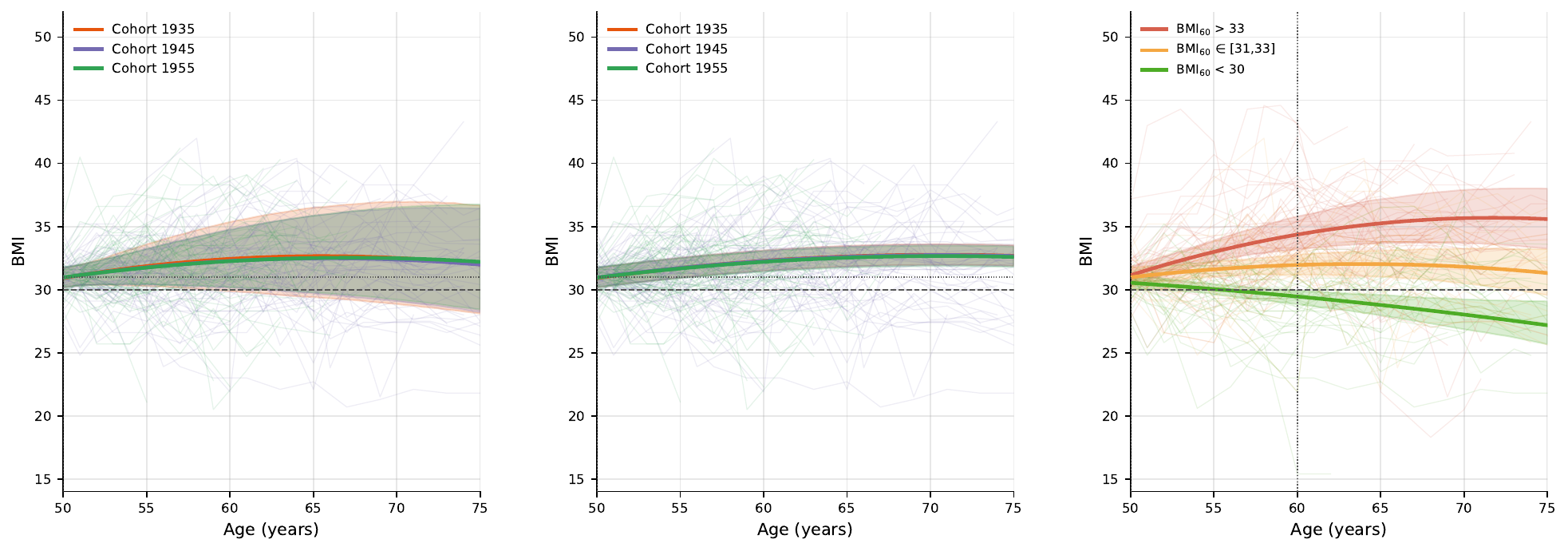}
    \caption{\textbf{Observed BMI trajectories (thin lines) overlaid on model-implied
    80\,\% prediction bands (shaded) for individuals at the obesity threshold
    (BMI\,$= 31$ at age\,50), ages 50--75.}
    Prediction bands are the 10th--90th percentile of
    $\{x(\text{age})^\top \boldsymbol{\beta}_m\}_{m=1}^{M'}$,
    where $\boldsymbol{\beta}_m$ are the $M' \leq M$ particles (Wasserstein least squares) or quantile-level coefficient vectors (Fr\'echet) that survive the conditioning
    criterion $\widehat{y}(50) \in [30,32]$.
    Dashed horizontal line: obesity threshold (BMI\,$= 30$);
    dotted horizontal line: conditioning level (BMI\,$= 31$);
    dotted vertical line: conditioning age (50).
    \textbf{(Left)~Wasserstein least squares} ($M = 20{,}000$ particles, three birth cohorts, female respondents): orange = cohort 1935, purple = cohort 1945, green = cohort 1955.
    The 80\,\% prediction bands are wide ($\approx$\,8--12 BMI units).
    Conditioning on a single BMI observation does not uniquely identify the coefficient vector, so the joint distribution $\widehat{Q}$ retains substantial spread.
    Observed trajectories largely fall within the prediction intervals.
    \textbf{(Center)~Fr\'echet} (same conditioning):
    After conditioning on BMI\,$=31$ at age\,50, only 39--48 quantile-level coefficient vectors $\widehat\beta(\pp)$ satisfy the criterion, and since all coefficients are determined by the same quantile level $\pp$,
    The entire trajectory is nearly fixed.
    The near-degenerate prediction bands contrast with the heterogeneous spread of observed individual paths, revealing that Fr\'echet's apparent precision is a structural artifact rather than genuine predictive accuracy.
    \textbf{(Right)~Wasserstein least squares with sequential conditioning}
    (cohort 1945, female; dotted vertical lines at ages 50 and 60): starting from BMI\,$\approx 31$ at age\,50, a second observation at age\,60 focuses predictions.
    Red: BMI$_{60} > 33$ (continued weight gain, $n = 541$ particles, $n_{\text{obs}} = 42$);
    orange: BMI$_{60} \in [31,33]$ (stable obesity, $n = 697$, $n_{\text{obs}} = 34$); green: BMI$_{60} < 30$ (returned below threshold, $n = 132$, $n_{\text{obs}} = 32$). The three groups trace clearly separated trajectories with substantially narrowed prediction bands, demonstrating that Wasserstein least squares uncertainty is meaningful and resolves as additional observations accumulate.}
    \label{fig:obesity_persistence}
\end{figure}

\begin{figure}[t]
    \centering
    \includegraphics[width=\textwidth]{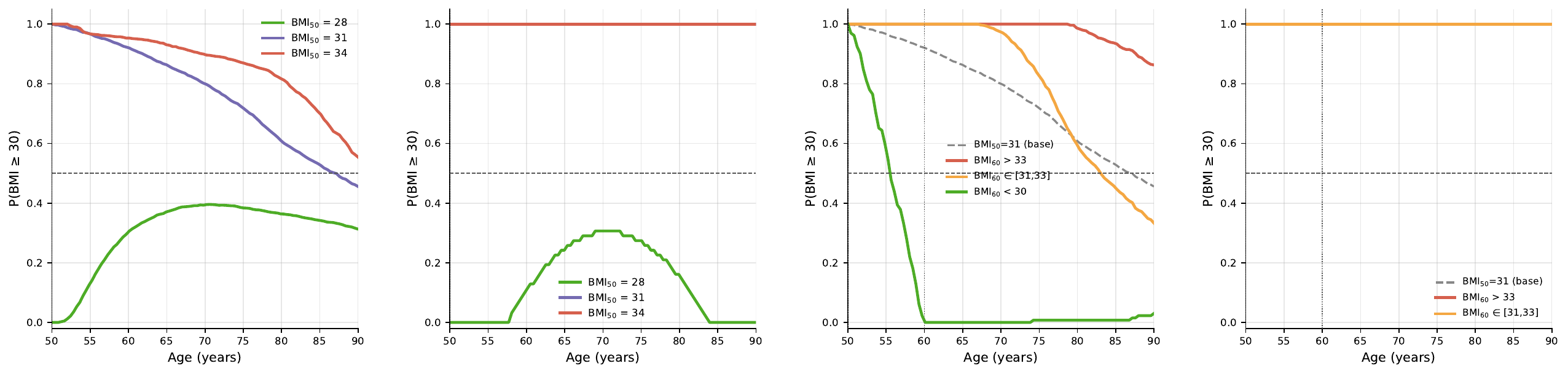}
    \caption{\textbf{Probability of obesity ($\mathrm{BMI} \geq 30$) as a
    function of age for females in cohort 1945, comparing Wasserstein least squares and
    Fr\'echet regression under single and sequential conditioning.}
    \textbf{(Panel 1)~Wasserstein least squares, single conditioning.}
    Conditioning on BMI at age~50 produces smooth, clinically meaningful probability curves.
    Individuals starting below the threshold ($\mathrm{BMI}_{50}=28$, green) show a modest, gradually rising probability of obesity, peaking at $30\%$--$40\%$ in their seventies before declining.
    Those just above ($\mathrm{BMI}_{50}=31$, purple) begin at certainty and see their probability decline steadily, reaching roughly $50\%$ by age~90, reflecting long-run uncertainty.
    Individuals well into obesity ($\mathrm{BMI}_{50}=34$, red) maintain high probability ($>80\%$) throughout, with a gradual decline toward $60\%$ at age~90.
    The smooth transitions arise because the Wasserstein least squares posterior retains heterogeneity among the coefficient draws that survive conditioning: individuals with the same current BMI may follow different future trajectories.
    \textbf{(Panel 2)~Fr\'echet, single conditioning.}
    Near-binary probabilities (close to 0 or~1) with sharp transition zones.
    In Fr\'echet regression, the fitted object is a collection of $K=500$ quantile-level coefficient vectors $\widehat\beta_k$, one per quantile level $k/K$; conditioning on an observed BMI range selects only those quantile levels whose predicted value falls within the observed range.
    Since each $\widehat\beta_k$ traces a deterministic trajectory, the selected quantile levels are clustered ($n \approx 28$--$62$ levels survive), implying near-certain future BMI predictions given a single current measurement.
    This is a structural feature of pointwise quantile regression rather than a reflection of the underlying biological uncertainty.
    \textbf{(Panel 3)~Wasserstein least squares, sequential conditioning.}
    Starting from $\mathrm{BMI}_{50} \approx 31$ (gray dashed, base), a second observation at age~60 differentiates long-run outcomes.
    When $\mathrm{BMI}_{60} > 33$ (red), the obesity probability rises toward $100\%$ and remains persistently high.
    When $\mathrm{BMI}_{60} \in [31,33]$ (orange), the probability stays elevated but begins to decline after age~70, reaching roughly $60\%$ at age~80.
    When $\mathrm{BMI}_{60} < 30$ (green), the probability drops toward zero---a second healthy-range measurement effectively reclassifies the long-run prognosis.
    \textbf{(Panel 4)~Fr\'echet, sequential conditioning.}
    Applying the same double conditioning to the Fr\'echet quantile-level coefficients leaves only 7--38 quantile levels for the $\mathrm{BMI}_{60} > 33$ and $\mathrm{BMI}_{60} \in [31,33]$ scenarios; the improved scenario retains zero levels, as every quantile-level trajectory that predicts $\mathrm{BMI}_{50} \approx 31$
    also predicts $\mathrm{BMI}_{60} \geq 30$, making a healthy second observation structurally impossible under the fitted model.
    Horizontal dashed line: $P = 0.5$; vertical dotted lines: conditioning ages (50 and 60).}
    \label{fig:obesity_probability_tetraptych}
\end{figure}

To empirically evaluate the predictive structures of both models, we compare their conditional forecasts against the observed trajectories of the corresponding HRS cohorts (males and females born 1935–1939 and 1940–1944 with $\text{BMI} \in [29, 33]$ at age 50). Table \ref{tab:coverage_single} summarizes the empirical coverage of the prediction intervals averaged across age bins 50 to 75.

Wasserstein least squares, which retains 2,953 posterior draws after conditioning, yields an empirical coverage of 0.877 for its 99\% PI and 0.579 for its 75\% PI. In contrast, the Fréchet model selects 78 quantile-level vectors, resulting in empirical coverages of 0.456 and 0.355 for the 99\% and 75\% PIs, respectively. The larger negative gap between empirical and nominal coverage in the Fréchet model indicates that its constrained trajectories do not fully encompass the observed spread of individual outcomes.

\begin{table}[t]
  \centering
  \small
  \begin{tabular}{llcccr}
    \toprule
    Method & PI level & $n_\text{ptcl}$ & $n_\text{real}$ & Mean emp.\ coverage & Gap \\
    \midrule
    Wasserstein least squares & 99\% & 2953 & 632 & 0.877 & -0.113 \\
    Wasserstein least squares & 75\% & 2953 & 632 & 0.579 & -0.171 \\
    Fréchet & 99\% & 78 & 632 & 0.456 & -0.534 \\
    Fréchet & 75\% & 78 & 632 & 0.355 & -0.395 \\
    \bottomrule
  \end{tabular}
  \caption{\textbf{Empirical PI coverage for single conditioning} (BMI $\in [29, 33]$ at age~50, cohorts 1935--39 and 1940--44, both sexes; trajectory shown for the 1935--39 female reference group). For each method, we report the mean empirical coverage across age bins 50--75 for the 75\% and 99\% prediction intervals, together with the gap (empirical $-$ nominal). Wasserstein least squares retains heterogeneous posterior draws, whereas Fr\'echet selects quantile-level coefficient vectors; the large negative gap for Fr\'echet reflects that a handful of near-deterministic quantile trajectories cannot represent the spread of individual outcomes.}
  \label{tab:coverage_single}
\end{table}

This difference in coverage is visually apparent in Figure \ref{fig:conditioning_wls_vs_frechet}. The Wasserstein least squares prediction intervals gradually widen with age, remaining generally centered on the empirical box plots and accommodating the varying long-term trajectories of individuals starting from a similar baseline.

By contrast, the Fréchet intervals are substantially narrower and tend to drift above the empirical distributions at older ages. Because the pointwise quantile regression framework largely pre-determines a quantile's future path once its level is fixed at age 50, it structurally understates the inherent variability in long-term BMI progression.

\begin{figure}[t]
    \centering
    \includegraphics[width=.68\textwidth]{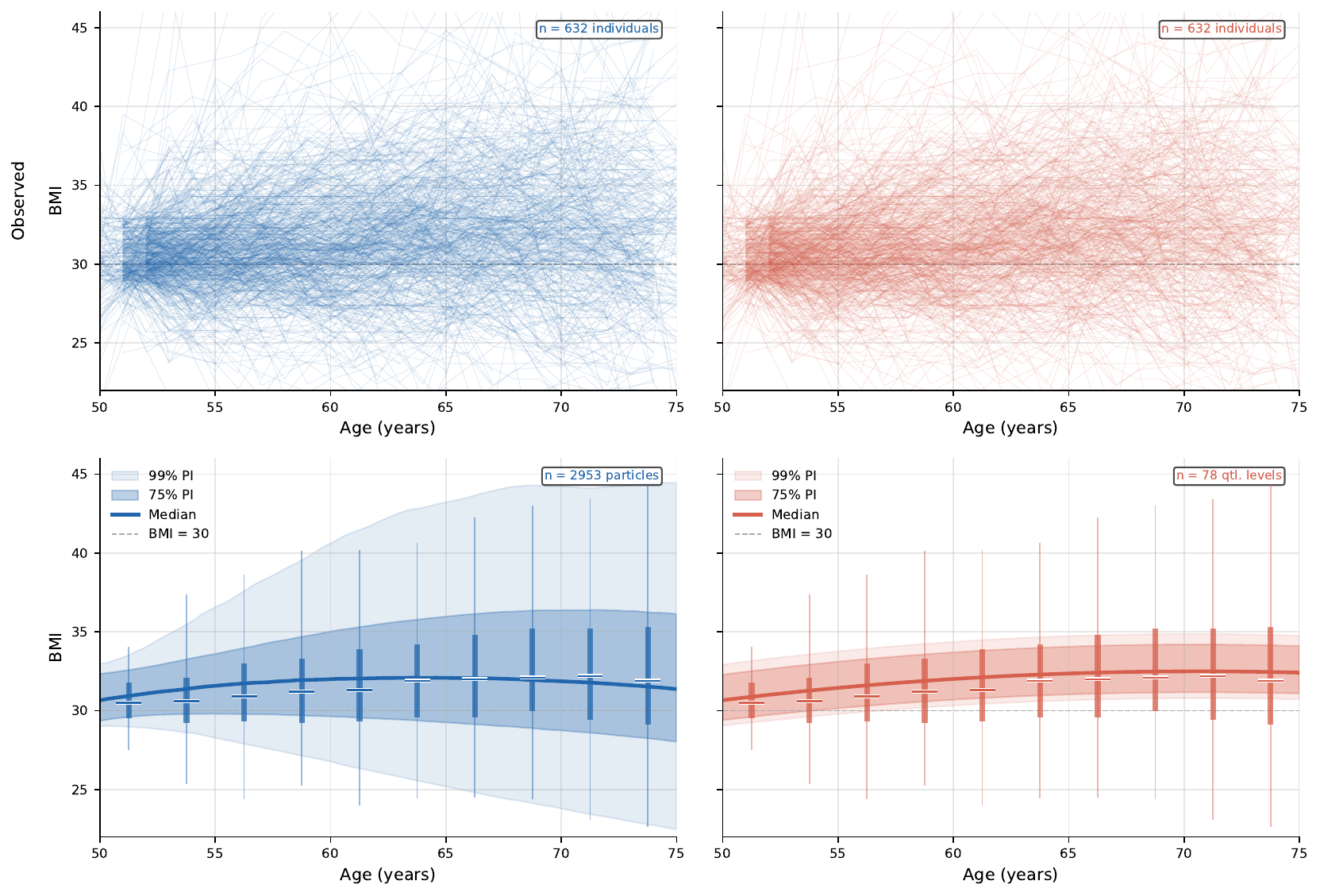}
    \caption{\textbf{Predicted BMI trajectories under single conditioning
    (BMI $\in [29,33]$ at age~50), cohorts 1935--39 and 1940--44, both sexes.}
    The top row shows individual observed BMI trajectories from the matched HRS cohort for Wasserstein least squares (left, blue) and Fr\'echet (right, red).
    The bottom row shows the Wasserstein least squares (left) and Fr\'echet (right) prediction intervals together with empirical box plots of the real cohort's BMI distribution at each age bin.
    Box plots show the interquartile range (thick segment), 2.5th--97.5th percentile whiskers, and median tick; trajectory shown for the 1935--39
    female reference group.
    \textbf{Wasserstein least squares} (left) retains $n=2{,}953$ posterior draws after conditioning, producing smooth 75\% and 99\% prediction intervals that grow as age increases and remain well-centered on the empirical distribution.
    The band reflects genuine uncertainty: individuals with BMI $\approx 31$ at age~50 may follow very different trajectories, as evidenced by both the width of the intervals and the spread of the real box plots.
    \textbf{Fr\'echet} (right) selects $n=78$ quantile-level coefficient vectors after conditioning, producing prediction intervals that are substantially narrower than those of Wasserstein least squares and that drift above the empirical box plots at older ages.
    This reflects the near-deterministic nature of the individual quantile trajectories under pointwise quantile regression: once a quantile level is pinned at age~50, its future path is largely pre-determined, understating the true variability in long-run BMI outcomes.}
    \label{fig:conditioning_wls_vs_frechet}
\end{figure}

Building on the single conditioning results, we extend our empirical evaluation to sequential conditioning. We assess how well both models capture the observed trajectories of individuals measured at both age 50 ($\text{BMI} \in [29, 33]$) and age 60 across three distinct clinical scenarios: worsening ($\text{BMI} > 33$), stable ($\text{BMI} \in [29, 33]$), and improved ($\text{BMI} < 29$). Table \ref{tab:coverage_double} details the empirical coverage for these subgroups. While Wasserstein least squares exhibits some under-coverage relative to nominal levels, it consistently maintains higher coverage than the Fréchet model.
\begin{table}[t]
  \centering
  \resizebox{\textwidth}{!}{%
  \begin{tabular}{lllcccr}
    \toprule
    Scenario & Method & PI level & $n_\text{ptcl}$ & $n_\text{real}$ & Mean emp.\ coverage & Gap \\
    \midrule
    BMI $> 33$ at 60 & Wasserstein least squares & 99\% & 1029 & 204 & 0.692 & -0.298 \\
     & Wasserstein least squares & 75\% & 1029 & 204 & 0.417 & -0.333 \\
     & Fréchet & 99\% & 21 & 204 & 0.173 & -0.817 \\
     & Fréchet & 75\% & 21 & 204 & 0.138 & -0.612 \\
    \addlinespace[2pt]
    BMI $29$--$33$ at 60 & Wasserstein least squares & 99\% & 1729 & 333 & 0.709 & -0.281 \\
     & Wasserstein least squares & 75\% & 1729 & 333 & 0.489 & -0.261 \\
     & Fréchet & 99\% & 57 & 333 & 0.445 & -0.545 \\
     & Fréchet & 75\% & 57 & 333 & 0.351 & -0.399 \\
    \addlinespace[2pt]
    BMI $< 29$ at 60 & Wasserstein least squares & 99\% & 195 & 169 & 0.491 & -0.499 \\
     & Wasserstein least squares & 75\% & 195 & 169 & 0.244 & -0.506 \\
    \bottomrule
  \end{tabular}}
  \caption{\textbf{Empirical PI coverage for double conditioning} (BMI $\in [29,33]$ at age~50, three age-60 scenarios, cohorts 1935--39 and 1940--44, both sexes; trajectory shown for the 1935--39 female reference group). Fr\'echet quantile-level coefficients are shown only when at least 15 levels survive the conditioning step; in the improved scenario, no Fréchet levels are eligible (all trajectories predict BMI $\geq 29$ at age~60) and are therefore omitted. Wasserstein least squares under-covers relative to its nominal level because the prediction band captures uncertainty about the population-level template $Q^\star$, not individual random variability.}
  \label{tab:coverage_double}
\end{table}

For the worsening and stable scenarios, Fréchet's coverage drops significantly, with its 99\% PIs capturing only 17.3\% and 44.5\% of the empirical data, respectively. This large negative gap indicates that a small subset of deterministic quantile trajectories struggles to encompass the natural variance of observed longitudinal outcomes. Furthermore, for the improved scenario, Fréchet retains zero quantile levels, meaning coverage cannot even be calculated.

These coverage metrics translate directly to the forecasted trajectories shown in Figure \ref{fig:double_conditioning}. The top row demonstrates that Wasserstein least squares successfully differentiates the three conditional pathways. The resulting prediction intervals adapt logically to the second observation; whether the trajectory worsens, remains stable, or improves, the Wasserstein least squares intervals remain broad enough to generally bracket the empirical box plots of the real HRS cohort. For the improved scenario, Wasserstein least squares does produce a median trajectory that descends below the obesity threshold, but its predicted path is monotonically decreasing, failing to capture the convexity visible in the empirical trajectories, where BMI typically bottoms out and then rises again at older ages; an observation actually supported by our model (see \cref{fig:extreme_age2} panel (c), where the sampled trajectories with $\beta_{\text{age}^2}>0$ predict a behavior similar to the improved case). This limitation is at least partly a data artifact: observations become sparser at older ages as cohort members attrit from the study, so the linear model is effectively pulled toward the more data-rich middle-age trajectories, biasing the predicted trend at the tail of the follow-up window.

The bottom row of Figure \ref{fig:double_conditioning} illustrates the structural constraints of applying sequential conditioning within the pointwise Fréchet framework. For both the worsening and stable scenarios, the surviving quantile levels yield prediction bands that are substantially narrower than those of Wasserstein least squares, often failing to capture the spread of the empirical data. For the improved scenario, the model cannot generate a prediction at all. Because every Fréchet trajectory that places a patient near the obesity threshold at age 50 strictly predicts an elevated BMI at age 60, the framework mathematically precludes the empirical reality of a patient successfully reducing their BMI over that specific decade.

\begin{figure}[t]
    \centering
    \includegraphics[width=\textwidth]{images/double_conditioning_pooled.pdf}
    \caption{\textbf{Predicted BMI trajectories under sequential conditioning
    (BMI $\in [29,33]$ at age~50 followed by three scenarios at age~60),
    cohorts 1935--39 and 1940--44, both sexes.}
    Columns correspond to three second-observation scenarios:
    worsening (BMI $> 33$ at 60, red),
    stable (BMI $29$--$33$ at 60, blue), and
    improved (BMI $< 29$ at 60, green).
    The top row shows individual observed trajectories from the matched HRS cohort;
    the middle row shows Wasserstein least squares predictions; the bottom row shows Fr\'echet.
    Shaded bands are 75\% (darker) and 99\% (lighter) prediction intervals;
    the solid line is the predicted median.
    Thin box plots show the empirical BMI distribution of matched real
    individuals at each age bin (IQR as thick segment, 2.5th--97.5th
    percentile whiskers).
    Vertical dotted lines mark the two conditioning ages (50 and 60).
    Trajectory shown for the 1935--39 female reference group.
    \textbf{Wasserstein least squares} successfully differentiates all three scenarios.
    The worsening trajectory maintains a median BMI well above 30
    throughout follow-up ($n=1{,}029$ posterior draws);
    the stable scenario shows a modestly elevated trajectory ($n=1{,}729$);
    the improved scenario produces a median that crosses below 30
    within a few years and remains low, though with wide prediction
    intervals reflecting the small retained sample ($n=195$).
    \textbf{Fr\'echet} produces meaningful results only for the worsening
    ($n=21$ quantile levels) and stable ($n=57$) scenarios.
    Both intervals are narrower than their Wasserstein least squares counterparts, underscoring the structural limitations of pointwise quantile regression under sequential conditioning.
    The improved scenario retains zero quantile levels---every
    quantile-level trajectory that places BMI $\approx 31$ at age~50 also predicts BMI $\geq 29$ at age~60, making the improvement condition structurally empty under the Fr\'echet model.}
    \label{fig:double_conditioning}
\end{figure}

\FloatBarrier
\section{Synthetic data experiments}\label{appendix:synthetic}
\subsubsection*{One-Dimensional Responses}
\label{sec:sim-1d}
We validate the Wasserstein least squares estimator under the template deformation model
\eqref{eq:model} with scalar responses ($d=1$) and a two-dimensional
covariate $$\bm{x}_i = (1, t_i)^\top \in \mathbb{R}^2.$$
The true template is chosen as
$$
  Q^\star \;=\; \mathcal{N}(m_Q,\, \Sigma_Q)
  \;\text{ on }\;\mathbb{R}^2,
  \qquad
  m_Q = \begin{pmatrix}0\\1\end{pmatrix},
  \quad
  \Sigma_Q = I_2,
$$
so that the marginal template at covariate $\bm{x} = (1,t)^\top$ is
\begin{equation}\label{eq:sim-marginal}
  Q^\star_{\bm{x}} \;=\; \mathcal{N}\!\bigl(t,\; 1+t^2\bigr),
  \qquad t \in [-2,2].
\end{equation}
The variance $\sigma^2(t) = 1+t^2$ is U-shaped in $t$.
This is a deliberate stress test: the marginal variance grows quadratically with
$|t|$, a non-linear feature that Fréchet regression on quantile functions
cannot capture (\cref{fig:heatmap_sinusoidal}).
\begin{figure}[ht]
    \centering
    \includegraphics[width=0.55\linewidth]{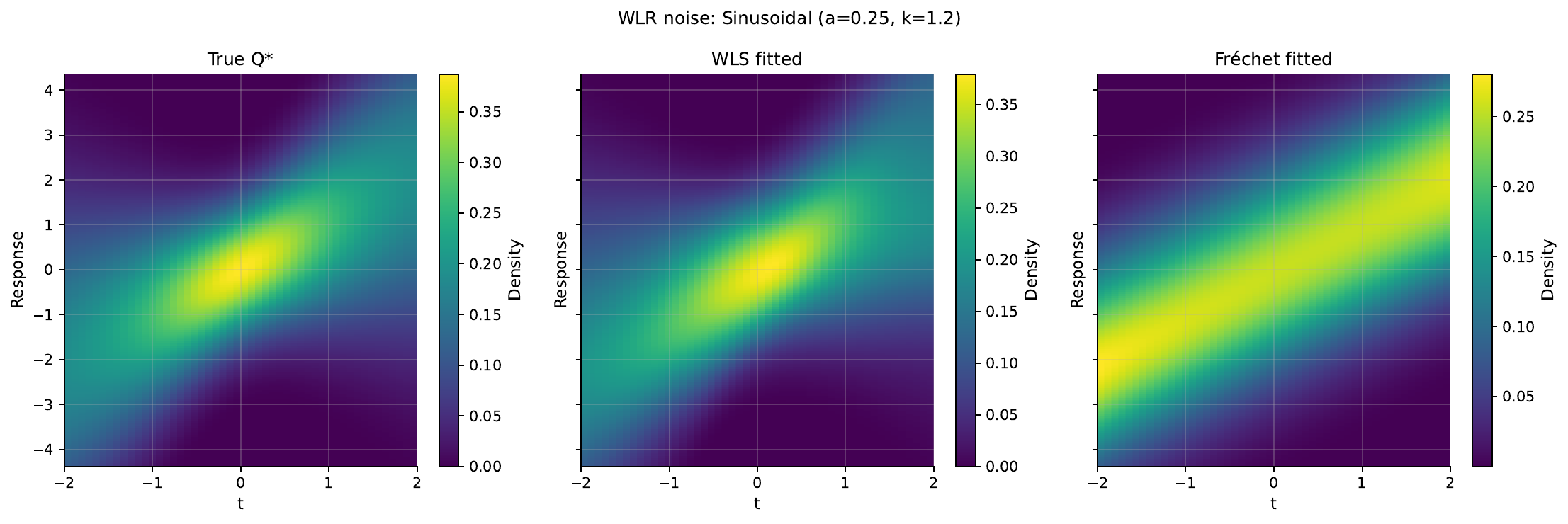}
    \caption{Template recovery under sinusoidal deformation ($k=1.2$, $n=50$).
    Density heatmaps of the true $Q^\star_{\bm{x}}$ (left), Wasserstein least squares fit (center), and Fréchet fit (right) over the covariate range $t\in[-2,2]$.
    Wasserstein least squares reproduces the U-shaped variance of the true template; Fréchet produces nearly uniform spread across all $t$.}
    \label{fig:heatmap_sinusoidal}
\end{figure}
Covariates $t_i$ are drawn uniformly from $[-2,2]$; each response
$\nu_i = (\nabla\phi_i)_\# Q^\star_{\bm{x}_i}$ is approximated by an
empirical measure of $m = 500$ i.i.d.\ draws.

We consider five families of random transport maps $\nabla\phi_i$, each
satisfying conditions C1--C3 (see \cref{tab:noise-models-1d}).
\begin{table}[ht]
\centering
\small
\resizebox{\linewidth}{!}{
\begin{tabular}{llccc}
\toprule
Name & Transport map $T(y)$ & Parameters & $\alpha$ & $\beta$ \\
\midrule
\multicolumn{5}{l}{\textit{Affine / near-affine models}} \\[2pt]
Additive
  & $y + \varepsilon$,\quad $\varepsilon\sim\mathcal{N}(0,\sigma^2)$
  & $\sigma=0.3$
  & $1.00$ & $1.00$ \\
Radial scaling
  & $(1+\eta)\,y$,\quad $\eta\sim\mathcal{U}(-a,a)$
  & $a=0.3$
  & $0.70$ & $1.30$ \\
Location-scale
  & $ay+b$,\quad $a\sim\mathcal{N}(1,\sigma_s^2)_{>0}$,\;$b\sim\mathcal{N}(0,\sigma_s^2)$
  & $\sigma_s=0.2$
  & $0.40$ & $1.60$ \\[6pt]
\multicolumn{5}{l}{\textit{Non-linear models}} \\[2pt]
Sinusoidal ($k=1.2$)
  & $y + A\sin(ky)$,\quad $A\sim\mathcal{U}(-A_{\max},A_{\max})$
  & $k=1.2,\;A_{\max}=0.25$
  & $0.70$ & $1.30$ \\
Sinusoidal ($k=2.5$)$^\dagger$
  & $y + A\sin(ky)$,\quad $A\sim\mathcal{U}(-A_{\max},A_{\max})$
  & $k=2.5,\;A_{\max}=0.30$
  & $0.25$ & $1.75$ \\
Gaussian Bump$^\dagger$
  & $y + Aye^{-y^2/(2\sigma_b^2)}$,\quad $A\sim\mathcal{U}(-A_{\max},A_{\max})$
  & $A_{\max}=0.8,\;\sigma_b=1.0$
  & $0.20$ & $1.80$ \\
Tanh warp
  & $y + A\tanh(ky)$,\quad $A\sim\mathcal{U}(-A_{\max},A_{\max})$
  & $k=0.8,\;A_{\max}=0.4$
  & $0.68$ & $1.32$ \\
\bottomrule
\end{tabular}
}
\caption{
  \textbf{Noise models for the 1-D template deformation experiment.}
  Each model defines a random transport map $T(y)=\nabla\phi_i(y)$ satisfying conditions C1--C3.
  The curvature band $[\alpha,\beta]$ gives the almost-sure range of $T'(y) = \nabla^2\phi_i(y)$, i.e.\ the C3 bounds on the slope of the transport map.
  Models marked $\dagger$ appear in Panel~(a) of \cref{fig:wlr-superfig} only; the remaining five are used in the comparison of \cref{tab:wlr-1d-comparison}.
}
\label{tab:noise-models-1d}
\end{table}
The first three are affine; the last two are non-linear deformations that create non-Gaussian responses even when applied to a Gaussian template.
\Cref{fig:wlr-superfig}(a) illustrates 18 independent draws of $T(y)$ and the deviation $T(y)-y$ for each noise model.

We compare two estimators: Wasserstein least squares, estimated with the particle-based gradient descent estimator $\widehat{Q}$ defined in Algorithm~\ref{alg:wls-gd-particle}, implemented with $M=1{,}000$ particles and $1{,}500$ gradient steps (learning rate $\tau_0 = 0.1$, exponential decay $10^{-3}$, momentum $0.9$, mini-batch size $5$) for the visual panels of \cref{fig:wlr-superfig}, and $M=2{,}000$ particles and $3{,}000$ steps for the averaged errors in \cref{tab:wlr-1d-comparison}; predictions $\widehat{Q}_{\bm{x}}$ are formed via the $\bm{x}$-linear pushforward of the particle system.
We also compare against Global Fréchet regression \citep{PetersenMuller2019}, computed with OLS on the quantile functions, $\widehat{\beta} = (X^\top X)^{-1}X^\top \mathcal{Q}$, where $\mathcal{Q}$ is the $n \times K$ matrix of quantile functions evaluated on a grid of $K=200$ quantile levels; the fitted quantile $\widehat{Q}_{\bm{x}}$ is the $\ell^2$-projection on monotone functions (PAVA).

\begin{figure}[!h]
  \centering
  \includegraphics[width=\textwidth]{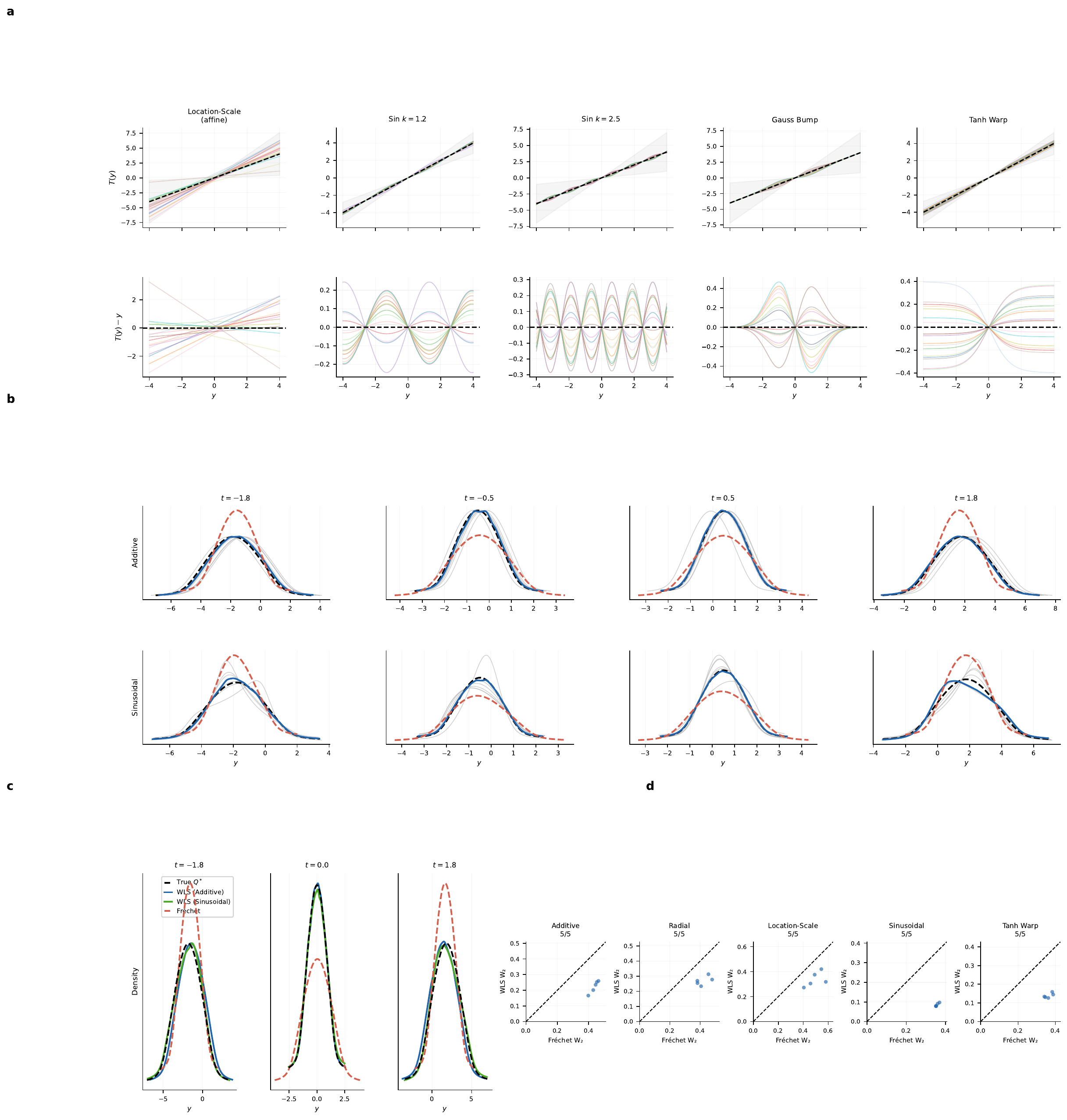}
   \caption{
    \textbf{Wasserstein least squares recovers $Q^\star$ under affine and non-linear noise;
    Fréchet regression is structurally misspecified.}
    DGP: $\nu_i = (\nabla\phi_i)_\# Q^\star_{\bx_i}$,\;
    $Q^\star_{\bx_i} \sim \mathcal{N}(t,\,1{+}t^2)$,\;
    $n=50$, five noise models (see \cref{tab:noise-models-1d}).
    \textbf{(a)}~$18$ draws of $T(y)=\nabla\phi_i(y)$ per model
    (coloured), identity (black dashed), C3 curvature band (grey fill); bottom row shows the deviation $T(y)-y$.
    \textbf{(b)}~Densities at $t\in\{-1.8,-0.5,0.5,1.8\}$ for Additive and Sinusoidal ($k{=}1.2$) noise:
    true $Q^\star_{\bx_i}$ (black dashed), noisy $\nu_i$ (grey), Wasserstein least squares (blue), Fréchet (red dashed).
    Wasserstein least squares spread widens with $|t|$; Fréchet width is flat.
    \textbf{(c)}~$Q^\star$ recovery at $t\in\{-1.8,0,1.8\}$:
    Wasserstein least squares under Additive (blue) and Sinusoidal (green) noise tracks the truth;
    Fréchet (red dashed) predicts constant spread at all $t$.
    \textbf{(d)}~Per-observation $W_2$ scatter (Wasserstein least squares vs.\ Fréchet, in-sample);
    points below the diagonal favor Wasserstein least squares.
    See \cref{tab:wlr-1d-comparison} for numerical summaries.
  }
  \label{fig:wlr-superfig}
\end{figure}
\Cref{fig:wlr-superfig}(b) displays, at four covariate values $t \in \{-1.8,-0.5,0.5,1.8\}$, the true marginal $Q^\star_{\bx_i}$ (black dashed), six independent noisy realisations $\nu_i$ (grey), the Wasserstein least squares fit (blue), and the Fréchet fit (red dashed), for the additive and sinusoidal noise models. Several features are apparent.
First, the Wasserstein least squares fit closely tracks the true density at all four covariate values and under both noise models: the estimated spread widens correctly as $|t|$ increases. Second, the Fréchet fit consistently under-estimates the spread at $|t| = 1.8$ and over-estimates it near $t = 0$; its width is nearly constant across the four panels, a direct consequence of the linear quantile model. Third, Wasserstein least squares is robust to the type of noise: the blue curves under additive and sinusoidal noise are virtually indistinguishable, confirming that the estimator does not require knowledge of the noise distribution.

\Cref{fig:wlr-superfig}(c) focuses on template recovery.
At $t \in \{-1.8, 0.0, 1.8\}$, we plot the true $Q^\star_{\bm{x}}$ alongside the Wasserstein least squares fits under additive (blue) and sinusoidal (green) noise, and the Fréchet fit (red dashed, common to both noise models).
Both Wasserstein least squares curves agree closely with the truth, while the Fréchet curve exhibits the predicted mis-calibration: it produces roughly the same spread at all three $t$ values, matching neither the narrow distribution at $t=0$ nor the wide distributions at $t = \pm 1.8$. \Cref{tab:wlr-1d-comparison} reports the average $W_2$ error against $\{\nu_i\}_{1\leq i\leq 50}$ and the error against the true template $Q^\star$, averaged over $n_{\rm rep}=5$ independent replicates of size $n=50$.

For the error versus $Q^\star$, the Wasserstein least squares estimator achieves values between $0.039$ and $0.167$ across all five noise models, while Fréchet regression ranges from $0.343$ to $0.364$.
The Wasserstein least squares improvement is roughly $5\times$ for mild (sinusoidal, additive) noise and $2\times$ for the more challenging location-scale model.
\Cref{fig:wlr-superfig}(d) provides a per-observation view: each scatter plot places the Fréchet $W_2$ error on the $x$-axis and the Wasserstein least squares $W_2$ error on the $y$-axis for the same observation; points below the diagonal correspond to observations where Wasserstein least squares achieves a smaller error. In all five noise models, all the points fall below the diagonal.

\begin{table}[ht]
\centering
\caption{
  \textbf{1-D template deformation: Wasserstein least squares vs.\ Fréchet regression.}
  Average $W_2$ error against $\{\nu_i\}_{1\leq i\leq 50}$ and error versus the true template $Q^\star$,
  mean $\pm$ std over 5 replicates ($n=50$ observations each);
  Wasserstein least squares fitted with $M=2{,}000$ particles and $T=3{,}000$ steps.
  All Wasserstein least squares errors are strictly smaller than the corresponding Fréchet errors.
}
\label{tab:wlr-1d-comparison}
\small

\resizebox{\linewidth}{!}{
\begin{tabular}{lcccccc}
\toprule
& \multicolumn{2}{c}{$[\alpha,\beta]$}
& \multicolumn{2}{c}{Average $W_2$ (vs $\nu_i$)}
& \multicolumn{2}{c}{Vs.\ true $Q^\star$ ($W_2$)} \\
\cmidrule(lr){2-3}\cmidrule(lr){4-5}\cmidrule(lr){6-7}
Noise model & $\alpha$ & $\beta$
  & Wasserstein least squares & Fréchet & Wasserstein least squares & Fréchet \\
\midrule
Additive ($\sigma=0.3$)         & 1.00 & 1.00
  & $0.223 \pm 0.035$ & $0.437 \pm 0.023$
  & $0.071 \pm 0.026$ & $0.356 \pm 0.012$ \\
Radial ($a=0.3$)                & 0.70 & 1.30
  & $0.270 \pm 0.026$ & $0.420 \pm 0.039$
  & $0.143 \pm 0.055$ & $0.356 \pm 0.009$ \\
Location-Scale ($\sigma_s=0.2$) & 0.40 & 1.60
  & $0.338 \pm 0.053$ & $0.496 \pm 0.062$
  & $0.167 \pm 0.065$ & $0.364 \pm 0.012$ \\
Sinusoidal ($k=1.2$)            & 0.70 & 1.30
  & $0.085 \pm 0.008$ & $0.357 \pm 0.007$
  & $0.039 \pm 0.011$ & $0.343 \pm 0.001$ \\
Tanh Warp ($k=0.8$)             & 0.68 & 1.32
  & $0.139 \pm 0.012$ & $0.364 \pm 0.019$
  & $0.067 \pm 0.024$ & $0.345 \pm 0.003$ \\
\bottomrule
\end{tabular}
}
\end{table}

\FloatBarrier
\subsubsection*{Bivariate Gaussian Responses}
\label{sec:sim-gaussian}

\begin{figure}[!h]
  \centering
  \includegraphics[width=.9\textwidth]{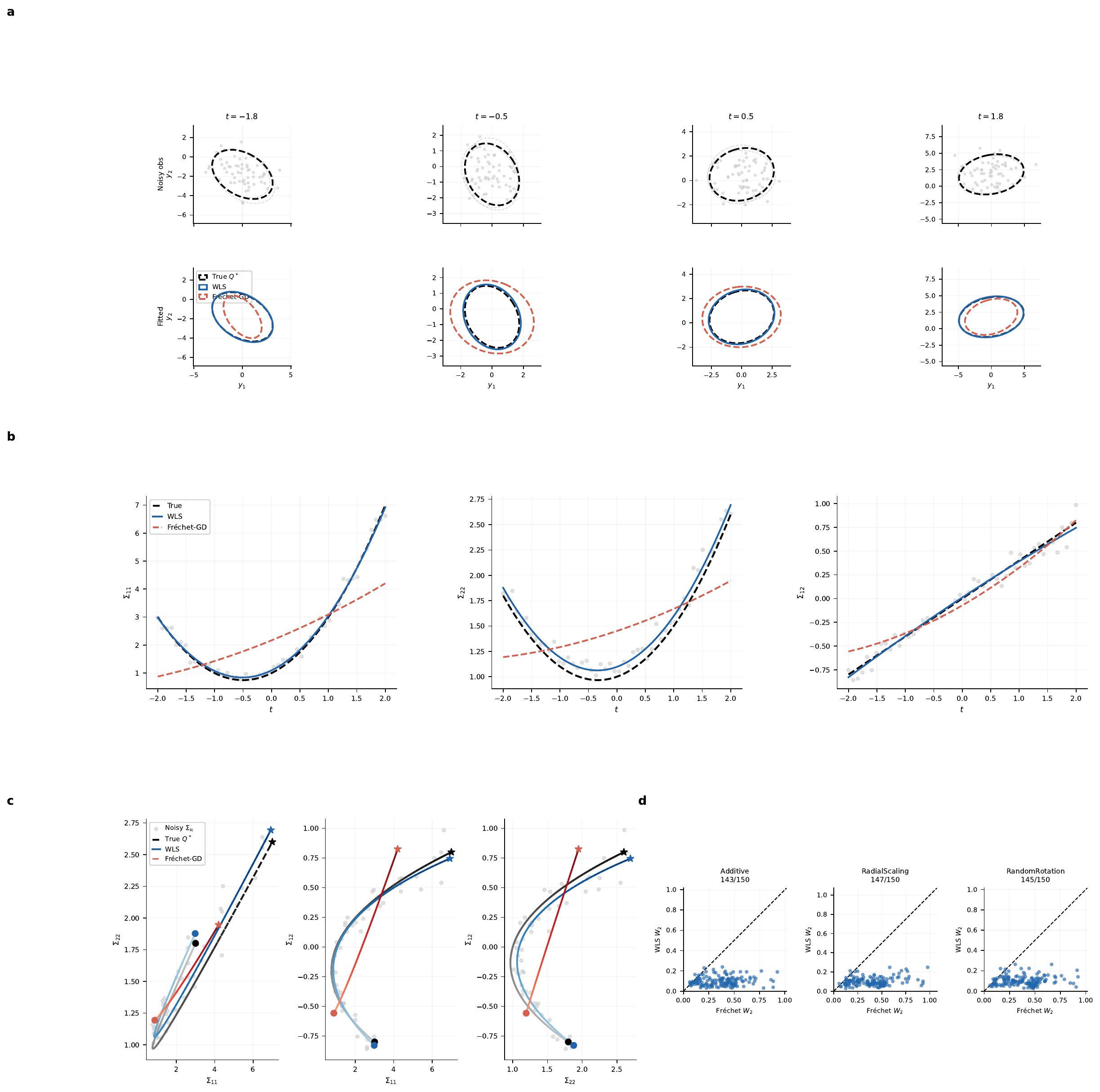}
  \caption{
    \textbf{Wasserstein least squares recovers a quadratic covariance trajectory on the SPD manifold;
    Fréchet-GD is structurally misspecified.}
    DGP: $\nu_i = (\nabla\phi_i)_\# Q^\star_{x_i}$,\;
    $Q^\star_{x_i} \sim \mathcal{N}(\mu(t),\,\Sigma(t))$ with
    $\Sigma(t) = A + t(B{+}B^\top) + t^2 C$ (quadratic on the SPD manifold),\; $n=50$, three noise models (see \cref{tab:noise-models-gaussian}).
    \textbf{(a)}~$2\sigma$ ellipses at $t\in\{-1.8,-0.5,0.5,1.8\}$ under
    Additive noise.
    \emph{Top row}: noisy sample clouds and noisy $\nu_i$ ellipses (grey).
    \emph{Bottom row}: true $Q^\star_{x_i}$ (black dashed), Wasserstein least squares (blue), Fréchet-GD (red dashed).
    Wasserstein least squares ellipses track the evolving shape; Fréchet-GD ellipses are too small at extreme $t$.
    \textbf{(b)}~Covariance entries $\Sigma_{11}$, $\Sigma_{22}$, $\Sigma_{12}$ vs $t$: true curve (black dashed), Wasserstein least squares (blue), Fréchet-GD (red dashed),  noisy observations (grey dots).
    Wasserstein least squares recovers the quadratic growth; Fréchet-GD recovers only the linear part.
    \textbf{(c)}~Trajectory $t\mapsto(\Sigma_{11},\Sigma_{22},\Sigma_{12})$ projected onto three coordinate pairs in the space of SPD entries.
    Colour gradient encodes $t\in[-2,2]$ (dark = $t{=}{-2}$, light = $t{=}2$);
    circles mark $t{=}{-2}$, stars mark $t{=}2$.
    Grey dots: noisy $\Sigma_{\nu_i}$.
    The true trajectory (black) is a quadratic arc; Wasserstein least squares (blue) tracks it faithfully, while Fréchet-GD (red) collapses toward the training mean and cannot reproduce the nonlinear curvature.
    \textbf{(d)}~Per-observation BW scatter (Wasserstein least squares vs.\ Fréchet-GD, in-sample);
    all points lie below the diagonal.
    See \cref{tab:wlr-gaussian-comparison} for numerical summaries.
  }
  \label{fig:wlr-gaussian-superfig}
\end{figure}

We extend the simulation to bivariate responses ($d=2$) with a
two-dimensional covariate $\bm{x}_i = (1,t_i)^\top \in \mathbb{R}^2$.
This setting exercises the full multivariate structure of
model~\eqref{eq:model}: the template $Q^\star \in \mathcal{P}(\mathbb{R}^{dp})$ with $p=d=2$ must capture how a $2\times 2$ random covariance matrix varies with the scalar predictor $t$.

We take $Q^\star = \mathcal{N}(m_Q,\Sigma_Q)$ on $\mathbb{R}^4$, with
\[
  m_Q = (0,0,0,1)^\top,
  \qquad
  \Sigma_Q = \begin{pmatrix} A & B \\ B^\top & C \end{pmatrix},
\]
where
\[
  A = I_2,\quad
  B = \begin{pmatrix}0.5&0.2\\0.2&0.1\end{pmatrix},\quad
  C = \begin{pmatrix}1.0&0\\0&0.3\end{pmatrix}.
\]
Via the Kronecker marginalisation in \cref{prop:covariance_gaussian_marginals}, the template at $\bm{x}=(1,t)^\top$ is
\begin{equation}\label{eq:sim-gaussian-marginal}
  Q^\star_{\bm{x}} \;=\; \mathcal{N}\!\bigl(\mu(t),\; \Sigma(t)\bigr),
  \qquad
  \mu(t) = \begin{pmatrix}0\\t\end{pmatrix},
  \quad
  \Sigma(t) = A + t(B+B^\top) + t^2 C.
\end{equation}
The mean $\mu(t)$ shifts linearly in $t$, while the covariance $\Sigma(t)$ traces a \emph{quadratic} trajectory on the cone of symmetric positive definite matrices.
At $t=0$ the marginal is circular ($A = I_2$); as $|t|$ grows, the ellipse elongates and rotates.
This is again a stress test for Fréchet regression.

We use three families of random maps $\nabla\phi_i : \mathbb{R}^2 \to \mathbb{R}^2$,
each satisfying C1--C3:

\begin{table}[ht]
\centering
\small
\resizebox{\linewidth}{!}{
\begin{tabular}{llc}
\toprule
Name & Transport map $T(\bm{y})$ & Parameters \\
\midrule
Additive
  & $\bm{y} + \bm{\varepsilon}$,\quad $\bm{\varepsilon}\sim\mathcal{N}(\bm{0},\sigma^2 I_2)$
  & $\sigma = 0.3$ \\
Radial scaling
  & $(1+\eta)\,\bm{y}$,\quad $\eta\sim\mathcal{U}(-a,a)$
  & $a = 0.3$ \\[6pt]

Random rotation--scale
  & $R^\top D R\,\bm{y}$,\quad $R\sim\mathrm{SO}(2)$, $D=\mathrm{diag}(s_1,s_2)$
  & $\theta\sim\mathcal{N}(0,0.3^2)$,\;$s_k\sim\mathcal{U}(0.8,1.2)$ \\
\bottomrule
\end{tabular}
}
\caption{
  \textbf{Noise models for the bivariate Gaussian experiment.}
  All maps satisfy C1 (convex, $C^2$), C2 ($\mathbb{E}[T(\bm{y})]=\bm{y}$), and C3 (uniform curvature bounds).
  Each $\nu_i$ is approximated by the sample mean and covariance of $m=500$ i.i.d.\ draws from $(\nabla\phi_i)_\# Q^\star_{\bm{x}_i}$.
}
\label{tab:noise-models-gaussian}
\end{table}

We compare two estimators:
\begin{enumerate}
  \item {Wasserstein least squares} (Bures--Wasserstein gradient descent): the closed-form
        Gaussian specialization of the Wasserstein least squares estimator, implemented with \cref{alg:bw_gd}.
        Fitted with $K=300$ gradient steps from a random initialization.
  \item {Fréchet-GD}: the global Fréchet regression estimator of
        \cite{PetersenMuller2019}, implemented for covariance matrices via iterative descent (Algorithm~1 in  \cite{bures-frechet}) with localization parameter
        $\rho=0.01$, step size $\eta=0.5$, and at most $200$ iterations
        per prediction point.
\end{enumerate}

\Cref{fig:wlr-gaussian-superfig}(a) shows $2\sigma$ ellipses at four covariate values $t \in \{-1.8,-0.5,0.5,1.8\}$ under Additive noise.
The top row displays the noisy sample clouds and three independent realizations of $\nu_i$ (grey ellipses), confirming that the individual responses are substantially noisier than the template.
The bottom row overlays the fitted ellipses.
The Wasserstein least squares ellipses (blue) closely match the true template (black dashed) at all four values of $t$: the elongation and orientation of the ellipse evolve correctly.
The Fréchet-GD ellipses (red dashed) are systematically too small at $|t| = 1.8$, where the quadratic term $t^2 C$ dominates; they recover the shape only near $t=0$.

\Cref{fig:wlr-gaussian-superfig}(b) plots the three independent entries of $\Sigma(t)$ against $t$ on a fine grid.
The true curves (black dashed) are quadratic in $t$; the Wasserstein least squares curves (blue) track them closely, while the Fréchet-GD curves (red) approximate only the linear part and underestimate variance at the extremes.
The structural origin of this limitation is the same as in the one-dimensional case: Fréchet regression's linear weighting scheme cannot produce predictions outside the convex hull of training responses.

\Cref{fig:wlr-gaussian-superfig}(c) visualises the trajectory
$t\mapsto\Sigma(t)$ as a curve in the three-dimensional space of SPD entries $(\Sigma_{11},\Sigma_{22},\Sigma_{12})$, shown through three coordinate
projections.
The true trajectory (black) sweeps a quadratic arc; the Wasserstein least squares curve (blue) tracks it closely across the full range $t\in[-2,2]$.
The Fréchet-GD curve (red) remains confined near the center of the training cloud, confirming the convex-hull constraint: it cannot extrapolate the quadratic growth that drives the trajectory at $|t|=2$.

\Cref{tab:wlr-gaussian-comparison} reports the mean average $W_2$ error (versus $\nu_i$) and the error against the true template $Q^\star$, averaged over $n_{\rm rep}=5$ replicates of size $n=50$.
Wasserstein least squares achieves BW errors against the true template between $0.026$ and $0.059$, a factor of $6$--$14\times$ smaller than Fréchet-GD ($0.362$--$0.369$).
The ranking is consistent across all three noise models, including the non-linear rotation--scale model.
Panel~(d) provides the per-observation view: across all three noise models and all $n \times n_{\rm rep} = 250$ training observations,  nearly every single point in the scatter falls below the diagonal, meaning Wasserstein least squares achieves a
smaller in-sample BW error than Fréchet-GD on every observation.

\begin{table}[ht]
\centering
\caption{
  \textbf{Bivariate Gaussian template deformation: Wasserstein least squares vs.\ Fréchet-GD.}
  Bures--Wasserstein error averaged over $5$ replicates ($n=50$ each).
  All Wasserstein least squares errors are strictly smaller than Fréchet-GD.
}
\label{tab:wlr-gaussian-comparison}
\small
\resizebox{\linewidth}{!}{
\begin{tabular}{lcccc}
\toprule
& \multicolumn{2}{c}{Average $W_2$ (vs $\nu_i$)}
& \multicolumn{2}{c}{Vs.\ true $Q^\star$ (BW)} \\
\cmidrule(lr){2-3}\cmidrule(lr){4-5}
Noise model & Wasserstein least squares & Fréchet-GD & Wasserstein least squares & Fréchet-GD \\
\midrule
Additive ($\sigma=0.3$)             & $0.102 \pm 0.006$ & $0.371 \pm 0.009$
                                    & $0.059 \pm 0.005$ & $0.369 \pm 0.001$ \\
Radial scaling ($a=0.3$)            & $0.106 \pm 0.008$ & $0.389 \pm 0.007$
                                    & $0.045 \pm 0.005$ & $0.365 \pm 0.001$ \\
Random rotation--scale ($\theta_0=0.3$) & $0.104 \pm 0.006$ & $0.383 \pm 0.005$
                                    & $0.026 \pm 0.005$ & $0.362 \pm 0.001$ \\
\bottomrule
\end{tabular}
}
\end{table}

\FloatBarrier

\subsubsection*{Rate Validation}
\label{sec:sim-rate}

We validate the $n^{-1/2}$ rate of \cref{thm:stat_main} by isolating its $n$-dependent term. Taking $m=\infty$ (responses $\nu_i$ observed exactly as $K=500$-point quantile functions), the bound reduces to
\begin{equation}\label{eq:rate_target}
  \frac{1}{n}\sum_{i=1}^n W_2^2\!\bigl(\widehat{Q}_{\bm{x}_i},\,Q^\star_{\bm{x}_i}\bigr)
  \;\lesssim\;\frac{1}{\sqrt{n}},
\end{equation}
predicting a log--log slope of $-1/2$ in $n$.

We use the Bures--Wasserstein gradient estimator (\cref{alg:bw_gd}) rather than
the particle method (\cref{alg:wls-gd-particle}) so that the empirical rate
reflects only the error sources controlled by \cref{thm:stat_main}: the
statistical term $\sqrt{pd/n}$ and the within-measure term $r_m$ (governed by
$m$). The particle method carries a separate $M$-dependent discretization error
whose joint scaling with $n$ and $m$ is a distinct question that we do not
address here.

We use the template $Q^\star_{\bm{x}} = \mathcal{N}(t, 1+t^2)$ with $\bm{x}=(1,t)^\top$, $t \sim \mathcal{U}[-2,2]$ ($p=2$, $d=1$), and the three affine noise models from \cref{tab:noise-models-1d} (additive, radial, location-scale), which produce exactly Gaussian responses. We run $K=500$ gradient descent steps and $20$ independent realizations at each $n \in \{10,25,50,100,200,500\}$.

\cref{fig:rate-validation} shows the median $W_2^2$ error and interquartile range on log--log axes together with $n^{-1/2}$ and $n^{-1}$ reference lines. All three curves decay monotonically. The location-scale and radial models display a rate closer to $n^{-1}$ over this range.

\begin{figure}[ht]
  \centering
  \includegraphics[width=0.75\textwidth]{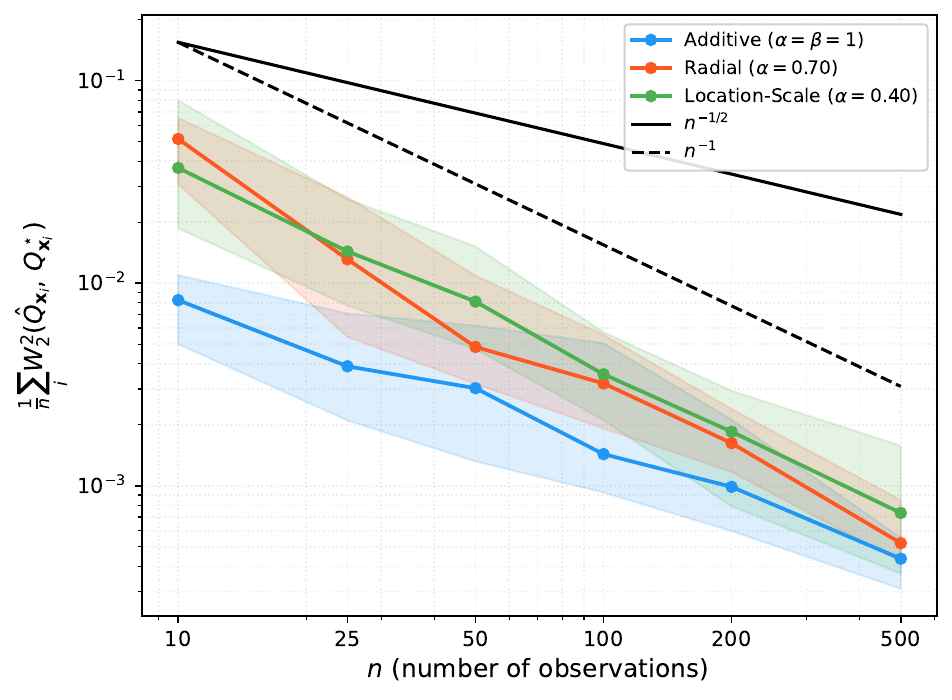}
  \caption{
    \textbf{Rate validation for \cref{thm:stat_main}: in-sample $W_2^2$ error
    vs.\ $n$ in the $m{=}\infty$ regime.}
    Template: $Q^\star_{\bm{x}_i}=\mathcal{N}(t_i,1{+}t_i^2)$, three affine noise models
    (see \cref{tab:noise-models-1d}), $n\in\{10,25,50,100,200,500\}$, $20$ seeds.
    Estimator: Bures--Wasserstein gradient descent ($K{=}500$ steps).
    Solid and dashed lines show $n^{-1/2}$ and $n^{-1}$ reference slopes.
    All three models decay at a rate consistent with the $n^{-1/2}$ bound
    of \cref{thm:stat_main}; location-scale and radial curves display a rate
    closer to $n^{-1}$ over this range.
  }
  \label{fig:rate-validation}
\end{figure}

\FloatBarrier

\section{Computational cost}\label{appendix:timing}

All times are wall-clock measurements on a 2022 MacBook Air.  Fréchet regression is closed-form (OLS + optional PAVA
projection) and takes less than one second in every setting.

\paragraph*{Appendix~\ref{appendix:bmi}: Retirement data (HRS)}
\Cref{tab:timing-bmi} reports the fitting cost for the $n=164$ demographic
cells.  The dominant cost is the main Wasserstein least squares quadratic
fit ($M=20{,}000$ particles, $T=10{,}000$ iterations), which represents the
highest-resolution estimate used for coefficient analysis and visualization.
The LOO-CV fit uses a lighter configuration ($M=2{,}000$, early stopping
with patience $=100$, typically converging in $\sim\!300$ iterations).

\begin{table}[ht]
\centering
\small
\caption{Wall-clock times — Appendix~\ref{appendix:bmi} (HRS, $n=164$ cells,
$K=500$, $p=5$). ``WLS'' = Wasserstein least squares.}
\label{tab:timing-bmi}
\resizebox{\textwidth}{!}{%
\begin{tabular}{lccc}
\toprule
Estimator & Parameters & Per fit & Total \\
\midrule
WLS, quadratic (Particle GD) & $M=20{,}000$,\; $T=10{,}000$ & ${\approx}11$ min & ${\approx}11$ min \\
WLS, LOO-CV, 164 folds (Particle GD) & $M=2{,}000$,\; patience$=100$ & ${\approx}1$ s & ${\approx}3$ min \\
Fréchet, quadratic (OLS + PAVA) & $K=500$ & ${<}1$ s & ${<}1$ s \\
\bottomrule
\end{tabular}}
\end{table}

\paragraph*{Appendix~\ref{appendix:synthetic}: Synthetic experiments}
\Cref{tab:timing-synthetic} gives per-fit and total times for each subsection
of Appendix~\ref{appendix:synthetic}.  ``Total'' refers to the full comparison
run that produced the published tables and figures (number of fits in
parentheses).

\begin{table}[ht]
\centering
\small
\caption{Wall-clock times — Appendix~\ref{appendix:synthetic} (synthetic
experiments). ``WLS'' = Wasserstein least squares.}
\label{tab:timing-synthetic}
\begin{tabular}{llccc}
\toprule
Estimator & Algorithm & Parameters & Per fit & Total \\
\midrule
\multicolumn{5}{l}{\textit{One-dimensional responses
  ($n=50$, $K=200$;\; 5 models $\times$ 5 reps $= 25$ fits)}}\\[2pt]
WLS & Particle GD & $M=2{,}000$,\; $T=3{,}000$ & $5$ s & ${\approx}2$ min \\
Fréchet & OLS + PAVA & $K=200$ & ${<}1$ s & ${<}1$ s \\[4pt]
\multicolumn{5}{l}{\textit{Bivariate Gaussian responses
  ($n=50$, $d=p=2$;\; 3 models $\times$ 5 reps $= 15$ fits)}}\\[2pt]
WLS & BW-GD (\cref{alg:bw_gd}) & $K=300$,\; $dp=4$ & $1.5$ s & $22$ s \\
Fréchet-GD & \citet{bures-frechet} Alg.~1 & iter$=200$ & $0.3$ s & $5$ s \\
Fréchet-OLS & Closed-form & --- & ${<}1$ s & ${<}1$ s \\[4pt]
\multicolumn{5}{l}{\textit{Rate validation
  ($d=1$, $p=2$;\; 3 models $\times$ 6 sizes $\times$ 20 seeds $= 360$ fits)}}\\[2pt]
WLS & BW-GD (\cref{alg:bw_gd}) & $K=500$,\; $dp=2$ & $0.2$--$2.5$ s & ${\approx}8$ min \\
\bottomrule
\end{tabular}
\end{table}

\FloatBarrier

\section{Lifting of Euclidean least squares to Wasserstein least squares}

\begin{proof}[Proof of \cref{thm:lifted_main}]
	We apply the results of \cite{lavenantLiftingFunctionalsDefined2023} to establish~\eqref{eq:mult_lift}.
	In the notation of that paper, the right side of \eqref{eq:mult_lift} is $\cT_E(\nu_1, \dots, \nu_n)$~\cite[Definition 3.6]{lavenantLiftingFunctionalsDefined2023}, with $\EEuc$ defined as in~\eqref{eq:NPR}, $Y = \RR^d$, and $X = [n]$ equipped with the uniform probability measure $m$.
	\citet[Proposition 3.13]{lavenantLiftingFunctionalsDefined2023} shows that the minimum on the right side of~\eqref{eq:mult_lift} is attained, justifying the use of $\min$ in place of $\inf$.

	The functional $\EEuc$ is continuous and, as $|X| < \infty$, it automatically satisfies \cite[Assumption A]{lavenantLiftingFunctionalsDefined2023}.
	Therefore \cite[Theorem 3.9]{lavenantLiftingFunctionalsDefined2023} implies that $\cT_E$ satisfies requirement R1 and is the largest convex and lower-semicontinuous function dominated by
	\begin{equation}
		\widetilde \cT_E(\nu_1, \dots, \nu_n) = \begin{cases}
						\EEuc(\by_1,\dots, \by_n) & \text{if $\nu_i = \delta_{\by_i}$ for $i = 1, \dots, n$} \\
						+\infty & \text{otherwise.}
					\end{cases}
	\end{equation}
	Since $\cE$ as defined in~\eqref{eq:ewas_def} is convex, lower semi-continuous, and dominated by $\widetilde \cT_E$, we obtain that
		\begin{equation*}
		\EWas(\nu_1, \dots, \nu_n) \leq \min_{P \in \Pi(\nu_1, \dots, \nu_n)} \int \EEuc(\by_1, \dots, \by_n) \dd P(\by_1, \dots, \by_n)\,.
	\end{equation*}
	On the other hand, since $\cT_E$ satisfies R1 and R2 \cite[Lemma 3.11 and Proposition 3.13]{lavenantLiftingFunctionalsDefined2023}, the reverse inequality also holds, which yields~\eqref{eq:mult_lift}.

	We now show that this expression agrees with~\eqref{eq:marg_lift}.
	By \cref{lemma:measure-lemma}, we may write
	\begin{multline*}
		 \min_{P \in \Pi(\nu_1, \dots, \nu_n)} \int \min_{f\in\mathcal{F}}\frac{1}{n}\sum_{i=1}^n \|\by_i-f(\bx_i)\|^2_2 \dd P(\by_1, \dots, \by_n) = \\  \min_{P \in \Pi(\nu_1, \dots, \nu_n)} \min_{\substack{\mathsf{P} \in \cP_2((\RR^d)^n \times \cF)\\ (\pi_1)_\# \mathsf P = P}} \int \frac{1}{n}\sum_{i=1}^n \|\by_i-f(\bx_i)\|^2_2 \dd \mathsf P(\by_1, \dots, \by_n, f) =\\
		  \min_{\substack{\mathsf{P} \in \cP_2((\RR^d)^n \times \cF)\\ (\pi_1)_\# \mathsf P \in \Pi(\nu_1, \dots, \nu_n)}} \int \frac{1}{n}\sum_{i=1}^n \|\by_i-f(\bx_i)\|^2_2 \dd \mathsf P(\by_1, \dots, \by_n, f)\,.
	\end{multline*}
	where $\pi_1:(\RR^d)^n \times \cF \to (\RR^d)^n $ denotes the canonical projection.

	Interchanging integration and summation, we obtain
	\begin{align*}
		\EWas(\nu_1, \dots, \nu_n) =  \min_{\substack{\mathsf{P} \in \cP_2((\RR^d)^n \times \cF)\\ (\pi_1)_\# \mathsf P \in \Pi(\nu_1, \dots, \nu_n)}} \frac 1n \sum_{i=1}^n \int \|\by_i-f(\bx_i)\|^2_2 \dd \mathsf P(\by_1, \dots, \by_n, f)\,.
	\end{align*}
	We now observe that the $i$th integral depends only on the marginal distribution of the pair $(\by_i, f)$ under $\mathsf P$, which we denote by $\mathsf P_i$.
	If we write $Q \in \cP(\cF)$ for the distribution of $f$ under $\mathsf P$, then each of the $n$ marginal measures $(\mathsf P_1, \dots, \mathsf P_n)$ is a coupling between $\nu_i$ and $Q$.
	Conversely, given a collection of $n$ couplings $(\mathsf P'_1, \dots, \mathsf P'_n)$ between $\nu_i$ and some element $Q' \in \cP(\cF)$, the gluing lemma~\citep{villani_optimal_transport_2009} implies that we can combine these into a $\mathsf P' \in  \cP_2((\RR^d)^n \times \cF)$ such that $(\pi_1)_\# \mathsf P' \in \Pi(\nu_1, \dots, \nu_n)$.
	We may therefore replace minimization over $\mathsf P$ by minimization over $Q$ and $(\mathsf P_1, \dots, \mathsf P_n)$ to obtain
	\begin{align*}
		\EWas(\nu_1, \dots, \nu_n) & = \min_{Q \in \cP(\cF)} \min_{\substack{\mathsf P_i, i = 1, \dots n\\ \mathsf P_i \in \Pi(\nu_i, Q)}} \frac 1n \sum_{i=1}^n \int \|\by_i-f(\bx_i)\|^2_2 \dd \mathsf P_i(\by_i, f) \\
		& = \min_{Q \in \cP(\cF)}   \frac 1n \sum_{i=1}^n \min_{\substack{\mathsf P_i \in \Pi(\nu_i, Q)}} \int \|\by_i-f(\bx_i)\|^2_2 \dd \mathsf P_i(\by_i, f)  \\
		& = \min_{Q \in \cP(\cF)}   \frac 1n \sum_{i=1}^n W_2^2(\nu_i, Q_{\bx_i})\,,
	\end{align*}
	where the last step uses the fact that $\Pi(\nu_i, Q_{\bx_i}) = (\Id, \delta_{\bx_i})_\# \Pi(\nu_i, Q)$, where $\delta_{\bx_i}$ denotes the evaluation map at $\bx_i$.
\end{proof}

\begin{proof}[Proof of \cref{thm:basic_dual}]
	Denote the right side of~\eqref{eq:dual} by $\cS(\nu_1, \dots, \nu_n)$.
	We first show that $\cS \leq \EWas$.
	Let $\psi_1, \dots, \psi_n \in C(\cF)$ satisfy $\sum_i \psi_i = 0$, and let $P \in \Pi(\nu_1, \dots, \nu_n)$ be arbitrary.
	Since $\sum_i \psi_i =0$, we have
	\begin{align*}
		\frac 1n \sum_{i=1}^n S_i \psi_i(y_i) & = \frac 1n \sum_{i=1}^n \inf_{f \in \cF} \|\by - f(\bx_i)\|^2 - \psi_i(f)\\
		& \leq \inf_{f \in \cF} \frac 1n \sum_{i=1}^n  \|\by_i - f(\bx_i)\|^2 - \sum_i \psi_i(f) = E(\by_1, \dots, \by_n)\,,
	\end{align*}
	so
	\begin{equation*}
		\frac 1n \sum_{i=1}^n \int S_i \psi_i \dd \nu_i = \int \frac 1n  \sum_{i=1}^n S_i \psi_i(\by_i) \dd P(\by_1, \dots, \by_n) \leq  \int E \dd P\,.
	\end{equation*}
	Taking the supremum over $\psi_1, \dots, \psi_n$ and infimum over $P$ shows that $\cS \leq \EWas$.

	We now show that $\cS \geq \EWas.$
	As in the proof of \cref{thm:lifted_main}, we appeal to the results of~\cite{lavenantLiftingFunctionalsDefined2023}: by \cite[Theorem 3.9]{lavenantLiftingFunctionalsDefined2023},
	\begin{multline}\label{eq:hugo_dual}
		\EWas(\nu_1, \dots, \nu_n) = \sup_{\phi_1, \dots, \phi_n \in C_b} \Big\{\frac 1n \sum_{i=1}^n \int \phi_i \dd \nu_i: \\\frac 1n \sum_{i=1}^n \phi_i(\by_i) \leq E(\by_1, \dots, \by_n) \quad \forall \by_i, i = 1, \dots, n\Big\}\,.
	\end{multline}
	Let $\phi_1, \dots, \phi_n$ be feasible for~\eqref{eq:hugo_dual}.
	For $i = 1, \dots, n-1$,  define
	\begin{equation}\label{eq:psi-def}
		\psi_i(f) = \inf_{\by} \|\by - f(\bx_i)\|^2 - \phi_i(\by)\,,
	\end{equation}
	and set $\psi_n(f) = - \sum_{i=1}^{n-1} \psi_i(f)$.
	Then $\psi_i \in C(\cF)$ for $i = 1, \dots, n$, and $\sum_{i=1}^n \psi_i = 0$ by construction.

	We now check, for $i = 1, \dots, n-1$,
	\begin{align*}
		S_i \psi_i(\by_i) & = \inf_{f \in \cF}\|\by_i - f(\bx_i)\|^2 - \psi_i(f) \\
		& = \inf_{f \in \cF} \sup_{\by} \|\by_i - f(\bx_i)\|^2 -\|\by - f(\bx_i)\|^2 + \phi_i(\by) \\
		& \geq \phi_i(\by_i)\,.
	\end{align*}
	and
	\begin{align*}
		S_n \psi_n(\by_n) & = \inf_{f \in \cF} \|\by_n - f(\bx_n)\|^2 + \sum_{i=1}^{n-1} \psi_i(f) \\
		& = \inf_{f, \by_1, \dots, \by_{n-1}} \sum_{i=1}^n \|\by_i - f(\bx_i)\|^2 - \sum_{i=1}^{n-1} \phi_i(\by_i) \\
		& \geq \phi_n(\by_n)\,,
	\end{align*}
	where the final inequality follows from the fact that
	\begin{equation}
		\sum_{i=1}^{n-1} \phi_i(\by_i) \leq - \phi_n(\by_n) +  \sum_{i=1}^n  \|\by_i -f(\bx_i)\|^2
	\end{equation}
	for all $\by_1, \dots, \by_n$ and $f \in \cF$ by assumption.

	Therefore $\frac 1 n \sum_{i=1}^n \int S_i \psi_i \dd \nu_i \geq \frac 1n \sum_{i=1}^n \int \phi_i \dd \nu_i$, and $\cS \geq \EWas$ by~\eqref{eq:hugo_dual}.

\end{proof}

\begin{proof}[Proof of \cref{thm:lin_dual}]
		We follow the approach of~\cite{aguehBarycentersWassersteinSpace2011}.
		The proof of \cref{thm:basic_dual} shows that for $i = 1, \dots, n-1$ we can always consider $\psi_i$ of the form in~\eqref{eq:psi-def}:
		\begin{align*}
					\psi_i(\bB) & = \inf_{\by} \|\by - \bB^\top \bx_i\|^2 - \phi_i(\by) \\
					& = \|\bB^\top \bx_i\|^2 - \sup_{\by} (2 \bx_i^\top \bB \by - \|\by\|^2 + \phi_i(\by))
		\end{align*}
		for some $\phi_i \in C_b$.
		In particular, we can assume that for $i = 1, \dots, n-1$, the function $\bB \mapsto \|\bB^\top \bx_i\|^2 - \psi_i(\bB)$ is convex.
			We may also assume by shifting by appropriate constants that $\psi_i(\bm{0}) = 0$ for all $i$.

			Now let $(\psi^m)_{m \geq 1} := (\psi^m_1, \dots, \psi^m_n)_{m \geq 1}$ be a maximizing sequence for $\cS$ satisfying the above assumptions.
			For each $i \in [n]$ and $m \geq 1$, the assumption that $\psi_i^m(\bm{0}) = 0$ implies
			\begin{equation}\label{eq:s_upper_bound}
					S_i\psi_i^m(\by_i) \leq \|\by_i\|^2\,.
				\end{equation}
			And since $\psi^m$ is a maximizing sequence, there exists a constant $C$ such that
			\begin{equation}
					\frac 1n \sum_{i=1}^n \int S_i \psi_i^m \dd \nu_i \geq C \quad \forall i, m\,.
				\end{equation}
			Since each $\nu_i$ has finite second moment, we obtain the existence of a constant $C'$ such that
			\begin{align*}
				\int S_i \psi_i^m \dd \nu_i & \leq C' \\
				\int S_i \psi_i^m \dd \nu_i & \geq nC - \sum_{j \neq i} \int S_j \psi_j^m \dd \nu_j \\
				& \geq nC - (n-1) C'
			\end{align*}
			for all $i$ and $m$.

			For any $\by_i \in \RR^d$ and $\bB \in \RR^{p \times d}$, we have
			\begin{equation}
					\psi^m_i(\bB) \leq \|\by_i - \bB^\top \bx_i\|^2 - S_i\psi^m_i(\by_i)\,,
				\end{equation}
			and integrating the right side with respect to $\nu_i$ and using the boundedness of $\int S_i\psi^m_i \dd \nu_i$ shows that there exists a constant $C''$ such that the functions $\psi^m_i$ satisfy
			\begin{equation}
				\psi_i^m(\bB) \leq C''(1 + \|\bB\|_F^2) \quad \forall i, m
			\end{equation}
			Since we have assumed that $\psi_n^m = -\sum_{i=1}^{n-1} \psi_i^m$, we also obtain
			\begin{equation}
				\sum_{i=1}^{n-1} \psi_i^m(\bB) = - \psi_n^m(\bB) \geq - C''(1 + \|\bB\|_F^2) \quad \forall m\,,
			\end{equation}
			and therefore for $i = 1, \dots, n-1$
			\begin{align*}
				\psi_i^m(\bB) & \geq - C''(1 + \|\bB\|_F^2) - \sum_{\substack{j = 1\\j \neq i}}^{n-1}\psi_j^m(\bB) \\
				& \geq -(n-1) C''(1 + \|\bB\|_F^2) \quad \forall m\,.
			\end{align*}

			We conclude that, for each $i \in [n]$, the sequence $(\psi_i^m)_{m \geq 1}$ satisfies a bound of the form
			\begin{equation}
				|\psi_i^m(\bB)| \leq C''' (1 + \|\bB\|_F^2) \quad \forall m\,.
			\end{equation}
			Moreover, since $\bB \mapsto \|\bB^\top \bx_i\|^2 - \psi_i^m(\bB)$ is convex for each $i = 1, \dots, n-1$ and $m \geq 1$, the functions $\psi_i^m$ are uniformly equicontinuous on any convex, compact subset of $\RR^{p \times d}$
			By the Arzel\`a--Ascoli theorem, we may therefore extract limiting dual variables $(\psi_1, \dots, \psi_n)$ to which a subsequence of $(\psi^m_1, \dots, \psi^m_n)$ converges uniformly on compacts.
			The constraint $\sum_i \psi_i = 0$ is clearly preserved in the limit, so to show that these limiting variables solve~\eqref{eq:dual}, we note that
			\begin{align*}
				\limsup_{m} S_i \psi_i^m(\by) & \leq \inf_{\bB} \left\{\limsup_{m} \|\by - \bB^\top \bx_i\|^2 - \psi_i^m(\bB)\right\} \\
				& = \inf_{\bB}\|\by - \bB^\top \bx_i\|^2 - \psi_i(\bB) \\
				& = S_i \psi_i(\by)\,.
			\end{align*}
			Since $\|\by_i\|^2 - S_i \psi_i^m(\by_i) \geq 0$ by~\eqref{eq:s_upper_bound}, Fatou's lemma implies that
			\begin{align*}
				\cS(\nu_1, \dots, \nu_n) & = \limsup_{m} \frac 1n \sum_{i=1}^n \int S_i \psi_i^m \dd \nu_i \\
				&  \leq \frac 1n \sum_{i=1}^n \int \limsup_{m} S_i \psi_i^m \dd \nu_i \\
				& \leq \frac 1n \sum_{i=1}^n \int S_i \psi_i \dd \nu_i\,,
			\end{align*}
			as claimed.
	\end{proof}

    \begin{proof}[Proof of \cref{thm:opt_duals}]
		Let $\psi_1, \dots, \psi_n$ be solutions to~\eqref{eq:dual}, whose existence is guaranteed by \cref{thm:lin_dual}.
		For each $i = 1, \dots, n$, define
		 $\varphi_i : \RR^d \to \RR$ by
	\begin{equation}\label{eq:phi_def}
		\varphi_i(\bz) := \left(\frac 12 \|\cdot\|^2 - \frac 12 S_i \psi_i\right)^*(\bz)\,.
	\end{equation}
	Clearly $\varphi_i$ is convex.

	We first claim that
	\begin{equation}\label{eq:psi_ineq}
		\frac 12 \psi_i(\bB) \leq \frac{\|\bB^\top \bx_i\|^2}{2} - \varphi_i(\bB^\top \bx_i)
	\end{equation}
	This follows directly from unrolling the definitions: for any $\bB \in \RR^{p \times d}$ and $\by \in \RR^d$, ~\eqref{eq:s_def} shows
	\begin{align*}
		\frac 12 \psi_i(\bB) & \leq \frac 12 \|\by - \bB^\top \bx_i\|^2 - \frac 12 S_i\psi_i(\by) \\
		& = \frac{\|\bB^\top \bx_i\|^2}{2} - \left(\by^\top \bB^\top \bx_i - \left(\frac 12 \|\by\|^2 - \frac 12 S_i \psi_i(\by)\right) \right) \\
		& \leq\frac{\|\bB^\top \bx_i\|^2}{2} -  \left(\frac 12 \|\cdot\|^2 - \frac 12 S_i \psi_i\right)^*(\bz)\,,
	\end{align*}
	by the definition of the convex conjugate.

	In particular, we obtain
	\begin{equation*}
		0 = \frac 1{2n} \sum_{i=1}^n \psi_i(\bB) \leq \frac 1n \sum_{i=1}^n \frac{\|\bB^\top \bx_i\|^2}{2} - \frac 1n \sum_{i=1}^n \varphi_i(\bB^\top \bx_i)\,,
	\end{equation*}
	proving~\eqref{eq:dual_ineq}.

	To show that $\varphi_i$ is an optimal Brenier potential and that~\eqref{eq:dual_ineq} holds $Q$-a.s., we exploit duality.
The optimality of $Q$ and $(\psi_1, \dots, \psi_n)$ implies
\begin{align}
	\frac 1n \sum_{i=1}^n W_2^2(\nu_i, Q_{\bx_i}) & = \frac 1n \sum_{i=1}^n \int S_i \psi_i \dd \nu_i \nonumber\\ & = \frac 1n \sum_{i=1}^n \left(\int S_i \psi_i \dd \nu_i + \int \psi_i \dd Q \right) \nonumber\\
	& \leq \frac 1n \sum_{i=1}^n \left(\int S_i \psi_i \dd \nu_i + \int( \|\cdot\|^2 - 2 \varphi_i) \dd Q_{\bx_i} \right)\,, \label{eq:ineq_1} \\
	& \leq \frac 1n \sum_{i=1}^n \left(\int (\|\cdot\|^2 - 2 \varphi_i^*) \dd \nu_i + \int( \|\cdot\|^2 - 2 \varphi_i) \dd Q_{\bx_i} \right) \label{eq:ineq_2}
	\end{align}
where the first inequality follows from~\eqref{eq:psi_ineq} and the second follows from
\begin{equation*}
	\varphi_i^* =  \left(\frac 12 \|\cdot\|^2 - \frac 12 S_i \psi_i\right)^{**} \leq \frac 12 \|\cdot\|^2 - \frac 12 S_i \psi_i\,.
\end{equation*}
Kantorovich duality for optimal transport~\cite{villani_optimal_transport_2009} then implies that $(\varphi_i, \varphi_i^*)$ are optimal Brenier potentials and that~\eqref{eq:ineq_1} and~\eqref{eq:ineq_2} are both equalities.
In particular, we see that we must have
\begin{equation}
	0 = \frac 1n \sum_{i=1}^n  \int( \|\cdot\|^2 - 2 \varphi_i) \dd Q_{\bx_i} = \int \frac 1n \sum_{i=1}^n \|\bB^\top \bx_i\|^2 - 2 \varphi_i(\bB^\top \bx_i) \dd Q(\bB)\,,
\end{equation}
which, combined with the inequality~\eqref{eq:dual_ineq}, shows that the integrand must vanish $Q$-a.s.
This establishes the forward direction of the claim.

Conversely, suppose that there exists $Q$ and $\varphi_1, \dots, \varphi_n$ satisfying conditions 1 and 2.
For $i = 1, \dots, n-1$, define
\begin{equation*}
	\psi_i(\bB) = \|\bB^\top \bx_i\|^2 - 2 \varphi_i(\bB^\top \bx_i)\,,
\end{equation*}
and set $\psi_n = - \sum_{i=1}^{n-1} \psi_i$.
Condition 2 guarantees that $\psi_n \leq \|\bB^\top \bx_n\|^2 - 2 \varphi_n(\bB^\top \bx_n)$.
Then clearly $(\psi_1, \dots, \psi_n)$ are feasible in~\eqref{eq:dual}, and for $i = 1, \dots, n$,
\begin{align*}
	S_i \psi_i(\by_i) & = \inf_{\bB} \|\by_i - \bB^\top \bx_i\|^2 - \psi_i(\bB) \\
	& \geq  \inf_{\bB} \|\by_i - \bB^\top \bx_i\|^2 - \|\bB^\top \bx_i\|^2 + 2 \varphi_i(\bB^\top \bx_i) \\
	& = \|\by_i\|^2 - 2 \sup_{\bB} (\by_i^\top \bB^\top \bx_i -\varphi_i(\bB^\top \bx_i)) \\
	& = \|\by_i\|^2 - 2 \varphi_i^*(\by_i)\,.
\end{align*}
We obtain
\begin{align*}
	\frac 1n \sum_{i=1}^n W_2^2(\nu_i, Q_{\bx_i}) & = \frac 1n \sum_{i=1}^n \left(\int (\|\cdot\|^2 - 2 \varphi_i^*) \dd \nu_i + \int( \|\cdot\|^2 - 2 \varphi_i) \dd Q_{\bx_i} \right) \\
	& \leq \frac 1n \sum_{i=1}^n \int S_i \psi_i \dd \nu_i + \int \frac 1n \sum_{i=1}^n \|\bB^\top \bx_i\|^2 - 2 \varphi_i(\bB^\top \bx_i) \dd Q(\bB) \\
	& =  \frac 1n \sum_{i=1}^n \int S_i \psi_i \dd \nu_i \,,
\end{align*}
where the first step follows from condition 1 and the last step follows from condition 2.
The claim then follows from \cref{thm:lin_dual}.
	\end{proof}

    \begin{proof}[Proof of \cref{theorem:W-normal-equations}]
		Both sides of~\eqref{eq:dual_ineq} are continuous, and since they are equal $Q$-a.e., they in fact agree everywhere on $\operatorname{supp}(Q)$.
		Let $\bB \in \operatorname{supp}(Q)$ be arbitrary.
		The subdifferential of the left side of~\eqref{eq:dual} at $\bB$ is~\cite[Theorem VI.4.1.1]{Hiriart-Urruty2011-dv}
		\begin{align*}
			\partial\left(\frac 1n \sum_{i=1}^n \varphi_i(\cdot^\top \bx_i)\right)(\bB) & = \left\{\frac 1n \sum_{i=1}^n g_i : g_i \in \partial \varphi_i(\cdot^\top \bx_i)(\bB)\right\} \\
			& = \left\{\frac 1n \sum_{i=1}^n x_i g_i^\top : g_i \in \partial \varphi_i(\bB^\top \bx_i)\right\}\,.
		\end{align*}
		On the other hand, since $\frac 1n \sum_{i=1}^n \varphi_i(\cdot^\top \bx_i) \leq \frac 1n \sum_{i=1}^n \frac{\|\cdot^\top \bx_i\|^2}{2}$ with equality at $\bB$, the subdifferential of the left side at $\bB$ is included in the subdifferential of the right side at $\bB$.
		Therefore
		\begin{equation*}
			\left\{\frac 1n \sum_{i=1}^n x_i g_i^\top : g_i \in \partial \varphi_i(\bB^\top \bx_i)\right\} \subseteq \left\{\Sigma_{XX} \bB \right\}\,.
		\end{equation*}
		Hence each subdifferential $\partial \varphi_i(\bB^\top \bx_i)$ is actually a singleton, so each $\varphi_i$ is differentiable at $\bB^\top \bx_i$, proving~\eqref{eq:W-normal-equations}.
	\end{proof}

\begin{lemma}\label{lemma:measure-lemma}
	Adopt either \cref{assume:compact} or \cref{assume:linear}, and let $P \in \cP_2((\RR^d)^n)$.
	Then
	    \begin{multline}\label{eq:measure_lemma_b}
		\int \min_{f\in\cF} \frac 1n \sum_{i=1}^n \|\by_i-f(\bx_i)\|^2_2 \dd {P}(\by_1, \dots, \by_n) \\
		=\min_{\substack{\mathsf{P} \in \cP_2((\RR^d)^n \times \cF)\\ (\pi_1)_\# \mathsf P = P}} \int \frac{1}{n}\sum_{i=1}^n \|\by_i-f(\bx_i)\|^2_2 \dd \mathsf{P}(\by_1, \dots, \by_n, f)\,.
	\end{multline}

\end{lemma}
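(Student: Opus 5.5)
The identity in \cref{lemma:measure-lemma} follows from two inequalities plus a measurable selection of minimizers. Write $c(\by, f) = \frac 1n \sum_{i=1}^n \|\by_i - f(\bx_i)\|^2$, where $\by = (\by_1, \dots, \by_n)$, and $E(\by) = \min_{f \in \cF} c(\by, f)$.

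\emph{The inequality ``$\leq$''.} Take any $\mathsf P \in \cP_2((\RR^d)^n \times \cF)$ with $(\pi_1)_\# \mathsf P = P$. Pointwise we have $c(\by, f) \geq E(\by)$. Integrating against $\mathsf P$ and using that the first marginal of $\mathsf P$ is $P$ gives $\int c \,\dd \mathsf P \geq \int E \,\dd P$. Taking the infimum over $\mathsf P$ yields the bound.

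\emph{The inequality ``$\geq$'' and attainment.} I will construct a Borel map $T : (\RR^d)^n \to \cF$ with $T(\by) \in \argmin_{f} c(\by, f)$ for every $\by$, and set $\mathsf P = (\Id, T)_\# P$. Then $(\pi_1)_\# \mathsf P = P$ and $\int c \,\dd \mathsf P = \int E \,\dd P$, so this $\mathsf P$ attains the right-hand side. This shows both equality and that the minimum is achieved. The construction of $T$ depends on which assumption is in force.

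\emph{Linear case (\cref{assume:linear}).} Here $T$ can be written down explicitly: $T(\by) = (\bX^\top \bX)^+ \bX^\top \bY$, as in~\eqref{eq:LR-normal_equations}. This map is linear in $\bY$, hence continuous. It also has linear growth, so $\|T(\by)\|_F \lesssim \|\bY\|_F$, and since $P$ has finite second moment, $\mathsf P$ has finite second moment as well.

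\emph{Compact case (\cref{assume:compact}).} Since $\cF$ is compact and the evaluation maps are continuous, $c$ is jointly continuous on $(\RR^d)^n \times \cF$. For each $\by$ the argmin set is therefore nonempty and compact, and the correspondence $\by \mapsto \argmin_f c(\by, f)$ is upper hemicontinuous with closed graph. The Kuratowski--Ryll-Nardzewski measurable selection theorem (or, more directly, the measurable maximum theorem of Aliprantis--Border) then supplies a Borel selector $T$. This step needs $\cF$ to be Polish. That holds because $\cF$ is a compact metrizable subset of $C(\RR^p; \RR^d)$ with the compact-open topology, which is metrizable via uniform convergence on compacts. Finite second moment of $\mathsf P$ is automatic: by compactness, $\sup_{f \in \cF} \|f(\bx_i)\|$ is bounded, so integrability is not an issue.

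The main obstacle is the compact case: verifying joint continuity of $c$ and the hypotheses of the selection theorem (a Polish target space and a closed-valued measurable correspondence). One might worry that the attained minimum is not Borel measurable. This does not arise here: $E$ is itself continuous, being the minimum of a jointly continuous function over a compact set, so integrating $E$ against $P$ poses no measurability issue.
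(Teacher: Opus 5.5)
Your proposal is correct and follows essentially the same route as the paper. One inequality comes from the pointwise bound $c \geq E$, and attainment comes from $(\mathrm{Id}, T)_\# P$, where $T$ is either a Kuratowski--Ryll-Nardzewski selector of the Berge upper hemicontinuous argmin correspondence (compact case) or the minimum-norm map $(\bX^\top\bX)^+\bX^\top\bY$ (linear case). Your extra checks, that $\cF$ is Polish and that $(\mathrm{Id}, T)_\# P$ has finite second moment, fill in details the paper leaves implicit.
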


\begin{proof}
	Let $\bY = (\by_1, \dots, \by_n) \in (\RR^d)^n$, and write $\Phi(\bY, f) := \frac 1n \sum_{i=1}^n \|\by_i-f(\bx_i)\|^2_2$ for brevity.
	Then $\Phi$ is continuous, and the function
	\begin{equation*}
		m(\bY) := \min_{f\in\cF}\Phi(\bY, f)
	\end{equation*}
	is well defined (the minimum is attained under either \cref{assume:compact} or \cref{assume:linear}) and continuous (by Berge's maximum theorem \cite[Theorem 17.31]{aliprantis-border-2006} under \cref{assume:compact} and by explicit computation under \cref{assume:linear}).

	Let $\mathsf{P}\in\cP_2((\RR^{d})^n \times \cF)$ be any admissible coupling, i.e.\ $(\pi_{1})_{\sharp}\mathsf{P} = P$.
	The fact that $\Phi(\bY, f) \geq m(\bY)$ for all $f \in \cF$ implies
	\begin{equation}\label{eq:lem_ineq_1}
		\int \Phi(\bY, f) \dd \mathsf{P}(\bY, f) \geq \int m(\bY) \dd \mathsf{P}(\bY, f) = \int m(\bY) \dd P(\bY)\,.
	\end{equation}

	On the other hand, suppose we can construct a Borel map $\sigma: (\RR^{d})^n \to \cF$ such that
	\begin{equation*}
		\Phi(\bY, \sigma(\bY)) = m(\bY) \quad \forall \bY \in (\RR^d)^n\,.
	\end{equation*}
	Then $\mathsf{P} = (\Id, \sigma)_\# P$ satisfies $(\pi_1)_\# \mathsf P = P$ and
	\begin{equation}\label{eq:lem_ineq_2}
		\int \Phi(\bY, f) \dd \mathsf{P}(\bY, f) = \int \Phi(\bY, \sigma(\bY)) \dd P(\bY) = \int m(\bY) \dd P(\bY)\,.
	\end{equation}
	Combining \eqref{eq:lem_ineq_1} and \eqref{eq:lem_ineq_2} yields \cref{eq:measure_lemma_b}.

	It remains to construct a suitable $\sigma$.
	Under \cref{assume:compact}, we note that for each $\bY$ the set $\cF_\bY := \argmin_{f \in \cF} \Phi(\bY, f)$ is nonempty and the correspondence $\bY \mapsto \cF_{\bY}$ is upper-hemicontinuous (again by Berge's theorem), hence the Kuratowski--Ryll-Nardzewski measurable selection theorem \cite[Theorem 18.13]{aliprantis-border-2006} guarantees the existence of a Borel $\sigma$ for which $\sigma(\bY) \in \cF_{\bY}.$
	Under \cref{assume:linear}, we may choose the minimum-norm solution $\sigma(\bY) = (\bX^\top \bX )^{+}\bX^\top \bY \in \cF_\bY$, which is clearly also Borel.
	This proves the claim.

\end{proof}

\FloatBarrier
\section{Estimation of Wasserstein least squares}\label{appendix:sample_complexity}
\begin{proof}[Proof of \cref{thm:stat_main}]
	We first reduce to the case where $\nabla \phi_i(0) = 0$ for all $i$.
    Write $\bm{v}_i := \nabla \phi_i(0)$, and define $\bar \phi_i(\by) = \phi_i(\by) - \langle \bm{v}_i, \by \rangle$.
	\Cref{cond:identity} implies that $\E[\bm{v}_i] = 0$; therefore, the function $\bar \phi_i$ still satisfies \cref{cond:convex,cond:alphabeta,cond:identity} as well as the further condition $\nabla \bar \phi_i(0) = 0$.
	Define $\bar \nu_i = (\nabla \bar \phi_i)_\# Q_{\bx_i}^\star$ and let $\widetilde \nu_i$ be an empirical measure corresponding to the shifted measure $\bar \nu_i$.
	Finally, we let $\widetilde Q  = \argmin_{Q \in \cP_2(\RR^{p \times d})} \frac 1n \sum_{i=1}^n W_2^2(\widetilde \nu_i, Q_{\bx_i})$ be the analogue of~\eqref{eq:empirical_estimate} constructed from the shifted measures $\widetilde \nu_i$.

	The data $\widetilde \nu_i$ and estimator $\widetilde Q$ correspond precisely to the model under investigation, under the added assumption that the deformations $\phi_i$ satisfy $\nabla \phi_i(0) = 0$ almost surely.
	Moreover, the following lemma shows that the expected in-sample error for the original data can be bounded by considering the same quantity for the shifted data.
	\begin{lemma}~\label{lem:mean_zero}
		The in-sample error satisfies
		\begin{equation}
			\E \left[	\frac 1n \sum_{i=1}^n W_2^2(\widehat Q_{\bx_i}, Q^\star_{\bx_i})\right] \lesssim \E \left[	\frac 1n \sum_{i=1}^n W_2^2(\widetilde Q_{\bx_i}, Q^\star_{\bx_i})\right] + \sigma^2 \frac{pd}{n}\,.
		\end{equation}
	\end{lemma}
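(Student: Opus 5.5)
The plan is to exploit translation equivariance of the Wasserstein least squares problem, and the fact that it decouples into a "mean" part, which is ordinary least squares, and a "centered" part. The shift $\bm{v}_i = \nabla\phi_i(0)$ only affects the mean part.

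\emph{Coupling the data.} First couple the original and shifted data on a common probability space. If $\widetilde\nu_i = \frac1m\sum_j \delta_{\bar Y_{i,j}}$ with $\bar Y_{i,j}\sim\bar\nu_i$, then $Y_{i,j} := \bar Y_{i,j} + \bm{v}_i$ are i.i.d.\ from $\nu_i = (\cdot + \bm{v}_i)_\#\bar\nu_i$. Hence we may take $\widehat\nu_i = (\cdot+\bm{v}_i)_\#\widetilde\nu_i$, and the left side of the lemma has the same law under this coupling.

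\emph{Decoupling the objective.} Next use the standard identity
\[
W_2^2(\mu,\rho) = \|m(\mu)-m(\rho)\|^2 + W_2^2(\mu^\circ,\rho^\circ),
\]
where $m(\cdot)$ is the mean and $\mu^\circ$ the centered measure. For $Q\in\cP_2(\RR^{p\times d})$ with mean $\bm{M}$, we have $m(Q_{\bx_i}) = \bm{M}^\top\bx_i$ and $(Q_{\bx_i})^\circ = (Q^\circ)_{\bx_i}$, where $Q^\circ = \mathrm{Law}(\bB-\bm{M})$. Therefore
\[
\frac1n\sum_i W_2^2(\widehat\nu_i,Q_{\bx_i}) = \frac1n\sum_i\|m(\widehat\nu_i)-\bm{M}^\top\bx_i\|^2 + \frac1n\sum_i W_2^2(\widehat\nu_i^\circ,(Q^\circ)_{\bx_i}).
\]
The two terms are minimized independently: $\bm{M}$ by Euclidean least squares, and $Q^\circ$ over centered laws. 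Since $\widehat\nu_i^\circ = \widetilde\nu_i^\circ$ and $m(\widehat\nu_i) = m(\widetilde\nu_i)+\bm{v}_i$, it follows that the minimizers are related by a translation. Concretely, with $\bm{V} = (\bm{v}_1|\cdots|\bm{v}_n)^\top$ and $\widehat{\bm{C}} := (\bX^\top\bX)^+\bX^\top\bm{V}$, the measure $(\bB\mapsto\bB+\widehat{\bm{C}})_\#\widetilde Q$ is a minimizer for the original data. Because the argmin in~\eqref{eq:empirical_estimate} need not be unique, we fix this selection of $\widehat Q$; this is a legitimate choice of estimator, and the minimal-norm OLS choice is Borel.

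\emph{Bounding the error.} Then $\widehat Q_{\bx_i}$ is the translate of $\widetilde Q_{\bx_i}$ by $\widehat{\bm{C}}^\top\bx_i$. The triangle inequality, together with $W_2(\mu,\mu+a) = \|a\|$, gives
\[
W_2^2(\widehat Q_{\bx_i},Q^\star_{\bx_i}) \le 2W_2^2(\widetilde Q_{\bx_i},Q^\star_{\bx_i}) + 2\|\widehat{\bm{C}}^\top\bx_i\|^2 .
\]
Averaging over $i$, $\frac1n\sum_i\|\widehat{\bm{C}}^\top\bx_i\|^2 = \frac1n\|\bm{H}\bm{V}\|_F^2$, where $\bm{H} = \bX(\bX^\top\bX)^+\bX^\top$ is an orthogonal projection of rank at most $p$. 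Since the $\phi_i$ are i.i.d., the $\bm{v}_i$ are independent. They are mean zero by \cref{cond:identity}, with $\E\|\bm{v}_i\|^2\le d\sigma^2$ by \cref{cond:convex}. Hence $\E[\bm{V}\bm{V}^\top]$ is diagonal with entries at most $d\sigma^2$, and
\[
\E\|\bm{H}\bm{V}\|_F^2 = \operatorname{tr}\bigl(\bm{H}\,\E[\bm{V}\bm{V}^\top]\bigr) \le d\sigma^2\operatorname{tr}(\bm{H}) \le pd\sigma^2 .
\]
This yields the claim.

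The main obstacle is the decoupling step: checking carefully that the centered part of the optimal $Q$ is unaffected by the shifts, and handling non-uniqueness of minimizers by choosing $\widehat Q$ as the translate of $\widetilde Q$. The remaining steps are routine.
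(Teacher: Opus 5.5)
Your proof is correct and is essentially the paper's argument: translation equivariance splits the objective into an OLS problem for the mean and a centered shape problem that the shifts $\bm{v}_i$ do not affect, so the two estimators differ only by a translation by $(\bX^\top\bX)^+\bX^\top\bm{V}$, and $\E\|\bm{H}\bm{V}\|_F^2\le d\sigma^2\operatorname{Tr}(\bm{H})\le pd\sigma^2$. The differences are cosmetic. The paper splits the in-sample error into coinciding shape errors plus mean errors and applies $\|a+b\|^2\le 2\|a\|^2+2\|b\|^2$ only to the mean part, whereas you apply the $W_2$ triangle inequality to the translated marginals; you also make explicit the choice of minimizer, which the paper assumes without comment when it writes $\widehat Q^0=\widetilde Q^0$.
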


	We therefore adopt the assumption that $\nabla \phi_i(0) = 0$ almost surely in what follows, and conclude the ultimate error bound via \cref{lem:mean_zero}.
	Under this reduction, $\nabla \bar\phi_i(0) = 0$ and $\nabla \bar\phi_i$ is $\beta$-Lipschitz by \cref{cond:alphabeta}, so the assumption~\eqref{eq:latent_bounded} that $\operatorname{supp}(Q^\star_{\bx_i}) \subset B_M$ implies $\operatorname{supp}(\bar\nu_i),\, \operatorname{supp}(\widetilde\nu_i) \subset B_{\beta M}$ almost surely.
	Note that the boundedness assumption~\eqref{eq:latent_bounded} is invariant under this centering reduction, since it is imposed on $Q^\star$ rather than on the deformed responses $\nu_i$.

	Let $\widehat G:\cP_2(\RR^{p\times d})\to\mathbb{R}$ be the empirical loss
	\begin{equation}
		\widehat G(Q):=\frac{1}{n}\sum^n_{i=1}W^2_2(\widehat\nu_i,Q_{\bx_i})
	\end{equation}
	and let $G: \cP_2(\RR^{p\times d})\to\mathbb{R}$ be its population counterpart
	\begin{equation}
		G(Q):=\frac{1}{n}\sum^n_{i=1}W^2_2(\nu_i,Q_{\bx_i})\,.
	\end{equation}
	We exploit a strong convexity inequality for the squared Wasserstein distance~\citep[Theorem 6]{Manole2024PluginES}.
	For completeness, we give a proof of the particular bound we employ in \cref{lem:stab}.
	Since each $\phi_i$ is $\alpha$-strongly convex, \cref{lem:stab} yields
	\begin{multline}\label{eq:intermediate_bound}
		\frac{\alpha}{n} \sum_{i=1}^n W_2^2(\widehat Q_{\bx_i}, Q^\star_{\bx_i})
		\leq \int \left(\frac 1n \sum_{i=1}^n
		\|\cdot^\top\bx_i\|^2 - 2\phi_i(\cdot^\top\bx_i)\right)
		\dd(Q^\star - \widehat Q)\\
		+ (G(\widehat Q) - G(Q^\star))\,.
	\end{multline}

	For any $R > 0$, let $B_R := \{\by \in \RR^d: \|\by\| \leq R\}$ be the closed ball of radius $R$ in $\RR^d$.
	By Lemma~\ref{lem:compact}, there exists $R > 0$ such that, under both $\bB\sim \widehat Q$ and $\bB \sim Q^\star$, the support of $\bX \bB$ lies in $K := B_R^n \subseteq \RR^{n \times d}$.
	Denote by $\cK$ the set of probability measures on $\RR^{p \times d}$ with this property.
	Then
	since $\widehat Q$ minimizes $\widehat G$, we have
	\begin{equation}
		G(\widehat Q) - G(Q^\star) \leq G(\widehat Q) - \widehat G(\widehat Q) + \widehat G(Q^\star) - G(Q^\star) \leq 2\sup_{Q\in \cK} |G( Q) - \widehat{G}(Q)|\,.
	\end{equation}
	For $\mathbf{Y} \in \RR^{n \times d}$ with rows $\by_1^\top, \dots, \by_n^\top$, define the random variable
	\begin{equation}
		Z_{\mathbf{Y}} :=  \frac 1n \sum_{i=1}^n \|\by_i\|^2 - 2 \phi_i(\by_i)
	\end{equation}
	Under \cref{cond:convex,cond:identity}, $\E[\phi_i(\by)] = \|\by\|^2/2$, so $Z_{\bY}$ is a mean-zero stochastic process on $\RR^{n \times d}$, and we have
	\begin{align}
		\left|\int \left(\frac 1n \sum_{i=1}^n {\|\cdot^\top\bx_i\|^2} - 2\phi_i(\cdot^\top\bx_i)\right) \dd(Q^\star - \widehat Q)\right| & = \left| \int  Z_{\bX \bB} \,(\widehat Q - Q^\star)(\mathrm{d} \bB)\right| \\
		& \leq \sup_{\bB : \bX \bB \in  K} |Z_{\bX \bB}|\,.
	\end{align}

	Combining these estimates with~\eqref{eq:intermediate_bound} yields
	\begin{equation}\label{eq:two_terms}
		\frac{\alpha}{n} \sum_{i=1}^n W_2^2(\widehat Q_{\bx_i}, Q^\star_{\bx_i}) \lesssim \sup_{\bB : \bX \bB \in  K} |Z_{\bX \bB}| + \sup_{Q \in \cK} |G(Q) - \widehat G(Q)|\,.
	\end{equation}

	The first term is controlled by Dudley's chaining bound~\cite{vershyninHighDimensionalProbability2018}: \Cref{lem:z_bound} shows that
	\begin{equation}\label{eq:z_bound}
		 \E \sup_{\bB : \bX \bB \in  K} |Z_{\bX \bB}| \lesssim \frac{\beta R^2 \sqrt{pd}}{\sqrt{n}}\,.
	\end{equation}
	A bound on the second term follows from known results in empirical convergence rates for the Wasserstein distance~\cite{chizat-peyre-estimation}: by  \cref{lemma:exchange_sup_expcted},
	\begin{equation}\label{eq:g_bound}
		\E\left[\sup_{Q \in \cK} |G(Q) - \widehat G(Q)|\right]
		\leq \frac{1}{n}\sum_{i=1}^n
		\E\left[\sup_{\mu \in \cP(B_R)} |W_2^2(\mu, \nu_i) - W_2^2(\mu,
		\widehat \nu_i)|\right] \lesssim R^2  r_m\,.
	\end{equation}

	Applying~\eqref{eq:z_bound} and~\eqref{eq:g_bound} to~\eqref{eq:two_terms} and recalling \cref{lem:mean_zero} proves the claim.

\end{proof}

\begin{proof}[Proof of \cref{lem:mean_zero}]
		Given any probability measure $\mu$ with finite first moment, denote its mean by $m(\mu)$.
	The Wasserstein distance satisfies the following translation equivariance property.
	Given a vector $v$, let $\tau^v(x) = x + v$ be the map that translates by $v$.
	Then we have
	\begin{equation}
		W_2^2(\mu, \nu) = W_2^2(\tau^{-m(\mu)}_\# \mu, \tau^{-m(\nu)}_\# \nu ) + \|m(\mu) - m(\nu)\|^2\,.
	\end{equation}
	In~\eqref{eq:empirical_estimate}, if we write $\widehat m_i := m(\widehat \nu_i)$ and $\bM := m(Q)$, then the objective function may therefore be  written as
	\begin{equation}
		\frac 1n \sum_{i=1}^n W_2^2(\tau^{-\widehat m_i}_\# \widehat \nu_i , (\tau^{-\bM}_\# Q)_{\bx_i}) + \|\widehat m_i - \bM^\top \bx_i\|^2\,.
	\end{equation}
	Reparametrizing in terms of the mean-zero measure $\tau^{-\bM}_\# Q$, we can therefore equivalently consider two separate minimization problems, the first optimizing over the ``shape'' of $Q$, and the second over its mean:
	\begin{align*}
		\widehat Q^0 & \in \argmin_{Q^0 \in \cP_2(\RR^{p \times d}): \E_{Q^0} \bB = 0} \frac 1n \sum_{i=1}^n W_2^2(\tau^{-\widehat m_i}_\# \widehat \nu_i, Q^0_{\bx_i}) \\
		\widehat {\bM} & \in \argmin_{\bM \in \RR^{p \times d}} \frac 1n \sum_{i=1}^n \|\widehat m_i - \bM^\top \bx_i\|^2 \\
		\widehat Q &:= \tau^{\widehat \bM}_\# \widehat Q^0\,.
	\end{align*}

	By the same logic, the in-sample error satisfies
	\begin{equation}
		\frac 1n \sum_{i=1}^n W_2^2(\widehat Q_{\bx_i}, Q^\star_{\bx_i}) = \frac 1n \sum_{i=1}^n W_2^2(\widehat Q^0_{\bx_i}, (\tau^{- \bM^\star}_\#  Q^\star)_{\bx_i}) + \|\widehat \bM^\top \bx_i - (\bM^\star)^\top \bx_i\|^2
	\end{equation}
	Applying the same decomposition to $\widetilde Q$ yields $\widetilde Q = \tau^{\widetilde \bM}_\# \widetilde Q^0$ and
	\begin{equation}\label{eq:tilde_insample}
		\frac 1n \sum_{i=1}^n W_2^2(\widetilde Q_{\bx_i}, Q^\star_{\bx_i}) = \frac 1n \sum_{i=1}^n W_2^2(\widetilde Q^0_{\bx_i}, (\tau^{-\bM^\star}_\# Q^\star)_{\bx_i}) + \|\widetilde \bM^\top \bx_i - (\bM^\star)^\top \bx_i\|^2\,.
	\end{equation}
	\textit{Shape errors coincide.}
	Since $\widehat \nu_i = \tau^{\bm{v}_i}_\# \widetilde \nu_i$, we have $\widehat m_i = \widetilde m_i + \bm{v}_i$ where $\widetilde m_i = m(\widetilde \nu_i)$, and therefore $\tau^{-\widehat m_i}_\# \widehat \nu_i = \tau^{-\widetilde m_i}_\# \widetilde \nu_i$.
	The centered data being identical implies $\widehat Q^0 = \widetilde Q^0$, so the shape errors in~\eqref{eq:tilde_insample} are equal to those in the preceding display.

	\textit{Mean errors.}
	The OLS solutions satisfy $\bX \widehat \bM = \bm{H} \widehat{\bm{m}}$ and $\bX \widetilde \bM = \bm{H} \widetilde{\bm{m}}$, where $\bm{H} = \bX(\bX^\top \bX)^+ \bX^\top$ is the hat matrix and where $\widehat{\bm{m}}$ and $\widetilde{\bm{m}}$ are $n \times d$ matrices with rows $\widehat m_i^\top$ and $\widetilde m_i^\top$, respectively.
	Since $\widehat{\bm{m}} = \widetilde{\bm{m}} + \bm{V}$ (where $\bm{V}$ is the $n \times d$ matrix with rows $\bm{v}_i^\top$), the inequality $\|a + b\|^2 \leq 2\|a\|^2 + 2\|b\|^2$ gives
	\begin{equation*}
		\frac 1n \|\bX(\widehat \bM - \bM^\star)\|^2
		= \frac 1n \|\bX(\widetilde \bM - \bM^\star) + \bm{H}\bm{V}\|^2
		\leq \frac 2n \|\bX(\widetilde \bM - \bM^\star)\|^2 + \frac 2n \|\bm{H}\bm{V}\|^2\,.
	\end{equation*}
	For the last term, independence of $\phi_i$ across $i$ and $\E[\bm{v}_i] = 0$ give
	$\E[\|\bm{H}\bm{V}\|_F^2] = \sum_{i=1}^n H_{ii}\, \E[\|\bm{v}_i\|^2] \leq d\sigma^2 \operatorname{Tr}(\bm{H}) \leq d\sigma^2 p$,
	where we used \cref{cond:convex} and $\operatorname{Tr}(\bm{H}) = \operatorname{rank}(\bX) \leq p$.
	Combining the equality of shape errors with the mean error bound gives
	\begin{equation*}
		\E\left[\frac 1n \sum_{i=1}^n W_2^2(\widehat Q_{\bx_i}, Q^\star_{\bx_i})\right] \leq 2\,\E\left[\frac 1n \sum_{i=1}^n W_2^2(\widetilde Q_{\bx_i}, Q^\star_{\bx_i})\right] + 2\sigma^2 \frac{pd}{n}\,.
	\end{equation*}
\end{proof}

\begin{lemma}\label{lem:stab}
  Under the conditions of \cref{thm:stat_main},
  \begin{multline}\label{eq:M-growth}
    \frac{\alpha}{2n}\sum_{i=1}^n W_2^2(\widehat{Q}_{\bx_i}, Q^\star_{\bx_i})
      \leq \int \left(\frac{1}{n}\sum_{i=1}^n \frac{\|\cdot^\top\bx_i\|^2}{2}
      - \phi_i(\cdot^\top\bx_i)\right) d(Q^\star - \widehat{Q})\\
      + (G(\widehat Q) - G(Q^\star))\,.
  \end{multline}
\end{lemma}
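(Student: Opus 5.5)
The plan is to compare, for each $i$, the two transport costs $W_2^2(\nu_i,\widehat Q_{\bx_i})$ and $W_2^2(\nu_i,Q^\star_{\bx_i})$, using only that $\nu_i=(\nabla\phi_i)_\#Q^\star_{\bx_i}$ with $\phi_i$ $\alpha$-strongly convex (\cref{cond:convex,cond:alphabeta}). The two terms on the right of \eqref{eq:M-growth} should be exactly what is left over from this comparison. I expect the comparison to prove cleanly only the inequality with $\alpha$ in front and the factor $2$ in the integrand, and the last step, reaching the constants as written, is where I expect the main obstacle.

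\emph{Step 1 (the reference cost).} Since $\phi_i$ is convex, $\nabla\phi_i$ is the Brenier map from $Q^\star_{\bx_i}$ to $\nu_i$. Hence
\[
W_2^2(\nu_i,Q^\star_{\bx_i})=\int\|\nabla\phi_i(\by)-\by\|^2\,\dd Q^\star_{\bx_i}(\by).
\]

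\emph{Step 2 (gluing).} Take an optimal coupling of $(\widehat Q_{\bx_i},\nu_i)$. Because $\nu_i$ is the image of $Q^\star_{\bx_i}$ under $\nabla\phi_i$, the gluing lemma gives random vectors $(\by',\by)$ with $\by'\sim\widehat Q_{\bx_i}$ and $\by\sim Q^\star_{\bx_i}$, such that $(\by',\nabla\phi_i(\by))$ is optimal for $(\widehat Q_{\bx_i},\nu_i)$. Expanding both squares and subtracting the expression in Step 1 gives
\[
W_2^2(\nu_i,\widehat Q_{\bx_i})-W_2^2(\nu_i,Q^\star_{\bx_i})
=\E\|\by'\|^2-\E\|\by\|^2-2\E\langle \by'-\by,\nabla\phi_i(\by)\rangle .
\]
All terms are finite since both measures lie in $\cP_2$.

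\emph{Step 3 (strong convexity).} Insert the bound
\[
\phi_i(\by')\ge\phi_i(\by)+\langle\nabla\phi_i(\by),\by'-\by\rangle+\tfrac\alpha2\|\by'-\by\|^2 .
\]
Then use $\E\|\by'-\by\|^2\ge W_2^2(\widehat Q_{\bx_i},Q^\star_{\bx_i})$, since $(\by',\by)$ is some coupling of these two measures. Averaging over $i$ yields
\begin{equation*}
\frac\alpha n\sum_{i=1}^nW_2^2(\widehat Q_{\bx_i},Q^\star_{\bx_i})\le 2\int\Bigl(\frac1n\sum_{i=1}^n\frac{\|\cdot^\top\bx_i\|^2}{2}-\phi_i(\cdot^\top\bx_i)\Bigr)\dd(Q^\star-\widehat Q)+\bigl(G(\widehat Q)-G(Q^\star)\bigr).
\end{equation*}
This is precisely the form \eqref{eq:intermediate_bound} used in the proof of \cref{thm:stat_main}.

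\emph{Step 4 (the constants in \eqref{eq:M-growth}; main obstacle).} Write $W:=\frac1n\sum_{i}W_2^2(\widehat Q_{\bx_i},Q^\star_{\bx_i})$, $I$ for the integral in \eqref{eq:M-growth}, and $\Delta:=G(\widehat Q)-G(Q^\star)$, so that Step 3 reads $\alpha W\le 2I+\Delta$. Halving gives $\tfrac\alpha2W\le I+\tfrac12\Delta$. The right side of \eqref{eq:M-growth} is $I+\Delta=(I+\tfrac12\Delta)+\tfrac12\Delta$, so \eqref{eq:M-growth} follows at once whenever $\Delta\ge0$. When $\Delta<0$ the missing piece is $\tfrac12\Delta$ itself. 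Splitting instead by the sign of $I-\tfrac\alpha2W$ does not close the gap, since it still needs $\Delta\ge0$ in one branch. This sign issue is the step I expect to be hardest, and I do not yet have an argument for it.

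My first attempt would be to sharpen Step 3 and recover an extra $-\Delta$ when $\Delta<0$. Concretely, I would keep the exact Bregman remainder $D_{\phi_i}(\by',\by)$ instead of discarding it. I would also check whether the integrand slack in Step 3 dominates $|\Delta|$; it plausibly does, because both quantities are controlled by the same coupling.

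For the use made of the lemma in \cref{thm:stat_main}, the inequality from Step 3 suffices by itself. That proof bounds $\Delta$ above by $2\sup_{Q\in\cK}|G(Q)-\widehat G(Q)|$ and $I$ by $\sup|Z_{\bX\bB}|$, which is exactly the shape of Step 3's inequality.
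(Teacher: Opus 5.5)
Your Steps 1--3 are correct, and they are the paper's argument written in primal form. The paper integrates the conjugate form of strong convexity, $\phi_i^*(x)+\phi_i(y)\ge\langle x,y\rangle+\tfrac{\alpha}{2}\|y-\nabla\phi_i^*(x)\|^2$, against an optimal plan for $(\nu_i,\widehat Q_{\bx_i})$. It then identifies $G(Q^\star)$ by Kantorovich duality with the potentials $(\phi_i,\phi_i^*)$. Put $x=\nabla\phi_i(y)$, so that $\nabla\phi_i^*(x)=y$, and rename the paper's $y$ as your $y'$; that inequality is then exactly your first-order bound. Your gluing computation plays the role of the duality identity. What you obtain, $\alpha W\le 2I+\Delta$ (equivalently $\tfrac{\alpha}{2}W\le I+\tfrac12\Delta$), is exactly \eqref{eq:intermediate_bound}, and that is the only form used in the proof of \cref{thm:stat_main}.

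The obstacle you found in Step 4 cannot be overcome, because \eqref{eq:M-growth} with the constants as written is false when $\Delta<0$. Here is a counterexample.
\begin{itemize}
\item Take $n=p=d=1$, $\bx_1=1$, $Q^\star=\tfrac12(\delta_{-1}+\delta_1)$, and $\phi_1(y)=\lambda y^2/2$ with $\lambda$ uniform on $\{1-s,1+s\}$ for some $0<s<1$. Then $\nu_1=\tfrac12(\delta_{-\lambda}+\delta_\lambda)$.
\item This satisfies C1--C3 with $\alpha=1-s$ and $\beta=1+s$, incoherence with $\mu=1$, and boundedness with $M=1$.
\item Here $\widehat Q=\widehat\nu_1$. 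For even $m$, the event $\{\lambda=1-s\}\cap\{\widehat\nu_1=\nu_1\}$ has positive probability.
\item On that event, $W=s^2$ and $\Delta=-s^2$ (since $G(\widehat Q)=0$ and $G(Q^\star)=s^2$). Also $I=\tfrac{1-\lambda}{2}\int y^2\,\mathrm{d}(Q^\star-\nu_1)=\tfrac{s^2(2-s)}{2}$.
\end{itemize}
Hence $I+\Delta=-\tfrac{s^3}{2}<0<\tfrac{(1-s)s^2}{2}=\tfrac{\alpha}{2}W$, while $I+\tfrac12\Delta=\tfrac{\alpha}{2}W$ holds with equality.

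The equality also rules out your proposed fix via the Bregman remainder. The curvature of $\phi_1$ is exactly $\alpha$, and the glued pair $(y',y)=(\lambda y,y)$ is already $W_2$-optimal, so Step 3 has no slack to absorb $\tfrac12|\Delta|$. More generally, whenever $\widehat\nu_i=\nu_i$ the estimator minimizes $G$. Then $\Delta\le 0$, and \eqref{eq:M-growth} would be strictly stronger than anything strong convexity gives.

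The stated constants come from a factor-of-two slip in the paper's own proof. The per-$i$ inequality has $\tfrac12 W_2^2(\nu_i,\widehat Q_{\bx_i})$ on its left, so averaging over $i$ gives $\tfrac12 G(\widehat Q)$. Likewise, the Kantorovich identity gives $\tfrac12 G(Q^\star)$. The paper writes $G(\widehat Q)$ and $G(Q^\star)$ instead. With the halves restored, the paper's argument proves exactly your Step 3 inequality. The lemma should therefore read $\frac{\alpha}{2n}\sum_i W_2^2(\widehat Q_{\bx_i},Q^\star_{\bx_i})\le I+\tfrac12\bigl(G(\widehat Q)-G(Q^\star)\bigr)$. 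You have already proved that version, and \cref{thm:stat_main} is unaffected.
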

\begin{proof}
	Given $\alpha$-strong convexity of $\phi_i$ for each $i=1,...,n$ we have that
	\begin{equation*}
		\phi_i^*(x)+\phi_i(y)\geq \langle x, y \rangle + \frac{\alpha}{2}\|y-\nabla\phi^*_i(x)\|^2.
	\end{equation*}
	Expanding the square and rearranging
	\begin{equation*}
		\frac{\|x-y\|^2}{2} -\frac{\alpha}{2}\|y-\nabla\phi^*_i(x)\|^2 \geq \frac{\|x\|^2}{2}-\phi^*_i(x) + \frac{\|y\|^2}{2}-\phi_i(y).
	\end{equation*}
	Integrating the previous expression w.r.t $\gamma_i$, the optimal coupling  between $\nu_i$ and $\widehat{Q}_{\bx_i}$, we arrive at
	\begin{equation*}
		\frac 12 W^2_2(\nu_i,\widehat{Q}_{\bx_i})-\frac{\alpha}{2}\int\|y-\nabla\phi^*_i(x)\|^2d\gamma_i\geq \int(\frac{\|\cdot\|^2}{2}-\phi^*_i)d\nu_i + \int(\frac{\|\cdot\|^2}{2}-\phi_i )d\widehat{Q}_{\bx_i}.
	\end{equation*}
	Noting that $W^2_2(\widehat{Q}_{\bx_i},Q^\star_{\bx_i})\leq \int\|y-\nabla\phi^*_i(x)\|^2d\gamma_i$ and summing over $1\leq i\leq n$ gives us
	\begin{equation*}
		G(\widehat{Q}) - \frac{\alpha}{2n}\sum_{i=1}^n W_2^2(\widehat{Q}_{\bx_i}, Q^\star_{\bx_i}) \geq \frac{1}{n}\sum_{i=1}^n \int \left(\|\cdot\|_2^2/2 - \phi_i^*\right) d\nu_i + \int \left(\|\cdot\|_2^2/2 - \phi_i\right) d\widehat{Q}_{\bx_i}
	\end{equation*}
	So
	\begin{align*}
		G(\widehat{Q}) - \frac{\alpha}{2n}\sum_{i=1}^n W_2^2(\widehat{Q}_{\bx_i}, Q^\star_{\bx_i})
		&\geq \frac{1}{n}\sum_{i=1}^n \left[\int \left(\frac{\|\cdot\|^2}{2}  - \phi_i^*\right) d\nu_i\right.\\
		&\qquad\left.+ \int \left(\frac{\|\cdot\|^2}{2}  - \phi_i\right) d\widehat{Q}_{\bx_i}\right]\\
		&= G(Q^\star) + \frac{1}{n}\sum_{i=1}^n \int \left(\frac{\|\cdot\|^2}{2} - \phi_i\right) d(\widehat{Q}_{\bx_i} - Q^\star_{\bx_i})\\
		&= G(Q^\star) \\
		&\quad + \int \left(\frac{1}{n}\sum_{i=1}^n \frac{\|\cdot^\top\bx_i\|^2}{2} - \phi_i(\cdot^\top\bx_i)\right) d(\widehat{Q} - Q^\star).
	\end{align*}

	Re-ordering the previous expression yields~\eqref{eq:M-growth}.
\end{proof}
\begin{lemma}\label{lem:z_bound}
	Let $K = B_R^n \subseteq \RR^{n \times d}$, where $B_R = \{\by \in \RR^d: \|\by\| \leq R\}$.
	Then
	\begin{equation}
		\E \sup_{\bB : \bX \bB \in  K} |Z_{\bX \bB}| \lesssim \beta R^2 \sqrt{\frac{pd}{n}}\,.
	\end{equation}
\end{lemma}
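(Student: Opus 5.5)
We bound the supremum of $Z_{\bX\bB}$ over the set $T := \{\bB : \bX\bB \in K\}$ by chaining. Since $Z$ depends on $\bB$ only through $\bX\bB$, we may identify $T$ with the set $\{\bX\bB\} \cap K$. This set lies inside the column space of $\bX$ tensored with $\RR^d$, a linear space of dimension at most $pd$. The cost of restricting to it is compensated by the incoherence condition~\eqref{eq:incoherence}.

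\emph{Step 1: the process is sub-Gaussian in a suitable metric.} Write $Z_{\bY} = \frac1n\sum_{i=1}^n \xi_i(\by_i)$ with $\xi_i(\by) = \|\by\|^2 - 2\phi_i(\by)$. These are independent, mean-zero random fields: the mean-zero property comes from \cref{cond:identity} together with $\phi_i(0)=0$, and after the reduction of \cref{lem:mean_zero} we also have $\nabla\phi_i(0)=0$. By \cref{cond:alphabeta}, $\nabla\xi_i(\by) = 2(\by - \nabla\phi_i(\by))$ is bounded in norm by $2\max(1,\beta)\|\by\|$. Consequently, for $\by,\by' \in B_R$,
\[
|\xi_i(\by) - \xi_i(\by')| \lesssim \max(1,\beta)\, R\, \|\by - \by'\|
\]
almost surely. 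By Hoeffding's inequality for bounded independent increments, $Z_{\bY} - Z_{\bY'}$ is sub-Gaussian with variance proxy
\[
\frac{\max(1,\beta)^2R^2}{n^2}\sum_i\|\by_i - \by_i'\|^2 = \frac{\max(1,\beta)^2R^2}{n^2}\|\bY - \bY'\|_F^2 .
\]
In addition, $Z_0 = 0$. So $Z$ has sub-Gaussian increments with respect to the metric $\rho(\bY,\bY') = \frac{\beta R}{n}\|\bY - \bY'\|_F$ (absorbing $\max(1,\beta)$ into the constants, as in the statement).

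\emph{Step 2: entropy of $T$.} By Dudley's inequality,
\[
\E\sup_T|Z| \lesssim \frac{\beta R}{n}\int_0^{D}\sqrt{\log N(T, \|\cdot\|_F, \epsilon)}\, d\epsilon,
\]
where $D$ is the Frobenius diameter of $T$. The set $T$ is contained in a $pd$-dimensional subspace and has $D \le 2\sqrt{n}R$, because each row has norm at most $R$. Hence $N(T,\epsilon) \le (3D/\epsilon)^{pd}$, and the entropy integral is of order $D\sqrt{pd} \lesssim \sqrt{n}R\sqrt{pd}$. Multiplying by $\beta R/n$ gives $\beta R^2\sqrt{pd/n}$, which is exactly the claimed bound. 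It is worth flagging that this step does not actually use incoherence. The role of incoherence is instead in \cref{lem:compact}: it ensures that the radius $R$ controlling the support of $\bX\bB$ under $\widehat Q$ is $M\max(1,\beta\sqrt{\mu p})$.

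\emph{Main obstacle.} The delicate point is Step 1. We need the Lipschitz bound on $\xi_i$ to hold uniformly on $B_R$, and the increments to be genuinely independent across $i$. Both hold because the $\phi_i$ are i.i.d.\ and the bound is almost sure, so Hoeffding applies coordinate-wise. We also need to handle measurability of the supremum. This is settled by continuity of $\bY \mapsto Z_{\bY}$, which allows restricting to a countable dense subset of $T$.
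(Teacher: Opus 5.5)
Your proposal is correct and follows the paper's proof essentially step for step: Hoeffding's lemma applied to the independent, almost surely bounded mean-zero summands $\xi_i(\by_i)-\xi_i(\by_i')$ gives sub-Gaussian increments of order $\beta R/n$ in Frobenius norm, and Dudley's bound over the at most $pd$-dimensional set $\mathrm{col}(\bX)^d\cap K$ of Frobenius diameter $2R\sqrt{n}$ then yields $\beta R^2\sqrt{pd/n}$. One step needs a justification rather than an assertion, namely replacing $\max(1,\beta)$ by $\beta$: almost surely $\langle \nabla\phi_i(\by)-\nabla\phi_i(0),\by\rangle\in[\alpha\|\by\|^2,\beta\|\by\|^2]$, and by C2 its expectation is $\|\by\|^2$, so $\alpha\le 1\le\beta$ (the same fact the paper invokes to get $L\le 2\beta$).
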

\begin{proof}
	We first show that $Z_\bY$ has sub-gaussian increments:
	For any $\bY, \bY' \in K$, we will show
	\begin{equation}\label{eq:subg}
		\|Z_{\bY} - Z_{\bY'}\|_{\psi_2} \leq \frac{\sigma}{n} \|\bY - \bY'\|
	\end{equation}
	where $\sigma = \beta R$.
	Indeed, \Cref{cond:alphabeta} and the assumption that $\nabla \phi_i(0) = 0$ imply that the function
	\begin{equation*}
		\psi_i(\by) = \|\by\|^2 - 2 \phi_i(\by)
	\end{equation*}
	satisfies $\nabla \psi_i(0) = 0$ and $\|\nabla^2 \psi_i(\by)\|_{\mathrm{op}} \leq 2 \max\{1-\alpha, \beta - 1\} =: L$.
	Hence the gradient of $\psi_i$ is $L$ Lipschitz, and in particular $\|\nabla \psi_i(\by)\| \leq L\|\by\|$ for all $\by$.
	Therefore
	\begin{align*}
		\left| \|\by_i\|^2 - 2 \phi_i(\by_i) - ( \|\by'_i\|^2 - 2 \phi_i(\by'_i))\right| & = |\psi_i(\by_i) - \psi_i(\by'_i)| \\
		& \leq L \|\by_i - \by_i'\| \max\{\|\by_i\|, \|\by_i'\|\}
	\end{align*}
	We obtain that $Z_{\bY} - Z_{\bY'}$ is an average of $n$ independent, mean zero terms, and the $i$th term is bounded by $L \|\by_i - \by_i'\| \max\{\|\by_i\|, \|\by_i'\|\}$ almost surely.
	Hoeffding's lemma therefore implies that for any $\bY, \bY' \in K$, the increment $Z_{\bY} - Z_{\bY'}$ is
	\begin{equation}
		\frac{L^2}{4n^2} \max_{i \in [n]}\max\{\|\by_i\|^2, \|\by_i'\|^2\} \|\bY - \bY'\|^2
	\end{equation}
	subgaussian.
	\Cref{cond:alphabeta} implies that $\alpha \leq 1 \leq \beta$, so $L \leq 2 \beta$, and the definition of $K$ shows that any $\bY \in K$ satisfies $\max_{i \in [n]} \|\by_i\| \leq R$.
	This proves~\eqref{eq:subg}.

	The result now follows from Dudley's entropy bound~\cite[Theorem~8.1.6]{vershyninHighDimensionalProbability2018}.
	Equip the indexing set $T := \{\bB \in \RR^{p \times d} : \bX \bB \in K\}$ with the pseudo-metric $\rho(\bB, \bB') = \|\bX(\bB - \bB')\|$.
	The covering numbers of $(T, \rho)$ equal those of the image $\bX(T) = \mathrm{col}(\bX)^d \cap K$ under $\|\cdot\|$.
	This set has dimension at most $pd$ and Frobenius diameter at most $2R\sqrt{n}$, so standard volumetric estimates give
	\begin{equation}
		\log N(\eps, T, \rho) \leq pd \log\!\left(\frac{6R\sqrt{n}}{\eps}\right)\,.
	\end{equation}
	Applying Dudley's entropy bound to the sub-gaussian process $Z_{\bX\bB}$ with increment parameter $\beta R / n$ from~\eqref{eq:subg} yields
	\begin{align}
		\E \sup_{\bB \in T} |Z_{\bX\bB}|
		&\lesssim \frac{\beta R}{n} \int_0^{R\sqrt{n}} \sqrt{pd \log\!\left(\frac{6R\sqrt{n}}{\eps}\right)} \, d\eps
		\lesssim \frac{\beta R^2 \sqrt{pd}}{\sqrt{n}}\,,
	\end{align}
	where the last step uses the standard evaluation of the entropy integral for a $pd$-dimensional body of diameter $R\sqrt{n}$.
\end{proof}
\begin{lemma}\label{lem:compact}
	Assume that $\nabla \phi_i(0) = 0$ for each $i \in [n]$, that the covariates satisfy the incoherence condition
	\begin{equation}
		\bx_i^\top (\bX^\top \bX)^+ \bx_i \leq \frac{\mu p}{n}\,,
	\end{equation}
	that $\operatorname{supp}(Q^\star_{\bx_i}) \subset B_M$ for each $i \in [n]$, and that $\operatorname{supp}(\widehat \nu_i) \subset B_{\beta M}$ almost surely for each $i \in [n]$.
	Then, under both $\bB \sim \widehat Q$ and $\bB \sim Q^\star$, the support of $\bX \bB$ lies in $B_R^n$ with $R = M\max(1, \beta\sqrt{\mu p})$.
\end{lemma}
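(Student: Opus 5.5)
The bound for $Q^\star$ is immediate: by assumption $\operatorname{supp}(Q^\star_{\bx_i}) \subset B_M \subseteq B_R$ for every $i$, since $R \geq M$. So the work is for $\widehat Q$. There I would use the Wasserstein normal equations of \cref{theorem:W-normal-equations}, applied with the responses $\widehat\nu_i$, to express each row of $\bX\bB$ as a hat-matrix combination of points lying in $B_{\beta M}$. Incoherence then controls the size of that combination.

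\textbf{Step 1 (normal equations with well-placed gradients).} Apply \cref{thm:opt_duals} and \cref{theorem:W-normal-equations} to the data $\widehat\nu_1,\dots,\widehat\nu_n$. This gives convex $\varphi_i$ such that $(\varphi_i,\varphi_i^*)$ are optimal Brenier potentials for $(\widehat Q_{\bx_i},\widehat\nu_i)$, together with \eqref{eq:dual_ineq} holding with equality on $\operatorname{supp}(\widehat Q)$. I want to ensure $\nabla\varphi_i(\bB^\top\bx_i) \in C_i := \operatorname{conv}\operatorname{supp}(\widehat\nu_i) \subseteq B_{\beta M}$. To arrange this, replace $\varphi_i^*$ by $\varphi_i^* + \iota_{C_i}$, where $\iota_{C_i}$ is the convex indicator of $C_i$, and set $\tilde\varphi_i := (\varphi_i^*+\iota_{C_i})^*$. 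Then:
\begin{itemize}
\item $\tilde\varphi_i \leq \varphi_i$, so inequality \eqref{eq:dual_ineq} persists.
\item $\varphi_i^*+\iota_{C_i} = \varphi_i^*$ holds $\widehat\nu_i$-a.e.
\item On the support of an optimal coupling one has $\varphi_i(x)+\varphi_i^*(y) = \langle x,y\rangle$ with $y \in \operatorname{supp}\widehat\nu_i$. Hence $\tilde\varphi_i = \varphi_i$ on $\operatorname{supp}\widehat Q_{\bx_i}$, so both the Brenier optimality and the equality in \eqref{eq:dual_ineq} on $\operatorname{supp}(\widehat Q)$ survive.
\end{itemize}
By the converse part of \cref{thm:opt_duals}, the $\tilde\varphi_i$ are optimal dual solutions. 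Moreover, subgradients of $\tilde\varphi_i$ lie in $C_i$, because $\tilde\varphi_i$ is a supremum of affine functions with slopes in $C_i$. Rerunning the subdifferential argument of \cref{theorem:W-normal-equations} with $\tilde\varphi_i$ then yields, for every $\bB \in \operatorname{supp}(\widehat Q)$,
\[
\bX^\top\bX\,\bB = \bX^\top\bY, \qquad \bY \text{ with rows } \by_j^\top,\ \by_j := \nabla\tilde\varphi_j(\bB^\top\bx_j) \in B_{\beta M}.
\]

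\textbf{Step 2 (projection).} Multiply the identity from Step 1 on the left by $\bX(\bX^\top\bX)^+$. Since $\bX(\bX^\top\bX)^+\bX^\top\bX = \bX$, this gives $\bX\bB = \bm{H}\bY$, where $\bm{H} = \bX(\bX^\top\bX)^+\bX^\top$ is the hat matrix. Row $i$ therefore reads $\bB^\top\bx_i = \sum_j H_{ij}\by_j$.

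\textbf{Step 3 (incoherence).} By Cauchy--Schwarz,
\[
\|\bB^\top\bx_i\| \leq \Bigl(\sum_j H_{ij}^2\Bigr)^{1/2}\Bigl(\sum_j\|\by_j\|^2\Bigr)^{1/2}.
\]
Because $\bm{H}$ is an orthogonal projection, $\sum_j H_{ij}^2 = H_{ii} = \bx_i^\top(\bX^\top\bX)^+\bx_i \leq \mu p/n$ by incoherence. Also $\sum_j\|\by_j\|^2 \leq n\beta^2M^2$. Combining,
\[
\|\bB^\top\bx_i\| \leq \beta M\sqrt{\mu p} \leq R \quad \text{for all } i \text{ and all } \bB \in \operatorname{supp}(\widehat Q),
\]
which is the claim for $\widehat Q$.

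The main obstacle is Step 1: guaranteeing that the gradients appearing in the normal equations lie in the convex hull of $\operatorname{supp}(\widehat\nu_i)$, rather than only $\widehat\nu_i$-almost everywhere, while keeping the duality conditions intact. The convex-indicator modification is what I would try first. If it proves delicate, a fallback is to argue directly from Brenier/Kantorovich optimality that the optimal coupling pairs each point of $\operatorname{supp}\widehat Q_{\bx_i}$ only with points of $C_i$.
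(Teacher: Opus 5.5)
Your proof is correct. Your Steps 2--3 are exactly the paper's computation: $\bX\bB=\bm H\bY$, then $\sum_j H_{ij}^2=H_{ii}\le\mu p/n$ and $\|\bY\|_F^2\le n\beta^2M^2$.

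The difference lies in how you obtain the representation $\bX\bB=\bm H\bY$ with the rows of $\bY$ in $\operatorname{supp}\widehat\nu_j$. The paper gets it from the primal side. Following \eqref{eq:LR-normal_equations} and the multimarginal construction behind \cref{lemma:measure-lemma}, it writes $\bB=(\bX^\top\bX)^+\bX^\top\bY$ with $\bY\sim P\in\Pi(\widehat\nu_1,\dots,\widehat\nu_n)$. The rows of $\bY$ then lie in the supports for free, and no dual potentials are involved. That is much shorter. As written, however, it implicitly takes $\widehat Q$ to be the particular minimizer obtained by pushing an optimal multimarginal coupling through the least-squares map. By contrast, \eqref{eq:empirical_estimate} allows any element of a non-unique argmin.

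Your dual route via \cref{thm:opt_duals,theorem:W-normal-equations} applies to every minimizer. The price is the modification $\tilde\varphi_i=(\varphi_i^*+\iota_{C_i})^*$, and that step is sound. It rests on three facts:
\begin{enumerate}
\item $\varphi_i^{**}=\varphi_i$.
\item The Fenchel--Young equality set is closed, by lower semicontinuity.
\item $\operatorname{supp}\widehat\nu_i$ is finite, so every point of $\operatorname{supp}\widehat Q_{\bx_i}$ is paired with some atom on the support of the optimal coupling.
\end{enumerate}
It also has a side benefit: each $\tilde\varphi_i$ is finite and $\beta M$-Lipschitz. The continuity needed to pass from $\widehat Q$-a.s.\ equality in \eqref{eq:dual_ineq} to equality on all of $\operatorname{supp}(\widehat Q)$ is therefore genuinely available. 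The paper's proof of \cref{theorem:W-normal-equations} asserts this continuity for $\varphi_i$ without justification.

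If you would rather stay primal while still covering an arbitrary minimizer, there is a shortcut. Glue optimal couplings of $\widehat Q$ with each $\widehat\nu_i$ into a single $\mathsf P$ on $(\RR^d)^n\times\RR^{p\times d}$. By \cref{thm:lifted_main}, its cost equals the multimarginal minimum, which forces $\bB$ to be a least-squares solution for $\bY$ $\mathsf P$-almost surely, i.e.\ $\bX^\top\bX\bB=\bX^\top\bY$. This gives the same bound with no potentials at all.
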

\begin{proof}
	We handle the two cases separately.

	\textbf{Under $\bB \sim Q^\star$:}
	By assumption, $\operatorname{supp}(Q^\star_{\bx_i}) \subset B_M$, so $\|\bB^\top \bx_i\| \leq M$ almost surely for each $i \in [n]$.

	\textbf{Under $\bB \sim \widehat Q$:}
	Following~\eqref{eq:LR-normal_equations}, we can write $\bB = (\bX^\top \bX)\bX^\top \bY$, for $(\by_1, \dots, \by_n) \sim P$ for some $P \in \Pi(\widehat \nu_1, \dots, \widehat \nu_n)$.
	Writing $\bm{H} = \bX (\bX^\top \bX)^+ \bX^\top$, we equivalently have $\bX \bB = \bm{H} \bY$, where $\bY$ is some random matrix satisfying $\|\bY\|^2 \leq \sum_{i=1}^n \sup\{\|\by_i\|^2: \by_i \in \operatorname{supp}(\widehat \nu_i)\} \leq n (\beta M)^2$.
	We obtain that the $i$th row of $\bX \bB$ has norm bounded by $\|\bm{h}_i\| \|\bY\| \leq \sqrt n\, \beta M \|\bm{h}_i\|$, where $\bm{h}_i$ is the $i$th row of $\bm{H}$.
	Finally, since $\bm{H}$ is an orthogonal projection, we have that
	\begin{equation}
		\|\bm{h}_i\|^2 = (\bm{H} \bm{H}^\top)_{ii} = \bm{H}_{ii} = \bx_i^\top (\bX^\top \bX)^+ \bx_i \leq \frac{\mu p}{n}\,.
	\end{equation}
	We obtain that the $i$th row of $\bX \bB$ has norm at most $\beta M \sqrt{\mu p}$.

	Taking $R = M\max(1, \beta\sqrt{\mu p})$ completes the proof.
\end{proof}
\begin{lemma}\label{lemma:exchange_sup_expcted}
	Let $\Omega = B_R \subseteq \RR^d$ be the ball of radius $R$ around the origin in $\RR^d$. If $\nu \in \cP(\Omega)$ and $\widehat{\nu}$ is an empirical version of $\nu$ obtained from $m$ i.i.d.\ samples, then
	\begin{equation}\label{eq:expected_sup_bound}
		\E\left[\sup_{\mu\in\mathcal{P}(\Omega)}|W^2_2(\mu,\nu)-W^2_2(\mu,\widehat{\nu})|\right]\lesssim R^2 \begin{cases}
			m^{-2/d} &\text{if $d>4$,}\\
			m^{-1/2}\log(m) &\text{if $d=4$,}\\
			m^{-1/2} &\text{if $d<4$,}
		\end{cases}
	\end{equation}
\end{lemma}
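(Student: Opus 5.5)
The plan is to reduce the uniform deviation to a single empirical-process quantity via Kantorovich duality, and then invoke known bounds for the Wasserstein empirical rate. First, rescale. Replace $\mu,\nu,\widehat\nu$ by their pushforwards under $\by\mapsto \by/R$. Both sides of \eqref{eq:expected_sup_bound} then scale by $R^2$, so it suffices to treat $R=1$, i.e.\ $\Omega=B_1$.

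Second, fix $\mu\in\cP(\Omega)$ and write $W_2^2(\mu,\nu)=\int\|\cdot\|^2\dd\mu+\int\|\cdot\|^2\dd\nu-2\sup_{\varphi}\{\int\varphi^*\dd\mu+\int\varphi\dd\nu\}$, where the supremum runs over convex $\varphi$. Since all measures live on $B_1$, we may restrict to the class $\mathcal{C}$ of convex functions that are $1$-Lipschitz on $B_1$ and normalized by $\varphi(0)=0$. Indeed, $\varphi^{**}$ restricted to $B_1$ can be taken to be the conjugate of a function supported in $B_1$, so it is $1$-Lipschitz. Call the bracketed quantity $J_\mu(\nu)$. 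Then
\[
\bigl|W_2^2(\mu,\nu)-W_2^2(\mu,\widehat\nu)\bigr| \le \Bigl|\int\|\cdot\|^2\dd(\nu-\widehat\nu)\Bigr| + 2\,\bigl|J_\mu(\nu)-J_\mu(\widehat\nu)\bigr|,
\]
and $|J_\mu(\nu)-J_\mu(\widehat\nu)|\le \sup_{\varphi\in\mathcal{C}}|\int\varphi\dd(\nu-\widehat\nu)|$, because the $\varphi^*$ term is common to both. This bound does not depend on $\mu$, so the supremum over $\mu$ disappears. The first term has expectation $O(m^{-1/2})$ by the variance bound for bounded functions.

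Third, the main step is to bound $\E\sup_{\varphi\in\mathcal{C}}|\int\varphi\dd(\nu-\widehat\nu)|$. This is an empirical process indexed by bounded convex Lipschitz functions on $B_1$. By Bronshtein's theorem, this class has sup-norm metric entropy $\log N(\eps)\lesssim \eps^{-d/2}$. Applying Dudley's chaining integral with a truncation at level $\delta$ gives
\[
\E\sup_{\varphi}\lesssim \inf_{\delta>0}\Bigl\{\delta + m^{-1/2}\int_\delta^1 \eps^{-d/4}\dd\eps\Bigr\}.
\]
For $d<4$ the integral converges at $\delta=0$, yielding $m^{-1/2}$. For $d=4$ it gives $m^{-1/2}\log m$. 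For $d>4$, optimizing with $\delta=m^{-2/d}$ gives $m^{-2/d}$. This is exactly the argument of \citet{chizat-peyre-estimation}, who bound $\E|W_2^2(\mu,\nu)-W_2^2(\mu,\widehat\nu)|$ through this same convex-function class. The point to verify carefully is that their bound really runs through a $\mu$-independent supremum, which the reduction above ensures.

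The main obstacle is this entropy step. We need to check that Bronshtein's bound applies to the restricted class, which requires uniform Lipschitz and sup bounds on $B_1$; these hold after the normalization $\varphi(0)=0$. We also need the chaining constants to be uniform. Once that is done, undoing the rescaling restores the factor $R^2$ and yields \eqref{eq:expected_sup_bound}.
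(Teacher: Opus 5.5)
Your proposal is correct and is essentially the paper's argument. The paper likewise follows the semi-dual reduction of Lemma 3 of Chizat et al.\ to bound the deviation by $\bigl|\int\|y\|^2\,d(\nu-\widehat\nu)\bigr|+\sup_{\phi}\bigl|\int\phi\,d(\nu-\widehat\nu)\bigr|$ over convex $R$-Lipschitz potentials on $B_R$, notes that this bound no longer depends on $\mu$, and then cites their Lemma 4, which is exactly the Bronshtein-entropy plus truncated-Dudley computation you write out. One minor slip: with your $-2$ prefactor, the potential term should be $\inf_{\varphi}\{\int\varphi^*\,d\mu+\int\varphi\,d\nu\}$ rather than a supremum (the supremum is $+\infty$); your estimate $|J_\mu(\nu)-J_\mu(\widehat\nu)|\le\sup_{\varphi\in\mathcal{C}}|\int\varphi\,d(\nu-\widehat\nu)|$ holds verbatim for the infimum.
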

\begin{proof}
	Following the proof of Lemma 3 in \cite{chizat-peyre-estimation}, we can bound the left side of (\ref{eq:expected_sup_bound}) in terms of the semi-dual problem, yielding
	\begin{multline*}
		\E \left[\sup_{\mu\in\mathcal{P}_2(\Omega)}|W^2_2(\mu,\nu)-W^2_2(\mu,\widehat{\nu})|\right]\\
		\leq \E\left[\sup_{\mu\in\mathcal{P}_2(\Omega)}\left(\left|\int\|y\|^2\,d(\nu-\widehat{\nu})\right|+\sup_{\phi\in\mathcal{F}_{R}}\left|\int\phi \,d(\nu-\widehat{\nu})\right|\right)\right],
	\end{multline*}
	where $\mathcal{F}_{R}$ is the class of convex, $R$-Lipschitz functions defined in $B_R$. But the terms inside the supremum of the rhs do not depend on $\mu$, hence
	\begin{multline*}
		\E \left[\sup_{\mu\in\mathcal{P}_2(\Omega)}|W^2_2(\mu,\nu)-W^2_2(\mu,\widehat{\nu})|\right]\\
		\leq \E\left[\left|\int\|y\|^2\,d(\nu-\widehat{\nu})\right|+\sup_{\phi\in\mathcal{F}_{R}}\left|\int\phi \,d(\nu-\widehat{\nu})\right|\right].
	\end{multline*}
	Lemma 4 in \cite{chizat-peyre-estimation} yields the result.

\end{proof}

\FloatBarrier
\section{First order geometry Wasserstein least squares}\label{appendix:FOG}
In this section, we will describe the elements on $\mathcal{P}_{2}(\RR^{p\times d})$ that make the Wasserstein gradient of $G$, the Wasserstein regression functional
$$G(Q)=\frac{1}{n}\sum_{i=1}^nW^2_2(Q_{\bx_i},\nu_i),$$
 vanish.
Our goal, as is usual in first-order optimization, is to gain knowledge of the set of minimizers for a Wasserstein least squares problem.
\begin{proof}[Proof of \cref{lemma:W-gradient-of-G}]
\begin{sloppypar}
As a simplifying assumption, we will consider $(Q,\nu_i)\in\cP_{2,\text{ ac}}(\RR^{p\times d})\times\cP_{2,\text{ ac}}(\RR^d)$.
Then we can compute the $L_2-$first variation of $G$ in $Q$ by looking at the dual formulation of each $W^2_2(\nu_i,Q_{\bx_i})$ and noting that $\phi_i=\|\cdot\|-\varphi_i$, i.e.
\end{sloppypar}
\begin{equation*}
    \begin{aligned}
        \delta G(Q)&=-\delta\frac{1}{n}\sum^n_{i=1}(\int_{\mathbb{R}^d}(\varphi_i-\frac{\|\cdot\|^2}{2})dQ_{\bx_i} -\int_{\mathbb{R}^d}{\phi^*}d\nu_i)\\
        =&-\delta\frac{1}{n}\sum^n_{i=1}\int_{\mathbb{R}^d}(\varphi_i-\frac{\|\cdot\|^2}{2})dQ_{\bx_i},\\
    \end{aligned}
\end{equation*}
where we used the envelope theorem to eliminate the $\nu_i$ integrands.
Using the gluing lemma to produce a coupling $Q_X\in\mathcal{P}_{2,ac}((\mathbb{R}^d)^n)$ where every $Q_{\bx_i}$ is involved, we get
\begin{equation*}
    \begin{aligned}
        \delta G(Q)&= -\delta\int_{(\mathbb{R}^{d})^n}(\frac{1}{n}\sum^n_{i=1}(\varphi_i-\frac{\|\cdot\|^2}{2}))dQ_{X}\\
        &=-\delta \int_{\mathbb{R}^{p\times d}}((\frac{1}{n}\sum^n_{i=1}(\varphi_i-\frac{\|\cdot\|^2}{2}))\circ L_X )dQ
    \end{aligned}
\end{equation*}
where $L_X:\mathbb{R}^{p\times d}\to(\mathbb{R}^d)^n$ is given by $L_X(B)=(\bB^\top \bx_1,...,\bB^\top \bx_n)$.

Hence
\begin{equation}\label{eq:first_variation_of_G}
    \delta G(Q)= -(\frac{1}{n}\sum^n_{i=1}(\varphi_i-\frac{\|\cdot\|^2}{2}))\circ L_X .
\end{equation}

The $W_2-$gradient of $G$ with respect to $Q$ is the $\RR^{p\times d}-$gradient of $\delta G$, meaning
\begin{equation}\
\begin{aligned}
        \WGrad_Q G(Q)&=\nabla_{\bB}\delta G(Q)\\
        &=-\nabla_{\bB}(\frac{1}{n}\sum^n_{i=1}(\varphi_i(\bB^\top\bx_i)-\frac{\|\bB^\top\bx_i\|^2}{2}))\\
        &=-\frac{1}{n}\sum^n_{i=1}(\bx_i(\nabla\varphi_i(\bB^\top\bx_i))^\top -\bx_i\bx_i^\top \bB ).
\end{aligned}
\end{equation}
\end{proof}

The following observation appears in Theorem 7 of \cite{chewiGradientDescentAlgorithms2020c}.
\begin{lemma}\label{lemma:distance_smoothness}
    The function $F_{\nu_i}:\mathcal{P}_2(\RR^{d})\to\RR$ with assignment rule $$\mu \mapsto W^2_2(\mu,\nu_i)$$ is smooth along Wasserstein geodesics in $\mathcal{P}_{2,\text{ac}}(\mathbb{R}^d)$.
\end{lemma}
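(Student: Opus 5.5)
The approach is the standard argument that squared Wasserstein distances are semiconcave along geodesics (nonnegative curvature of $(\cP_2(\RR^d),W_2)$). By ``smooth'' I mean, as in Theorem 7 of \cite{chewiGradientDescentAlgorithms2020c}, the one-sided quadratic upper bound
\[
F_{\nu_i}(\mu_t) \le F_{\nu_i}(\mu_0) + t\,\langle \WGrad F_{\nu_i}(\mu_0), T-\Id\rangle_{L^2(\mu_0)} + t^2 W_2^2(\mu_0,\mu_1),
\]
for every constant-speed geodesic $(\mu_t)_{t\in[0,1]}$ with $\mu_0\in\cP_{2,\mathrm{ac}}(\RR^d)$. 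Here $\WGrad F_{\nu_i}(\mu_0)=2(\Id - S)$, and $S$ is the Brenier map from $\mu_0$ to $\nu_i$. This bound is what the descent analysis in \cref{theorem:smoothness,prop:convergence_rate} uses.

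First I fix the data. Since $\mu_0$ is absolutely continuous, Brenier's theorem gives an optimal map $T$ from $\mu_0$ to $\mu_1$, and the geodesic is $\mu_t=((1-t)\Id + tT)_\#\mu_0$. It also gives an optimal map $S$ from $\mu_0$ to $\nu_i$. The first-variation computation in the proof of \cref{lemma:W-gradient-of-G} (Kantorovich duality and the envelope theorem) identifies $\WGrad F_{\nu_i}(\mu_0)=2(\Id-S)$. I take this identification as given.

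The key step is to build a competitor coupling between $\mu_t$ and $\nu_i$, namely the law of $((1-t)X+tT(X),\,S(X))$ with $X\sim\mu_0$. Because this coupling need not be optimal,
\[
F_{\nu_i}(\mu_t)\le \int \|X-S(X)+t(T(X)-X)\|^2\,\dd\mu_0 .
\]
Expanding the square gives three terms. The first is $\int\|X-S(X)\|^2\dd\mu_0=F_{\nu_i}(\mu_0)$, by optimality of $S$. The second is the cross term $2t\int\langle X-S(X),T(X)-X\rangle\dd\mu_0$, which equals $t\langle \WGrad F_{\nu_i}(\mu_0),T-\Id\rangle_{L^2(\mu_0)}$. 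The third is $t^2\int\|T-\Id\|^2\dd\mu_0=t^2W_2^2(\mu_0,\mu_1)$, by optimality of $T$. Together these give the displayed inequality, with smoothness constant $2$ for $F_{\nu_i}$ (equivalently $1$ for $\tfrac12 W_2^2$). This holds uniformly in $\nu_i$, so it passes to the average $G$ after composing with the linear pushforward $Q\mapsto Q_{\bx_i}$.

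I expect no step to be technically hard. The main obstacle is interpretive: settling which notion of smoothness is intended, and making sure absolute continuity is used only to guarantee that $T$ and $S$ exist as maps. A matching lower quadratic bound would amount to geodesic convexity, which fails (\cref{rem:non-convex-G}), so I would state and prove only the upper bound.
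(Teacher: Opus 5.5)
Your proof is correct, but it reaches the bound by a different route than the paper. The paper starts from the nonnegative-curvature inequality $W_2^2(\mu_s,\nu_i)\ge(1-s)W_2^2(\mu_0,\nu_i)+sW_2^2(\mu_1,\nu_i)-s(1-s)W_2^2(\mu_0,\mu_1)$, divides by $s$, and lets $s\to0^+$. It uses the first-variation formula \citep[Proposition~7.3.6]{ambrosioGradientFlowsMetric2008} to identify the limit of the difference quotient with $-2\langle\nabla\varphi_{\mu_0\to\nu_i}-\Id,\log_{\mu_0}(\mu_1)\rangle_{\mu_0}$. Rearranging gives $F_{\nu_i}(\mu_1)\le F_{\nu_i}(\mu_0)+\langle\WGrad_{\mu_0}F_{\nu_i},\log_{\mu_0}(\mu_1)\rangle_{\mu_0}+W_2^2(\mu_0,\mu_1)$, which is your inequality at $t=1$; your version for general $t$ follows from it by applying it to the sub-geodesic from $\mu_0$ to $\mu_t$.

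Your explicit competitor $((1-t)X+tT(X),S(X))$ proves the upper bound directly, by the same kind of coupling-and-expansion computation that underlies the curvature inequality. So your proof is self-contained, with no limiting step and no need for one-sided differentiability of $W_2^2$ along the curve. The identification $\WGrad F_{\nu_i}(\mu_0)=2(\Id-S)$ is needed only to interpret the linear term, not to prove the bound. The paper's version is shorter given the cited machinery, and it makes the link to the CBB(0) structure of \cref{rem:non-convex-G} explicit.

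Your route also has a concrete advantage, which you should use rather than summarize as ``passes to $G$''. Optimality of $T$ enters only through $\int\|T-\Id\|^2\,\mathrm{d}\mu_0=W_2^2(\mu_0,\mu_1)$. So the same expansion bounds $F_{\nu_i}$ along $((1-t)\Id+tT)_\#\mu_0$ for an arbitrary map $T$, with $\int\|T-\Id\|^2\,\mathrm{d}\mu_0$ in the last term. This is what \cref{theorem:smoothness} actually needs. There the bound is applied along $s\mapsto Q^s_{\bx_i}$, the image of a geodesic under $\bB\mapsto\bB^\top\bx_i$, which is in general not a geodesic of $\cP_2(\RR^d)$; the lemma, stated for geodesics, therefore does not literally apply. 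Run your coupling directly on that curve, with $T$ now the optimal map from $Q^0$ to $Q^1$ and $S_i$ the Brenier map from $Q^0_{\bx_i}$ (absolutely continuous when $\bx_i\neq0$) to $\nu_i$. The estimate $\|(T(\bB)-\bB)^\top\bx_i\|\le\|\bx_i\|\,\|T(\bB)-\bB\|_F$ then yields the $\|\bx_i\|^2W_2^2(Q^0,Q^1)$ term cleanly.
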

\begin{proof}
    Consider $\mu_s$ the constant speed geodesic joining $\mu_0$ and $\mu_1$.
     Letting $s\in(0,1]$, the NNC inequality reads as
     \begin{equation*}
        \frac{ W^2_2(\mu_s,\nu_i)-W^2_2(\mu_0,\nu_i)}{s}\geq [W^2_2(\mu_1,\nu_i)-W^2_2\mu_0,\nu_i)]-(1-s)W^2_2(\mu_0,\mu_1).
     \end{equation*}
     Taking $s\to 0_+$ and recalling Proposition 7.3.6 in \cite{ambrosioGradientFlowsMetric2008} yields the convergence of the left-hand side to
    \begin{equation*}
    W^2_2(\mu_0,\nu_i)|_{s=0_+}=-2\langle\nabla\varphi_{\mu_0\to\nu_i}-\Id,\log_{\mu_0}(\mu_1)\rangle_{\mu_0}=\langle\WGrad_{\mu_0} F_{\nu_i}, \log_{\mu_0}(\mu_1)\rangle_{\mu_0}.
    \end{equation*}

    Upon reordering, we achieved the desired inequality, meaning
    \begin{equation}
        F_{\nu_i}(\mu_1)\leq F_{\nu_i}(\mu_0)+ \langle\WGrad_{\mu_0} F_{\nu_i}, \log_{\mu_0}(\mu_1)\rangle_{\mu_0}+ W^2_2(\mu_0,\mu_1).
    \end{equation}
\end{proof}

The following theorem relates \eqref{eq:W-Flow-linear} and \eqref{eq:gradient_descent_G} in a practical way. It is a generalization of Theorem 7 in \cite{chewiGradientDescentAlgorithms2020c}.

\begin{theorem}\label{theorem:smoothness}

    Let $\eta=\frac{2}{n}\sum^n_{i=1}\|\bx_i\|^2_2$ and $\tau>0$. The Wasserstein least-squares functional $G$ is $\eta$-smooth along Wasserstein geodesics. As a consequence, if we define $Q^+:=\exp_Q\left(-\WGrad_Q G(Q)\right)$ for every $Q\in\cP_2(\mathbb{R}^{p\times d})$, then
    \begin{equation}\label{eq:descent_rate}
        G(Q^+)-G(Q)\leq (\frac{\eta}{2}-\tau)\left\|\WGrad_Q G\right\|^2_Q \quad \forall Q\in \cP_{2}(\mathbb{R}^{p\times d}).
    \end{equation}
\end{theorem}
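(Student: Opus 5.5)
The argument has two parts: a lift of the geodesic smoothness of each $F_{\nu_i}$ (\cref{lemma:distance_smoothness}) through the linear evaluation maps $\bB\mapsto\bB^\top\bx_i$, followed by the standard one-step descent computation.

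\textbf{Step 1: pushing a geodesic forward.} Fix a constant-speed geodesic $(Q_s)_{s\in[0,1]}$ in $\cP_2(\RR^{p\times d})$ from $Q_0$ to $Q_1$, with optimal coupling $\gamma$. Write $L_i(\bB)=\bB^\top\bx_i$ for the evaluation map. The curve $((L_i)_\#Q_s)_s$ is the linear interpolation of the coupling $(L_i\times L_i)_\#\gamma$ between $(Q_0)_{\bx_i}$ and $(Q_1)_{\bx_i}$. That coupling need not be optimal, so this curve is a generalized geodesic rather than a true geodesic.

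\textbf{Step 2: smoothness along the pushed curve.} The inequality of \cref{lemma:distance_smoothness} holds along such interpolations. The reason is that $W_2^2(\cdot,\nu_i)$ satisfies the semiconcavity bound
\[
W_2^2(\mu_s,\nu)\ge (1-s)W_2^2(\mu_0,\nu)+sW_2^2(\mu_1,\nu)-s(1-s)\int\|x-y\|^2\,\dd\pi
\]
for any coupling $\pi$ used to interpolate $\mu_0$ and $\mu_1$. This follows by gluing an optimal plan for $(\mu_s,\nu)$ and using the Hilbert identity for $\|(1-s)x+sy-z\|^2$. With $\pi=(L_i\times L_i)_\#\gamma$ the penalty term is
\[
\int\|(\bB-\bB')^\top\bx_i\|^2\,\dd\gamma\le\|\bx_i\|^2W_2^2(Q_0,Q_1).
\]
The first-order term, computed as in \cref{lemma:W-gradient-of-G}, equals $\langle \WGrad_{Q_0}(F_{\nu_i}\circ L_{i\#}),\log_{Q_0}Q_1\rangle_{Q_0}$. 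This uses the chain rule: the gradient pulls back as $\bx_i(\nabla\varphi_i(\bB^\top\bx_i)-\bB^\top\bx_i)^\top$, up to the factor 2.

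\textbf{Step 3: summing over $i$.} Summing over $i$ with weights $1/n$ gives
\[
G(Q_1)\le G(Q_0)+\langle\WGrad_{Q_0}G,\log_{Q_0}Q_1\rangle_{Q_0}+\tfrac{\eta}{2}W_2^2(Q_0,Q_1),
\qquad \eta=\tfrac2n\sum_i\|\bx_i\|^2 .
\]
So $G$ is $\eta$-smooth. Because the argument uses only the inequality along an interpolation, the same inequality holds along any curve $s\mapsto(\Id+sv)_\#Q_0$, even when that curve is not a geodesic.

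\textbf{Step 4: the descent inequality.} Apply this inequality along $(\Id-s\tau\WGrad_QG)_\#Q$. The plan is to use the coupling $\pi=(\Id,\Id-\tau\WGrad_QG)_\#Q$ directly in Step 2, so that the transport cost is bounded by $\tau^2\|\WGrad_QG\|_Q^2$ and we never need optimality of the map. This yields
\[
G(Q^+)\le G(Q)-\tau\|\WGrad_QG\|_Q^2+\tfrac{\eta\tau^2}{2}\|\WGrad_QG\|_Q^2 .
\]
The statement defines $Q^+$ without $\tau$ and has $(\eta/2-\tau)$. To match it, I would read the update as having unit step rescaled appropriately, or equivalently take $\tau=1$ in the quadratic term; I would flag this normalization as needing reconciliation.

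\textbf{Main obstacle.} The delicate step is identifying the first-order term along a non-optimal interpolation. We need the derivative at $s=0^+$ to equal $\int\langle\WGrad_QG,v\rangle\,\dd Q$ for an arbitrary direction $v$, not only for tangent directions induced by optimal maps. The cleanest route, which I would try first, is to avoid derivatives entirely. The supporting-potential inequality for $W_2^2(\cdot,\nu_i)$, namely
\[
W_2^2(\mu',\nu)\le\int(\|\cdot\|^2-2\varphi)\,\dd\mu'+\int(\|\cdot\|^2-2\varphi^*)\,\dd\nu
\]
with equality at $\mu$, combined with $\beta$-smoothness estimates on $\varphi_i$, gives the upper bound directly. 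If $\varphi_i$ lacks the needed regularity, I would fall back on the semiconcavity argument of Step 2.
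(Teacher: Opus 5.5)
The weak point is exactly the step you flag as the main obstacle, and your first-choice remedy fails because the ``supporting-potential inequality'' is reversed. Since $\varphi(x)+\varphi^*(y)\ge\langle x,y\rangle$, the pair $(\|\cdot\|^2-2\varphi,\|\cdot\|^2-2\varphi^*)$ is Kantorovich-feasible for every $\mu'$, so $\int(\|\cdot\|^2-2\varphi)\,\dd\mu'+\int(\|\cdot\|^2-2\varphi^*)\,\dd\nu\le W_2^2(\mu',\nu)$. That is a lower bound, and it can never give smoothness. There are also no $\beta$-smoothness estimates for Brenier potentials from $Q_{\bx_i}$ to arbitrary data $\nu_i$; the $\beta$ of \cref{cond:alphabeta} belongs to the statistical model, not to $G$.

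The fallback does not close the gap on its own either. Write $\mu_s=((1-s)u+su')_\#\pi$. Rearranging your semiconcavity bound gives $F_{\nu_i}(\mu_1)\le F_{\nu_i}(\mu_0)+\frac{F_{\nu_i}(\mu_s)-F_{\nu_i}(\mu_0)}{s}+(1-s)\int\|u'-u\|^2\,\dd\pi$. You therefore still need $\limsup_{s\downarrow0}\frac{F_{\nu_i}(\mu_s)-F_{\nu_i}(\mu_0)}{s}\le-2\int\langle T_i(u)-u,u'-u\rangle\,\dd\pi$ along a non-optimal interpolation. That is the very claim in question, and the formal first-variation computation of \cref{lemma:W-gradient-of-G} does not prove it.

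The missing idea is a one-line primal bound. If $\pi$ couples $\mu_0=Q_{\bx_i}$ with $\mu_1$ and $T_i$ is the optimal map from $\mu_0$ to $\nu_i$, then $(u',T_i(u))_\#\pi$ couples $\mu_1$ with $\nu_i$, whence
\[
W_2^2(\mu_1,\nu_i)\le\int\|u'-T_i(u)\|^2\,\dd\pi=W_2^2(\mu_0,\nu_i)-2\int\langle T_i(u)-u,\,u'-u\rangle\,\dd\pi+\int\|u'-u\|^2\,\dd\pi .
\]
This is the entire per-$i$ inequality, with no derivative and no semiconcavity. Applied at intermediate $s$, it also yields the upper derivative your Step 2 needs. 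It requires only that $Q_{\bx_i}$ admit an optimal map, e.g.\ $Q$ absolutely continuous and $\bx_i\neq0$, which is where $\WGrad_QG$ is defined anyway.

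Take for $\pi$ the push-forward under $(\bB,\bB')\mapsto(\bB^\top\bx_i,(\bB')^\top\bx_i)$ of either an optimal plan or of $(\Id,\Id-\tau\WGrad_QG)_\#Q$. Your Steps 3 and 4 then go through as written, provided you use the true gradient $-\frac2n\sum_i\bx_i\bigl(T_i(\bB^\top\bx_i)-\bB^\top\bx_i\bigr)^\top$. The formula in \cref{lemma:W-gradient-of-G} is the gradient of $G/2$.

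Otherwise your skeleton is the paper's, and you are right where its proof is loose. It applies \cref{lemma:distance_smoothness}, a statement about geodesics, to the non-geodesic curve $s\mapsto Q^s_{\bx_i}$. It also tacitly identifies $\log_QQ^+$ with the gradient step, which your choice of coupling in Step 4 avoids.

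Your conclusion $G(Q^+)-G(Q)\le-\tau\bigl(1-\frac{\eta\tau}{2}\bigr)\|\WGrad_QG\|_Q^2$ is the correct one. Say so plainly instead of proposing to ``take $\tau=1$ in the quadratic term'': the displayed factor $(\frac\eta2-\tau)$ cannot hold for all $\tau>0$. Take $n=p=d=1$, $\bx_1=1$, $Q=\mathcal N(0,1)$, $\nu_1=\mathcal N(m,1)$ with $m\neq0$. Then $\eta=2$, $\WGrad_QG\equiv-2m$, and $\exp_Q(-\WGrad_QG)=\mathcal N(2m,1)$. Hence $G(Q^+)-G(Q)=0>4m^2(1-\tau)$ for every $\tau>1$. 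The stated bound coincides with yours only at $\tau=1$, and is implied by it for $\tau\le1$.
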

\begin{proof}[Proof of Theorem \ref{theorem:smoothness}.]
    It is enough to verify that the function $G_i:\mathcal{P}_2(\RR^{p\times d})\to\RR$ with assignment rule $Q \mapsto W^2_2(Q_{\bx_i},\nu_i)$ is smooth for $1\leq i\leq n$.
    Let $Q^s$ be the constant speed geodesic joining $Q^0$ and $Q^1$ in $\mathcal{P}_{2,\text{ac}}(\RR^{p\times d})$.
    Due to Lemma \ref{lemma:distance_smoothness}
        \begin{equation}\label{eq:smoothness_on_Q_i}
        F_{\nu_i}(Q^1_{\bx_i})\leq F_{\nu_i}(Q^0_{\bx_i})+ \langle\WGrad_{Q^0_{\bx_i}} F_{\nu_i}, \frac{d}{ds}(Q^s)_i|_{s=0_+}\rangle_{Q^0_{\bx_i}}+ W^2_2(Q^0_{\bx_i},Q^1_{\bx_i}).
        \end{equation}
    But, note that $G_i=F_{\nu_i}\circ\pi_i$ where $\pi_i:\mathcal{P}_{2,\text{ac}}(\RR^{p\times d})\to\mathcal{P}_{2,\text{ac}}(\mathbb{R}^{d})$ is the $i$-th projection of $Q$, or $\pi_i(Q)=Q_{\bx_i}$. So, by the chain rule,
    \begin{equation*}
        \begin{aligned}
        \langle\WGrad_{Q^0_{\bx_i}} F_{\nu_i}, \frac{d}{ds}(Q^s)_{\bx_i}|_{s=0_+}\rangle_{Q^0_i}&=\langle\WGrad_{\pi_i\circ Q} F_{\nu_i}, \frac{d}{ds}(\pi_i\circ Q^s)|_{s=0_+}\rangle_{\pi_i\circ Q^0}\\
        &=\langle\WGrad_{Q^0}F_{\nu_i}\circ\pi_i,\log_{Q^0}(Q^1)\rangle_{Q^0}.
        \end{aligned}
    \end{equation*}
    Then we can rewrite equation \eqref{eq:smoothness_on_Q_i} as
    \begin{equation*}
        G_i(Q^1)\leq G_i(Q^0)+ \langle\WGrad_{Q^0}G_i,\log_{Q^0}(Q^1)\rangle_{Q^0}+ W^2_2(Q^0_{\bx_i},Q^1_{\bx_i}).
        \end{equation*}
    Furthermore, using the Cauchy-Schwarz inequality, we get that
    \begin{equation*}
        G_i(Q^1)\leq G_i(Q^0)+ \langle\WGrad_{Q^0}G_i,\log_{Q^0}(Q^1)\rangle_{Q^0}+ \|\bx_i\|^2_2W^2_2(Q^0,Q^1).
    \end{equation*}
    as long as $\bx_i$ is not a null vector.
    Multiplying each of the $n$ previous equations by $\frac{1}{n}$  and summing them up yields the smoothness:
    \begin{equation}\label{eq:G-smooth}
        G(Q^1)\leq G(Q^0)+ \langle\WGrad_{Q^0}G(Q_0),\log_{Q^0}(Q^1)\rangle_{Q^0}+ \frac{\eta}{2}W^2_2(Q^0,Q^1).
    \end{equation}
with $\eta=\frac{2}{n}\sum_{i=1}^n\|\bx_i\|^2_2$.

Letting $Q^0=Q$ and $Q^1=Q^+:=\exp_{Q}(\tau \WGrad G(Q))$ for any $Q\in\cP_2(\RR^{p\times d})$, and noting that
\begin{equation*}
    \langle\WGrad_{Q}G(Q),\log_{Q}(Q^+)\rangle_{Q^0}=\tau\langle\WGrad_{Q}G(Q),\WGrad_Q G(Q)\rangle_{Q^0}=\tau W^2_2(Q,Q^+)
\end{equation*}
concludes the proof upon rearrangement.
\end{proof}

Theorem \ref{theorem:smoothness} plays an important part in the convergence of \eqref{eq:gradient_descent_G} to a critical point of the Wasserstein least squares functional. For example, combining \cref{theorem:smoothness} with Proposition 4.7 in \cite{boumal2023intromanifolds} yields the following.

\begin{proposition}\label{prop:convergence_rate}
Choose  $\tau>\eta/2$. Then, for every $K\in\mathbb{N}$ there exists a $k$ in $0,\dots, K-1$ such that \begin{equation*}
        \|\WGrad G(Q^k)\|_{Q^k}\leq \sqrt{\frac{G(Q^0)}{K(\tau-\eta/2)}}
    \end{equation*}
    where the sequence $(Q^k)_{k>0}$ was produced according to \eqref{eq:gradient_descent_G}.
\end{proposition}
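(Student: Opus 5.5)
The bound follows from \cref{theorem:smoothness} by the standard telescoping argument for smooth nonconvex descent, in the spirit of \citet[Proposition 4.7]{boumal2023intromanifolds}.

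\textbf{Step 1: sufficient decrease.} Apply \eqref{eq:descent_rate} at $Q=Q^k$, with $Q^{k+1}=Q^+$ the iterate produced by \eqref{eq:gradient_descent_G}. This gives
\[
G(Q^{k+1}) \le G(Q^k) - \left(\tau-\frac{\eta}{2}\right)\|\WGrad G(Q^k)\|_{Q^k}^2 \qquad \text{for each } k \geq 0.
\]
Since $\tau>\eta/2$, the coefficient is positive. Hence $G(Q^k)$ is nonincreasing, and each step removes a positive multiple of the squared gradient norm.

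Before using this, I will check one point. \eqref{eq:descent_rate} was derived from the smoothness inequality \eqref{eq:G-smooth}, with $Q^1$ equal to the image of $Q$ under a step of length $\tau$. The identification $W_2^2(Q,Q^+)=\tau^2\|\WGrad G(Q)\|^2_Q$ requires that $\Id-\tau\WGrad G(Q)$ be an optimal map, so that $Q^s$ is a genuine geodesic. I will either assume this, as the paper implicitly does by invoking the chain rule along geodesics, or argue that the smoothness bound only needs the curve $s\mapsto(\Id-s\tau\WGrad G)_\#Q$ together with the upper bound $W_2^2(Q,Q^+)\le \tau^2\|\WGrad G\|^2_Q$. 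The latter bound holds for any transport map, and it points in the direction needed for an upper bound on $G(Q^+)$.

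\textbf{Step 2: telescope.} Sum the decrease inequality over $k=0,\dots,K-1$ to get
\[
\left(\tau-\frac{\eta}{2}\right)\sum_{k=0}^{K-1}\|\WGrad G(Q^k)\|^2_{Q^k} \le G(Q^0)-G(Q^K)\le G(Q^0),
\]
where the last inequality uses $G\ge 0$, since $G$ is an average of squared Wasserstein distances.

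\textbf{Step 3: pass to the minimum.} The minimum of $K$ nonnegative numbers is at most their average. So some $k\in\{0,\dots,K-1\}$ satisfies
\[
\|\WGrad G(Q^k)\|^2_{Q^k}\le \frac{G(Q^0)}{K(\tau-\eta/2)}.
\]
Taking square roots gives the claim.

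The only real obstacle is the one flagged in Step 1: making sure the descent inequality \eqref{eq:descent_rate} legitimately applies to each iterate. This needs the iterates to stay absolutely continuous, so that $\WGrad G$ is defined via \cref{lemma:W-gradient-of-G}, and it needs the step curve to be one along which \eqref{eq:G-smooth} holds. I would handle this by assuming $Q^0$ is absolutely continuous and the step map is injective, which preserves absolute continuity, as in \citet{chewiGradientDescentAlgorithms2020c}. Everything after that is elementary.
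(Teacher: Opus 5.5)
Your route is exactly the one the paper intends: the descent inequality \eqref{eq:descent_rate}, followed by the telescoping argument of \citet[Proposition 4.7]{boumal2023intromanifolds}. The problem is that Step 1 fails, and with it the proposition as stated.

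Take \eqref{eq:G-smooth} at face value. Insert your own identity $W_2^2(Q,Q^+)=\tau^2\|\WGrad G(Q)\|_Q^2$ together with $\log_Q(Q^+)=-\tau\WGrad G(Q)$. This gives
\[
G(Q^+)-G(Q)\le -\tau\Bigl(1-\frac{\eta\tau}{2}\Bigr)\|\WGrad G(Q)\|_Q^2 .
\]
This agrees with \eqref{eq:descent_rate} only at $\tau=1$: \eqref{eq:descent_rate} evaluates the quadratic term at unit step while keeping $\tau$ in the linear term. The correct inequality certifies a decrease only for small steps. The hypothesis $\tau>\eta/2$ permits arbitrarily large steps, and for those $G$ really does increase.

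Here is a concrete example. Take $n=p=d=1$ and $\bx_1=1$, so $\eta=2$. By \cref{lemma:W-gradient-of-G}, $\WGrad G(Q)=\Id-T_Q$, where $T_Q$ is the Brenier map from $Q$ to $\nu_1$. Let $\nu_1=\mathcal{N}(m,1)$ with $m\neq 0$, $Q^0=\mathcal{N}(0,1)$ and $\tau=3$.
\begin{itemize}
\item Every step is a translation, and $Q^k=\mathcal{N}\bigl(m(1-(-2)^k),1\bigr)$.
\item Hence $\|\WGrad G(Q^k)\|_{Q^k}=2^k|m|$ and $G(Q^k)=4^k m^2$.
\item The claimed bound is $\sqrt{G(Q^0)/(K(\tau-\eta/2))}=|m|/\sqrt{2K}<|m|$ for every $K\geq 1$, so it fails at every $k$.
\end{itemize}
All your regularity caveats hold here: absolute continuity, and injective, optimal step maps. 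So no technical repair of Step 1 rescues the statement; the step-size condition itself has to change.

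Your remark that geodesics are unnecessary is the right way to prove the correct version. For $Q^+=(\Id+v)_\#Q$, bound $W_2^2(Q^+_{\bx_i},\nu_i)$ by the cost of the coupling $\bigl((\bB+v(\bB))^\top\bx_i,\,T_i(\bB^\top\bx_i)\bigr)$. Here $\bB\sim Q$ and $T_i$ is the Brenier map from $Q_{\bx_i}$ to $\nu_i$. Expand the square, use $\|v^\top\bx_i\|\le\|v\|_F\|\bx_i\|$, and average over $i$. This gives
\[
G(Q^+)\le G(Q)+2\langle \WGrad G(Q),v\rangle_Q+\frac{\eta}{2}\|v\|_Q^2 .
\]

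The factor $2$ appears because the expression in \cref{lemma:W-gradient-of-G}, which drives \eqref{eq:gradient_descent_G}, is half the true first-variation gradient. So \eqref{eq:G-smooth} as written is itself off by this factor. In the example, $Q^0=\mathcal{N}(0,1)$ and $Q^1=\mathcal{N}(a,1)$ with $am<0$ violate it.

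With $v=-\tau\WGrad G(Q^k)$ you get
\[
G(Q^{k+1})\le G(Q^k)-\tau\Bigl(2-\frac{\eta\tau}{2}\Bigr)\|\WGrad G(Q^k)\|^2_{Q^k}.
\]
Your Steps 2 and 3 then give the following, for $0<\tau<4/\eta$:
\[
\min_{0\le k\le K-1}\|\WGrad G(Q^k)\|_{Q^k}^2\le\frac{G(Q^0)}{K\tau(2-\eta\tau/2)} .
\]
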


\FloatBarrier
\section{Wasserstein least squares: the Gaussian case}\label{appendix:geometry_of_gaussians}

In this section, we study the Wasserstein least squares problem under Gaussian responses. In conjunction with the benign properties derived in Section \ref{section:linear case}, the geometry of Gaussian measures equipped the Bures-Wasserstein metric will help us derive further connections with the Wasserstein barycenter problem via first-order conditions of the Wasserstein least squares functional.

Our analysis requires a careful treatment of $G:\mathcal{P}_{2}(\mathbb{R}^{p\times d})\to\RR$, the Wasserstein least squares functional for the data $(\bx_i,\nu_i)_{i=1}^n$, given by
\begin{equation}
    G(Q):=\sum_{i=1}^nW^2_2(\nu_i, Q_{\bx_i}).
\end{equation}

We begin by noting that, if each of the marginal measures is Gaussian, then Wasserstein least squares has a Gaussian solution.

\begin{proposition}\label{prop:gaussian_regression_sol_is_gaussian}
Suppose $\nu_1, \dots, \nu_n$ are Gaussian measures.
Then there exists a Gaussian measure $\widetilde Q$ on $\RR^{p \times d}$ such that
\begin{equation}\label{eq:min_wls_gauss}
    \widetilde{Q}\in\argmin_{Q\in\cP_{2}(\mathbb{R}^{p\times d})}\frac{1}{n}\sum_{i=1}^n W^2_2(\nu_i,Q_{\bx_i}).
\end{equation}
\end{proposition}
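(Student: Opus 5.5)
Take any minimizer $Q$ of \eqref{eq:min_wls_gauss}; one exists by \cref{thm:main_linear}. We replace $Q$ by the Gaussian $\widetilde Q$ with the same mean and covariance, and show that each term $W_2^2(\nu_i,\cdot_{\bx_i})$ does not increase. The key fact is that for a fixed Gaussian $\nu$ and any $\mu\in\cP_2(\RR^d)$,
\begin{equation*}
W_2^2(\nu,\mu)\geq W_2^2(\nu,\mathcal N(m_\mu,\Sigma_\mu)),
\end{equation*}
where $m_\mu$ and $\Sigma_\mu$ are the mean and covariance of $\mu$. This is the Gelbrich bound. To prove it, write $W_2^2(\nu,\mu)=\|m_\nu-m_\mu\|^2+\operatorname{tr}\Sigma_\nu+\operatorname{tr}\Sigma_\mu-2\sup_\gamma \operatorname{tr}\operatorname{Cov}_\gamma(Y,Z)$. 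The supremum over couplings of the cross-covariance trace is bounded above by its supremum over all joint covariance matrices with the prescribed diagonal blocks, i.e.\ over PSD block matrices $\begin{pmatrix}\Sigma_\nu & C\\ C^\top & \Sigma_\mu\end{pmatrix}$. That matrix supremum equals $\operatorname{tr}(\Sigma_\nu^{1/2}\Sigma_\mu\Sigma_\nu^{1/2})^{1/2}$, and it is attained by the Gaussian optimal coupling. Hence the right-hand side is exactly the Bures--Wasserstein distance between $\nu$ and $\mathcal N(m_\mu,\Sigma_\mu)$.

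Next we observe that linear pushforwards commute with Gaussianization. Since $\bB\mapsto\bB^\top\bx_i$ is linear, $Q_{\bx_i}$ has mean $m_Q^\top\bx_i$ and covariance $(\bx_i^\top\otimes I_d)\Sigma_Q(\bx_i\otimes I_d)$ in the $\vect$ coordinates of \cref{appendix:geometry_of_gaussians}. The measure $\widetilde Q_{\bx_i}$ is Gaussian with these same first two moments, so $\widetilde Q_{\bx_i}=\mathcal N(m_{Q_{\bx_i}},\Sigma_{Q_{\bx_i}})$. Applying the Gelbrich bound with $\nu=\nu_i$ and $\mu=Q_{\bx_i}$, then summing over $i$, gives $G(\widetilde Q)\leq G(Q)$. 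Therefore $\widetilde Q$ is also a minimizer, which proves \cref{prop:gaussian_regression_sol_is_gaussian}. We allow degenerate covariances; this is harmless because the Bures formula holds for PSD matrices.

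The main obstacle is justifying the Gelbrich inequality rigorously. That amounts to the matrix optimization $\sup\{\operatorname{tr} C : \begin{pmatrix}A & C\\ C^\top & B\end{pmatrix}\succeq 0\}=\operatorname{tr}(A^{1/2}BA^{1/2})^{1/2}$. I would prove it first for invertible $A$: the constraint forces $C=A^{1/2}KB^{1/2}$ with $\|K\|_{\mathrm{op}}\leq1$, so $\operatorname{tr}C=\operatorname{tr}(KB^{1/2}A^{1/2})$, which is at most the nuclear norm $\|B^{1/2}A^{1/2}\|_*=\operatorname{tr}(A^{1/2}BA^{1/2})^{1/2}$. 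The singular case then follows by continuity.

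An alternative route uses the multimarginal formulation \eqref{eq:max-linear-mot} of \cref{prop:linear_explicit}. Its objective depends only on the joint covariance of $\bY$ under $P$, and every achievable joint covariance is achieved by a Gaussian coupling. The optimal $\bB^*=(\bX^\top\bX)^+\bX^\top\bY$ is then a linear image of a Gaussian, hence Gaussian. This is the same fact seen from the primal side, and I would keep it as a cross-check.
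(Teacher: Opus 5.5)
Your proposal is correct and follows the paper's proof: take a minimizer $Q$, replace it by the Gaussian with the same first two moments, observe that $\widetilde Q_{\bx_i}$ is the Gaussian with the same moments as $Q_{\bx_i}$, and apply Gelbrich's inequality term by term. The only difference is that you prove Gelbrich's bound yourself, via the block-matrix contraction and nuclear-norm argument, whereas the paper cites \citet{Gelbrich1990}.
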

\begin{proof}
	We use the following fact~\cite{Gelbrich1990}:
    for every $\xi,\xi'\in\mathcal{P}_2(\mathbb{R}^d)$
\begin{equation}\label{eq:gelbrich}
    W^2_2(N_\xi,N_{\xi'})\leq W^2_2(\xi,\xi'),
\end{equation}
 where $N_\xi$ is the Gaussian distribution $\mathcal{N}(\mathbb{E}_{\xi}(X), \text{Cov}_{\xi}(X))$ whose first two moments match those of $\xi$.
Now, let $Q$ be an arbitrary minimizer of~\eqref{eq:min_wls_gauss}, and let $\widetilde Q = N_Q$ be the corresponding Gaussian.
Note that the projected measure $\widetilde Q_{\bx_i}$ is a Gaussian with the same mean and variance as $Q_{\bx_i}$, so~\eqref{eq:gelbrich} implies
\begin{equation*}
	W_2^2(\nu_i, \widetilde Q_{\bx_i}) \leq W_2^2(\nu_i,  Q_{\bx_i})\,,
\end{equation*}
so $\widetilde{Q}$ is also a minimizer in~\eqref{eq:min_wls_gauss}.
\end{proof}

\Cref{prop:gaussian_regression_sol_is_gaussian} implies that when solving the Wasserstein least squares problem with Gaussian responses we may restrict our attention to the subset of $\cP_2(\RR^{d \times p})$ consisting of Gaussian measures.
This finite-dimensional space equipped with the 2-Wasserstein metric is known as the \textit{Bures--Wasserstein} manifold~\cite{Bhatia2019}.
The objects introduced in \cref{sec:lifting} have simplified analogues on this space, which we now introduce.

For notational convenience, we work with vectorized versions of random variables on $\RR^{p \times d}$.
If  $Q\in\cP(\RR^{p\times d})$ is a Gaussian measure, then its representative in the Bures-Wasserstein manifold is given by $(m_Q,\Sigma_Q)\in\mathbb{R}^{dp}\times\mathbf{S}_{++}^{dp}$ where
    \begin{equation}
    \begin{cases}
        m_Q:=\EE[\vect{(\bB^\top)}],\\
        \Sigma_Q:=\E_Q[\text{vec}(\bB^\top-m_Q)\text{vec}(\bB^\top-m_Q)^\top]\\
     \bB\sim Q,
    \end{cases}
    \end{equation}
where $\vect(\cdot)$ maps matrices to vectors by stacking the columns: if $\bB^\top = [\bB^\top_1|...|\bB^\top_p] \in \RR^{d \times p}$, then
    \begin{equation*}
        \vect{(\bB^\top)}=\begin{bmatrix}
            \bB^\top_1\\
            \vdots\\
            \bB^\top_p
        \end{bmatrix}\in \mathbb{R}^{pd}.
    \end{equation*}

 With abuse of notation, we will not distinguish between $Q$, the measure supported in $\RR^{p\times d}$, and $Q$ as a measure in $\mathcal{P}_2(\RR^{dp})$ with vectorized mean $m_Q$ and covariance $\Sigma_Q$.
The following Lemma is a consequence of our parametrization.

\begin{lemma}\label{lemma:marginal_cov_gaussian}
Let $Q\in\mathcal{P}(\RR^{p\times d})$ be a Gaussian measure.
\begin{enumerate}
    \item If $\bB\sim Q$ and $\bx_i\in\RR^p$, then $Q_{\bx_i}:=\text{Law}(\bB^\top\bx_i)$ is Gaussian measure with mean and variance
\begin{equation}\label{prop:covariance_gaussian_marginals}
    \begin{aligned}
        m_{Q_{\bx_i}}&=(\bx_i^\top\otimes I_d) m_Q,\\
        \Sigma_{Q_{\bx_i}}&=(\bx^{\top}_i\otimes I_d)\Sigma_{Q}(\bx_i\otimes I_d).
    \end{aligned}
\end{equation}
\item A Brenier map, denoted by $\nabla \varphi_i$, between $Q_{\bx_i}$ and $\nu_i$ has the expression
\begin{equation}
    \nabla\varphi(\cdot)=m_{\nu_i}+ \Sigma_{Q_{\bx_i}}\#\Sigma_{\nu_i}(\cdot)-m_{Q_{\bx_i}}\,,
\end{equation}
where $\Sigma_1 \# \Sigma_2 :=\Sigma_1^{\frac12}(\Sigma^{-\frac{1}{2}}_1\Sigma_2\Sigma^{-\frac{1}{2}}_1)^{\frac 12}\Sigma_{1}^{\frac12}.$
\item \begin{sloppypar}The Bures-Wasserstein gradient of $G$ with mean-zero Gaussian data $(\{{\nu_i}\}_{i=1}^n, Q)$ specializes to\end{sloppypar}
\begin{equation}\label{eq:wgrad_gaussian}
    \WGrad_Q G(Q)= -\frac{1}{n}\sum^n_{i=1}\bx_i\bx_i^\top\bB(\Sigma_{Q_{\bx_i}}\#\Sigma_{\nu_i}- I_d).
\end{equation}
\end{enumerate}
\end{lemma}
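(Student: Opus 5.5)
The three parts of \cref{lemma:marginal_cov_gaussian} can be proved in order, each reducing to a standard fact once the vectorization is set up.

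For part (1), the key identity is $\bB^\top \bx_i = (\bx_i^\top \otimes I_d)\,\vect(\bB^\top)$. It follows from $\vect(AXC) = (C^\top\otimes A)\vect(X)$ applied with $A=I_d$, $X=\bB^\top$, $C=\bx_i$. Alternatively, one can check it directly: $\bB^\top\bx_i=\sum_j x_{ij}\bB_j^\top$ is exactly the block row $(x_{i1}I_d,\dots,x_{ip}I_d)$ applied to the stacked columns. Since $\bB^\top\bx_i$ is therefore a linear image of the Gaussian vector $\vect(\bB^\top)$, it is Gaussian. Its mean is $(\bx_i^\top\otimes I_d)m_Q$, and its covariance is $(\bx_i^\top\otimes I_d)\Sigma_Q(\bx_i^\top\otimes I_d)^\top$. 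Because $(\bx_i^\top\otimes I_d)^\top=\bx_i\otimes I_d$, this gives the stated formula.

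For part (2), invoke the classical closed form of the optimal map between Gaussians. Assume $\Sigma_{Q_{\bx_i}}\succ0$, which holds whenever $\Sigma_Q\succ 0$ and $\bx_i\neq0$, since $\bx_i\otimes I_d$ then has full column rank. Define the map $T(\by)=m_{\nu_i}+A(\by-m_{Q_{\bx_i}})$ with $A=\Sigma_{Q_{\bx_i}}\#\Sigma_{\nu_i}$. The matrix $A$ is symmetric positive semidefinite, so $T$ is the gradient of the convex quadratic $\varphi(\by)=\langle m_{\nu_i}-A m_{Q_{\bx_i}},\by\rangle+\tfrac12\by^\top A\by$. A direct computation gives $A\Sigma_{Q_{\bx_i}}A=\Sigma_{\nu_i}$: writing $S=\Sigma_{Q_{\bx_i}}$, the product is $S^{1/2}(S^{-1/2}\Sigma_{\nu_i}S^{-1/2})^{1/2}S^{1/2}S^{-1}S^{1/2}\cdots$, which telescopes to $\Sigma_{\nu_i}$. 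Hence $T_\#Q_{\bx_i}=\nu_i$ by part (1) and affine invariance of Gaussians. Brenier's theorem then says a gradient of a convex function pushing $Q_{\bx_i}$ to $\nu_i$ is the optimal map, which gives part (2). The displayed formula in the statement should be read with the matrix acting on $(\cdot-m_{Q_{\bx_i}})$.

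For part (3), substitute part (2) into \cref{lemma:W-gradient-of-G}. In the mean-zero case $m_{\nu_i}=0$ and $m_{Q_{\bx_i}}=(\bx_i^\top\otimes I_d)m_Q=0$, so $\nabla\varphi_i(\bB^\top\bx_i)=A_i\bB^\top\bx_i$ with $A_i$ symmetric. Then $\bx_i(\nabla\varphi_i(\bB^\top\bx_i))^\top=\bx_i\bx_i^\top\bB A_i$. Subtracting $\bx_i\bx_i^\top\bB$ gives $\bx_i\bx_i^\top\bB(A_i-I_d)$, and summing with the factor $-\tfrac1n$ yields \eqref{eq:wgrad_gaussian}. \Cref{lemma:W-gradient-of-G} requires $Q$ absolutely continuous, which holds for nondegenerate Gaussians.

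Nothing here is deep. The main thing to get right is the Kronecker and vec bookkeeping in part (1), in particular the column-stacking convention and the transpose identity $(\bx_i^\top\otimes I_d)^\top=\bx_i\otimes I_d$. The nondegeneracy caveat needed for $\#$ to be defined should also be stated explicitly. If $\Sigma_{Q_{\bx_i}}$ is singular, I would either restrict to $\Sigma_Q\succ0$ or note that the formulas extend by continuity.
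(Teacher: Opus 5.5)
Your route matches the paper's. You use the identity $\bB^\top\bx_i=(\bx_i^\top\otimes I_d)\,\mathrm{vec}(\bB^\top)$ for (1), the closed-form Gaussian Brenier map for (2), and substitution into \cref{lemma:W-gradient-of-G} for (3). Parts (1) and (3) are fine, but your verification in part (2) is wrong. Take $A=S^{1/2}(S^{-1/2}\Sigma_{\nu_i}S^{-1/2})^{1/2}S^{1/2}$, as the statement defines $\#$. Then the middle factor of $ASA$ is $S^{1/2}\,S\,S^{1/2}=S^2$, not the $S^{1/2}S^{-1}S^{1/2}$ you wrote. Your telescoping actually computes $AS^{-1}A=\Sigma_{\nu_i}$, the Riccati identity of the matrix geometric mean. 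It does not give the push-forward condition $ASA=\Sigma_{\nu_i}$, and that condition fails. For $d=1$, $S=s$, $\Sigma_{\nu_i}=v$, the statement's formula gives $A=\sqrt{sv}$, which pushes $\mathcal{N}(0,s)$ to $\mathcal{N}(0,s^2v)$; the optimal slope is $\sqrt{v/s}$. So under the literal definition of $\#$, your map does not transport $Q_{\bx_i}$ to $\nu_i$, and Brenier's theorem cannot be applied.

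The repair is to use
\[
A=S^{-1/2}\bigl(S^{1/2}\Sigma_{\nu_i}S^{1/2}\bigr)^{1/2}S^{-1/2},
\]
equivalently $S^{-1}\#\Sigma_{\nu_i}$ in geometric-mean notation. This is the formula the paper's proof actually writes down, even though it labels it $\Sigma_\xi\#\Sigma_{\xi'}$, inconsistently with the definition in the statement. For this $A$, the product $ASA=S^{-1/2}(S^{1/2}\Sigma_{\nu_i}S^{1/2})S^{-1/2}=\Sigma_{\nu_i}$ genuinely telescopes, and $A$ is still symmetric positive semidefinite. The rest of your argument then goes through unchanged: Brenier's theorem for part (2), and the computation $\bx_i(A_i\bB^\top\bx_i)^\top=\bx_i\bx_i^\top\bB A_i$ for part (3), with $\Sigma_{Q_{\bx_i}}\#\Sigma_{\nu_i}$ in \eqref{eq:wgrad_gaussian} read as this matrix. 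You correctly caught the misplaced parenthesis in the displayed map. The definition of $\#$ needed the same scrutiny, rather than an identity forced to hold.
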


\begin{proof}
Let $\bx\in\RR^{p}$. On one hand, if $Z\sim \mu:=\text{Law}({\bB^\top\bx})$, then
\begin{equation}\label{eq:integreation_on_Q}
\begin{aligned}
    m_\mu&=\E_\mu[Z]=\E_Q[
    \bB^\top\bx] \text{, and }\\
    \Sigma_{\mu}&=\mathbb{E}_{\mu}[Z^\top Z]=\mathbb{E}_{Q}[(\bB^\top\bx -m_\mu )(\bx^\top \bB-m_\mu^\top)].
\end{aligned}
\end{equation}
On the other hand, by properties of the $\vect{}$ operator
\begin{equation*}
     \bB^\top\bx=\vect(\bB^\top\bx)=\text{vec}(I_d\bB^\top\bx)= \bx^\top\otimes I_d\vect(\bB^\top).
\end{equation*}
Plugging the last equation back in (\ref{eq:integreation_on_Q}) yields
that
\begin{equation*}
    m_\mu=\E_{Q}[\bx^\top\otimes I_d\vect(\bB^\top)]=(\bx^\top\otimes I_d)m_Q.
\end{equation*}
Similarly, the covariance ends up being
\begin{equation*}
    \begin{aligned}
    \Sigma_{\mu}&=\mathbb{E}_{Q}[(\bx^{\top}\otimes I_d) \vect(\bB^\top-m_Q)\vect(\bB^\top-m_Q)^{\top}(\bx\otimes I_d) ]\\
    &=(\bx^{\top}\otimes I_d) \mathbb{E}_{Q}[\vect(\bB^\top-m_Q)\vect(\bB^\top-m_Q)^{\top}](\bx\otimes I_d)\\
    &=(\bx^{\top}\otimes I_d)\Sigma_{Q}(\bx\otimes I_d).\\
    \end{aligned}
\end{equation*}

Items 1 and 2 follow if:
\begin{enumerate}
    \item We recall that the Brenier map between two Gaussian measures $\xi,\xi'$ is given by
\begin{equation*}
    \nabla\varphi(\cdot)=m_{\xi'}+\Sigma_{\xi}\#\Sigma_{\xi'}(\cdot-m_\xi)=m_{\xi'}+\Sigma_{\xi}^{-\frac{1}{2}}(\Sigma_{\xi}^{\frac{1}{2}}\Sigma_{\xi'}\Sigma_{\xi}^{\frac{1}{2}})^{\frac{1}{2}}\Sigma_{\xi}^{-\frac{1}{2}}(\cdot-m_{\xi}).
\end{equation*}
    \item We apply Lemma \ref{lemma:W-gradient-of-G} to our data.
\end{enumerate}
\end{proof}

\begin{remark}\label{remark:centered-measures}
    The interesting case of study is when $\nu_1, \dots, \nu_n$ are centered. This reduction is reasonable since by Lemma \ref{lemma:marginal_cov_gaussian}, for general Gaussian data, the Wasserstein least squares functional decomposes to
\begin{equation}
    G(Q)=\sum_{i=1}^n\frac{1}{n}\|m_{\nu_i}-(\bx_i^\top\otimes I_d)m_{Q}\|^2_2+W^2_2(\widetilde{\nu_i},\widetilde{Q}_{\bx_i})
\end{equation}
where $(\{\widetilde{\nu_i}\}_{i=1}^n,\widetilde{Q})$ are zero-mean Gaussian measures with the same covariance of $(\{\nu_i\}_{i=1}^n,Q)$.
\end{remark}

The following statement is reminiscent of Theorem 6.1 in \cite{aguehBarycentersWassersteinSpace2011}.
\begin{proposition}\label{prop:optimal_dual_solution_gaussian}
    Let $S\in\mathbf{S}^{dp}_+$.
    Define by $S_i:=(\bx^\top_i\otimes I_d)S(\bx_i\otimes I_d)\in\mathbf{S}^{d}_+$.
    Consider $\nu_1,..., \nu_n$ $d$-dimensional Gaussian measures with null mean and covariances $\Sigma_{\nu_i}$. Assume $\Sigma_{\widetilde{Q}}$ is a solution of the normal matrix equation
    \begin{equation}\label{eq:first-order-condition}
        \frac{1}{n}\sum^n_{i=1}\bx_i\bx_i^\top\bB S_i\#\Sigma_{\nu_i}\ =\frac{1}{n}\sum^n_{i=1}\bx_i\bx_i^\top\bB
    \end{equation}
    for every $\bB\in\RR^{p\times d}$.
    Then $\widetilde{Q}=\mathcal{N}(0,\Sigma_{\widetilde{Q}})$ is an optimal dual solution of (\ref{eq:lin_wls}).
\end{proposition}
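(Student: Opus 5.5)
The plan is to verify the two conditions of \cref{thm:opt_duals} for $\widetilde Q=\mathcal N(0,\Sigma_{\widetilde Q})$ and then apply the converse direction of that theorem. By \cref{prop:gaussian_regression_sol_is_gaussian} we lose nothing by working in the Gaussian class. Throughout, take $S=\Sigma_{\widetilde Q}$. Then \cref{lemma:marginal_cov_gaussian}(1) identifies $S_i=(\bx_i^\top\otimes I_d)\Sigma_{\widetilde Q}(\bx_i\otimes I_d)$ as the covariance of the marginal $\widetilde Q_{\bx_i}=\mathcal N(0,S_i)$.

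\emph{Step 1 (candidate potentials).} Set $T_i:=S_i\#\Sigma_{\nu_i}$, which is symmetric positive semidefinite, and define
\begin{equation*}
\varphi_i(\bz):=\tfrac12\,\bz^\top T_i\bz .
\end{equation*}
This $\varphi_i$ is convex. By \cref{lemma:marginal_cov_gaussian}(2), with all means zero, $\nabla\varphi_i=T_i$ is the Brenier map from $\widetilde Q_{\bx_i}$ to $\nu_i$. Hence $(\varphi_i,\varphi_i^*)$ are optimal Kantorovich potentials for $(\widetilde Q_{\bx_i},\nu_i)$, which gives condition~1 of \cref{thm:opt_duals}.

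\emph{Step 2 (condition 2 via the normal equation).} Consider the quadratic function on $\RR^{p\times d}$
\begin{equation*}
h(\bB):=\frac1n\sum_{i=1}^n\Bigl(\tfrac12\|\bB^\top\bx_i\|^2-\varphi_i(\bB^\top\bx_i)\Bigr)
=\frac1{2n}\sum_{i=1}^n \bx_i^\top\bB(I_d-T_i)\bB^\top\bx_i .
\end{equation*}
Its gradient is $\nabla h(\bB)=\frac1n\sum_i\bx_i\bx_i^\top\bB(I_d-T_i)$. The hypothesis \eqref{eq:first-order-condition} says exactly that this gradient vanishes for every $\bB$. Since $h$ is a homogeneous quadratic form, Euler's identity gives $h(\bB)=\tfrac12\langle\nabla h(\bB),\bB\rangle_F$. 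Therefore $h\equiv0$. This yields \eqref{eq:dual_ineq} with equality for \emph{every} $\bB$, and in particular $\widetilde Q$-a.s., which is condition~2.

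\emph{Step 3 (conclusion).} The converse part of \cref{thm:opt_duals} now shows that $\widetilde Q$ minimizes \eqref{eq:lin_wls}, and that $(\varphi_1,\dots,\varphi_n)$ are optimal dual solutions.

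The main obstacle is in Step~1, when some $S_i$ is singular, i.e.\ $\widetilde Q_{\bx_i}$ is degenerate. Then the formula for $S_i\#\Sigma_{\nu_i}$ involves $S_i^{-1/2}$ and is not literally defined, and the Brenier map is no longer unique. I would first treat the case $S_i\succ0$ for all $i$, which holds for instance when $\Sigma_{\widetilde Q}\succ0$ and $\bx_i\neq0$. For the degenerate case I would interpret $S_i\#\Sigma_{\nu_i}$ as any positive semidefinite matrix $T_i$ with $T_iS_iT_i=\Sigma_{\nu_i}$ that is optimal for the Gaussian pair, obtained as a limit of $(S_i+\eps I)\#\Sigma_{\nu_i}$. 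The argument of Steps~2--3 uses only that $\varphi_i$ is an optimal potential and that \eqref{eq:first-order-condition} holds for this choice of $T_i$, so it goes through unchanged.
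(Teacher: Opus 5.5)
Your argument is correct and is essentially the paper's proof. The paper uses the same quadratic potentials $\varphi_i(\bz)=\frac12\bz^\top(S_i\#\Sigma_{\nu_i})\bz$ with $S=\Sigma_{\widetilde Q}$ and appeals to the converse direction of \cref{thm:opt_duals}. It checks condition~2 by rewriting $\frac1n\sum_i\bigl(\varphi_i(\bB^\top\bx_i)-\frac12\|\bB^\top\bx_i\|^2\bigr)$ as $\frac12\bigl\langle\bB,\frac1n\sum_i\bx_i\bx_i^\top\bB\,(S_i\#\Sigma_{\nu_i}-I_d)\bigr\rangle_F$, which is exactly your Euler-identity step.

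You should drop the claim in your last paragraph that the singular case ``goes through unchanged''. The paper does not treat that case; it implicitly assumes $S_i\succ0$ so that $S_i\#\Sigma_{\nu_i}$ is defined. If $\operatorname{rank}\Sigma_{\nu_i}>\operatorname{rank}S_i$, no $T_i$ with $T_iS_iT_i=\Sigma_{\nu_i}$ exists, and $(S_i+\varepsilon I)\#\Sigma_{\nu_i}$ diverges: for $d=1$, $S_i=0$, $\Sigma_{\nu_i}=1$ it equals $\varepsilon^{-1/2}$.
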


\begin{proof}
        By Theorem \ref{thm:opt_duals}, it is enough to verify that the mappings $(\varphi_1,\cdots,\varphi_n)$ given by
        \begin{equation*}
            \varphi_i:=\frac{1}{2}\langle \cdot\Sigma_{\widetilde{Q}_{\bx_i}}\#\Sigma_{\nu_i},\cdot\rangle.
        \end{equation*}
        are optimal dual solutions.
        The proposed maps are clearly the optimal Brenier potentials between $\widetilde{Q}_{\bx_i}$ and $\nu_i$.
        So we now verify that $$\frac{1}{n}\sum^n_{i=1}(\varphi_i(\bB^\top\bx_i)-\|\bB^\top\bx_i\|^2/2)=0 \quad \bB\text{-a.e}.$$
        Consequently we check that if $\bB\sim \widetilde{Q}$, then
        \begin{equation*}
            \begin{aligned}
                \frac{1}{n}\sum^n_{i=1}(\varphi_i(\bB^\top\bx_i)-\frac{\|\bB^\top\bx_i\|^2}{2})&=\sum_{i=1}^n\frac{1}{2n}(\langle \bx_i^\top\bB\Sigma_{\widetilde{Q}_{\bx_i}}\#\Sigma_{\nu_i}, \bx_i^\top\bB\rangle-\|\bB^\top\bx_i\|^2_2)\\
                &=\frac{1}{2}\langle \bB, \frac{1}{n}\sum^n_{i=1}\bx_i\bx_i^\top\bB\Sigma_{\widetilde{Q}_{\bx_i}}\#\Sigma_{\nu_i} \rangle_F+\sum_{i=1}^n\frac{1}{2n}\|\bB^\top\bx_i^\top\|^2\\
                &=\frac{1}{2}\langle \bB, \frac{1}{n}\sum^n_{i=1}\bx_i\bx_i^\top\bB \rangle_F+\sum_{i=1}^n\frac{1}{2n}\|\bB^\top\bx_i\|^2\\
                &=0.
            \end{aligned}
        \end{equation*}
    \end{proof}

 A further relationship between the Wasserstein barycenter problem and Wasserstein least squares is drawn in the following proposition.

\begin{proposition}\label{prop:covariates_first order_LD_case}
    Assume $\nu_i\sim \mathcal{N}_d(0,\Sigma_{\nu_i})$ and each $\bx_i=a_i \bx_1$ for a $a_i\neq0$. If $\widetilde{Q}$ is an optimal dual solution for \eqref{eq:lin_wls}, then $\Sigma_{\widetilde{Q}_{\bx_i}}$ solves the Barycenter equation for $S\in\mathbf{S}_+^{dp}$
    \begin{equation}\label{eq:almost_barycenter}
        \sum_{i=1}^n\widetilde{\lambda}_i(S^{1/2}\Sigma_{\nu_i}S^{1/2})^{1/2}=S
    \end{equation}
 Consequently, for this type of design matrix finding a solution of \eqref{eq:lin_wls} is equivalent to solving the Wasserstein barycenter problem
    \begin{equation}\label{eq:wasserstein_barycenter}
        \min_{b\in\mathcal{P}_{2,\text{ac.}}(\mathbb{R}^d)}\sum_{i=1}^n\widetilde\lambda_iW^2_2(b,\nu_i).
    \end{equation}
with $\widetilde{\lambda}_i=\frac{a_i^2}{\sum_{j=1}^n a_j^2}$.
\end{proposition}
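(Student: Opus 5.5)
The plan is to exploit the collinearity $\bx_i = a_i\bx_1$ to collapse every marginal $Q_{\bx_i}$ onto the single measure $b := Q_{\bx_1}$, so that the Wasserstein least squares problem becomes a weighted barycenter problem. I would then read off \eqref{eq:almost_barycenter} from the normal equations of \cref{theorem:W-normal-equations}, specialized to Gaussians via \cref{lemma:marginal_cov_gaussian}.

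\textbf{Step 1 (reduction of the objective).} Since $\bB^\top\bx_i = a_i\,\bB^\top\bx_1$, we have $Q_{\bx_i} = (a_i\,\mathrm{Id})_\# b$. Dilation by the scalar $a_i$ (composed with $-\mathrm{Id}$ if $a_i<0$, which is an isometry) scales $W_2$ by $|a_i|$. This gives
\[
W_2^2(\nu_i, Q_{\bx_i}) = a_i^2\, W_2^2\bigl((a_i^{-1}\mathrm{Id})_\#\nu_i,\, b\bigr),
\]
where $(a_i^{-1}\mathrm{Id})_\#\nu_i = \mathcal N(0,\Sigma_{\nu_i}/a_i^2)$. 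Conversely, every $b\in\cP_2(\RR^d)$ arises as $Q_{\bx_1}$: take $\bB = \bx_1\by^\top/\|\bx_1\|^2$ with $\by\sim b$. Hence $\min_Q G(Q)$ equals $\bigl(\sum_j a_j^2/n\bigr)$ times the barycenter problem \eqref{eq:wasserstein_barycenter} with weights $\widetilde\lambda_i = a_i^2/\sum_j a_j^2$ applied to the rescaled Gaussians. Minimizers correspond via $Q\mapsto Q_{\bx_1}$. By \cref{prop:gaussian_regression_sol_is_gaussian}, or directly from Gaussian barycenter theory, we may restrict to Gaussian $b$. This gives the ``consequently'' part.

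\textbf{Step 2 (first-order condition).} Let $\widetilde Q=\mathcal N(0,\Sigma_{\widetilde Q})$ be optimal and set $S := \Sigma_{\widetilde Q_{\bx_1}}$. By \cref{lemma:marginal_cov_gaussian}, $S_i := \Sigma_{\widetilde Q_{\bx_i}} = a_i^2 S$. Substituting into the geometric mean $\#$ gives
\[
S_i\#\Sigma_{\nu_i} = |a_i|^{-1}\,S^{-1/2}\bigl(S^{1/2}\Sigma_{\nu_i}S^{1/2}\bigr)^{1/2}S^{-1/2} =: |a_i|^{-1}T_i .
\]
The normal equations \eqref{eq:W-normal-equations}, equivalently \eqref{eq:first-order-condition} or the vanishing of \eqref{eq:wgrad_gaussian}, then read
\[
\bx_1\bx_1^\top \bB\Bigl(\sum_i a_i^2\bigl(|a_i|^{-1}T_i - I_d\bigr)\Bigr)=0 \quad \text{for all } \bB\in\operatorname{supp}(\widetilde Q).
\]
Multiplying on the left by $\bx_1^\top$ gives $\by^\top M = 0$, where $\by=\bB^\top\bx_1$ ranges over $\operatorname{supp}(\widetilde Q_{\bx_1})$ and $M$ denotes the bracketed sum.

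\textbf{Step 3 (removing the support restriction).} If $S$ is nondegenerate, the support of $\widetilde Q_{\bx_1}$ spans $\RR^d$, so $M=0$. Conjugating by $S^{1/2}$ then yields $\sum_i \widetilde\lambda_i\,|a_i|^{-1}(S^{1/2}\Sigma_{\nu_i}S^{1/2})^{1/2} = S$. This is exactly the Agueh--Carlier fixed-point equation for the barycenter of the rescaled $\mathcal N(0,\Sigma_{\nu_i}/a_i^2)$, which is the form \eqref{eq:almost_barycenter} after absorbing the scaling of $\nu_i$ from Step~1. I expect this step to be the main obstacle: one must keep the $|a_i|$ normalizations consistent with the statement, and one must justify that $S\succ0$. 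For the latter I would first invoke absolute continuity of the $\nu_i$ (so the barycenter is nondegenerate, by Agueh--Carlier); in the degenerate case I would instead restrict the identity to the range of $S$.
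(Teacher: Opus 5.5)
\textbf{The gap.} Your Step~2 computation $S_i\#\Sigma_{\nu_i}=|a_i|^{-1}T_i$ is correct. The gap is the last sentence of Step~3: ``which is the form \eqref{eq:almost_barycenter} after absorbing the scaling of $\nu_i$'' is not a proof step but a change of statement, and no argument can close it. What you actually derive is
\[
\sum_{i=1}^n \widetilde\lambda_i\bigl(S^{1/2}(a_i^{-2}\Sigma_{\nu_i})S^{1/2}\bigr)^{1/2}=S,\qquad S=\Sigma_{\widetilde Q_{\bx_1}}.
\]
This is the fixed-point equation for the $\widetilde\lambda$-weighted barycenter of the rescaled measures $(a_i^{-1}\mathrm{Id})_\#\nu_i=\mathcal N(0,a_i^{-2}\Sigma_{\nu_i})$. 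By contrast, \eqref{eq:almost_barycenter} and \eqref{eq:wasserstein_barycenter} are about the original $\nu_i$. Note that $a_1=1$ is forced, and that whenever some $|a_i|\neq1$ the stated version is simply false.

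\textbf{Counterexample.} Take $d=p=1$, $n=2$, $x_1=1$, $x_2=2$, $\nu_1=\nu_2=\mathcal N(0,1)$.
\begin{itemize}
\item Minimizing $(1-s)^2+(1-2s)^2$ gives $\widetilde Q=\mathcal N(0,9/25)$. This optimizer is unique by your Step~1.
\item Hence $S=9/25$. It satisfies your equation, $(\tfrac15+\tfrac45\cdot\tfrac12)\sqrt S=S$.
\item It does not satisfy \eqref{eq:almost_barycenter}, which reads $\sqrt S=S$ and forces $S=1$.
\item The barycenter in \eqref{eq:wasserstein_barycenter} is $\mathcal N(0,1)$. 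This is neither $\widetilde Q_{x_1}$ nor $\widetilde Q_{x_2}=\mathcal N(0,36/25)$.
\end{itemize}

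\textbf{Where the paper goes wrong.} The paper's proof follows exactly your Steps~2--3: specialize the Gaussian normal equations, multiply by $\bx_1^\top$, then conjugate by $S^{1/2}$. It reaches \eqref{eq:almost_barycenter} only by asserting $\Sigma_{\widetilde Q_{\bx_i}}\#\Sigma_{\nu_i}=\Sigma_{\widetilde Q_{\bx_1}}\#\Sigma_{\nu_i}$. That identity drops precisely the factor $|a_i|^{-1}$ you kept. Under either reading of $\#$ in the paper (Brenier map or geometric mean), rescaling the first argument by $a_i^2$ rescales the result by $|a_i|^{\mp1}$.

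\textbf{What to do instead.} Do not try to ``keep the normalizations consistent with the statement.'' State the corrected proposition and prove it:
\begin{itemize}
\item The fixed-point equation is the displayed one above, for $S=\Sigma_{\widetilde Q_{\bx_1}}$ specifically. It cannot hold for every $\Sigma_{\widetilde Q_{\bx_i}}=a_i^2S$: under $S\mapsto cS$ its left side scales like $c^{1/2}$ and its right side like $c$.
\item The Wasserstein least squares problem is equivalent to the $\widetilde\lambda$-weighted barycenter problem for $(a_i^{-1}\mathrm{Id})_\#\nu_i$.
\item This coincides with the stated proposition when every $|a_i|=1$.
\end{itemize}

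\textbf{Your Step~1.} This step is absent from the paper, whose ``consequently'' clause is not argued. It is the right proof of the corrected equivalence. Combined with Agueh--Carlier's characterization of barycenters of nondegenerate Gaussians, it already gives the fixed-point equation for every optimizer $\widetilde Q$, Gaussian or not. Steps~2--3 then become a consistency check via the normal equations.

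\textbf{Minor points.} The hypothesis allows singular $\Sigma_{\nu_i}$, so your appeal to absolute continuity needs an added assumption such as $\Sigma_{\nu_i}\succ0$; otherwise use the range-of-$S$ fallback you mention. Also, $S$ lives in $\mathbf S^d_+$, not $\mathbf S^{dp}_+$.
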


\begin{proof}
   Note that by hypothesis
    \begin{equation*}
        \Sigma_{\widetilde{Q}_{\bx_i}}=a_i^2\mathbb{E}_{\bB\sim \widetilde{Q}}[\vect(\bB^\top \bx_1)\vect(\bB^\top \bx_1)^\top]=a_i^2\Sigma_{\widetilde{Q}_{\bx_1}}.
    \end{equation*}
    As a consequence
    \begin{equation*}
    \Sigma_{\widetilde{Q}_{\bx_i}}\#\Sigma_{\nu_i}=\Sigma_{\widetilde{Q}_{\bx_1}}\#\Sigma_{\nu_i} \quad\text{ for  }\quad1\leq i\leq n.
    \end{equation*}
      Specializing equation \eqref{eq:first-order-condition} to our case and multiplying it on the left side by $\bx^\top_1$ yields
      \begin{equation*}
       \sum^n_{i=1}a_i^2Z\Sigma_{Q_{\bx_1}}\#\Sigma_{\nu_i}=\sum^n_{i=1}a_i^2Z
    \end{equation*}
    where $Z=\bx^\top\bB(\|\bx_1\|_2^2 )$.
    Since $Z\in\mathbb{R}^d$ is arbitrary, the previous equations are equivalent to
    \begin{equation*}
    \sum^n_{i=1}a_i^2\Sigma_{\widetilde{Q}_{\bx_1}}\#\Sigma_{\nu_i}=\sum^n_{i=1}a_i^2I_d.
    \end{equation*}
    If we multiply this equation on both sides by $(\Sigma_{\widetilde{Q}_{\bx_1}})^{1/2}$ we get
        \begin{equation*}
        \sum^n_{i=1}a_i^2 (\Sigma_{\widetilde{Q}_{\bx_1}}^{1/2}\Sigma_{\nu_i}\Sigma_{\widetilde{Q}_{\bx_1}}^{1/2})^{1/2}=\sum^n_{i=1}a_i^2\Sigma_{\widetilde{Q}_{\bx_1}}
        \end{equation*}
\end{proof}

\begin{proof}[Proof of Proposition \ref{prop:full_gaussian_gf}.]

We proceed to treat each item separately.
\begin{enumerate}
    \item If we specialize this notion for a $G$ depending on Gaussian responses $\nu_i\in\cP_2(\RR^d)$ with mean $m_{\nu_i}$ and variance $\Sigma_{\nu_i}$, then the gradient flow equation reads as
\begin{equation*}
    \begin{cases}
    \dot{Q^t}=\frac{1}{n}\sum^n_{i=1}\bx_i\bx_i^\top(\cdot)(\Sigma_{Q^t_{\bx_i}}\#\Sigma_{\nu_i}- I_d)+\frac{1}{n}\sum_{i=1}^n \bx_i( m_{Q^t_{\bx_i}}^\top \Sigma_{Q^t_{\bx_i}}\#\Sigma_{\nu_i} - m_{\nu_i}^\top)\\
        Q^0= \mathcal{N}_{dp}(0,\Sigma_{Q^0}).
    \end{cases}
\end{equation*}

So, if we sample a random variable  $\bB^\top_t\sim Q^t$ at time $t>0$, then previous equation reads as
\begin{equation}\label{eq:full-particle}
    \dot{\bB}_t=v_t(\bB_t)=\frac{1}{n}\sum^n_{i=1}\bx_i\bx_i^\top\bB_t(\Sigma_{Q^t_{\bx_i}}\#\Sigma_{\nu_i}- I_d)+\frac{1}{n}\sum_{i=1}^n \bx_i( m_{Q^t_{\bx_i}}^\top \Sigma_{Q^t_{\bx_i}}\#\Sigma_{\nu_i} - m_{\nu_i}^\top)
\end{equation}
under the particle interpretation of the continuity equation.
Let's investigate the vectorized form of $\dot{\bB}^\top_t$.

Note the vectorized form of the transpose of the first summand of equation \eqref{eq:full-particle} simplifies to
\begin{equation}\label{eq:first_summand_vec}
    (\frac{1}{n}\sum^n_{i=1}\bx_i\bx_i^\top \otimes(\Sigma_{Q^t_{i}}\#\Sigma_{\nu_i}- I_d))\vect{(\bB^\top_t)}
\end{equation}
by using the vector identities $\vect{(ABC)}=(C^\top\otimes A)\vect{(B)}$.

But, since $m_{Q_{\bx_i}^t}=(\bx_i\otimes I_d)m_Q$,  the vectorized form of the transpose of the second summand of  \eqref{eq:full-particle} is
\begin{align*}
   \frac{1}{n} \left[ \left( \sum_{i=1}^n \bx_i \bx_i^\top \otimes \Sigma_{Q^t_{\bx_i}}\#\Sigma_{\nu_i} \right) m_{Q^t} - \sum_{i=1}^n \bx_i \otimes m_{\nu_i} \right],
\end{align*}
where we used the mixed-product property for Kronecker products $(A\otimes B)(C\otimes D)=(AC)\otimes (BD)$.

Putting these two expressions back together and taking expectation with  respect to $Q^t$ yields that
\begin{equation*}
    \dot{m}_{Q^t}=\sum^n_{i=1}\frac{1}{n}\left(\bx_i\bx_i^\top \otimes(2\Sigma_{Q^t_{i}}\#\Sigma_{\nu_i}- I_d)\right)m_{Q^t} -\sum_{i=1}^n \frac{1}{n}\bx_i \otimes m_{\nu_i}
\end{equation*}
for all $t>0$.

In order to derive an expression for the variance of $Q^t$, recall that $$\Sigma_{Q^t}=\E[\vect{(\bB^\top-m_{Q^t})}(\vect{(\bB^\top)-m_{Q^t}})^\top]$$ Taking the derivative of $\Sigma_{Q^t}$ over time gives
\begin{align*}
    \dot{\Sigma}_{Q^t}&=\partial_t\mathbb{E}[\vect{(\bB^\top-m_{Q^t})}(\vect{(\bB^\top)-m_{Q^t}})^\top]\\&=\mathbb{E}[\vect{(\dot{\bB}^\top_t-\dot{m}_{Q^t})}\vect{(\bB^\top_t-m_{Q^t})}^{\top}+\vect{(\bB^\top_t-m_{Q^t})}\vect{(\dot{\bB}^\top_t-\dot{m}_{Q^t})}^{\top}].
\end{align*}
But vectorizing $\dot{\bB}^\top_t-\dot m_{Q^t}$ yields
\begin{equation*}
    \vect{(\dot{\bB}^\top_t-\dot m_{Q^t})}=M_t\vect{(\bB^\top_t-m_{Q^t})}
\end{equation*}
where we define $M_t:=(\frac{1}{n}\sum^n_{i=1}\bx_i\bx_i^\top \otimes(\Sigma_{Q^t_{\bx_i}}\#\Sigma_{\nu_i}- I_d))$ for every $t>0$.
Putting the two previous equations together yields the Lyapunov equation
\begin{equation*}
    \dot{\Sigma}_{Q^t}=M_t\Sigma_{Q^t}+\Sigma_{Q^t}M_t.
\end{equation*}
\item By Theorem 5.22 in \cite{ChewiNilesWeedRigollet2025}, we just implicitly proved that for any $Q=\mathcal{N}(0,\Sigma_Q)$ in $\cP(\mathbb{R}^{p\times d})$
$$M_k:=-\E_Q[\nabla^2_\bB \delta G(Q)]=(\frac{1}{n}\sum^n_{i=1}\bx_i\bx_i^\top \otimes(\Sigma_{Q_{\bx_i}}\#\Sigma_{\nu_i}- I_d)).$$
Then the update equation  \eqref{eq:gradient_descent_G} reads
\begin{equation*}
    Q^{k+1}=\exp_{Q^k}(\tau\WGrad_{Q^k} G)=\mathcal{N}\left(0, (\tau M_k + I_{dp})\Sigma_{Q^k}(\tau M_k + I_{dp})\right).
\end{equation*}
\end{enumerate}
\end{proof}

\end{document}